\theoremstyle{plain}
\newtheorem{theorem}{Theorem}[section]
\newtheorem{lemma}[theorem]{Lemma}
\newtheorem{proposition}[theorem]{Proposition}
\newtheorem{corollary}[theorem]{Corollary}
\theoremstyle{remark}
\newtheorem{remark}[theorem]{Remark}
\numberwithin{equation}{section}
\numberwithin{paragraph}{section}
\theoremstyle{definition}
\newtheorem{definition}[theorem]{Definition}
\newtheorem{property}[theorem]{Property}
\newcommand{\setproptag}[1]{
	\let\oldthetheorem\thetheorem
	\renewcommand{\thetheorem}{#1}}
\g@addto@macro\endproperty{
		\global\let\thetheorem\oldthetheorem
		\addtocounter{theorem}{-1}
}
\newcommand{\mf}{\mathbf}
\newcommand{\ov}{\overline}
\newcommand{\mr}{\mathrm}
\newcommand{\mc}{\mathcal}
\DeclareMathOperator{\Hom}{Hom}
\DeclareMathOperator{\Mod}{Mod}
\DeclareMathOperator{\Ad}{Ad}
\DeclareMathOperator{\Id}{Id}
\DeclareMathOperator{\Gal}{Gal}
\DeclareMathOperator{\ord}{ord}
\DeclareMathOperator{\Fitt}{Fitt}
\DeclareMathOperator{\tr}{tr}
\DeclareMathOperator{\St}{St}
\DeclareMathOperator{\End}{End}
\DeclareMathOperator{\Sym}{Sym}
\DeclareMathOperator{\Frob}{Frob}
\DeclareMathOperator{\Ann}{Ann}
\DeclareMathOperator{\Spec}{Spec}
\DeclareMathOperator{\st}{st}
\DeclareMathOperator{\fl}{fl}
\DeclareMathOperator{\loc}{loc}
\DeclareMathOperator{\uni}{uni}
\DeclareMathOperator{\Der}{Der}
\DeclareMathOperator{\Tor}{Tor}
\DeclareMathOperator{\proj}{proj}
\DeclareMathOperator{\an}{an}
\renewcommand{\min}{\operatorname{min}}
\renewcommand{\mod}{\operatorname{mod}}
\DeclareMathOperator{\kernel}{ker}
\DeclareMathOperator{\image}{im}
\newcommand{\Lotimes}{\otimes^{\mathbf{L}}}
\DeclareMathOperator{\RHom}{RHom}
\newcommand{\cDer}{\widehat{\Der}}
\newcommand{\cA}{{\mathcal A}}
\newcommand{\cD}{{\mathcal D}}
\newcommand{\cI}{{\mathcal I}}
\newcommand{\cL}{{\mathcal L}}
\newcommand{\cO}{{\mathcal O}}
\newcommand{\cR}{{\mathcal R}}
\newcommand{\ffrm}{{\mathfrak m}}
\newcommand{\frp}{{\mathfrak p}}
\newcommand{\frq}{{\mathfrak q}}
\newcommand{\bbF}{{\mathbb F}}
\newcommand{\GL}{\mathrm{GL}}
\newcommand{\PGL}{\mathrm{PGL}}
\newcommand{\SL}{\mathrm{SL}}
\newcommand{\barqp}{{\overline{\Q}_p}}
\newcommand{\Fbar}{{\overline{\F}}}
\newcommand{\Qbar}{{\overline{\Q}}}
\newcommand{\Tbar}{{\overline{\T}}}
\newcommand{\CNLO}{{\mathrm{CNL}_\cO}}
\newcommand{\CNL}{{\mathrm{CNL}}}
\newcommand{\sm}{\smallsetminus}
\newcommand{\es}{\varnothing}
\newcommand{\cOmega}{{\widehat{\Omega}}}
\newcommand{\cotimes}{{\widehat{\otimes}}}
\newcommand{\cbigotimes}[1]{{\widehat{\bigotimes_{#1}}}}
\newcommand{\invlim}{\varprojlim}
\newcommand{\ds}{\displaystyle}
\newcommand{\half}{\mathcal{H}}
\newcommand{\Rt}{\widetilde{R}}
\newcommand{\xt}{\widetilde{x}}
\newcommand{\yt}{\widetilde{y}}
\newcommand{\lambdat}{\widetilde{\lambda}}
\newcommand{\thetat}{\widetilde{\theta}}
\newcommand{\Thetat}{\widetilde{\Theta}}
\newcommand{\rhobar}{\overline{\rho}}
\newcommand{\A}{\mathbf A}
\newcommand{\Q}{\mathbf Q}
\newcommand{\Z}{\mathbf Z}
\newcommand{\T}{\mathbf T}
\newcommand{\F}{\mathbf F}
\newcommand{\into}{\hookrightarrow}
\newcommand{\onto}{\twoheadrightarrow}
\newcommand{\tf}{\mathrm{tf}}
\newcommand{\olQ}{Q}
\newcommand{\sMat}[2][.8]{%
  \scalebox{#1}{%
    \renewcommand{\arraystretch}{.8}%
   $\begin{pmatrix}#2\end{pmatrix}$%
  }
}
\DeclareMathOperator{\depth}{depth}
\DeclareMathOperator{\socle}{{\mathrm{socle}}}
\renewcommand{\st}{{\mathrm{st}}}
\newcommand{\un}{{\mathrm{un}}}
\newcommand{\unr}{{\mathrm{unr}}}
\newcommand{\fun}{{\operatorname{\varphi-uni}}}
\newcommand{\cst}{{$(\st)$}}
\newcommand{\cun}{{$(\un)$}}
\newcommand{\ctun}{{$(\fun)$}}
\newcommand{\uq}{{\underline{q}}}
\newcommand{\us}{{\underline{s}}}
\newcommand{\ut}{{\underline{t}}}
\newcommand{\Ro}{\overline{R}}
\newcommand{\Io}{\overline{I}}
\newcommand{\pio}{\overline{\pi}}
\renewcommand{\St}{\mathrm{st}}
\renewcommand{\uni}{\mathrm{uni}}
\newcommand{\chibar}{\overline\chi}
\DeclareMathOperator{\tcD}{\widetilde{\cD}}
\newcommand{\tR}{\widetilde R}
\newcommand{\tpi}{\widetilde \pi}
\renewcommand{\un}{{\mathrm{uni}}}
\newcommand{\Lambdat}{\widetilde{\Lambda}}
\title{ Wiles defect of Hecke algebras via  local-global arguments  \\[0.3em]\small{}With an appendix by N. Fakhruddin and C. Khare}
\author{Gebhard B\"ockle, Chandrashekhar B. Khare, Jeffrey Manning} 
\begin{document}
\maketitle

\begin{abstract}
In his work on modularity of elliptic curves and Fermat’s Last Theorem,  A.~Wiles introduced two measures of congruences  between  Galois representations and between modular forms. One measure  is related to the order of a Selmer group associated to a newform  $f \in S_2(\Gamma_0(N))$ (and closely linked to  deformations of the Galois representation $\rho_f$ associated to $f$), whilst  the other measure  is related to the congruence module associated to $f$ (and is closely linked to Hecke rings and congruences between $f$ and other newforms in $S_2(\Gamma_0(N))$). The equality  of these two measures led to isomorphisms $R=\T$  between deformation rings and Hecke rings (via a numerical criterion for isomorphisms that Wiles  proved)  and showed these rings to be complete intersections.

 We continue our study begun in \cite{BKM}  of the {\it Wiles defect} of  deformation rings  and Hecke  rings (at a newform $f$)  acting on  the cohomology of Shimura curves over $\Q$: it is defined to be  the difference  between these two measures of congruences. The Wiles defect  thus  arises from the failure of the Wiles numerical criterion at an augmentation $\lambda_f:\T \to \cO$. In  situations we study here  the   Taylor-Wiles-Kisin patching method  gives  an isomorphism $ R=\T$ without the rings being complete intersections. Using novel arguments in commutative algebra and patching, we  generalize significantly and give different proofs of the results in \cite{BKM} that compute the Wiles defect at  $\lambda_f: R=\T \to \cO$, and explain  in an {\it a priori} manner why the answer in \cite{BKM}  is a sum of {\it  local defects}. As a curious application of our work we give a new and more robust approach to the result of Ribet--Takahashi that computes change of degrees of optimal parametrizations of elliptic curves over $\Q$ by Shimura curves as we vary the Shimura curve. 

\end{abstract}

\tableofcontents

\section{Introduction}\label{sec:intro}

In the  work  on modularity of elliptic curves,   Wiles pioneered methods to prove   $R=\T$ theorems where $R$ is a deformation ring and $\T$ a Hecke algebra, thus proving an equality of moduli spaces of  Galois representations to pro-$p$ Artinian rings arising from modular forms with the {\it a priori} larger  moduli space of  corresponding abstract Galois representations, both with  certain prescribed   local (ramification) behavior. 

The  injectivity of the {\it a priori} surjective map $R \onto \T$ was proven by using  two different types of  criteria/methods:  

\begin{description}
	\item[(i)] the numerical criterion of  \cite[Proposition 2 of Appendix]{Wiles}; 
	\item[(ii)] the patching method of \cite{TaylorWiles}.
\end{description}

 In  \cite{TaylorWiles}  the local conditions imposed  on the deformations were smooth. Kisin \cite{Kisin}  later generalized the patching method to allow   local conditions on the deformations   that were  not necessarily  smooth. The generic fiber of the local deformation rings  in question was  smooth  and  Kisin proved a   $R[1/p]=\T[1/p]$ theorem,  thus proving a coarser  equality of moduli spaces of   $p$-adic Galois representations arising from modular forms with the {\it a priori} larger  moduli space of  corresponding abstract Galois representations, both with  certain prescribed   local behavior. When the local conditions are Cohen--Macaulay,  one  sees {\em a posteriori}   that $R$ has no $p$-torsion (see\cite[paragraph before Corollary 4.7]{KW},  \cite[\S 5]{Snowden} or \cite[Theorem 6.3]{BKM}  for instance) and  thus  as $\T$  is also torsion-free one can promote an $R[1/p]=\T[1/p]$ theorem to an   integral $R=\T$ theorem, without the rings in question turning out to be complete intersections.

Wiles  applied  his  numerical criterion for maps between rings to be isomorphisms of complete intersections to prove,    when the local conditions on the deformations  considered were not minimal,  the relevant  surjective map $R \onto \T$ was an  isomorphism of complete intersections (see\cite[Theorem 2.17 of \S 2]{Wiles}).  Wiles deduced in this manner   $R=\T$ theorems in the  non-minimal case   from $R=\T$ theorems  in the minimal case.   The latter were   proved via the  patching method of \cite{TaylorWiles}.  The numerical criterion has been used subsequently  in \cite{K} to prove  $R=\T$ theorems without  any reliance  on patching. The numerical  criterion of Wiles has  not as yet  been generalized to give a  criterion for maps between rings to be an isomorphism when the rings are known  to not  be  complete intersections.  

The work   of this paper, like that of the previous paper \cite{BKM} of this series, arises when considering situations when we have $R=\T$ theorems proved by patching, but  $R$ and $\T$  fail to be complete intersections. In \cite{BKM}  and the present paper we seek to study the failure (quantified in a numerical quantity called the {\em Wiles defect} introduced in \cite{TiUr}, see also \cite[Definition 3.10]{BKM}) of the numerical criterion for being a complete intersection locally at an augmentation $\lambda_f:\T \to \cO$ induced by  a newform $f$.

In \cite{BKM} we studied the  Wiles defect (at $\lambda_f$ of  a certain Hecke ring $\T$ acting on the cohomology of a Shimura curve)   using a combination of patching and level lowering results of Ribet-Takahashi \cite{RiTa}. In the present paper we combine the  new  results in commutative algebra that we prove here with  patching  to  determine the Wiles defect.  The patching method  allows one to show
that the Wiles defect of a global deformation ring at an augmentation $\lambda_f$  depends only on the in induced augmentations of the corresponding  local deformation rings.  This gives   yet another illustration of the versatility of the patching method, and its ability to reduce proving   properties of global deformation rings to proving  properties of the corresponding local deformation rings.

As a curious  consequence we  derive and strengthen  the results of Ribet--Takahashi in \cite{RiTa}  on degrees of optimal parametrizations of elliptic curves over $\Q$ by Shimura curves, via a new  argument. The methods of Ribet--Takahashi use arithmetic geometry,  while the method here uses patching. 
 
 \subsection{A particular case of our main theorem}
 
 In \cite[Theorem 10.1]{BKM} we  determined the Wiles defect  associated to a newform  $f \in S_2(\Gamma_0(NQ))$ of squarefree level $NQ$ that arises by the Jacquet Langlands correspondence from a newform in $S_2(\Gamma^Q_0(N))$. Here  $\Gamma^Q_0(N)$ is  the congruence subgroup of a quaternion algebra that is ramified at the set of primes dividing $Q$, of level $\Gamma_0(N)$, and maximal at the primes in $Q$.

We state an improvement of     \cite[Theorem 10.1]{BKM} referring to it for any of the unexplained   notation in the statement below    (we  do recall the definition of the Wiles defect below). The proof relies on  the Taylor-Wiles-Kisin patching method, but not on \cite{RiTa}, and also explains {\it en passant}  why  the Wiles defect computed below is a sum of  {\em local defects} in a sense we make precise later in the introduction.

\begin{theorem}\label{thm:intro}
Let $N$ and $Q$ be relatively prime squarefree integers. Let $p>2$ be a prime not dividing $NQ$ and let $E/\Q_p$ be a finite extension with ring of integers $\cO$, uniformizer $\varpi$ and residue field $k$. Let $\rho_f:G_{\Q}\to \GL_2(\cO)$ be a Galois representation arising from a newform $f\in S_2(\Gamma_0(NQ))$, and let $\rhobar_f:G_{\Q}\to \GL_2(k)$ be the residual representation. Assume that $\rhobar_f$ is irreducible and $N|N(\rhobar_f)$.

Let $R^{\st}$ be the Galois deformation ring of $\rhobar_f$  parameterizing lifts of $\rhobar_f$ of fixed determinant which are Steinberg at each prime dividing $Q$, finite flat at $p$ and minimal at all other primes.

Let $D$ be the quaternion algebra with discriminant $Q$ and let $\Gamma_0^Q(N)$ be the congruence subgroup for $D$. Let $\T^Q(N)$ and $S^Q(\Gamma_0^Q(N))$ be the Hecke algebra and cohomological Hecke module at level $\Gamma^Q_0(N)$ and let $\ffrm\subseteq \T^Q(N)$ be the maximal ideal corresponding to $ f$. Let $\T^{\st} = \T^Q(N)_\ffrm$ and let $\lambda:\T^{\st}\to \cO$ be the augmentation corresponding to $f$.

Then the Wiles defects  of $\T^{\st}$ and $S^Q(\Gamma_0^Q(N))$ with respect to the map $R^{\st}\onto \T^{\st}$ and the augmentation $\lambda$ are
\[
\delta_\lambda(R^\st)=\delta_\lambda(\T^{\St}) = \delta_\lambda(S^Q(\Gamma_0^Q(N))_{\ffrm}) = \sum_{q|Q}\frac{2n_q}{e}
\]
where $e$ is the ramification index of $\cO$ and for each $q|Q$, $n_q$ is the largest integer for which $\rho_f|_{G_{\Q_q}} \pmod{\varpi^{n_q}}$ is unramified and $\rho_f(\Frob_q)\equiv \pm {\rm Id} \pmod{\varpi^{n_q}}$.
\end{theorem}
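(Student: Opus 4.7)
The plan is to apply the Taylor--Wiles--Kisin patching machinery to reduce the global Wiles defect computation to a sum of purely local contributions. The first step is to patch the deformation ring $R^{\st}$ and the cohomological Hecke module $S^Q(\Gamma_0^Q(N))_\ffrm$, producing a patched ring $R_\infty \cong R^{\loc}[[x_1,\ldots,x_s]]$, where $R^{\loc} = \widehat{\bigotimes}_{v\in\Sigma,\cO} R_v^{\loc}$ is the completed tensor product of local deformation rings over the ramification set $\Sigma = \{p\} \cup \{q \mid Q\} \cup \Ram(\rhobar_f) \cup \{\text{Taylor--Wiles primes}\}$, together with a patched module $M_\infty$ that is maximal Cohen--Macaulay over $R_\infty$ and, after quotienting by an appropriate regular sequence of Taylor--Wiles parameters, recovers $S^Q(\Gamma_0^Q(N))_\ffrm$ as a $\T^{\st}$-module. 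The $R=\T$ theorem gives $\delta_\lambda(R^{\st})=\delta_\lambda(\T^{\st})$, while the patched multiplicity of $M_\infty$ yields the remaining equality $\delta_\lambda(\T^{\st})=\delta_\lambda(S^Q(\Gamma_0^Q(N))_\ffrm)$, so the three defects coincide and only a single numerical invariant needs to be computed.

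The heart of the proof, and its main obstacle, is the commutative algebra statement that for such a maximal Cohen--Macaulay patched module $M_\infty$ over $R_\infty$, the Wiles defect at the augmentation $\lambda$ decomposes additively as
\[
\delta_\lambda(M_\infty) \;=\; \sum_{v\in\Sigma} \delta_{\lambda_v}(R_v^{\loc}),
\]
where $\lambda_v$ is the local augmentation induced by $\rho_f|_{G_{\Q_v}}$. This is not a formal consequence of the definitions, since the Wiles defect is not transparently additive under completed tensor products. I would prove it by analyzing the congruence and cotangent modules using explicit Koszul-type resolutions afforded by the Cohen--Macaulay hypothesis, showing that the formally smooth Taylor--Wiles factor contributes nothing and that the defect of the completed tensor product splits across its factors.

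Granted this decomposition, one computes each local defect separately. The Taylor--Wiles primes and the primes in $\Ram(\rhobar_f)\setminus\{q\mid Q\}$ contribute zero because the corresponding local deformation rings are formally smooth; the contribution at $p$ vanishes because the finite flat local deformation ring is a complete intersection, hence has zero defect at every augmentation. Only the Steinberg primes $q\mid Q$ contribute. For each such $q$, the local deformation ring $R_q^{\st}$ admits an explicit presentation, and a direct calculation of its cotangent and congruence modules at the augmentation $\lambda_q$ yields $\delta_{\lambda_q}(R_q^{\st})=2n_q/e$, where $n_q$ is the largest exponent such that $\rho_f|_{G_{\Q_q}}$ is unramified modulo $\varpi^{n_q}$ with $\rho_f(\Frob_q)\equiv\pm\Id\pmod{\varpi^{n_q}}$. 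Summing over $q\mid Q$ yields the stated formula.
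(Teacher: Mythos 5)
Your outline for the ring-theoretic equalities $\delta_\lambda(R^{\st})=\delta_\lambda(\T^{\st})=\sum_{q|Q}2n_q/e$ is essentially the paper's route: patch to get $R^{\st}_\infty=R_{\loc}^{\st}[[x_1,\ldots,x_g]]$ finite free over $S_\infty$ with $R^{\st}_\infty\otimes_{S_\infty}\cO\cong R^{\st}\cong\T^{\st}$, invoke invariance of the defect under quotienting by a regular sequence in $\ker\lambda$ (Theorem \ref{thm:delta invariant}) and additivity over completed tensor factors (Proposition \ref{prop:delta prod}), observe that the minimal, flat and power-series factors contribute nothing, and compute $\delta_\lambda(R_q^{\st})=2n_q/e$ at the Steinberg places (Theorem \ref{thm:delta_v^St}). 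One caveat: since $R_q^{\st}$ has relative dimension $3$ over $\cO$, Wiles's cotangent and congruence modules are not defined at the local augmentation (finiteness of $\Phi_\lambda$ forces dimension one, Lemma \ref{minimal}); the local computation has to be carried out through the higher-dimensional invariants $\cDer^1_\cO(-,E/\cO)$ and $C_{1,\lambda}(-)$, or after cutting by a regular sequence via a complete intersection cover, and it is genuinely elaborate (Eagon--Northcott resolutions, annihilators and Fitting ideals). That is a matter of detail rather than of strategy.

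The genuine gap is your third equality, $\delta_\lambda(\T^{\st})=\delta_\lambda(S^Q(\Gamma_0^Q(N))_{\ffrm})$, which you derive from ``the patched multiplicity of $M_\infty$'' together with a claimed decomposition $\delta_\lambda(M_\infty)=\sum_v\delta_{\lambda_v}(R_v)$. Neither is available. The patched module $M_\infty$ is maximal Cohen--Macaulay but in general \emph{not} free over $R_\infty$: the rings $R_q^{\st}$ at trivial primes are non-Gorenstein and the multiplicity of $S^Q(\Gamma_0^Q(N))_{\ffrm}$ can be $2^k>1$ by \cite{Manning}, so patching alone does not identify the module defect with the ring defect (in \cite{BKM} this step required the fine structural results of \cite{Manning} on $M_\infty$). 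Moreover, the invariants $\cDer^1_\cO$ and $C_{1,\lambda}$, whose invariance under regular sequences and additivity under completed tensor products drive the local--global decomposition, are defined only for rings; the paper explicitly remarks that a module-level analogue of Venkatesh's formula is not known, so your asserted additive decomposition of $\delta_\lambda(M_\infty)$ has no proof to appeal to. The paper instead obtains the module statement indirectly (Theorem \ref{change-of-eta}, Proposition \ref{prop:moduledefect}): it bounds $\delta_\lambda(S^Q(\Gamma_0^Q(N))_{\ffrm})$ below by $\delta_\lambda(\T^{\st})$ using \cite[Theorem 3.12]{BKM}, and above by combining the vanishing of the defect at the auxiliary level $N^2Q^2$ (Diamond's multiplicity-one argument), the computation of the relative cotangent space of $R\onto R^{\st,Q}$, and the Ribet--Takahashi bound \cite[Theorem 2]{RiTa} on the change of congruence modules; since both bounds equal $\sum_{q|Q}2n_q/e$ by Theorem \ref{thm:mc}, equality follows. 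Without such a sandwich argument, or a substitute for the structural input from \cite{Manning}, your proposal does not establish the equality of the cohomological defect with the ring defect.
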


The improvement as far as the statement of the theorem is concerned, if one compares to  \cite[Theorem 10.1]{BKM}, is that the assumptions needed there on $Q$:
\begin{enumerate}
	\item $Q$ is a product of an even number of primes (i.e. $D$ is indefinite),and  $(N(\rhobar),Q)>1$;
	\item    $Q$ is a product of an  odd number of primes (i.e. $D$ is definite), and $N>1$;
	\item $N(\rhobar)$ is divisible by at least two primes,
\end{enumerate}
 which arose from our relying on delicate  results in \cite{RiTa}, are no longer needed because of  the innovations introduced in this paper.  We prove a much more general theorem below, see Theorem \ref{thm:mc}, that works with more general local conditions than being Steinberg at {\it trivial primes} (see \cite[\S 2]{BKM})  and with  the  field $\Q$ replaced by any totally real field $F$,   but focus on this special case for the purposes of the introduction to  more easily explain the novelty of our methods  in comparison  to \cite{BKM}.

If we look at the shape of the  formula:
\[
\delta_\lambda(\T^{\St}) = \delta_\lambda(S^Q(\Gamma_0^Q(N))_{\ffrm}) = \sum_{q|Q}\frac{2n_q}{e}
\] we see that the Wiles defect  $\delta_\lambda(\T^{\St}) $  that as defined is a {\it global} quantity arising from the augmentation $\lambda_f:\T^\st \to \cO$ is expressed as a  sum  over the primes dividing $ Q$ of  terms  $2n_q/e$. Furthermore  each of  the integers $n_q$   depends  only on  $\rho_f|_{G_{\Q_q}}$.  In \cite{BKM},  it is  only after  having proved the theorem  that one observes that the formula depends only on  $(\rho_f|_{G_{\Q_q}})_{q \in Q}$. In this paper we show that the Wiles defect $\delta_\lambda(R^\st) $ is {\it a priori} local,   and in fact  is a sum of  the defects of local deformation rings (equivalently, {\em local defects}) at primes in  $Q$ that we define below.   The proof of \cite[Theorem 10.1]{BKM} did not shed light on  the local-global  aspect of the statement of the  theorem. 

Further the proof in \cite{BKM} computed the Wiles defect
using a combination of patching and arguments related to level lowering results of \cite{RiTa}, which were used to first show that \[\delta_\lambda(S^Q(\Gamma_0^Q(N))_{\ffrm}) = \sum_{q|Q}\frac{2n_q}{e}.
\]  Then delicate  results from \cite{Manning} were used to prove  \cite[Theorem 3.10, Theorem 8.1, Corollary 8.3]{BKM} that  \[\delta_\lambda(\T^{\St}) = \delta_\lambda(S^Q(\Gamma_0^Q(N))_{\ffrm}), \] and hence that  \[\delta_\lambda(\T^{\St}) =   \sum_{q|Q}\frac{2n_q}{e}.\]  Here we reverse the logic of the proof in \cite{BKM}, and  first show  (see Theorem \ref{thm:mc}) that \[
\delta_\lambda(R^\st)=\delta_\lambda(\T^{\St})= \sum_{q|Q}\frac{2n_q}{e},\] and deduce from this  (see Theorem \ref{change-of-eta} (ii)  and Proposition \ref{prop:moduledefect}) that 
\[ \delta_\lambda(S^Q(\Gamma_0^Q(N))_{\ffrm}) =\sum_{q|Q}\frac{2n_q}{e}.
\] Thus we show how to use defects of Hecke rings to compute the defects of their ``cohomological’’ modules (arising from  the first cohomology of modular curves  and Shimura curves that they act on).  This also allows one to give a more robust approach   to the main result  proved by Ribet and Takahashi \cite[Theorem 1]{RiTa} that  computes changes of degrees of optimal parametrizations of  elliptic curves over $\Q$ by Shimura curves as one varies the Shimura curve. For all this, see Theorem \ref{change-of-eta}(ii),  Corollary \ref{degree} and Corollary \ref{component} below.

\subsection{Main ideas of proof of Theorem \ref{thm:intro}}

Let $C_\cO$  be the category of  tuples $(R, \lambda_R)$ with:

\begin{itemize}
	\item $R$ a complete, Noetherian  local $\cO$-algebras, with maximal ideal $\ffrm$ and  residue field $k=\cO/\sf m$;
	\item flat over $\cO$ and Cohen--Macaulay;
	\item together with an augmentation $\lambda_R: R \to \cO$ (that is by definition a  continuous surjective $\cO$-algebra homomorphism) that is formally smooth over the generic fiber.
\end{itemize}

The morphisms in the category $C_\cO$ are local homomorphisms of $\cO$-algebras compatible with the augmentation, namely
 local  $\cO$-algebra maps $f \colon R\to S$  such that $\lambda_Sf=\lambda_R$.  (As the augmentation considered will be clear from the context,  we   will often denote $\lambda_R$ by just $\lambda$, and also given a pair $(R,\lambda) \in C_\cO$ we  will sometimes write $R \in C_\cO$.)

We   take a  cue from a formula  discovered by Venkatesh \cite{venkatesh,venkatesh2}  (see Proposition \ref{prop:v} of the appendix) and define  in \S \ref{sec:Venkatesh}  the {\it  Wiles defect} $\delta_\lambda(R)$ for $ (R,\lambda)  \in C_\cO$. The defect  $\delta_\lambda(R)$  is expressed in terms of two invariants first introduced by Venkatesh:
\begin{description}
	\item[(i)] the  length of  the  $\cO$-module  $\cDer_\cO^1(R, E/\cO)$ which  can be directly defined using a continuous version of the Andr\'e--Quillen cohomology of rings (cf. \S \ref{sec:Der1}), (which will agree with the standard Andr\'e--Quillen cohomology module $\Der_\cO^1(R, E/\cO)$ in the case when $R$ has dimension $1$) and
	\item[(ii)] the length of the $\cO$-module  $C_{1,\lambda}(R)$ (cf. \S \ref{sec:c2}, in particular  Corollary \ref{well defined}).
\end{description}

The Wiles defect $\delta_\lambda(R)$  is then defined  (cf. Definition \ref{def:key})  to be \[\ds \delta_\lambda(R) = \frac{\log|\cDer^1_\cO(R,E/\cO)|-\log|C_{1,\lambda}(R)|}{\log|\cO/p|}=\frac{\ell_\cO(\cDer^1_\cO(R,E/\cO))-\ell_\cO(C_{1,\lambda}(R))}{\ell_\cO(\cO/p)}.\]  This definition of the defect we give  for   $R \in C_\cO$  agrees,   by Proposition~\ref{prop:v}  and Proposition \ref{prop:coincidence},  in the case when $R \in C_\cO$  is of dimension one  with the definition of the Wiles defect  given in \cite{BKM}  as \[\delta_\lambda(R) = \frac{\log|\Phi_\lambda(R)|-\log|\Psi_\lambda(R)|}{\log|\cO/p|}.\]

Our main technique for the proof of  Theorem \ref{thm:intro}  is the Taylor--Wiles--Kisin patching method. Specifically, under some mild global hypotheses, one can write $R^\st$ as a quotient $ R_{\loc}^{\St}[[x_1,\ldots,x_g]]/(y_1,\ldots,y_d)$ (see  Theorem \ref{thm:patching} and Theorem \ref{thm:mc}), where $R_{\loc}^{\St}$ is a completed tensor product of  local Galois deformation rings, and is thus determined by local Galois theoretic information. In the case when $R_{\loc}^{\St}$ is Cohen--Macaulay\footnote{Which happens in many cases in which the relevant deformation rings have been explicitly computed, including the case considered in \cite{BKM}, and is conjectured to hold far more generally.} we prove general results  (see Theorem \ref{thm:c_1 local} and  Theorem \ref{thm:Der^1 local}) that  imply that $\cDer^1_\cO(R,E/\cO)$ and $C_{1,\lambda}(R)$ are independent of the choice of ideal $(y_1,\ldots,y_d)$, and thus depend only on the ring $R_{\loc}^{\St}$ and the induced composite  map $R_{\loc}^{\St} \to R^{\St}  \xrightarrow{\lambda}\cO$, which shows that   \[
\delta_\lambda(R^{\St}) =  \delta_\lambda(R_{\loc}^{\st})=\sum_{q|Q} \delta_\lambda(R_q^\st)
\] where $\delta(R_q^\st)$  is the defect of the local deformation ring $R_q^\st \in C_\cO$.  Thus to determine $\delta_\lambda(R^\st)$,  we have to compute the   defects  $\delta_\lambda(R_q^\st)$ of the local deformation rings $R_q^\st$. These computations are quite elaborate  and are  done in Theorem \ref{thm:delta_v^St}     of  \S \ref{sec:computations} (Theorems \ref{thm:delta-PhiUn} and \ref{thm:delta-Un} do analogous computations for local deformation rings defined by conditions of being unipotent, and  unipotent together with a choice of Frobenius eigenvalue).

\subsection{Broader context}

We make some more informal remarks about the broader context of our work and  further questions to pursue in this context.

Our work is in the general context of understanding deformation rings $R$  when they are ``obstructed’’, and are thus not expected to be complete intersections. The Wiles defect is in a sense a measure of the obstructedness of $R$ at a given augmentation $\lambda: R \to \cO$. In the context of the present paper the obstructions are local in nature. The Wiles defect is a global quantity which in our case turns out to be a sum of local defects. This is proved by patching and showing that that the invariants  $\cDer_\cO^1(R, E/\cO)$ and  $C_{1,\lambda}(R)$ remain invariant under going modulo regular sequences. In other situations (as in \cite{TiUr}) the obstructedness  of deformation rings $R$ is because of  global reasons,  in that one is in a situation of positive defect $\ell_0>0$, and the natural ``automorphic cohomology’’ to consider lives in more than one degree. The work in \cite{gal-ven} gives a framework to understand this more deeply via considering derived deformation rings $\cal R$ such that $R=\pi_0(\cal R)$, and $\pi_*(\cal R)$ acts as a graded ring  on the ``automorphic cohomology’’. It seems interesting to explore these ideas in the context of the paper, and for instance  ``derive’’ the local deformation rings at trivial primes. 

We have not dealt with cases when the local deformation ring at $p$ is not a complete intersection in this paper, but our results will still be applicable provided that the local deformation rings are Cohen--Macaulay. For example \cite{Snowden} considers a fixed weight ordinary deformation ring when the residual representation is trivial at $p$ and shows that this ring is Cohen--Macaulay but not a complete intersection (or even Gorenstein). Our methods show that the global Wiles defect is again a sum of local defects in this case. However we have not been able to determine the local defect at $p$ in this case (due largely to the fact that \cite{Snowden} only computes the special fiber of the ring, while computing the local defect would require the integral version of the ring).

In the tame cases we have considered here and in \cite{BKM} the local defect  at $q$ is related to tame regulators  (in the sense of Mazur-Tate) of the $q$-adic   Mumford-Raynaud-Tate periods of  the corresponding abelian variety $A_f$ which has multiplicative reduction at $q$. In the  wild case one imagines that the local defect will be related  to $p$-adic regulators.

Our work should also help in formulating and proving Bloch--Kato conjectures for newforms $f \in S_2(\Gamma_0(N))$ (say $N$ squarefree) and  the $p$-part of  special value of the $L(1,\Ad)$ for the adjoint $L$-function of $f$ for suitable primes $p$.  The algebraic part of the $L$-value is traditionally  related to congruence modules  of $f$ by the work of Hida \cite{Hida}. The Selmer group  for the adjoint motive of $f$ can be related to the cotangent space at the augmentation $\lambda_f: R \to \cO$ where the local deformation problem at primes dividing  $N$ is the unipotent condition. The Wiles defect here by Theorem \ref{thm:mc} is $\sum_{q|N} n_q$, and  is the discrepancy between the length of  the congruence module for $f$ and the Selmer  group for the adjoint motive of $f$.  It will be interesting to see this defect emerge from automorphic considerations. We believe that the Selmer group we  are alluding to here  is the natural (primitive)  Selmer group to consider  for the adjoint motive of $f$, reflecting nature of $\pi_f$ locally at primes dividing $N$.   ( See  \cite[Theorem 5.20]{TiUr}   that relates the ratio of  different  integral normalizations of   periods (cohomological and motivic) of the adjoint motive of a Bianchi form    to the Wiles defect, and to Bloch-Kato conjectures.) Note that if we relax the Selmer conditions at primes dividing $N$ to be unrestricted of fixed determinant,  and consider the corresponding  imprimitive Selmer group, then the Wiles defect becomes  0 and one is in a setting where Wiles-type methods prove the Bloch-Kato conjecture for this imprimitive Selmer group.

 If we consider the Jacquet-Langlands correspondent  $g$ of $f$ on a quaternion algebra  $D_Q$ ramified  at the  set of primes $Q$  dividing $N$ which we assume   to be of even cardinality,  normalized (as in \cite{Prasanna})  using the schematic structure  over $\Z_p$ of the corresponding Shimura curve $X^Q$ over $\Q$,  with $p$ a prime such that $(p,N)=1$,  then one sees easily that the ratio of Petersson inner products \[\frac{(f,f)}{(g,g)}=\frac{\deg(\phi)}{\deg(\phi’)}\] where $\phi,\phi’$ are optimal parametrizations of  abelian varieties in the isogeny class ${\cal A}_f$ over $\Q$ associated to $A_f$. We could ask for a different ``natural’’ normalization $g’$ such that  \[\frac{(f,f)}{(g’,g’)}=\frac{\deg(\phi)}{\deg(\phi’)} \Pi_{q \in Q} \omega^{-2n_q}\] would be the change of the corresponding Selmer groups  (when we check the local conditions at primes in $Q$ from Steinberg to unrestricted with fixed determinant) and thus would incorporate the Wiles defect $\sum_{q \in Q} \frac{2n_q}{e}$.
 
 Our method to compute $p$-parts of change of degrees of parametrizations of elliptic curves over $\Q$ by Shimura curves gives results that are stronger than the ones which can be obtained  using the arithmetic-geometric methods of \cite{RiTa}. To have these results in the fullest possible  generality   is important for  Diophantine applications  (see \cite{Pasten}).

\subsection{Structure of this paper}

We begin by developing the commutative algebra tools that are needed for  our main theorem Theorem \ref{thm:mc}.
In \S \ref{sec:Venkatesh}  we state  a formula for Wiles defects of rings of dimension one that is proved in Appendix  \ref{app}. In  the key \S  \ref{sec:delta invariant} we define and prove properties of  the invariants 
$\cDer_\cO^1(R, E/\cO)$ and   $C_{1,\lambda}(R)$ for rings $R \in C_\cO$. In \S \ref{sec_deformation_theory} we summarize information about local and global deformation rings. In \S \ref{sec:computations} we compute the invariants defined in \S \ref{sec:delta invariant} for the local deformation rings we consider. This is a key input in  computing the Wiles defect of global deformation rings 
in Theorem \ref{thm:mc}. In \S \ref{sec:deformation} we use patching and the work in \S \ref{sec:delta invariant} to show that the Wiles defect of global deformation rings and Hecke rings  we consider is the sum of local defects. As the local defects have been computed in \S \ref{sec:computations} this allows us to complete the proof of our main Theorem \ref{thm:mc}.  In \S \ref{sec:ribet} we apply Theorem \ref{thm:mc} to compute the Wiles defect for modules over Hecke algebras that arise from their action on the cohomology of modular and Shimura curves. This also leads to  a new approach to, and strengthening of, the results in \cite{RiTa} about change of degrees of optimal parametrizations  of elliptic curves by Shimura curves as one changes the Shimura curve. 

In Appendix \ref{app} (written by N.~Fakhruddin and C.~Khare) a proof of a formula stated by Venkatesh is given, which has been proven earlier  in a special case in  \cite[Proposition 4]{TiUr}).

\subsection{Notation}

By $F$ we denote our base field, a totally real number field, by $F_v$ its completion at any place $v$ of $F$, and we choose algebraic closures $\overline F$ of $F$ and $\overline F_v$ if $F_v$ for all places $v$. These choices define the absolute Galois groups $G_F=\Gal(\overline F/F)$ and $G_{F_v}=\Gal(\overline F_v/F_v)$. We write $I_v\subset G_{F_v}$ for the inertia subgroup. We also fix embeddings $\overline F\to\overline F_v$, extending the canonical embeddings $F\to F_v$. This determines for each place $v$ of $F$ an embedding $G_{F_v}\to G_F$. By $\Frob_v$ we denote a Frobenius automorphism in $G_{F_v}$, that is unique up to $I_v$, and we also write $\Frob_v$ for its image in $G_F$. All representations of $G_F$ or of $G_{F_v}$ will be assumed to be continuous. If $v$ is a finite place of $F$, then we write $q_v$ for the cardinality of its residue field.

Throughout the paper, we fix a prime $p>2$, and we denote by $\overline \Q_p$ an algebraic closure of $\Q_p$. We will call a finite extension $E$ of $\Q_p$ inside $\overline\Q_p$ a coefficient field. For a coefficient field $E$, we let $\cO$ be its ring of integers, $k$ its residue field and $\varpi\in\cO$ a uniformizer. We write $\Sigma_p$ for the set of places of $F$ above~$p$.

The category of complete Noetherian local $\cO$-algebras with residue field $k$ is denoted by $\CNL_\cO$, and for any object $R$ in $\CNL_\cO$, we write $\ffrm_R\subset R$ for its maximal ideal. Each object $R\in\CNL_\cO$ will be endowed with its profinite ($\ffrm_R$-adic)~topology. By a complete Noetherian local $\cO$-algebra, we implicitly mean that its residue field is equal to $k$; we feel justified because our rings typically have an augmentation to~$\cO$.

We denote by $\varepsilon_p$ the $p$-adic cyclotomic character $\varepsilon_p:G_F \to \Z_p^\times$; if we compose $\varepsilon_p$ on the right with any map $G_{F_v}\to G_F$ or on the left with $\Z_p^\times\to R^\times$, induced from any morphism $\Z_p\to R$ in $\CNL_{\Z_p}$, then we also write $\varepsilon_p$, by slight abuse of notation.

For an $\cO$-algebra $R$, an augmentation $\lambda$ of $R$ will always mean a surjective $\cO$-algebra homomorphism $\lambda:R\to \cO'$, where $\cO'$ is the ring of integers in a finite extension of $E$ (we will almost always take $\cO=\cO'$). For an $\cO$-module $M$ that is a finite abelian group, we denote by $\ell_\cO(M)$ the length of $M$ as an $\cO$-module.
 For $\alpha \in \cO$, we denote by $\ord_\cO(\alpha)=\ell_\cO(\cO/(\alpha))$.

For a Galois representation $\rhobar:G_{F}\to \GL_2(\overline{\F}_p)$ which is finite flat at $p$, we will let $N(\rhobar)$ represent its Artin conductor.

\subsection{Acknowledgements} We would like to thank  Najmuddin Fakhruddin, Tony Feng, Michael Harris, Srikanth Iyengar,   Akshay Venkatesh for helpful discussions related to this paper. G.B. acknowledges support by Deutsche Forschungsgemeinschaft  (DFG) through CRC-TR 326 `Geometry and Arithmetic of Uniformized Structures', project number 444845124.

\section{Wiles defect for  rings of dimension one}\label{sec:Venkatesh}

In this section we state results from the Appendix \ref{app} in  the form in which they are used in the paper, and also with a view  to generalizing these results to higher dimensional rings in \S \ref{sec:delta invariant}.

For any ring $R$, any ideal $I\subseteq R$ and any $R$-module $M$, we will always use $M[I]\subseteq M$ for the submodule of $I$-torsion elements of $M$. In particular, $R[I]=\Ann_R(I)\subseteq R$ is the annihilator of the ideal $I$.

If $M$ is a finitely generated $R$-module, with generating set $m_1,\ldots,m_n$ inducing a surjection $R^n\onto M$, then we will let $\Fitt_R(M)\subseteq R$ (called the $0^{\mathrm{th}}$ \emph{fitting ideal}) denote the ideal generated by all elements of the form $\det(v_1,\ldots,v_n)\in R$ for $v_1,\ldots,v_n\in \ker(R^n\onto M)$. It is well known that this is independent of the choice of generating set $m_1,\ldots,m_n$, and moreover that $\Fitt_R(M)\subseteq \Ann_R(M)$. When the ring $R$ is clear from context we will sometimes write $\Fitt(M)$ in place of $\Fitt_R(M)$.

Let $R$ be a complete, local Noetherian $\cO$-algebra with $\dim(R) = 1$, and assume that $R$ is finite over $\cO$. Let $\lambda:R\onto\cO$ be any \emph{augmentation} (i.e.\ surjective $\cO$-algebra homomorphism). Let $R^{\tf}$ be the maximal $\varpi$-torsion free quotient of $R$, which is automatically finite free over $\cO$.\footnote{In the rest of this paper we will always work in the case where $R=R^{\tf}$, but we still state the general version in this section for the sake of completeness.} Also use $\lambda$ to denote the augmentation $R^{\tf}
\onto\cO$ induced by $\lambda$. Define
\[\Phi_\lambda(R) = (\ker \lambda)/(\ker\lambda)^2 = \Omega_{R/\cO}\otimes_\lambda\cO\]
and
\[\Psi_\lambda(R) = \cO/\eta_\lambda(R) = \cO/(\lambda(R^{\tf}[\ker \lambda])),\]
which we will call the \emph{cotangent space} and \emph{congruence module} of $R$ (with respect to $\lambda$). From now on we will assume that $\Phi_\lambda(R)$ is finite, which geometrically means that $\lambda$ is smooth on the generic fiber of~$R$.

In \cite{BKM} we define the \emph{Wiles defect} of $R$ with respect to $\lambda$ to be
\[\delta_\lambda(R) = \frac{\log|\Phi_\lambda(R)|-\log|\Psi_\lambda(R)|}{\log|\cO/p|}= \frac{\ell_\cO(\Phi_\lambda(R))-\ell_\cO(\Psi_\lambda(R))}{\ell_\cO(\cO/p)},\]
which is known to be a non-negative rational number. Moreover (cf. \cite{Wiles,Lenstra}) $\delta_\lambda(R) = 0$ if and only if $R=R^{\tf}$ and $R$ is a complete intersection. To deduce this from the literature we note that  $\delta_\lambda(R)=0$ implies that the map $R \to R^{\tf}$ is an isomorphism of complete intersections.  (The reason for the normalization factor of $\log|\cO/p|$ is to ensure that $\delta_\lambda(R)$ is invariant under expanding the coefficient ring $\cO$.)

Venkatesh, in an unpublished note \cite{venkatesh}, observed that $\delta_\lambda(R)$ can be expressed in terms of two other invariants of $R$ (see Appendix \ref{app} of this paper, for a detailed proof of  a more general version of Venkatesh's observation).

First, let $R$ act on $E/\cO$ through its quotient $R\xrightarrow{\lambda} \cO$. Venkatesh's first invariant is simply the first Andr\'e--Quillen cohomology group $\Der^1_\cO(R,E/\cO)$.

To define Venkatesh's second invariant, we will fix an $\cO$-algebra $\Rt$ and a surjection $\varphi:\Rt\onto R$ with the properties that
\begin{itemize}
	\item $\Rt$ is a complete intersection of dimension $1$, finite free over $\cO$.
	\item $\Phi_{\lambda\circ\varphi}(\Rt)$ is finite.
\end{itemize}
(such a ring always exists, as explained in Appendix  \ref{app}). When there is no chance of confusion we will also use $\lambda$ to denote the induced map $\lambda\circ\varphi:\Rt\onto R\onto \cO$. 

Now write $I=\ker\varphi$ so that $\lambda(I) = 0$. As $\Rt$-modules, we have that $\Fitt(I)\subseteq \Rt[I]$, and hence $\lambda(\Fitt(I))\subseteq \lambda(\Rt[I])$ as ideals of $\cO$ (and in fact, both of these ideals are nonzero as explained in Appendix \ref{app}). We then define Venkatesh's second invariant to be the cyclic $\cO$-module
\[
C_{1,\lambda}(R) = \lambda\left(\Rt[I]\right)/\lambda\left(\Fitt(I)\right).
\]
\emph{A priori}, this looks like it will depend on the choice of complete intersection $\Rt$, but the work of Appendix  \ref{app} shows that it in fact depends only on $R$ and $\lambda$. The main result Proposition \ref{prop:v} of Appendix  \ref{app} is the following formula for the Wiles defect $\delta_\lambda(R)$. We  recall as noted earlier  that \cite[Proposition 4]{TiUr} proves a particular case  (when $C_{1,\lambda}(R)$ is trivial)  of this formula.

\begin{theorem}[see \ref{prop:v}]\label{thm:Venkatesh}
If $R$ and $\lambda:R\onto \cO$ are as described above, and $\Phi_\lambda(R)$ is finite, then
\[
\frac{|\Der^1_\cO(R,E/\cO)|}{|C_{1,\lambda}(R)|} = \frac{|\Phi_\lambda(R)|}{|\Psi_\lambda(R)|}.
\]
In particular, $\ds \delta_\lambda(R) = \frac{\log|\Der^1_\cO(R,E/\cO)|-\log|C_{1,\lambda}(R)|}{\log|\cO/p|}$.
\end{theorem}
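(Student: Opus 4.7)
The plan is to reduce Theorem \ref{thm:Venkatesh} to the classical Wiles--Lenstra numerical criterion by tracking how each of the four quantities $|\Phi_\lambda|$, $|\Psi_\lambda|$, $|\Der^1_\cO(-,E/\cO)|$, $|C_{1,\lambda}(-)|$ changes on passing from the complete intersection cover $\Rt$ to $R$. Fix $\varphi\colon\Rt\onto R$ as in the statement, with $\Rt$ a $1$-dimensional complete intersection, finite free over $\cO$, and smooth at $\lambda\circ\varphi$ on the generic fiber, and set $I=\ker\varphi$. For $\Rt$ itself the classical numerical criterion gives $|\Phi_\lambda(\Rt)|=|\Psi_\lambda(\Rt)|$, while $C_{1,\lambda}(\Rt)=0$ trivially (take $\Rt$ as its own CI cover, so the defining ideal is zero). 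The vanishing $\Der^1_\cO(\Rt,E/\cO)=0$ will follow from a presentation $\Rt=\cO[[x_1,\dots,x_n]]/(f_1,\dots,f_n)$ by a regular sequence of the correct length, which makes $L_{\Rt/\cO}$ a two-term complex whose dual into the divisible module $E/\cO$ is surjective in top degree by generic \'etaleness at~$\lambda$.

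Next I compute the four length differences in terms of $I$. The ratio $|\Phi_\lambda(\Rt)|/|\Phi_\lambda(R)|$ is handled by the conormal right-exact sequence for $\cO\to\Rt\to R$ tensored with $\lambda$. The Jacobi--Zariski exact sequence, combined with $\Der^1_\cO(\Rt,E/\cO)=0$ and with the identification $\Der^1_\cO(R/\Rt,E/\cO)=\Hom_R(I/I^2,E/\cO)$ (coming from $L_{R/\Rt}$ having $H_0=0$ and $H_1=I/I^2$ for the surjection $\Rt\onto R$), presents $\Der^1_\cO(R,E/\cO)$ as the cokernel of the natural map $\Der^0_\cO(\Rt,E/\cO)\to\Hom_R(I/I^2,E/\cO)$, whose length is governed by the Jacobian of the defining relations of $\Rt$ evaluated at~$\lambda$.

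The congruence side uses Gorenstein duality on $\Rt$: being a $1$-dimensional complete intersection, finite free over $\cO$, it carries a canonical $\cO$-linear trace inducing an identification $\Rt[J]\cong\Hom_\cO(\Rt/J,\cO)$ for every ideal $J\subseteq\Rt$. Under this duality $\lambda(\Rt[\ker(\lambda\circ\varphi)])$ and $\lambda(R^{\tf}[\ker\lambda])$ become explicit discriminant-type lengths, and combining with the short exact sequence $0\to\Rt[I]\cap\ker(\lambda\circ\varphi)\to\Rt[I]\to\lambda(\Rt[I])\to 0$ expresses $|\Psi_\lambda(\Rt)|/|\Psi_\lambda(R)|$ in terms of $\lambda(\Rt[I])$.

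The decisive step, which I expect to be the main obstacle, is the matching of $\lambda(\Fitt(I))$ on the $C_{1,\lambda}$ side with the Jacobian-cokernel contribution on the $\Der^1$ side. By definition $\Fitt(I)$ is generated by top minors of any presentation matrix for $I$ over $\Rt$, and by the Lichtenbaum--Schlessinger description of $L_{R/\Rt}$ the Jacobi--Zariski boundary is, up to standard identifications, precisely cap-product with this presentation matrix. Once that comparison is in hand, assembling the four length changes together with $|\Phi_\lambda(\Rt)|=|\Psi_\lambda(\Rt)|$ and the vanishing statements for $\Rt$ yields the desired equality $|\Phi_\lambda(R)|/|\Psi_\lambda(R)|=|\Der^1_\cO(R,E/\cO)|/|C_{1,\lambda}(R)|$. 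Independence of $C_{1,\lambda}(R)$ from the choice of $\Rt$ then follows a posteriori, since the other three quantities depend only on $(R,\lambda)$.
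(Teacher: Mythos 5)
Your skeleton coincides with the one the appendix actually uses --- the four-term Jacobi--Zariski sequence for $\cO\to\Rt\to R$ together with $\Der^1_\cO(\Rt,E/\cO)=0$, the Wiles--Lenstra/complete-intersection congruence theory for the cover, and a comparison of the two correction terms $\lambda(\Fitt(I))$ and $\lambda(\Rt[I])$ --- but the two steps you leave open are precisely the content of the proof, and the mechanism you propose for the ``decisive step'' would not work as stated. That step is not a statement about the Jacobi--Zariski boundary map at all: the boundary $\Der^0_\cO(\Rt,E/\cO)\to\Hom_R(I/I^2,E/\cO)$ is dual to the conormal map $I/I^2\to\Omega_{\Rt/\cO}\otimes_{\Rt}R$, i.e.\ it is computed by \emph{differentiating} a chosen set of generators of $I$, whereas $\Fitt_{\Rt}(I)$ is generated by maximal minors of a \emph{presentation (syzygy) matrix} of $I$ as an $\Rt$-module. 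These are different matrices, and no Lichtenbaum--Schlessinger/cap-product identification equates them. What is actually needed is the length identity $\ell\bigl(\Hom_R(I/I^2,E/\cO)\bigr)=\ell\bigl(\cO/\lambda(\Fitt_{\Rt}(I))\bigr)$, and the correct (and much simpler) route, as in the proof of Proposition \ref{prop:v}, is: $(I/I^2)\otimes_{R,\lambda}\cO\cong I\otimes_{\Rt,\lambda}\cO$ because $\lambda(I)=0$; Fitting ideals commute with base change, so $\lambda(\Fitt_{\Rt}(I))=\Fitt_\cO(I\otimes_{\Rt}\cO)$; the module $I\otimes_{\Rt}\cO$ has finite length (this uses the finiteness of $\Phi_\lambda$, i.e.\ generic smoothness at $\lambda$, and must be checked); and $\ell(M)=\ell(\cO/\Fitt_\cO(M))$ for finite-length $\cO$-modules by elementary divisors. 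In the final bookkeeping one never identifies the image of the boundary map with anything; one only sums lengths in the exact sequence.

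The congruence side has the same kind of gap. The statement you need is the multiplicativity of congruence ideals, $\lambda\bigl(\Rt[\ker(\lambda\circ\varphi)]\bigr)=\lambda\bigl(\Rt[I]\bigr)\cdot\lambda\bigl(R^{\tf}[\ker\lambda]\bigr)$, equivalently $\ell(\Psi_\lambda(\Rt))=\ell\bigl(\cO/\lambda(\Rt[I])\bigr)+\ell(\Psi_\lambda(R))$; the short exact sequence $0\to\Rt[I]\cap\ker(\lambda\circ\varphi)\to\Rt[I]\to\lambda(\Rt[I])\to0$ is just the image factorization of $\lambda$ restricted to $\Rt[I]$ and carries no information about $\Psi$. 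This multiplicativity is a genuine lemma (Lemma \ref{lem:v}, resting on \cite[Lemma A.10]{fkr}): it needs that $R^{\tf}$ is Cohen--Macaulay, a depth-one argument showing $\Ann_{\Rt}(\ker(\Rt\to R^{\tf}))=\Ann_{\Rt}(I)$, and the duality argument over the Cohen--Macaulay quotient --- invoking ``Gorenstein duality on $\Rt$'' by itself does not produce it. Once these two inputs are supplied, your assembly (Wiles--Lenstra for $\Rt$, vanishing of $\Der^1_\cO(\Rt,E/\cO)$ and of $C_{1,\lambda}(\Rt)$, and the length count in the four-term sequence) does yield the theorem, and at that point it is essentially the appendix's own argument reorganized rather than a different proof.
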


\begin{remark}
In practice, one is often interested in the Wiles defect $\delta_\lambda(M)$ (as defined in \cite[Section 3]{BKM}) of a particular module $M$ over $R$, as well as, or instead of $\delta_\lambda(R)$. However in many cases relevant to us, the results of \cite{BKM} imply that $\delta_\lambda(R) = \delta_\lambda(M)$, so we will focus mainly on $\delta_\lambda(R)$ in this paper, except in \S \ref{sec:ribet} in which we apply Theorem \ref{thm:mc} which determines defects of Hecke rings to detect the defect of modules that they act on.

We do suspect that there may exist some generalization of Theorem \ref{thm:Venkatesh} which would directly express $\delta_\lambda(M)$ in terms of similar invariants. Such a generalization would allow us to directly study $\delta_\lambda(M)$ in cases when we can not prove it is equal to $\delta_\lambda(R)$, and could possibly work in cases when the results of this paper do not apply. The results of \cite[Theorem 1.2]{BIK} support such a suspicion. 
\end{remark}

\section{Wiles defect for higher dimensional Cohen--Macaulay rings}\label{sec:delta invariant}

We begin this section by remarking that the  definition of the Wiles defect $\delta_\lambda(R)$ in \cite{BKM}, which depends on finiteness of $\Phi_\lambda(R)$,  makes sense  for a complete Noetherian, Cohen--Macaulay local $\cO$-algebra  $R$   only when $R$ is of dimension one.

\begin{lemma}\label{minimal}
 Let $R$ be  a complete Noetherian local $\cO$-algebra  together with an augmentation $\lambda: R \to \cO$,  such that  $\Phi_\lambda(R)$ is a finite abelian group then $\ker(\lambda)$ is a minimal prime ideal. If we further assume that $R$ is Cohen--Macaulay then $R$ is of dimension one.
\end{lemma}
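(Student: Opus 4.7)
Write $\mathfrak p=\ker\lambda$, so that $R/\mathfrak p\cong \cO$ and $\Phi_\lambda(R)=\mathfrak p/\mathfrak p^2$ is naturally an $R/\mathfrak p\cong\cO$-module. My plan for the first assertion is to localize at $\mathfrak p$ and show $R_{\mathfrak p}$ is a field, so $\mathfrak p$ is a minimal prime.

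First I would observe that $\lambda(p)=p\ne 0$ in $\cO$, so $p\notin\mathfrak p$ and hence $p$ is a unit in $R_{\mathfrak p}$. Since $R/\mathfrak p\cong\cO$ is a discrete valuation ring, its field of fractions is $E$, and one computes that the cotangent space at $\mathfrak p$ equals
\[
\mathfrak p R_{\mathfrak p}/(\mathfrak p R_{\mathfrak p})^2
\;\cong\;(\mathfrak p/\mathfrak p^2)\otimes_{R/\mathfrak p}\mathrm{Frac}(R/\mathfrak p)
\;\cong\;\Phi_\lambda(R)\otimes_\cO E.
\]
By hypothesis $\Phi_\lambda(R)$ is a finite abelian group, hence annihilated by some power of $\varpi$ as an $\cO$-module, so the displayed tensor product vanishes. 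Since $R$ is Noetherian, $\mathfrak p R_{\mathfrak p}$ is finitely generated over the local ring $R_{\mathfrak p}$, and Nakayama's lemma yields $\mathfrak p R_{\mathfrak p}=0$. Thus $R_{\mathfrak p}$ is a field, which means $\dim R_{\mathfrak p}=0$, i.e.\ $\mathfrak p$ is a minimal prime of $R$.

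For the second assertion, assume in addition that $R$ is Cohen--Macaulay. Then $R$ is equidimensional: by the unmixedness theorem, every associated (in particular every minimal) prime of a local Cohen--Macaulay ring has coheight equal to $\dim R$. Applied to the minimal prime $\mathfrak p$, this gives
\[
\dim R \;=\; \dim R/\mathfrak p \;=\; \dim \cO \;=\; 1.
\]

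There is no real obstacle here; the only point that needs a little care is the identification of the cotangent space of $R_{\mathfrak p}$ with $\Phi_\lambda(R)\otimes_\cO E$, and the appeal to equidimensionality of local Cohen--Macaulay rings in the last step.
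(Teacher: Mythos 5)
Your proof is correct and follows essentially the same route as the paper: localize at $\mathfrak p=\ker\lambda$, show the cotangent space of $R_{\mathfrak p}$ vanishes (the paper sees this via finiteness of $\Phi_\lambda(R)$ against the infinite residue field $E$, you via the identification with $\Phi_\lambda(R)\otimes_\cO E$ and $\varpi$-power torsion), apply Nakayama to conclude $R_{\mathfrak p}$ is a field, and invoke equidimensionality of Cohen--Macaulay local rings for the dimension-one statement. The remark that $p\notin\mathfrak p$ is harmless but not needed.
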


\begin{proof}
Let $\ker(\lambda)=\frak p$ and we observe that the localization $R_{\frak p}$ is a local ring with maximal ideal $m=\frak pR_{\frak p}$ and infinite residue field $E$, and by our assumption that $\ker(\lambda)/\ker(\lambda)^2$ is finite we deduce that $m=m^2$ and thus $m=0$. This implies that $R_\frak p$ is a field, and thus $\frak p$ is a minimal prime ideal of $R$. As Cohen--Macaulay rings are equidimensional,  we deduce the last statement of the lemma.
\end{proof}

In light of this  we  give a  definition   (cf. Definition \ref{def:key}) of  the {\it  Wiles defect} $\delta_\lambda(R)$ for $R \in C_\cO$  motivated by the Venkatesh formula of the defect $\delta_\lambda(R)$ for $R \in C_\cO$ when $R$ is one-dimensional. This is expressed in terms of:
\begin{description}
	\item[(i)] the invariant $\cDer_\cO^1(R,E/\cO)$ which can be directly defined using a continuous version of the Andr\'e--Quillen cohomology of rings (cf. \S \ref{sec:Der1}, in particular  Theorem \ref{thm:Der^1 local}); and
	\item[(ii)] the invariant $C_{1,\lambda}(R)$ (cf. \S \ref{sec:c2}, in particular  Corollary \ref{well defined})  that  is defined   by reduction, via  quotienting by regular sequences,  to the case of dimension 1 treated in  the Appendix \ref{app}, and where it was originally defined by Venkatesh. 
\end{description}

The Wiles defect $\delta_\lambda(R)$ is then defined  (cf. Definition \ref{def:key}) via the formula  \[\ds \delta_\lambda(R) = \frac{\log|\cDer^1_\cO(R,E/\cO)|-\log|C_{1,\lambda}(R)|}{\log|\cO/p|}.\] 

In the case when $R$ is of dimension one this definition of the defect for   $R \in C_\cO$ agrees by Theorem~\ref{thm:Venkatesh}  and Proposition \ref{prop:coincidence}, with the definition of the Wiles defect  defined in \cite{BKM}  as \[\delta_\lambda(R) = \frac{\log|\Phi_\lambda(R)|-\log|\Psi_\lambda(R)|}{\log|\cO/p|}.\] 
(Note that when $R$ is of dimension one, the finiteness of   $|\Phi_\lambda(R)|$ is equivalent to saying that  $\lambda: R \to \cO$ is formally smooth over the generic fiber.)  

We  show  below the independence  of the invariants we define under forming quotients by regular sequences: this  is used to even define the invariant $C_{1,\lambda}(R)$. We also provide formulas for the invariants in terms of certain complete intersection rings that surject onto $ R \in C_\cO$, similar to the treatment in the appendix, but in higher dimensions.

\subsection{Complete intersection (CI)  covers}\label{sec:setup}

Consider $R \in C_\cO$, and a power series ring $S = \cO[[y_1,\ldots,y_d]]$. We will say that an inclusion $\theta:S \into R$ satisfies \ref{prop P} the following conditions hold:

\setproptag{(P)}
\begin{property}\label{prop P}\ 
\begin{itemize}
	\item $\theta:S \into R$ is a continuous $\cO$-algebra homomorphism.
	\item $\theta$ makes $R$ into a finite free $S$-module (so that $(\theta(y_1),\ldots,\theta(y_d),\varpi)$ is a regular sequence for $R$).
	\item $(\theta(y_1),\ldots,\theta(y_d))\subseteq \ker \lambda$.
	\item If $R_\theta = R/(\theta(y_1),\ldots,\theta(y_d)) = R \otimes_{S}\cO$ and $\lambda_\theta:R_\theta\onto \cO$ is the map induced by $\lambda$, then $\Phi_{\lambda_\theta}(R_\theta)$ is finite.
\end{itemize}
\end{property}
\begin{proposition}\label{prop ThetaExists}
A map $\theta$ satisfying property \ref{prop P} exists.
\end{proposition}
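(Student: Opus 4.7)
The plan is to prove the proposition by induction on $d = \dim R - 1 \geq 0$. The base case $d = 0$ is straightforward: take $S = \cO$ with $\theta$ the structural map of $R$. Since $R$ is one-dimensional, Cohen--Macaulay, and $\cO$-flat, the quotient $R/\varpi R$ is Artinian, so $R$ is $\cO$-finite by complete Nakayama and then $\cO$-free by flatness. Finiteness of $\Phi_\lambda(R)$ follows from the fact that formal smoothness of $\lambda$ on the generic fiber forces the Artinian local ring $R[1/\varpi]_{\mathfrak{p}[1/\varpi]}$ to be a field, so $\mathfrak{p}/\mathfrak{p}^2$ is $\varpi$-power torsion.

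For the inductive step, assume $\dim R = d+1 \geq 2$. I will construct a single element $\theta_1 \in \mathfrak{p} := \ker \lambda$ such that $R' := R/(\theta_1)$ lies in $C_\cO$ and has dimension $d$, and then apply the inductive hypothesis to $R'$ to produce $\theta_2, \ldots, \theta_d$. I need $\theta_1$ to satisfy two properties: (i) the image of $\theta_1$ in $\mathfrak{p}/\mathfrak{p}^2$ is primitive in the free part, meaning that the image in $((\mathfrak{p}/\mathfrak{p}^2)/\text{torsion})\otimes_\cO k \cong k^d$ under the composite $\pi: \mathfrak{p} \onto \mathfrak{p}/\mathfrak{p}^2 \onto k^d$ is nonzero; and (ii) $\theta_1$ lies outside every prime of $R$ minimal over $\varpi$. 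Generic-fiber formal smoothness of $\lambda$ ensures that $(\mathfrak{p}/\mathfrak{p}^2)/\text{torsion}$ has $\cO$-rank exactly $d$ (since $R[1/\varpi]_{\mathfrak{p}[1/\varpi]}$ is regular of dimension $d$), so $\pi$ is surjective. Condition (i) then forces $\theta_1$ to be a regular parameter in the $d$-dimensional regular local ring $R[1/\varpi]_{\mathfrak{p}[1/\varpi]}$, whence $R'[1/\varpi]_{\mathfrak{p}'[1/\varpi]}$ remains regular and $\lambda'$ is formally smooth on the generic fiber. Condition (ii) ensures $\varpi$ is a non-zero-divisor on $R'$, giving $\cO$-flatness of $R'$, and the Krull intersection theorem then forces $\theta_1$ to be a non-zero-divisor on $R$ as well. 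Hence $R' \in C_\cO$ with $\dim R' = d$.

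To produce $\theta_1$, let $\mathfrak{q}_1, \ldots, \mathfrak{q}_r$ be the finitely many primes of $R$ minimal over $\varpi$ and set $N := \ker \pi \subseteq \mathfrak{p}$. The subset $N$ is in fact an ideal of $R$, since $R$ acts on $\mathfrak{p}/\mathfrak{p}^2$ through its quotient $R/\mathfrak{p} = \cO$. The generalized prime avoidance lemma (permitting a single non-prime ideal in a covering by ideals) implies that if $\mathfrak{p}$ were contained in $N \cup \bigcup_j \mathfrak{q}_j$, one would have $\mathfrak{p} \subseteq N$ or $\mathfrak{p} \subseteq \mathfrak{q}_j$ for some $j$. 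The former fails because $\pi$ is surjective, and the latter fails because $\varpi \in \mathfrak{q}_j$ while $\varpi \notin \mathfrak{p}$ (as $\lambda(\varpi) = \varpi \neq 0$). Thus a $\theta_1 \in \mathfrak{p}$ outside $N \cup \bigcup_j \mathfrak{q}_j$ exists and satisfies both (i) and (ii).

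By the inductive hypothesis applied to $R'$, there is an inclusion $\theta': \cO[[y_2, \ldots, y_d]] \into R'$ satisfying Property (P) for $R'$. Lift each $\theta'(y_i)$ to $\theta_i \in \mathfrak{p}$ for $i \geq 2$ (possible since $\ker \lambda' = \mathfrak{p}/(\theta_1)$), and define $\theta: S := \cO[[y_1, \ldots, y_d]] \to R$ by $y_i \mapsto \theta_i$. The quotient $R/(\varpi, \theta_1, \ldots, \theta_d) = R'/(\varpi, \theta_2, \ldots, \theta_d)$ is finite over $k$ by the inductive hypothesis, so $R$ is a finite $S$-module by complete Nakayama. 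Since $R$ is Cohen--Macaulay with $\mathfrak{m}_S R$ an $\mathfrak{m}_R$-primary ideal of $R$, one has $\depth_S(R) = d+1 = \depth(S)$, and Auslander--Buchsbaum over the regular local ring $S$ yields $\text{pd}_S(R) = 0$: $R$ is $S$-free, and in particular $\theta$ is injective. Since $R_\theta = R'_{\theta'}$ with induced augmentation $\lambda_\theta = \lambda'_{\theta'}$, finiteness of $\Phi_{\lambda_\theta}(R_\theta)$ is immediate from the inductive hypothesis. The main obstacle is the simultaneous achievement of conditions (i) and (ii) for $\theta_1$; this is resolved via the generalized prime avoidance lemma combined with the surjectivity of $\pi$ coming from generic-fiber formal smoothness.
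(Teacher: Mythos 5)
Your proof is correct, but it takes a genuinely different route from the paper. The paper first picks any maximal regular sequence $\varpi,f_1,\ldots,f_d$ with $f_i\in\ker\lambda$ (possible since $R$ is Cohen--Macaulay and $\cO$-flat) and then invokes Lemma~\ref{lemma ModifyRseq}: it replaces $f_i$ by $h_i=f_i^2+\varpi g_i$, where the $g_i$ are chosen with linearly independent images in the cotangent space of the completed generic-fiber local ring at $\lambda$; squaring pushes the $f_i$ into $(\ker\lambda)^2$ so that regularity (seen modulo $\varpi$) and transversality at the augmentation (seen after inverting $\varpi$) are arranged by the two summands independently, and all $d$ elements are handled at once. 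You instead induct on the relative dimension and choose one element $\theta_1\in\frp=\ker\lambda$ at a time by prime avoidance, so that it simultaneously avoids the finitely many primes minimal over $(\varpi)$ and the ideal $N=\ker\bigl(\frp\to((\frp/\frp^2)/\mathrm{tors})\otimes_\cO k\bigr)$; the first condition preserves $\cO$-flatness and the regular-sequence property, the second preserves generic-fiber formal smoothness, and both approaches finish identically via depth and Auslander--Buchsbaum to get freeness over $S$ and the finiteness of $\Phi_{\lambda_\theta}(R_\theta)$. Your route avoids the perturbation lemma entirely (at the cost of an induction and of verifying that $R/(\theta_1)$ stays in $C_\cO$), while the paper's perturbation argument is non-inductive and is stated for a general augmented ring $S$, which lets it be reused in Lemma~\ref{lemma CI cover} to produce the CI covers of Property~\ref{prop CI}. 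Two steps you leave compressed, though both are fine here: the deduction that avoiding the minimal primes of $(\varpi)$ makes $\varpi$ a non-zero-divisor on $R/(\theta_1)$ implicitly uses that $R/(\varpi)$ is Cohen--Macaulay (so its associated primes are minimal, making $\theta_1$ a non-zero-divisor modulo $\varpi$), and your Krull-intersection step is exactly the standard permutability argument turning the regular sequence $(\varpi,\theta_1)$ into $(\theta_1,\varpi)$; you should say explicitly that the Cohen--Macaulay hypothesis is what makes minimal-prime avoidance sufficient, since for a general flat local $\cO$-algebra it would not be.
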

We will give the proof later in this subsection after the proof of Lemma~\ref{lemma CI cover}.

\smallskip

Given a $\theta$ satisfying \ref{prop P}, we will usually identify $S$ with its image in $R$, so that in particular $y_1,\ldots,y_d\in R$. As in the statement of Property \ref{prop P}, we will let $R_\theta = R/(y_1,\ldots,y_d) = R \otimes_{S}\cO$ and let $\lambda_\theta:R_\theta\onto\cO$ denote the map induced by $\lambda$. Note that by the second condition of Property \ref{prop P}, the ring $R_\theta$ is a finite and free  module over $\cO$.

Similarly to Section \ref{sec:Venkatesh}, we will sometimes need to write $R$ as a quotient of some auxiliary complete intersection. We will say that a triple $(\Rt, I, \varphi)$ satisfies \ref{prop CI} if:
\setproptag{(CI)}
\begin{property}\label{prop CI}\ 
\begin{itemize}
	\item $\Rt$ is a complete, Noetherian local $\cO$-algebra, flat and equidimensional over $\cO$ of relative dimension~$d$.
	\item $\Rt$ is a complete intersection.
	\item $\varphi:\Rt\to R$ is a continuous surjection of $\cO$-algebras with $I=\ker \varphi$.
	\item The point corresponding to $\lambda\circ \varphi$ in $\Spec \Rt[1/\varpi]$ is a formally smooth point.
\end{itemize}
If $(\Rt,I,\varphi)$ satisfies \ref{prop CI} we will use $\lambdat$ to denote the composition $\lambda\circ\varphi:\Rt\onto R\onto\cO$.
\end{property}

Given $(\Rt,I,\varphi)$ with Property \ref{prop CI}, Theorem~\ref{thm:c_1 local} will give a formula for $C_{1,\lambda}(R)$ in terms of this triple, independent of any $\theta$, and Theorem~\ref{thm:Der exact sequence} will give a similar result for~$\cDer_\cO^1(R,E/\cO)$.

\begin{proposition}\label{prop CI exists}
For any pair $(R,\lambda)$ there exists a triple $(\Rt,I,\varphi)$ satisfying Property \ref{prop CI}.
\end{proposition}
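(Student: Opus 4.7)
The plan is to write $R$ as a quotient of a power series ring, identify a kernel that cuts out a complete intersection with the required formal smoothness at $\lambdat$, and adjust by prime avoidance to ensure $\cO$-flatness. By Cohen's structure theorem I would fix a presentation $A=\cO[[x_1,\ldots,x_n]]\onto R$ with kernel $J$, chosen so that the composite $A\onto R\xrightarrow{\lambda}\cO$ sends $x_i\mapsto 0$; the augmentation then corresponds to the prime $\mathfrak{q}=(x_1,\ldots,x_n)\subset A$. Writing $d+1=\dim R$, Cohen--Macaulayness and $\cO$-flatness of $R$ give $\mathrm{ht}_A(J)=n-d$, while the hypothesis that $\lambda$ is formally smooth on the generic fiber says $R[1/p]_{\ker\lambda}$ is regular of dimension $d$. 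Since $A[1/p]_\mathfrak{q}$ is regular of dimension $n$, the surjection of regular local rings $A[1/p]_\mathfrak{q}\onto R[1/p]_{\ker\lambda}$ has kernel $J\cdot A[1/p]_\mathfrak{q}$ generated by $n-d$ elements that form part of a regular system of parameters.

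The main step will be to produce $f_1,\ldots,f_{n-d}\in J$ such that (i) their images generate $J\cdot A[1/p]_\mathfrak{q}$, and (ii) $\varpi,f_1,\ldots,f_{n-d}$ is a regular sequence in $A$. I would construct the $f_i$ inductively. Given $f_1,\ldots,f_i$ satisfying the analogous properties, I would pick $f_{i+1}\in J$ that both (a) avoids every minimal prime $\mathfrak{P}$ of $(\varpi,f_1,\ldots,f_i)$ in $A$ (so as to extend the regular sequence, using that $A$ is Cohen--Macaulay), and (b) maps to a nonzero class in the finite-dimensional $E$-vector space $J\cdot A[1/p]_\mathfrak{q}/\bigl((f_1,\ldots,f_i)+\mathfrak{q} J\bigr)A[1/p]_\mathfrak{q}$, which is nontrivial as long as $i<n-d$.

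The hard part will be verifying that (a) and (b) can be arranged simultaneously. Unmixedness in the Cohen--Macaulay ring $A$ forces every such $\mathfrak{P}$ to have height $i+1\le n-d$, and I claim $J\not\subseteq\mathfrak{P}$: otherwise $\mathfrak{P}/J$ would be a minimal prime of $R$ containing $\varpi$, contradicting the fact that $R$ is Cohen--Macaulay and $\cO$-flat, so that $\varpi$ lies in no associated (equivalently, minimal) prime of $R$. A standard prime-avoidance argument inside $J$, combined with the observation that the submodule of $J$ failing condition (b) is a proper $\cO$-submodule, then produces the desired $f_{i+1}$.

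Finally, setting $\Rt=A/(f_1,\ldots,f_{n-d})$, $\varphi:\Rt\onto R$, and $I=\ker\varphi$, Property \ref{prop CI} will follow readily. Condition (ii) together with Cohen--Macaulayness of $A$ shows that $\Rt$ is a complete intersection of relative dimension $d$, flat and equidimensional over $\cO$. Condition (i) implies $\Rt[1/p]_{\ker\lambdat}$ is the quotient of the regular local ring $A[1/p]_\mathfrak{q}$ by part of a regular system of parameters, hence is itself regular, which gives formal smoothness at $\lambdat$ on the generic fiber.
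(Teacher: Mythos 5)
Your construction is correct, and it reaches the proposition by a genuinely different route than the paper. The paper proves this via Lemma \ref{lemma CI cover}, whose proof rests on Lemma \ref{lemma ModifyRseq}: one first picks \emph{any} regular sequence $f_1,\dots,f_m$ in the kernel of a presentation $S=\cO[[z_1,\dots,z_n]]\onto R$ with $(f_1,\dots,f_m,\varpi)$ regular, and then repairs smoothness on the generic fiber by a perturbation, replacing $f_i$ by $f_i^2$ and adding $\varpi g_i$ with the $g_i$ chosen to have linearly independent images in the cotangent space at the augmentation, so that the quotient becomes formally smooth there of dimension $d$. You instead impose the two requirements simultaneously: at each step you choose $f_{i+1}\in J$ avoiding the finitely many (height $i+1$, by unmixedness) minimal primes of $(\varpi,f_1,\dots,f_i)$ and avoiding the locus whose image in $JA[1/\varpi]_{\frq}/\frq JA[1/\varpi]_{\frq}$ lies in the span of the earlier classes; Nakayama then gives $(f_1,\dots,f_{n-d})A[1/\varpi]_{\frq}=JA[1/\varpi]_{\frq}$, so $\Rt[1/\varpi]$ localized at $\lambdat$ coincides with $R[1/\varpi]$ localized at $\lambda$ and smoothness is inherited rather than engineered. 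Your route buys the local identification of generic fibers at the augmentation directly (the analogue of part (2) of Lemma \ref{lemma CI cover}) and keeps every chosen element inside $J$, so the surjection $\Rt\onto R$ is immediate; the paper's perturbation trick is softer (no height or unmixedness bookkeeping) and its Lemma \ref{lemma ModifyRseq} is reused elsewhere, e.g.\ in the proof of Proposition \ref{prop ThetaExists}. One point to tighten in your write-up: you justify the avoidance step by saying the set of elements of $J$ failing condition (b) is a proper $\cO$-submodule. Since $\cO$ may have finite residue field, a proper subgroup together with finitely many primes $\frP_j$ could in principle cover $J$, so that alone is not enough. What saves the argument is that this failing set is in fact an \emph{ideal} of $A$: it is the preimage of a proper $E$-subspace of $JA[1/\varpi]_{\frq}/\frq JA[1/\varpi]_{\frq}$ under an $A$-linear map, since $A$ acts on that quotient through the residue field $E$ at $\frq$. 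With that observation the standard prime-avoidance lemma, which tolerates one non-prime member, applies verbatim and your induction goes through.
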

Proposition~\ref{prop CI exists} will be a direct consequence of Lemma~\ref{lemma CI cover}. We begin with the following~lemma:
\begin{lemma}\label{lemma ModifyRseq}
Let $S$ be a complete, Noetherian local $\cO$-algebra with an augmentation $\lambda\colon S\to \cO$ and let $d>0$. Suppose that $S[1/\varpi]$ is formally smooth at $\lambda$ of dimension $n\ge d$ and that there are elements $f_1,\ldots,f_d\in \kernel \lambda$ such that $f_1,\ldots,f_d,\varpi$ is a regular sequence in $S$. Then there exist $h_1,\ldots,h_d\in (\kernel \lambda \cap (f_1,\ldots,f_d,\varpi))$ such that $h_1,\ldots,h_d,\varpi$ is a regular sequence in $S$ and such that for $A=S/(h_1,\ldots,h_d)$ and the induced augmentation $\lambda_A\colon A\to \cO$, the ring $A[1/\varpi]$ is formally smooth at $\lambda_A$ of dimension $n-d$.
\end{lemma}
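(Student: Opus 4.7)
The plan is to adjust the sequence $f_1,\ldots,f_d$ by adding elements of $\varpi\,\kernel\lambda$ so that the images of the new sequence in the cotangent space $T := (\kernel\lambda)/(\kernel\lambda)^2 \otimes_\cO E$ become linearly independent; this yields both the regular sequence property and the correct dimension of formal smoothness for the quotient.

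Since $S[1/\varpi]$ is formally smooth at $\lambda$ of dimension $n$, the $E$-vector space $T$ has dimension $n$. First, I would look at the images $\bar f_1,\ldots,\bar f_d \in T$; let $r \leq d$ denote the dimension of their $E$-span, and after relabeling assume $\bar f_1,\ldots,\bar f_r$ are linearly independent. Since $n \geq d$, I can choose $g_1,\ldots,g_{d-r} \in \kernel\lambda$ whose images $\bar g_j$ extend $\bar f_1,\ldots,\bar f_r$ to a linearly independent family of size $d$ in $T$. Define
\[
h_i = f_i \text{ for } 1 \leq i \leq r, \qquad h_i = f_i + \varpi\, g_{i-r} \text{ for } r < i \leq d.
\]
By construction $h_i \in \kernel\lambda \cap (f_1,\ldots,f_d,\varpi)$.

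To check the regular sequence property, note $h_i \equiv f_i \pmod{\varpi S}$, so $(\varpi, h_1,\ldots,h_j) = (\varpi, f_1,\ldots,f_j)$ for every $j$. Since $\varpi, f_1,\ldots,f_d$ is a regular sequence (being a permutation within $\ffrm_S$ of the given regular sequence $f_1,\ldots,f_d,\varpi$), the sequence $\varpi, h_1,\ldots,h_d$ is also regular, and permuting once more inside $\ffrm_S$ gives regularity of $h_1,\ldots,h_d,\varpi$. For the formal smoothness: in the regular local ring $(S[1/\varpi])_\lambda$ of dimension $n$ with cotangent space $T$, the images $\bar h_i$ equal $\bar f_i$ for $i \leq r$ and $\bar f_i + \varpi\,\bar g_{i-r}$ for $i > r$. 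Expressing each $\bar f_i$ with $i > r$ as an $E$-combination of $\bar f_1,\ldots,\bar f_r$ and using linear independence of $\{\bar f_1,\ldots,\bar f_r,\bar g_1,\ldots,\bar g_{d-r}\}$, a direct computation shows that $\bar h_1,\ldots,\bar h_d$ are linearly independent in $T$, hence part of a regular system of parameters of $(S[1/\varpi])_\lambda$. Thus $(A[1/\varpi])_{\lambda_A} = (S[1/\varpi])_\lambda/(h_1,\ldots,h_d)$ is a regular local ring of dimension $n-d$, as required.

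The main obstacle I expect is the tension between two constraints: the $h_i$ must lie in the rather restrictive ideal $\kernel\lambda \cap (f_1,\ldots,f_d,\varpi) = (f_1,\ldots,f_d) + \varpi\,\kernel\lambda$, yet simultaneously the $\bar h_i$ must span a $d$-dimensional subspace of $T$, which the $\bar f_i$ alone may fail to do (regular sequences in regular local rings need not be part of a regular system of parameters, as shown by classical examples like $x-y^2, x$ in $k[[x,y,z]]$). The key point that resolves this tension is that $\varpi$ is a unit in $E$: the perturbation $\varpi\, g_{i-r}$ is invisible modulo $\varpi$, preserving the regular sequence, while its image in $T$ can be made an arbitrary prescribed direction $\bar g_{i-r}$, which is exactly what is needed to achieve linear independence.
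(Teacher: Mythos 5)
Your proof is correct and follows essentially the same strategy as the paper's: perturb the $f_i$ by elements of $\varpi\ker\lambda$, which leaves the sequence unchanged modulo $\varpi$ (hence still regular) but, since $\varpi$ is a unit in $E$, lets you prescribe the images in the cotangent space at $\lambda$ so that the quotient is regular of dimension $n-d$. The only difference is cosmetic: the paper first replaces each $f_i$ by $f_i^2$ so that all cotangent images vanish and then perturbs every term, whereas you keep a maximal linearly independent subfamily untouched and perturb only the remaining $f_i$, using permutation-invariance of regular sequences in place of the regularity of powers.
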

\begin{proof}
By replacing $(f_1,\ldots,f_d)$ by $(f_1^2,\ldots,f_d^2)$ we may assume that $(f_1,\ldots,f_d)\subset \kernel\lambda^2$; see \cite[15.A, Theorem~26]{Matsumura-CA}. Write  $S[1/\varpi]$ for the localization of $S$ at $\varpi$ and $\widehat{S[1/\varpi]}$ for the completion of the latter at the point corresponding to $\lambda$. By our hypothesis, the ring $\widehat{S[1/\varpi]}$ is a power series ring over $E$ in $n\ge d$ indeterminates. Let  $\widehat I$ denote its maximal ideal. Choose $g_1,\ldots,g_d$ in $\ker\lambda$ whose images in $\widehat I/\widehat I^2$ are linearly independent over $E$. Then $(h_1,\ldots,h_d)$ with $h_i=f_i+\varpi g_i$ has all properties required.
\end{proof}

\begin{lemma}\label{lemma CI cover}
Suppose $B$ is a complete, Noetherian local $\cO$-algebra with $\dim B=d+1$ and $\dim B/\varpi=d$, and $\lambda:B\to\cO$ is an augmentation, such that $\Spec B[1/\varpi]$ is formally smooth at~$\lambda$ of dimension~$d$. Then there exists a Noetherian $\cO$-algebra $A$ and a surjective homomorphism $\pi\colon A\to B$ such that the following~holds:
\begin{enumerate}
\item The ring $A$ is local and complete, a complete intersection, flat over $\cO$ and of relative dimension $d$.
\item The map $\pi[1/\varpi]\colon A[1/\varpi]\to B[1/\varpi]$, obtained from $\pi$ by inverting $\varpi$, induces an isomorphism after completion at the points corresponding to the augmentations $\lambda$ and $\mu=\lambda \pi\colon A\to\cO$, respectively. In particular, $\Spec A[1/\varpi]$  is formally smooth at $\mu$ of dimension~$d$.
\end{enumerate}
\end{lemma}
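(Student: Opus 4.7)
The plan is to realize $A$ as a quotient of a power series ring $S=\cO[[x_1,\ldots,x_n]]$ by a length $n-d$ regular sequence lying in $J=\ker(S\to B)$ and chosen so as to cut out a formally smooth subscheme of the generic fiber at $\lambda$. I begin by fixing a presentation $\pi_S\colon S\onto B$ with $\lambda_S:=\lambda\circ\pi_S$ sending each $x_i\mapsto 0$; such a presentation is obtained by lifting a $k$-basis of $\ker\lambda/((\ker\lambda)^2+\varpi\ker\lambda)$ to $\ker\lambda\subset B$ and mapping $x_i$ to such lifts. Write $J=\ker\pi_S\subseteq(x_1,\ldots,x_n)$. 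Since $S$ is regular, $\hgt J=\dim S-\dim B=n-d$, and using $\dim B/\varpi=d$ one similarly obtains $\hgt((J,\varpi)/\varpi)=n-d$ in the regular ring $S/\varpi$. Prime avoidance in $S/\varpi$ yields a regular sequence $\bar f_1,\ldots,\bar f_{n-d}$ in $(J,\varpi)/\varpi$, which lifts to $f_1,\ldots,f_{n-d}\in J$; by permutability of regular sequences in local Noetherian rings, $(f_1,\ldots,f_{n-d},\varpi)$ is then a regular sequence in $S$. Replacing each $f_i$ by $f_i^2$ preserves the regular sequence property (heights are unchanged) and ensures $f_i\in(\ker\lambda_S)^2$.

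The key refinement uses the formal smoothness of $B[1/\varpi]$ at $\lambda$: the completed generic fiber $\widehat{S[1/\varpi]}_{\lambda_S}$ is a power series ring over $E$ in $n$ variables with maximal ideal $\widehat I$, and the hypothesis forces $J\cdot\widehat{S[1/\varpi]}_{\lambda_S}$ to map onto a subspace of $\widehat I/\widehat I^2$ of dimension exactly $n-d$. I can therefore select $g_1,\ldots,g_{n-d}\in J$ whose images in $\widehat I/\widehat I^2$ are linearly independent over $E$. In the spirit of Lemma~\ref{lemma ModifyRseq}, I set $h_i=f_i+\varpi g_i\in J$. Because the squaring puts $f_i$ into $\widehat I^2$, the image of $h_i$ in $\widehat I/\widehat I^2$ equals (up to the unit $\varpi\in E^\times$) that of $g_i$, so the classes $[h_i]$ remain linearly independent; and since $h_i\equiv f_i\pmod\varpi$, the sequence $(h_1,\ldots,h_{n-d},\varpi)$ is still regular in $S$.

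Setting $A=S/(h_1,\ldots,h_{n-d})$ with $\pi\colon A\onto B$ induced from $\pi_S$ now gives a complete intersection, flat over $\cO$ of relative dimension $d$, and $\pi$ is surjective because $h_i\in J$. The completion $\widehat{A[1/\varpi]}_\mu$ is $\widehat{S[1/\varpi]}_{\lambda_S}/(h_1,\ldots,h_{n-d})$, which by the linear independence of the $[h_i]$ is a regular local ring of dimension $d$; the induced surjection onto $\widehat{B[1/\varpi]}_\lambda$, another regular local ring of dimension $d$, must then be an isomorphism. The main technical obstacle is reconciling two a priori incompatible demands on the generators---being a regular sequence in the integral ring $S$, and mapping to a basis of an $(n-d)$-dimensional subspace of the cotangent space of the generic fiber at $\lambda$---and the substitution $h_i=f_i+\varpi g_i$, with $f_i$ first squared into $\widehat I^2$, is precisely the device that arranges both conditions simultaneously.
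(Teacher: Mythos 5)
Your proof is correct, and at its core it is the same construction as the paper's: present $B$ as a quotient of $S=\cO[[x_1,\dots,x_n]]$, extract a length-$(n-d)$ regular sequence in the kernel that stays regular together with $\varpi$, square it into the square of the augmentation ideal, and then perturb by $\varpi g_i$ with the $g_i$ having linearly independent images in $\widehat I/\widehat I^2$ of the completed generic fiber --- this perturbation device is exactly the content of Lemma~\ref{lemma ModifyRseq}. The one genuine difference is that you insist the perturbations $g_i$ lie in $J=\ker(S\onto B)$, using the formal smoothness hypothesis to see that the image of $J$ in $\widehat I/\widehat I^2$ has dimension exactly $n-d$; this forces $h_i\in J$, so the surjection $A\onto B$ is manifest. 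The paper instead invokes Lemma~\ref{lemma ModifyRseq} for the augmentation $\lambda\Pi$, which as stated only places $h_i\in\ker\Pi+\varpi S$, and then asserts the induced surjection $A\to B$; your choice of $g_i\in J$ supplies precisely the extra care needed to make that assertion literal, so your write-up is, if anything, slightly tighter at that point. The supporting steps (the height computations in $S$ and $S/\varpi$, lifting the regular sequence from $S/\varpi$ and permuting, preservation of regularity under squaring, regularity of the quotient of $E[[t_1,\dots,t_n]]$ by part of a regular system of parameters, and the fact that a surjection of $d$-dimensional regular local rings is an isomorphism) are all correctly justified.
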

\begin{proof}
Let $\Pi\colon S=\cO[[z_1,\ldots,z_n]]\to B$ be a surjective ring homomorphism. Let $\frp_\lambda\subset \ffrm_B$ be the prime ideal $\ker\lambda$, and denote by $\frq_\lambda\subset\ffrm_S$ its inverse image under $\Pi$, i.e. $\frq_\lambda=\ker \lambda\Pi$. Let $m=n-d\ge0$.

By hypothesis $B/\varpi$ has dimension $d$. Because $S$ is $\varpi$-torsion free and $S/\varpi$ is regular, we can find a regular sequence $(f_1,\ldots,f_m)$ in $\ker \Pi\subset S$ such that $(f_1,\ldots,f_m,\varpi)$ is a regular sequence. Because $S[1/\varpi]$ is regular of dimension $n$ and $\frq_\lambda[1/\varpi]$ is a maximal ideal of that ring, the ring  $S[1/\varpi]$ is formally smooth at $\frq_\lambda[1/\varpi]$ of dimension $n$.

It follows from Lemma~\ref{lemma ModifyRseq} that there exist $h_1,\ldots,h_m\in \kernel \Pi+\varpi S$ such that $h_1,\ldots,h_m,\varpi$ is a regular sequence in $S$ and such that for $A=S/(h_1,\ldots,h_m)$ and induced augmentation $\lambda_A\colon A\to \cO$ the ring $A[1/\varpi]$ is formally smooth at $\lambda_A$ of dimension $n-m=d$. It follows that one has an induced surjection $A\to B$ where $A$ is a local complete, complete intersection $\cO$-algebra, flat over $\cO$ of relative dimension $d$, and that the induced surjection $A[1/\varpi]\to B[1/\varpi]$ becomes an isomorphism after completion at $\frq_\lambda[1/\varpi]$.
\end{proof}
\begin{proof}[Proof of  Proposition~\ref{prop ThetaExists}]
Because $R$ is Cohen--Macaulay and flat over $\cO$ of relative dimension $d$, we can find a regular sequence $\varpi,f_1,\ldots,f_d$ in $R$. If we replace each $f_i$ by an element in $f_i+\varpi R$ the resulting sequence is again regular. Now using that $\kernel \lambda$ together with $\varpi$ generate the maximal ideal of $R$, we may assume that $f_1,\ldots,f_d$ lie in $\kernel \lambda$. Again by hypothesis $R[1/\varpi]$ is Cohen--Macaulay of dimension $d$ and formally smooth at $\lambda$, and hence it is formally smooth at $\lambda$ of dimension~$d$.

Then by Lemma~\ref{lemma ModifyRseq}, there exist $h_1,\ldots,h_d\in \kernel \lambda$ such that $h_1,\ldots,h_d,\varpi$ is a regular sequence in $R$ and such that for $B=R/(h_1,\ldots,h_d)$ and the induced augmentation $\lambda_B\colon B\to \cO$ the ring $B[1/\varpi]$ is formally smooth at $\lambda_B$ of dimension $0$. It follows that the continuous $\cO$-algebra map $\theta\colon S=\cO[[y_1,\ldots,y_d]]\to R$ with $y_i\mapsto f_i$ makes $R$ into a finite free $S$-module, such that in the notation of \ref{prop P}, we have $B=R_\theta$ and $\lambda_B=\lambda_\theta$, and moreover $R_\theta$ is finite free over $\cO$. Hence $R_\theta[1/\varpi]$ is a product of Artin $E$-algebras, and the smoothness at $\lambda_\theta$ shows that the component corresponding to $\lambda_\theta$ is equal to $E$. From this it follows that $\Phi_{\lambda_\theta}(R_\theta) =\kernel \lambda_\theta/(\kernel\lambda_\theta)^2$ is of finite $\cO$-length, as it is finitely generated over $\cO$ and $\cO$-torsion.
\end{proof}
Next we observe that we can lift regular sequences of $R$ along $\Rt \to R$.
\begin{lemma}\label{lem:lifting reg seq}
Assume that $\theta:S \into R$ satisfies \ref{prop P} and $(\Rt,I,\varphi)$ satisfies \ref{prop CI}. Then $\theta$ lifts to a morphism $\thetat:S \to \Rt$ (making $\varphi$ into a $S$-algebra homomorphism) which makes $\Rt$ into a finite free $S$-module. That is, identifying $S$ with its image in $\Rt$, that $(y_1,\ldots,y_d,\varpi)$ is a regular sequence for both $\Rt$ and $R$.

Moreover if $\Rt_\theta = \Rt/(y_1,\ldots,y_d)$ and $\lambdat_\theta:\Rt_\theta\onto \cO$ is the map induced by $\lambdat$, then $\Rt_\theta$ is a complete intersection of dimension $1$, finite free over $\cO$, and $\Phi_{\lambdat_\theta}(\Rt_\theta)$ is finite.
\end{lemma}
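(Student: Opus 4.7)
The plan is to build the lift $\thetat$ by iteratively choosing preimages $\widetilde{y}_i\in\Rt$ of $\theta(y_i)$, arranging via a coset version of prime avoidance that $(\varpi,\widetilde{y}_1,\ldots,\widetilde{y}_d)$ is a regular sequence on $\Rt$. Setting $\thetat(y_i)=\widetilde{y}_i$ then gives a continuous $\cO$-algebra homomorphism with $\varphi\circ\thetat=\theta$, after which the remaining assertions follow from Cohen--Macaulay machinery and a comparison of the completed generic fibers of $\Rt$ and $R$ at $\widetilde\lambda$ and $\lambda$.

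For the inductive step, assume $\widetilde{y}_1,\ldots,\widetilde{y}_{i-1}$ have been chosen, so $\Rt_i:=\Rt/(\varpi,\widetilde{y}_1,\ldots,\widetilde{y}_{i-1})$ is Cohen--Macaulay of dimension $d-i+1$, and fix any lift $\widetilde{y}_i^0\in\Rt$ of $\theta(y_i)$. One seeks $\alpha\in I$ such that $\widetilde{y}_i^0+\alpha$ avoids every minimal (equivalently associated) prime $\frq$ of $\Rt_i$. I split the $\frq$'s according to whether the image of $I$ in $\Rt_i$ lies in $\frq$. If $I\subseteq\frq$, then $\frq$ descends to a minimal prime of $R/(\varpi,\theta(y_1),\ldots,\theta(y_{i-1}))$, and since $(\theta(y_1),\ldots,\theta(y_d),\varpi)$ is a regular sequence on the Cohen--Macaulay ring $R$ (so is any permutation), $\theta(y_i)$ is a nonzerodivisor on this quotient, forcing $\widetilde{y}_i^0\notin\frq$ and in fact making the entire coset $\widetilde{y}_i^0+I$ disjoint from $\frq$. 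For the remaining primes (those with $I\not\subseteq\frq$), the image of $I$ becomes the unit ideal after localizing $\Rt_i$ at the multiplicative complement of their union, so the coset $\widetilde{y}_i^0+I$ surjects onto this localization; in particular it contains an element whose image is a unit there, and that element, viewed in $\Rt_i$, lies outside each such $\frq$.

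After $d$ iterations, $\Rt/\ffrm_S\Rt=\Rt/(\varpi,\widetilde{y}_1,\ldots,\widetilde{y}_d)$ is Artinian, so by the topological Nakayama lemma $\Rt$ is finitely generated as an $S$-module via $\thetat$; since $S$ is regular of dimension $d+1$ and $\Rt$ is Cohen--Macaulay of the same dimension, the Auslander--Buchsbaum formula gives $\mathrm{pd}_S\Rt=0$, making $\Rt$ finite free over $S$. CM permutation shows $(\widetilde{y}_1,\ldots,\widetilde{y}_d,\varpi)$ is regular on $\Rt$, and the corresponding statement on $R$ is recorded in property~\ref{prop P}. Then $\Rt_\theta=\Rt\otimes_S\cO$ is the quotient of the complete intersection $\Rt$ by the regular sequence $\widetilde{y}_1,\ldots,\widetilde{y}_d$, hence itself a complete intersection of dimension $1$, and as a base change of the finite free $S$-module $\Rt$, it is finite free over $\cO$. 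For finiteness of $\Phi_{\widetilde\lambda_\theta}(\Rt_\theta)$: by~\ref{prop CI}, $\widehat{\Rt[1/\varpi]}_{\widetilde\lambda}\cong E[[T_1,\ldots,T_d]]$; by~\ref{prop P}, the $y_i$ generate the maximal ideal of $\widehat{R[1/\varpi]}_\lambda$, which together with $\dim R[1/\varpi]=d$ makes the latter completion regular of dimension $d$, so the induced surjection between two complete regular local rings of the same dimension is an isomorphism; hence the $\widetilde{y}_i$ also generate the maximal ideal of $\widehat{\Rt[1/\varpi]}_{\widetilde\lambda}$, whence $\widehat{\Rt_\theta[1/\varpi]}_{\widetilde\lambda_\theta}=E$ and $\Phi_{\widetilde\lambda_\theta}(\Rt_\theta)$ is $\cO$-torsion, hence finite. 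The main obstacle is the coset prime-avoidance step: with a potentially nontrivial $I$, lifts can only be modified within cosets of $I$, and the dichotomy based on whether $I$ is contained in each minimal prime of $\Rt_i$ is essential.
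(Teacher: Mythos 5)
Your overall strategy is the same as the paper's: inductively lift the regular sequence through $\varphi$, splitting the minimal primes of $\Rt_i=\Rt/(\varpi,\yt_1,\ldots,\yt_{i-1})$ according to whether they contain the image of $I$, then obtain finite freeness over $S$ from Cohen--Macaulayness plus Auslander--Buchsbaum, and deduce the assertions about $\Rt_\theta$. The one step that does not hold up as written is the avoidance of those minimal primes $\frq$ of $\Rt_i$ that do \emph{not} contain the image of $I$. You argue that since the image of $I$ generates the unit ideal in the localization $T^{-1}\Rt_i$ (with $T$ the complement of the union of these primes), ``the coset $\yt_i^0+I$ surjects onto this localization.'' That is not true: what becomes the unit ideal is the extended ideal $I\cdot T^{-1}\Rt_i$, not the set-theoretic image of $I$, so at best you learn that some element of the form $\yt_i^0+\alpha/t$ with $\alpha\in I$, $t\in T$ is a unit of $T^{-1}\Rt_i$; this does not produce an element of the honest coset $\yt_i^0+I$ inside $\Rt_i$ lying outside every such $\frq$, which is exactly what the induction requires. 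The fact you need is coset (Davis) prime avoidance, and it requires a genuine argument: for instance, for each minimal prime $\frq_k$ containing $\yt_i^0$ (necessarily $I\not\subseteq\frq_k$, by your first case), choose $r_k\in I\setminus\frq_k$ and $s_{kj}\in\frq_j\setminus\frq_k$ for the other minimal primes $\frq_j$, and add $\sum_k r_k\prod_{j\ne k}s_{kj}$ to $\yt_i^0$; this is precisely the content of the paper's Lemma \ref{lem:lifting element}. So, as written, there is a gap at the heart of the lifting step, though it is repairable by citing or reproving that avoidance lemma.

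The remainder of your argument is sound. Your treatment of the finiteness of $\Phi_{\lambdat_\theta}(\Rt_\theta)$ differs mildly from the paper's: you compare the completions of $\Rt[1/\varpi]$ and $R[1/\varpi]$ at the augmentations (a surjection of $d$-dimensional regular complete local $E$-algebras is an isomorphism, so the $\yt_i$ generate the maximal ideal upstairs because the $y_i$ do downstairs), whereas the paper runs a rank count on the exact sequences $\cO^d\to\Phi_{\lambdat}(\Rt)\to\Phi_{\lambdat_\theta}(\Rt_\theta)\to 0$ and $\cO^d\to\Phi_{\lambda}(R)\to\Phi_{\lambda_\theta}(R_\theta)\to 0$ to show the map $\Thetat$ is injective. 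The two arguments are essentially equivalent, and yours is correct as stated.
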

This will follow from the following lemma:
\begin{lemma}\label{lem:lifting element}
	Let $A$ be a Noetherian local ring, and let $B = A/I$ for some ideal $I$ of $A$. Let $x\in \ffrm_B$ be an element not contained in any minimal prime of $B$. Then $x$ lifts to an element $\xt\in\ffrm_A$ which is not contained in any minimal primes of $A$.
\end{lemma}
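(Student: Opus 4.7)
The plan is to pick any lift $x_0\in \ffrm_A$ of $x$ and then correct $x_0$ by an element of $I$. Partition the finitely many minimal primes of $A$ into $P_1=\{\frp:I\subseteq \frp\}$ and $P_2=\{\frp:I\not\subseteq \frp\}$. Each $\frp\in P_1$ is automatically minimal over $I$ (no prime can lie strictly between $I$ and $\frp$, since none can lie strictly below $\frp$), so $\frp/I$ is a minimal prime of $B$; the hypothesis on $x$ then forces $x_0\notin \frp$, and any adjustment by an element of $I\subseteq \frp$ preserves this. The task therefore reduces to producing $j\in I$ with $x_0+j\notin \frq$ for every $\frq\in P_2$.

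I would achieve this by induction on $m=|P_2|$, writing $P_2=\{\frq_1,\ldots,\frq_m\}$. The base case $m=0$ is trivial. For the inductive step, the induction hypothesis supplies $j'\in I$ with $x_0+j'\notin \frq_i$ for $i<m$; if also $x_0+j'\notin \frq_m$ we are done, so assume $x_0+j'\in \frq_m$. I would replace $j'$ by $j=j'+j''$, where $j''\in I\cap \bigcap_{i<m}\frq_i$ is chosen outside $\frq_m$. Since $j''\in \frq_i$ for $i<m$, we have $x_0+j\equiv x_0+j'\pmod{\frq_i}$, which is nonzero by the inductive hypothesis; and modulo $\frq_m$, $x_0+j\equiv j''\not\equiv 0$ because $x_0+j'\in \frq_m$ and $j''\notin \frq_m$.

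The existence of $j''$ is the crux, and is provided by ordinary prime avoidance for the ideal $I\cap \bigcap_{i<m}\frq_i$ against the single prime $\frq_m$: this ideal fails to lie in $\frq_m$ because $I\not\subseteq \frq_m$ (as $\frq_m\in P_2$) and $\frq_i\not\subseteq \frq_m$ for $i<m$ (distinct minimal primes are pairwise incomparable), so the prime $\frq_m$ cannot contain the intersection. Setting $\xt=x_0+j$ completes the construction; $\xt\in \ffrm_A$ because $x_0\in \ffrm_A$ and $I\subseteq \ffrm_A$ (as $B\neq 0$ forces $I\neq A$, and $A$ is local).

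The main obstacle is that a single direct application of prime avoidance does not work: the coset $x_0+I$ is not an ideal, so the usual prime avoidance statement does not apply to it. The induction circumvents this by reducing to a one-prime avoidance step at each stage, the essential algebraic input being the pairwise incomparability of minimal primes.
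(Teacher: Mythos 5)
Your proof is correct and follows essentially the same route as the paper's: lift arbitrarily and then correct by elements of $I$ chosen to lie in all the relevant minimal primes except the offending one, using the pairwise incomparability of distinct minimal primes together with the observation that a minimal prime of $A$ containing $I$ cannot contain the lift. The only difference is organizational: you correct one prime at a time by induction, with the avoiding element supplied by the fact that a prime containing $I\cap\bigcap_{i<m}\frq_i$ must contain $I$ or some $\frq_i$, whereas the paper performs the same correction in a single step via the explicit products $y_i=r_i\prod_{j\ne i}s_{ij}$.
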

\begin{proof}
	Pick any lift $\xt_0\in \ffrm_A$ of $x$. Let the set of minimal primes of $A$ be $\{P_1,\ldots,P_n\}$, labeled so that there is some $0\le a\le n$ for which $\xt_0\not\in P_1,P_2,\ldots,P_a$, and $\xt_0\in P_{a+1},\ldots,P_n$.
	
	Now fix any $i>a$, so that $\xt_0\in P_i$. Note that if $I\subseteq P_i$ then $P_i/I$ would be a minimal prime of $B$ containing $x$, contradicting our assumption. Hence $I\not\subseteq P_i$ and so there is some $r_i\in I\sm P_i$. 
	
	Also for any $j\ne i$, $P_j\not\subseteq P_i$, and so there is some $s_{ij}\in P_j\sm P_i$. Now define
	\[y_i:= r_i\prod_{j\ne i} s_{ij}\]
	so that $y_i\in I$, $y_i\in P_j$ for $j\ne i$ and $y_i\not\in P_i$. Finally let
	\[\xt = \xt_0+y_{a+1}+y_{b+2}+\cdots+y_{n}.\]
	Then we have $\xt\equiv \xt_0\equiv x\mod{I}$, $\xt\equiv \xt_0\not\equiv 0\mod{P_i}$ for $i\le a$ and $\xt\equiv y_i\not\equiv 0\mod{P_i}$ for $i>a$. So $\xt$ is our desired lift.
\end{proof}

\begin{proof}[Proof of Lemma \ref{lem:lifting reg seq}]
Identifying $S$ with its image in $R$, we get that $(y_1,\ldots,y_d,\varpi)$, and thus $(\varpi,y_1,\ldots,y_d)$, is a regular sequence for $R$. We claim that we can inductively construct a sequence $\yt_1,\yt_2,\ldots,\yt_d\in \Rt$ such that $\varphi_\infty(\yt_i) = y_i$ for all $i$ and $\dim \Rt/(\varpi,\yt_1,\ldots,\yt_j) = d-j  = \dim R/(\varpi,y_1,\ldots,y_j)$ for all $0\le j\le d$.

As $\Rt$ and $R$ are both flat over $\cO$ of relative dimension $d$, we have $\dim \Rt/(\varpi) = d = \dim R/(\varpi)$. Now assume that $\yt_1,\ldots,\yt_j$ have been constructed for some $j<d$. Let $A_j = \Rt/(\varpi,\yt_1,\ldots,\yt_j)$ and $B_j = R/(\varpi,y_1,\ldots,y_j)$, so that $\varphi_\infty:\Rt\to R$ induces a map $\varphi_j:A_j\to B_j$. As $(\varpi,y_1,\ldots,y_d)$ is a regular sequence for $R$, $y_{j+1}$ is by definition not a zero divisor in $B_j$, and so in particular cannot be contained in any minimal primes of $B_j$. By Lemma \ref{lem:lifting element} it follows there is some $y_{j+1}'\in A_j$ with $\varphi_j(y_{j+1}') = y_{j+1}$ which is not contained in any minimal prime of $A_j$. Let $\yt_{j+1}\in \Rt$ be any lift of $y_{j+1}'$. But now
\[\Rt/(\varpi,\yt_1,\ldots,\yt_j,\yt_{j+1})\cong A_j/(y_{j+1}')\]
which has dimension $\dim A_j - 1 = d-(j+1)$, by the assumption that $y_{j+1}'$ is not contained in any minimal prime of $A_j$. This completes the induction.

Now $(\varpi,\yt_1,\ldots,\yt_d)$ is a system of parameters for $\Rt$. As $\Rt$ is a complete intersection, and thus Cohen--Macaulay, it follows that $(\varpi,\yt_1,\ldots,\yt_d)$, and thus $(y_1,\ldots,y_d,\varpi)$, is a regular sequence for $\Rt$.

So now defining $\thetat:S\to \Rt$ by $\thetat(y_i)=\yt_i$ makes $\Rt$ into a finite free $S$ module, as desired.

The fact that $\Rt_\theta$ is a complete intersection of dimension $1$, and finite free over $\cO$, now follows immediately from the fact that $\Rt_\theta$ was a complete intersection. For the last assertion, the proof of \cite[Theorem 7.16]{BKM} gives rise to a commutative diagram with exact rows:
\begin{center}
	\begin{tikzpicture}
	\node(11) at (1,2) {$\cO^d$};
	\node(21) at (4,2) {$\Phi_{\lambdat}(\Rt)$};
	\node(31) at (7,2) {$\Phi_{\lambdat_\theta}(\Rt_\theta)$};
	\node(41) at (9,2) {$0$};
	
	\node(10) at (1,0) {$\cO^d$};
	\node(20) at (4,0) {$\Phi_{\lambda}(R)$};
	\node(30) at (7,0) {$\Phi_{\lambda_\theta}(R_\theta)$};
	\node(40) at (9,0) {$0$};
	
	\draw[-latex] (11)--(10) node[midway,right]{$=$};
	\draw[-latex] (21)--(20);
	\draw[-latex] (31)--(30);

	\draw[-latex] (11)--(21) node[midway,above]{$\Thetat$};;
	\draw[-latex] (21)--(31);
	\draw[-latex] (31)--(41);
	
	\draw[-latex] (10)--(20) node[midway,above]{$\Theta$};;
	\draw[-latex] (20)--(30);
	\draw[-latex] (30)--(40);
	\end{tikzpicture}
\end{center}
where $\Phi_{\lambda}(R) = (\ker \lambda)/(\ker\lambda)^2 = \cOmega_{R/\cO}\otimes_\lambda\cO$ and $\Phi_{\lambdat}(\Rt) = (\ker \lambdat)/(\ker\lambdat)^2 = \cOmega_{\Rt/\cO}\otimes_{\lambdat}\cO$, and the maps $\Theta$ and $\Thetat$ are given in terms of differentials by $e_i\mapsto dy_i$.

Now as in \cite[Theorem 7.16]{BKM}, the fact that $\Spec R[1/\varpi]$ and $\Spec\Rt[1/\varpi]$ are both equidimensional of dimension $d$ and $\lambda$ and $\lambdat$, respectively, correspond to formally smooth points on these schemes, implies that $\Phi_{\lambda}(R)$ and $\Phi_{\lambdat}(\Rt)$ both have rank $d$ as $\cO$-modules.

But now the fact that $\Phi_{\lambda_\theta}(R_\theta)$ is finite implies that $\Theta$ must be injective. By commutativity, this implies that $\Thetat$ is also injective, which in turn implies that $\Phi_{\lambdat_\theta}(\Rt_\theta)$ is also finite.
\end{proof}

\subsection{Invariance of $C_{1,\lambda_\theta}(R_\theta)$ of $\theta$}\label{sec:c2}

For this section we will fix $\theta$ satisfying \ref{prop P} and $(\Rt, I,\varphi)$ satisfying \ref{prop CI}. We will let $\thetat:S \into \Rt$ be a lift of $\theta$ satisfying the conclusion of Lemma \ref{lem:lifting reg seq}, and we will identify $S$ with its images in $R$ and $\Rt$.

Let $\Rt_\theta$ and $\lambdat_\theta$ be as in Lemma \ref{lem:lifting reg seq}, and let $\varphi_\theta = \varphi \otimes_{S}\cO:\Rt_\theta \onto R_\theta$ (so that $\lambdat = \lambda\circ\varphi$) and let $I_\theta = \ker \varphi_\theta\subseteq \Rt_\theta$. Also let $\pi_\theta:\Rt \to \Rt_\theta$ be the quotient map, so that $\lambdat = \lambdat_\theta\circ \pi_\theta$ and $I_\theta = \pi_\theta(I)$.

The ring $\Rt_\theta$ now satisfies the conditions from Section \ref{sec:Venkatesh}, so we have
\[C_{1,\lambda_\theta}(R_\theta) = \lambdat_\theta\left(\Rt_\theta[I_\theta]\right)/\lambdat_\theta\left(\Fitt(I_\theta)\right).\]

The main result of this subsection is the following:

\begin{theorem}\label{thm:c_1 local}
We have the following:
\begin{enumerate}
	\item $\Rt_\theta[I_\theta] = \pi_\theta(\Rt[I])$
	\item $\Fitt(I_\theta) = \pi_\theta(\Fitt(I))$
\end{enumerate}
So in particular,
\[C_{1,\lambda_\theta}(R_\theta) = \lambdat_\theta\left(\Rt_\theta[I_\theta]\right)/\lambdat_\theta\left(\Fitt(I_\theta)\right) =
 \lambdat_\theta\left(\pi_\theta(\Rt[I])\right)/\lambdat_\theta\left(\pi_\theta(\Fitt(I))\right)
=
\lambdat\left(\Rt[I]\right)/\lambdat\left(\Fitt(I)\right),\]
which depends only on $\Rt$, $R$ and $\lambdat:R \to \cO$, all of which are independent of $\theta$.

Thus if we define  $C_{1,\lambdat}(\Rt)=\lambdat\left(\Rt[I]\right)/\lambdat\left(\Fitt(I)\right)$,
then we have  \[
C_{1,\lambdat}(\Rt)=C_{1,\lambda_\theta}(R_\theta).\]
\end{theorem}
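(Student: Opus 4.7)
The approach is to prove assertions (1) and (2) separately, then combine them with the identity $\widetilde\lambda = \widetilde\lambda_\theta \circ \pi_\theta$ to extract the final identity for $C_{1,\lambda_\theta}(R_\theta)$.

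First I record that $I_\theta$ agrees with $\pi_\theta(I)$ as a submodule of $\widetilde R_\theta$. Because $(y_1,\ldots,y_d)$ is a regular sequence on $R$, the Koszul resolution of $\widetilde R_\theta = \widetilde R/(y_1,\ldots,y_d)$ over $\widetilde R$ gives $\Tor_1^{\widetilde R}(R,\widetilde R_\theta)=0$; hence applying $-\otimes_{\widetilde R}\widetilde R_\theta$ to $0\to I\to\widetilde R\to R\to 0$ preserves exactness, and $I_\theta = I\otimes_{\widetilde R}\widetilde R_\theta = \pi_\theta(I)$. Part~(2) then follows from the standard base-change formula for Fitting ideals, $\Fitt_{\widetilde R_\theta}(M\otimes_{\widetilde R}\widetilde R_\theta) = \Fitt_{\widetilde R}(M)\cdot\widetilde R_\theta$, applied to $M=I$.

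The essential content lies in part (1), for which the decisive input is the vanishing $\Ext^i_{\widetilde R}(R,\widetilde R)=0$ for all $i\ge 1$. Since $\widetilde R$ is a complete intersection it is Gorenstein of dimension $d+1$, while the finite free $S$-module structure on $R$ from property~\ref{prop P} combined with $R$ being Cohen--Macaulay forces $\dim R = d+1 = \dim \widetilde R$; thus $R$ is a maximal Cohen--Macaulay $\widetilde R$-module of codimension zero, and local duality gives the required Ext vanishing. I then induct on $d$, writing $\widetilde R^{(i)}=\widetilde R/(y_1,\ldots,y_i)$, $R^{(i)}=R/(y_1,\ldots,y_i)$ and $I^{(i)}=\ker(\widetilde R^{(i)}\to R^{(i)})$. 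Each step preserves every hypothesis: a CI modulo a regular element is CI (hence Gorenstein), a CM ring (or module) modulo a regular element remains CM, and the two dimensions stay equal. Applying $\Hom_{\widetilde R^{(i)}}(R^{(i)},-)$ to $0 \to \widetilde R^{(i)} \xrightarrow{y_{i+1}} \widetilde R^{(i)} \to \widetilde R^{(i+1)} \to 0$ produces
\[
0 \to \widetilde R^{(i)}[I^{(i)}] \xrightarrow{y_{i+1}} \widetilde R^{(i)}[I^{(i)}] \to \widetilde R^{(i+1)}[I^{(i+1)}] \to \Ext^1_{\widetilde R^{(i)}}(R^{(i)},\widetilde R^{(i)}) = 0,
\]
which identifies $\widetilde R^{(i+1)}[I^{(i+1)}]$ with $\widetilde R^{(i)}[I^{(i)}]/y_{i+1}\widetilde R^{(i)}[I^{(i)}] = \pi_{y_{i+1}}(\widetilde R^{(i)}[I^{(i)}])$. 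Iterating $d$ times yields (1). Combining (1) and (2) with $\widetilde\lambda = \widetilde\lambda_\theta\circ\pi_\theta$ now gives $\widetilde\lambda_\theta(\widetilde R_\theta[I_\theta]) = \widetilde\lambda(\widetilde R[I])$ and $\widetilde\lambda_\theta(\Fitt(I_\theta)) = \widetilde\lambda(\Fitt(I))$, which is the stated identity $C_{1,\lambda_\theta}(R_\theta) = \widetilde\lambda(\widetilde R[I])/\widetilde\lambda(\Fitt(I)) = C_{1,\widetilde\lambda}(\widetilde R)$, manifestly independent of $\theta$.

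The main obstacle is part (1): annihilators are not in general preserved under base change along a surjection, and what rescues the situation is the pairing of Gorenstein-ness of $\widetilde R$ with the codimension-zero hypothesis, which together force the Ext vanishing responsible for $\Hom_{\widetilde R}(R,-)$ commuting with reduction by regular elements. Once the $d=1$ case is in place via this vanishing, the induction proceeds automatically, and part (2) is a formal consequence of base change for Fitting ideals.
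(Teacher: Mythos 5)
Your proposal is correct, and while your treatment of part (2) coincides with the paper's (tensor the sequence $0\to I\to\Rt\to R\to 0$ against the reduction, kill the relevant $\Tor_1$ using the regular sequence/freeness over $S$, then invoke base change of Fitting ideals), your proof of part (1) follows a genuinely different route. The paper argues via explicit Gorenstein duality over $S$: it constructs compatible isomorphisms $\Psi:\Rt\xrightarrow{\sim}\Hom_{S}(\Rt,S)$ and $\Psi_\theta:\Rt_\theta\xrightarrow{\sim}\Hom_{\cO}(\Rt_\theta,\cO)$ fitting into a commutative square, identifies $\Psi(\Rt[I])=\Hom_{S}(R,S)$ and $\Psi_\theta(\Rt_\theta[I_\theta])=\Hom_{\cO}(R_\theta,\cO)$, and deduces surjectivity of $\pi_\theta|_{\Rt[I]}$ from the projectivity of $R$ over $S$; a by-product is the identification $\Rt[I]\cong\omega_{R}$ (the paper's Lemma \ref{lem:R[I]=omega}), which is reused in the explicit computations of Section \ref{sec:computations}. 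You instead observe that $\Rt[I]=\Hom_{\Rt}(R,\Rt)$ and that $R$ is maximal Cohen--Macaulay over the Gorenstein ring $\Rt$, so $\Ext^{i}_{\Rt}(R,\Rt)=0$ for $i\ge 1$; applying $\Hom_{\Rt^{(i)}}(R^{(i)},-)$ to multiplication by $y_{i+1}$ and using this vanishing at each stage makes the annihilator commute with reduction, one regular element at a time. This is a clean homological repackaging of the same underlying duality: it avoids constructing the pair of duality isomorphisms and verifying their compatibility, needs only the single Ext-vanishing statement (and even identifies the kernel $y_{i+1}\Rt^{(i)}[I^{(i)}]$ at each step, more than is required), at the cost of an induction and of not producing the $\omega_R$ identification that the paper exploits later. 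One small caveat on phrasing: your closing remark that "once the $d=1$ case is in place the induction proceeds automatically" is slightly misleading, since the Ext vanishing is invoked afresh over each intermediate ring $\Rt^{(i)}$ (which is legitimate, as you note, because complete intersection, Cohen--Macaulay, and equal dimension all pass to quotients by the regular elements supplied by Lemma \ref{lem:lifting reg seq}); but the body of your argument handles this correctly, so there is no gap.
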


\begin{proof}[Proof of Theorem \ref{thm:c_1 local}(1)]
Clearly we have $\pi_\theta(\Rt[I]) \subseteq \Rt_\theta[I_\theta]$ (since $I_\theta = \pi_\theta(I)$ and so $\Rt_\theta[I_\theta] = \Rt[I]$), so it suffices to prove that $\pi_\theta|_{\Rt[I]}:\Rt[I]\to \Rt_\theta[I_\theta]$ is surjective.

We first note that as $\Rt$ and $\Rt_\theta$ are complete intersections, and thus are Gorenstein, we get the following:

\begin{lemma}\label{lem:gorenstein}
There are isomorphisms $\Psi:\Rt\xrightarrow{\sim} \Hom_{S}(\Rt,S)$ and $\Psi_\theta:\Rt_\theta\xrightarrow{\sim} \Hom_{\cO}(\Rt_\theta,\cO)$ of $\Rt$-modules, fitting into a commutative diagram:
\begin{center}
	\begin{tikzpicture}
	\node(11) at (0,2) {$\Rt$};
	\node(21) at (3,2) {$\Hom_{S}(\Rt,S)$};
	
	\node(10) at (0,0) {$\Rt_\theta$};
	\node(20) at (3,0) {$\Hom_{\cO}(\Rt_\theta,\cO)$};
	
	\draw[-latex] (11)--(10) node[midway,right]{$\pi_\theta$};
	\draw[-latex] (21)--(20) node[midway,right]{$\sigma$};

	\draw[-latex] (11)--(21) node[midway,above]{$\Psi$};
	
	\draw[-latex] (10)--(20) node[midway,above]{$\Psi_\theta$};
	\end{tikzpicture}
\end{center}
where the vertical map $\sigma:\Hom_{S}(\Rt,S)\to \Hom_{S}(\Rt,\cO) = \Hom_{\cO}(\Rt_\theta,\cO)$ is just composition with the map $S \to S/(y_1,\ldots,y_d)=\cO$.
\end{lemma}
\begin{proof}
As $\Rt$ is Cohen--Macaulay and free of finite rank over $S$, we have $\omega_{\Rt} \cong \Hom_{S}(\Rt,S)$. But as $\Rt$ is a complete intersection, it is Gorenstein, and so $\omega_{\Rt}\cong \Rt$. Composing these isomorphisms gives the desired isomorphism $\Psi:\Rt \xrightarrow{\sim} \Hom_{S}(\Rt,S)$.

Now note that (as $\Rt$ is a free $S$-module):
\begin{align*}
\Psi(\ker \pi_\theta) 
&= \Psi(y_1\Rt+\cdots+y_d\Rt) 
 = y_1\Psi(\Rt)+\cdots+y_d\Psi(\Rt)\\
&= y_1\Hom_{S}(\Rt,S)+\cdots+y_d\Hom_{S}(\Rt,S)
 = \Hom_{S}(\Rt,y_1S+\cdots +y_dS)\\
&= \ker \sigma,
\end{align*}
which implies that there is an injection $\Psi_\theta:\Rt\xrightarrow{\sim} \Hom_{\cO}(\Rt,\cO)$ making the above diagram commute. As $\sigma$ is clearly surjective (since $\Rt$ is a projective $S$-module), it follows that $\Psi_\theta$ is also surjective.
\end{proof}

\begin{lemma}\label{lem:Psi(R[I])}
We have 
\[\Psi(\Rt[I]) = \{f:\Rt \to S|f(I) = 0\} = \Hom_{S}(\Rt/I,S)\]
and
\[\Psi_\theta(\Rt_\theta[I_\theta]) = \{f:\Rt_\theta\to \cO|f(I_\theta) = 0\} = \Hom_{\cO}(\Rt_\theta/I_\theta,\cO) = \Hom_{S_\infty}(\Rt/I,\cO).\]
\end{lemma}
\begin{proof}
As $\Psi$ is an isomorphism of $\Rt$-modules, we have $\Psi(\Rt[I]) = \Hom_{S}(\Rt,S)[I]$ and thus
\begin{align*}
\Psi(\Rt[I])
&= \{f:\Rt\to S|rf=0 \text{ for all }r\in I\}\\
&= \{f:\Rt\to S|(rf)(x)=0 \text{ for all }r\in I\text{ and } x\in \Rt\}\\
&= \{f:\Rt\to S|f(rx)=0 \text{ for all }r\in I\text{ and } x\in \Rt\}\\
&= \{f:\Rt\to S|f(I) = 0\}\\
&=\Hom_{S}(\Rt/I,S).
\end{align*}
The proof for $\Psi_\theta(\Rt_\theta[I_\theta])$ is identical.
\end{proof}

Now since $\Rt/I\cong R$ is a projective $S$-module, $\sigma$ induces a surjective map $\Hom_{S}(\Rt/I,S)\to \Hom_{S}(\Rt/I,\cO)$. By Lemma \ref{lem:Psi(R[I])}, this is a surjective map $\sigma|_{\Psi(\Rt[I])}:\Psi(\Rt[I])\to \Psi_\theta(\Rt_\theta[I_\theta])$, so the commutative diagram from Lemma \ref{lem:gorenstein} gives that $\pi_\theta|_{\Rt[I]}:\Rt[I]\to \Rt_\theta[I_\theta]$ is surjective. This completes the proof of (1).
\end{proof}

\begin{proof}[Proof of Theorem \ref{thm:c_1 local}(2)]
By the definition of $I$, we have a short exact sequence of $S$-modules
\[0\to I \to \Rt\xrightarrow{\varphi} R \to 0.\]
Applying $-\otimes_{S}\cO$ to this gives an exact sequence 
\[\Tor_1^{S}(R,\cO)\to I\otimes_{S}\cO\to \Rt_\theta\xrightarrow{\varphi} R_\theta\to 0.\]
and so as $I_\theta = \ker \varphi_\theta$, this gives as exact sequence
\[\Tor_1^{S}(R,\cO)\to I\otimes_{S}\cO\to I_\theta\to 0.\]
But now as $R$ is a finite free $S$-module, $\Tor_1^{S}(R,\cO) = 0$ and so we have an isomorphism $I\otimes_{S}\cO\cong I_\theta$ of $\Rt_\theta$-modules.

Now by \cite[\href{https://stacks.math.columbia.edu/tag/07ZA}{Lemma 07ZA}]{stacks-project} we indeed have:
\[\pi_\theta(\Fitt(I)) = \Fitt(I\otimes_{S}\cO) = \Fitt(I_\theta),\]
as desired. This completes the proof of (2), and hence of Theorem \ref{thm:c_1 local}.
\end{proof}

We note the following corollary.

\begin{corollary}\label{well defined}
With notation as above  \[ C_{1,\lambdat}(\Rt)= \lambdat(\Rt[I])/\lambdat(\Fitt(I))\]
 depends only on its quotient $\Rt/I \simeq   R$  and we define \[  C_{1,\lambda}(R) \stackrel{\mathrm{def}}=C_{1,\lambdat}(\Rt).\] 
\end{corollary}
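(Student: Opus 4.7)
The plan is to deduce the corollary directly from Theorem \ref{thm:c_1 local}, which already contains the essential content. The key observation is that in the identity $C_{1,\lambdat}(\Rt) = C_{1,\lambda_\theta}(R_\theta)$ established there, the right-hand side depends only on $R$, $\lambda$ and the chosen $\theta$, but not on the CI cover $(\Rt, I, \varphi)$.

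So given two triples $(\Rt_1, I_1, \varphi_1)$ and $(\Rt_2, I_2, \varphi_2)$ both satisfying \ref{prop CI} with respect to the same pair $(R,\lambda)$, I would proceed as follows. First, invoke Proposition~\ref{prop ThetaExists} to choose a single map $\theta: S = \cO[[y_1,\ldots,y_d]] \into R$ satisfying \ref{prop P}. Next, apply Lemma~\ref{lem:lifting reg seq} twice, once for each $(\Rt_i, I_i, \varphi_i)$, to obtain lifts $\thetat_i: S \to \Rt_i$ making each $\Rt_i$ into a finite free $S$-module in a way compatible with $\varphi_i$. Setting $R_\theta = R \otimes_S \cO$ with induced augmentation $\lambda_\theta$, Theorem \ref{thm:c_1 local} applied to both triples yields
\[
C_{1,\lambdat_1}(\Rt_1) = C_{1,\lambda_\theta}(R_\theta) = C_{1,\lambdat_2}(\Rt_2),
\]
since the middle term only involves $R$, $\lambda$ and $\theta$. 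This shows the invariant is well-defined as a function of $R$ and $\lambda$ alone, justifying the notation $C_{1,\lambda}(R)$.

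There is no real obstacle here: the corollary is essentially a formal repackaging of Theorem \ref{thm:c_1 local}. The only small thing to verify is that both CI covers can indeed be compared through a common $\theta$, but this is guaranteed because the existence of $\theta$ depends only on $(R,\lambda)$ (Proposition~\ref{prop ThetaExists}) and the lifting to any $\Rt_i$ is provided by Lemma~\ref{lem:lifting reg seq}. One should also note explicitly that the identification $\Rt_i/I_i \simeq R$ is compatible with the augmentations, so that $\lambdat_i = \lambda \circ \varphi_i$ induces the same $\lambda_\theta$ on $R_\theta$ on both sides; this is automatic from the definitions.
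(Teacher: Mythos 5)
Your proof is correct and follows essentially the same route as the paper: the well-definedness is deduced from Theorem \ref{thm:c_1 local} together with the fact that the one-dimensional invariant $C_{1,\lambda_\theta}(R_\theta)$ depends only on $(R_\theta,\lambda_\theta)$. The one point to make explicit is that your claim that ``the middle term only involves $R$, $\lambda$ and $\theta$'' is precisely the Appendix \ref{app} result (independence of the auxiliary complete intersection cover in dimension one): your two applications of Theorem \ref{thm:c_1 local} compute that middle term via the two different covers $\Rt_{1,\theta}$ and $\Rt_{2,\theta}$ of $R_\theta$, and identifying these values is the second ingredient the paper's proof cites alongside Theorem \ref{thm:c_1 local}.
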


\begin{proof}
 This follows from Theorems \ref{thm:c_1 local} which shows that  \[C_{1,\lambda_\theta}(R_\theta) =
C_{1,\lambdat}(\Rt),\]
 and the results of Appendix \ref{app} which show  that  $C_{1,\lambda_\theta}(R_\theta)$ is well defined and independent of $\Rt_\theta$.
\end{proof}

\begin{remark}
The above Corollary \ref{well defined} can also be proved directly by using  the proof of Lemma \ref{l:indc2} instead of reducing to the statement of Lemma \ref{l:indc2}.
\end{remark}

For later use, we also state the following result.

\begin{lemma}\label{lem:R[I]=omega}
As $R$-modules one has $\Rt[I]\cong \omega_{R}$.
\end{lemma}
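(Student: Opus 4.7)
The plan is to combine the Gorenstein-type identification from Lemma \ref{lem:gorenstein} with the classical duality formula expressing the canonical module of a Cohen--Macaulay ring as $\Hom$ into a regular subring of the same Krull dimension. The key observation is that the setup of \S \ref{sec:setup} together with Lemma \ref{lem:Psi(R[I])} already computes $\Rt[I]$ as $\Hom_S(R,S)$, and a standard change-of-rings formula then turns the latter into $\omega_R$.

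Concretely, first I would fix any $\theta\colon S=\cO[[y_1,\ldots,y_d]]\into R$ satisfying Property \ref{prop P} (such a $\theta$ exists by Proposition \ref{prop ThetaExists}), and lift it to $\thetat\colon S\to \Rt$ as in Lemma \ref{lem:lifting reg seq}, so that both $R$ and $\Rt$ are finite free $S$-modules. By the same argument used in the proof of Lemma \ref{lem:Psi(R[I])}, the $\Rt$-module isomorphism $\Psi\colon \Rt\xrightarrow{\sim}\Hom_S(\Rt,S)$ of Lemma \ref{lem:gorenstein} restricts to an $\Rt$-linear isomorphism
\[
\Psi|_{\Rt[I]}\colon \Rt[I]\xrightarrow{\sim}\Hom_S(\Rt,S)[I]=\Hom_S(\Rt/I,S)=\Hom_S(R,S).
\]
Since every element of $\Rt[I]$ is annihilated by $I$, the $\Rt$-module structure on both sides factors through $R=\Rt/I$, so this is in fact an isomorphism of $R$-modules (with the action on the right given by $(r\cdot f)(x)=f(rx)$).

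Second, I would identify $\Hom_S(R,S)$ with $\omega_R$ as an $R$-module. This is a standard consequence of the theory of canonical modules: $S=\cO[[y_1,\ldots,y_d]]$ is a regular local ring, hence Gorenstein with $\omega_S\cong S$; $R$ is Cohen--Macaulay of the same Krull dimension $d+1$ as $S$; and $R$ is finite (in fact free) over $S$. Under these hypotheses one has a canonical $R$-module isomorphism $\omega_R\cong \Hom_S(R,\omega_S)\cong \Hom_S(R,S)$ (e.g.\ by the usual duality formula for finite maps of Cohen--Macaulay rings of the same dimension). Composing with the isomorphism of the previous paragraph yields $\Rt[I]\cong \omega_R$.

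There is no substantive obstacle; the only things worth verifying are that $\Psi|_{\Rt[I]}$ really is $R$-linear in the natural $R$-module structures on both sides (which is immediate once one notes $I\cdot \Rt[I]=0$) and that the resulting isomorphism class does not depend on the auxiliary choice of $\theta$. The latter is automatic, since $\omega_R$ is intrinsic to $R$ up to (non-canonical) $R$-module isomorphism, so once we have produced an $R$-module isomorphism $\Rt[I]\cong \omega_R$ for one $\theta$, the statement of the lemma holds unconditionally.
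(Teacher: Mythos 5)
Your proof is correct and follows essentially the same route as the paper: the paper likewise works with the fixed $\theta$, $\thetat$ making $R$ and $\Rt$ finite free over $S$, identifies $\Rt[I]\cong\Hom_{\Rt}(R,\Rt)\cong\Hom_{\Rt}(R,\Hom_S(\Rt,S))\cong\Hom_S(R,S)$ using the Gorenstein property of $\Rt$, and concludes with $\omega_R\cong\Hom_S(R,S)$ since $R$ is Cohen--Macaulay and finite over the regular ring $S$. Your variant, restricting $\Psi$ and quoting the computation of Lemma \ref{lem:Psi(R[I])}, is just a repackaging of the same chain of identifications, so there is nothing to add.
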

\begin{proof}
As $R$ is Cohen--Macaulay and $\Rt$ is Gorenstein, we have that $\omega_{R}\cong \Hom_{S}(R,S)$ and $\Rt\cong \Hom_{S}(\Rt,S)$ as $\Rt$-modules. Now by \cite[\href{https://stacks.math.columbia.edu/tag/08YP}{Lemma 08YP}]{stacks-project}:
\begin{align*}
\Rt[I] 
&\cong \Hom_{\Rt}(R,\Rt)
 \cong \Hom_{\Rt}(R,\Hom_{S}(\Rt,S))
 \cong \Hom_{S}(R,S)
 \cong \omega_{R}
\end{align*}
as $R$-modules.
\end{proof}

\subsection{Invariance of $\Der^1_\cO(R_\theta,E/\cO)$}\label{sec:Der1}

In this section, we will let $R\in C_\cO$ and $S = \cO[[y_1,\ldots,y_d]]$ be as above. We shall show that for any inclusion $\theta:S\into R$ satisfying \ref{prop P}, the Andr\'e--Quillen cohomology group $\Der^1_\cO(R_\theta,E/\cO)$ does not depend on the choice of $\theta$.

In order to do this, we will need to make use of a continuous version of Andr\'e--Quillen cohomology, as the classical version does not behave well for rings that are not of finite type but only topologically of finite type over the base. We will define this in terms of the analytic cotangent complex defined in \cite[Chapter 7]{GR}.

For any ring $A$, we will let $\Mod_A$ denote the category of $A$-modules, $D(\Mod_A)$ its derived category, and $D^-(\Mod_A)\subseteq D(\Mod_A)$ the subcategory of bounded above complexes.

For any map of ring $A\to B$, let $L_{B/A}\in D^-(\Mod_B)$ denote the relative cotangent complex.

Now consider any $A\in\CNLO$ and let $\wedge:\Mod_A\to \Mod_A$ denote the $\ffrm_A$-adic completion functor. As in \cite[Chapter 7.1]{GR}, let $\wedge:D^-(\Mod_A)\to D^-(\Mod_A)$ denote its left-derived functor.

If $A\to B$ is a \emph{continuous} map of rings in $\CNLO$, then define the analytic relative cotangent complex to be $L^{\an}_{B/A} = (L_{B/A})^{\wedge}$. For any $B$-module $M$ and any $i\ge 0$ we may then define the $i^{th}$ continuous Andr\'e--Quillen cohomology group to be
\[\cDer^i_A(B,M) = H^i(\RHom_B(L_{B/A}^{\an},M)).\]
Similarly if $A\to B$ is any ring map and $M$ is any $B$-module, the $i^{th}$ Ardr\'e--Quillen cohomology group is just
\[\Der^i_A(B,M) = H^i(\RHom_B(L_{B/A},M)).\]
We will begin by recording the basic properties of continuous Andr\'e--Quillen cohomology we will need in our arguments.

\begin{proposition}\label{prop:les}
Given any $A,B,C\in\CNLO$, and continuous ring homomorphisms $A\to B\to C$ and any $C$-module $M$, there is a long exact sequence:
\[0\to \cDer^0_B(C,M)\to \cDer^0_A(C,M)\to \cDer^0_A(B,M)\to \cDer^1_B(C,M)\to \cDer^1_A(C,M)\to \cDer^1_A(B,M)\to\cdots\]
\end{proposition}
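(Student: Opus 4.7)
The plan is to deduce the long exact sequence from a transitivity distinguished triangle for the analytic cotangent complex, namely
\[
L^{\an}_{B/A}\otimes^{\mathbf{L}}_B C \;\longrightarrow\; L^{\an}_{C/A} \;\longrightarrow\; L^{\an}_{C/B} \;\xrightarrow{+1}\;
\]
in $D^-(\Mod_C)$, and then apply the functor $\RHom_C(-,M)$ and take cohomology. The assertion will then be the standard cohomological long exact sequence associated with a distinguished triangle, together with the vanishing of $\cDer^i$ in negative degrees (which forces the initial $0$).

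First I would recall the classical Quillen--Illusie transitivity triangle for the composition $A\to B\to C$, which is a distinguished triangle
\[
L_{B/A}\otimes^{\mathbf{L}}_B C \;\longrightarrow\; L_{C/A}\;\longrightarrow\; L_{C/B}\;\xrightarrow{+1}
\]
in $D^-(\Mod_C)$ (see \cite{stacks-project}). I would then apply the derived $\ffrm_C$-adic completion functor $\wedge\colon D^-(\Mod_C)\to D^-(\Mod_C)$ of \cite[Chapter 7]{GR}. As the left derived functor of an additive functor between abelian categories, $\wedge$ is a triangulated functor and so sends this distinguished triangle to a distinguished triangle. By definition the images of $L_{C/A}$ and $L_{C/B}$ are $L^{\an}_{C/A}$ and $L^{\an}_{C/B}$, while the image of $L_{B/A}\otimes^{\mathbf{L}}_B C$ is identified with $L^{\an}_{B/A}\otimes^{\mathbf{L}}_B C$ by commuting derived completion past derived base change along the continuous map $B\to C$ — a fact which can be verified by replacing $L_{B/A}$ by a resolution by free $B$-modules and using that $\ffrm_B\cdot C\subseteq \ffrm_C$.

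Next, applying $\RHom_C(-,M)$ to the resulting analytic transitivity triangle yields a distinguished triangle in $D^+(\Mod_C)$, with the direction of the arrows reversed. By the standard adjunction between restriction of scalars and derived base change,
\[
\RHom_C\!\left(L^{\an}_{B/A}\otimes^{\mathbf{L}}_B C,\,M\right) \;\simeq\; \RHom_B\!\left(L^{\an}_{B/A},\,M\right),
\]
so the cohomological long exact sequence reads
\[
\cdots\to\cDer^i_B(C,M)\to\cDer^i_A(C,M)\to\cDer^i_A(B,M)\to\cDer^{i+1}_B(C,M)\to\cdots,
\]
which starts at $i=0$ with an initial $0$ because each $\cDer^i$ vanishes for $i<0$ (the analytic cotangent complexes in question lie in non-positive cohomological degrees).

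The main obstacle is the technical verification that $\ffrm_C$-adic derived completion is a triangulated endofunctor of $D^-(\Mod_C)$ and that it commutes with derived base change $-\otimes^{\mathbf{L}}_B C$ along the continuous homomorphism $B\to C$. Both points are developed systematically in \cite[Chapter 7]{GR}, so the bulk of the work is an invocation of the correct results there and a check that the data $(A\to B\to C, M)$ fits their hypotheses — which is automatic because all rings are objects of $\CNLO$ and all morphisms are continuous for the respective maximal-ideal-adic topologies.
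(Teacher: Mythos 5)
Your overall strategy coincides with the paper's: everything comes down to the transitivity distinguished triangle $C\Lotimes_B L^{\an}_{B/A}\to L^{\an}_{C/A}\to L^{\an}_{C/B}\to C\Lotimes_B L^{\an}_{B/A}[1]$, to which one applies $\RHom_C(-,M)$, uses the adjunction $\RHom_C(C\Lotimes_B L^{\an}_{B/A},M)\simeq \RHom_B(L^{\an}_{B/A},M)$, and takes the cohomology long exact sequence (the initial $0$ coming from vanishing in negative degrees). The paper obtains the triangle by simply quoting \cite[Theorem 7.1.33]{GR}.

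The step of yours that does not hold up as written is the reduction of this analytic triangle to the classical Quillen--Illusie triangle via ``commuting derived completion past derived base change,'' checked termwise on a free resolution using $\ffrm_B\cdot C\subseteq\ffrm_C$. Termwise this is false in the generality needed here: the terms of a free resolution of $L_{B/A}$ are typically of infinite rank, and for an infinite index set $I$ and a non-finite continuous map $B\to C$ in $\CNLO$ (e.g.\ $B=\cO$, $C=\cO[[x]]$, $I=\bbN$) the natural map $(B^{(I)})^{\wedge}\otimes_B C\to (C^{(I)})^{\wedge}$ is not surjective: the element $(x^n)_{n\in\bbN}$ lies in the $\ffrm_C$-adic completion $(C^{(\bbN)})^{\wedge}$, but every element of the image has all its entries in a single finitely generated $\cO$-submodule of $C$, which the $x^n$ do not. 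So the identification of the derived $\ffrm_C$-adic completion of $L_{B/A}\Lotimes_B C$ with $C\Lotimes_B L^{\an}_{B/A}$ is not a formal consequence of choosing a free resolution; it is essentially the nontrivial content packaged in \cite[Theorem 7.1.33]{GR}, and your closing appeal to ``the correct results'' of \cite[Chapter 7]{GR} lands precisely on the statement you were trying to re-derive. The repair is immediate: cite that theorem for the analytic transitivity triangle and keep the remainder of your argument ($\RHom$, adjunction, long exact sequence), which is correct and is exactly what the paper does.
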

\begin{proof}
This follows from the distinguished triangle
\[
C\Lotimes_B L_{B/A}^{\an}\to L_{C/A}^{\an}\to L_{C/B}^{\an}\to C\Lotimes_B L_{B/A}^{\an}[1]
\]
from \cite[Theorem 7.1.33]{GR}.
\end{proof}

\begin{proposition}\label{prop:cDer=Der}
If $A\to B$ is a continuous map of rings in $\CNLO$ which makes $B$ into a finite $A$-module then $L_{B/A}^{\an}\cong L_{B/A}$, and so $\cDer^i_A(B,M)\cong \Der^i_A(B,M)$ for all $i\ge 0$ and all $M\in \Mod_B$.
\end{proposition}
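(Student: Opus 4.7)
The plan is to reduce the proposition to a single quasi-isomorphism $L_{B/A} \simeq L_{B/A}^{\an}$ in $D^-(\Mod_B)$. Once this is established, applying $\RHom_B(-,M)$ to both sides and taking cohomology yields $\Der^i_A(B,M) \cong \cDer^i_A(B,M)$ for every $B$-module $M$ and every $i\ge 0$, since derived Hom only sees the complex up to quasi-isomorphism.

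The first key input is that each cohomology module $H^j(L_{B/A})$ is finitely generated as a $B$-module. This is a standard property of the cotangent complex for a finitely presented map of Noetherian rings: here $A$ is Noetherian and $B$ is a finite, hence finitely presented, $A$-algebra, so one can represent $L_{B/A}$ by a bounded-above complex derived from a simplicial resolution by polynomial $A$-algebras, whose cohomology modules are finitely generated over $B$ (cf.\ the standard references on the cotangent complex in the Noetherian setting). The second key input is that, since $B \in \CNL_\cO$ is Noetherian and $\ffrm_B$-adically complete, any finitely generated $B$-module $N$ is itself $\ffrm_B$-adically complete, and moreover the higher left-derived completion functors $L^i\wedge$ vanish on $N$ for $i>0$; this is the statement that finitely generated modules over a Noetherian ring are derived $\ffrm_B$-adically complete. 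Consequently, the derived completion functor $\wedge$ of \cite[Chapter 7]{GR} acts as the identity on any bounded-above complex of $B$-modules with finitely generated cohomology, as one sees by truncating and proceeding by induction on the length of the cohomological support (using the evident two-out-of-three property in distinguished triangles).

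Combining these two inputs, the canonical morphism $L_{B/A} \to L_{B/A}^{\an}=(L_{B/A})^{\wedge}$ is a quasi-isomorphism, which completes the proof. The main obstacle is the second input: one must verify that the specific construction of derived $\ffrm_B$-adic completion used in \cite[Chapter 7]{GR} to define $L^{\an}_{B/A}$ indeed agrees, on complexes of $B$-modules with finitely generated cohomology, with the identity functor up to quasi-isomorphism. Granting the foundational material of \emph{loc.\ cit.}, this is a formal consequence of the finite generation of the $H^j(L_{B/A})$ and the completeness of $B$, and the conclusion $\cDer^i_A(B,M) \cong \Der^i_A(B,M)$ then follows immediately.
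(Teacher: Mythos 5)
Your overall strategy is the right one and is close in spirit to the paper's: everything comes down to showing the canonical map $L_{B/A}\to (L_{B/A})^{\wedge}$ is a quasi-isomorphism, and your two inputs (finite generation of the cohomology of $L_{B/A}$ over the Noetherian ring $B$, and the fact that finitely generated modules over the complete ring $B$ are already complete with vanishing higher left-derived completion) are both correct. The problem is the step where you pass from these inputs to the conclusion ``by truncating and proceeding by induction on the length of the cohomological support.'' That induction presupposes that $L_{B/A}$ has cohomology in only finitely many degrees, and this is false in exactly the situations of interest here: for a finite map $A\to B$ in $\CNLO$ that is not a local complete intersection (e.g.\ $B=A/I$ with $I$ not generated by a regular sequence), the cotangent complex has nonvanishing cohomology in infinitely many negative degrees (by the Quillen--Avramov theorem, boundedness of $L_{B/A}$ would force the map to be l.c.i.). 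So the finite d\'evissage through distinguished triangles never terminates, and as written the argument does not establish the quasi-isomorphism $L_{B/A}\simeq L_{B/A}^{\an}$.

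The gap is fixable, and the cleanest repair is essentially what the paper does: since $A\to B$ is finite (hence of finite type) and both rings are Noetherian, $L_{B/A}$ is not merely a complex with finitely generated cohomology but is \emph{pseudo-coherent}, i.e.\ quasi-isomorphic to a bounded-above complex $\cL^\bullet$ of finite \emph{free} $B$-modules (this is the content of the reference to \cite[6.11]{Iyengar}). Such a complex is a complex of projectives, so it computes the left-derived completion termwise, and termwise $\ffrm_B$-adic completion is the identity on finite free modules over the complete ring $B$; hence $(L_{B/A})^{\wedge}\cong(\cL^\bullet)^{\wedge}=\cL^\bullet\cong L_{B/A}$. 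Alternatively, you could salvage your own route by replacing the finite induction with the hyper-derived functor spectral sequence $E^2_{pq}=L_p\wedge\bigl(H_q(L_{B/A})\bigr)\Rightarrow L_{p+q}\wedge(L_{B/A})$, which converges because $L_{B/A}$ is homologically bounded below, and which degenerates by your second input; but you must say this (or the pseudo-coherence argument) explicitly rather than appeal to a finite-length induction. Once the quasi-isomorphism is in place, your final step of applying $\RHom_B(-,M)$ and taking cohomology is exactly right.
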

\begin{proof}
As the map $A\to B$ is finite, it is finite type (and not merely \emph{topologically} finite type). By \cite[6.11]{Iyengar}, $L_{B/A}$ is quasi-isomorphic to a bounded above complex of finite free $B$-modules $\cL^\bullet$. Using $\cL^\bullet$ to compute $(L_{B/A})^\wedge$ we get
\[
L_{B/A}^{\an} = (L_{B/A})^\wedge \cong (\cL^\bullet)^\wedge = \cL^{\bullet} \cong L_{B/A},
\]
as finitely generated $B$-modules are already $\ffrm_B$-adically complete. The last claim now follows from the definition of $\cDer^i_A(B,M)$ and $\Der^i_A(B,M)$.
\end{proof}

\begin{proposition}\label{prop:Der0}
If $A\to B$ is a continuous map of rings in $\CNLO$. Then the module $\cOmega_{B/A} = \varprojlim \Omega_{(B/\ffrm_B^n)/A}$ of continuous K\"ahler differentials defined in \cite[Section 7.1]{BKM} is the $\ffrm_B$-adic completion of $\Omega_{B/A}$ and we have $\cDer^0_A(B,M)\cong \Hom_A(\cOmega_{B/A},M)$ for any $B$-module $M$.
\end{proposition}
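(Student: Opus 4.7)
The plan is to establish the two assertions of the proposition separately, handling the identification of $\cOmega_{B/A}$ with the $\ffrm_B$-adic completion of $\Omega_{B/A}$ first, and then using this to compute $\cDer^0_A(B,M)$.

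For the first assertion, I would apply the standard conormal (second fundamental) exact sequence
\[
I/I^2 \xrightarrow{d} \Omega_{B/A}\otimes_B B/I \to \Omega_{(B/I)/A} \to 0
\]
to $I = \ffrm_B^n$. Since $d(\ffrm_B^n) \subseteq \ffrm_B^{n-1}\Omega_{B/A}$, this produces a surjection $\Omega_{B/A}/\ffrm_B^n\Omega_{B/A} \onto \Omega_{(B/\ffrm_B^n)/A}$ whose kernel is contained in $\ffrm_B^{n-1}\Omega_{B/A}/\ffrm_B^n\Omega_{B/A}$, and hence is killed by the transition map to the level $n-1$ quotient. A straightforward cofinality argument on inverse systems then gives
\[
\cOmega_{B/A} = \varprojlim_n \Omega_{(B/\ffrm_B^n)/A} \;\cong\; \varprojlim_n \Omega_{B/A}/\ffrm_B^n\Omega_{B/A} \;=\; \widehat{\Omega_{B/A}}.
\]

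For the second assertion, the key fact is that $L_{B/A}$ is concentrated in non-positive cohomological degrees with $H^0(L_{B/A}) = \Omega_{B/A}$, and this vanishing range is preserved under the left-derived completion functor, so $L_{B/A}^{\an}$ is likewise in non-positive degrees. For any complex $K^\bullet$ concentrated in non-positive degrees and any $B$-module $M$, one has the standard identification $H^0(\RHom_B(K^\bullet, M)) = \Hom_B(H^0(K^\bullet), M)$, which reduces the problem to identifying $H^0(L_{B/A}^{\an})$ with $\cOmega_{B/A}$. Choosing a simplicial resolution $P_\bullet \to B$ computing $L_{B/A}$, the top cohomology of the derived completion is computed as a cokernel, and the first assertion (together with vanishing of the relevant $\varprojlim^1$ on the pro-system $\{\Omega_{(B/\ffrm_B^n)/A}\}_n$ whose terms are finitely generated over the Artinian rings $B/\ffrm_B^n$) implies this cokernel agrees with the classical completion $\cOmega_{B/A}$.

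The main obstacle is the identification $H^0(L_{B/A}^{\an}) \cong \cOmega_{B/A}$: because $\Omega_{B/A}$ is typically not finitely generated over $B$ (already for $B = \cO[[x]]$, $A = \cO$), its derived $\ffrm_B$-adic completion need not coincide with its classical completion a priori, and one could be forced to contend with a $\varprojlim^1$ term. The resolution is to pass to the inverse system $\{\Omega_{(B/\ffrm_B^n)/A}\}_n$, where each term is finitely generated over the Artinian local ring $B/\ffrm_B^n$, so that Mittag--Leffler applies and the first part of the proposition forces the two completions to agree in degree zero.
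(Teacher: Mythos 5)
Your treatment of the first assertion is correct, and it is a genuinely different (and arguably cleaner) route than the paper's: the paper shows $\Omega_{B/A}\otimes_B B/\ffrm_B^k\cong \Omega_{(B/\ffrm_B^n)/A}\otimes_B B/\ffrm_B^k$ for $n>k$ and passes to inverse limits using that $\cOmega_{B/A}$ is finite over $B$, whereas your conormal-sequence argument exhibits a pro-isomorphism between $\{\Omega_{B/A}/\ffrm_B^n\Omega_{B/A}\}_n$ and $\{\Omega_{(B/\ffrm_B^n)/A}\}_n$ directly. The problem lies in the second assertion.

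The step that does not hold up is the identification $H^0(L^{\an}_{B/A})\cong\cOmega_{B/A}$. Writing $L^{\an}_{B/A}$ as the termwise $\ffrm_B$-adic completion of a bounded-above complex $P^\bullet$ of free $B$-modules, the comparison between $H^0$ of the completion and the completion of $H^0$ is governed by the Milnor sequence for $\varprojlim_n\bigl(P^\bullet\otimes_B B/\ffrm_B^n\bigr)$:
\[
0\to{\varprojlim_n}^{1}\,H^{-1}\bigl(L_{B/A}\Lotimes_B B/\ffrm_B^n\bigr)\to H^0\bigl(L^{\an}_{B/A}\bigr)\to \varprojlim_n\bigl(\Omega_{B/A}\otimes_B B/\ffrm_B^n\bigr)\to 0 .
\]
So the obstruction is a ${\varprojlim}^1$ on the system $\{H^{-1}(L_{B/A}\Lotimes_B B/\ffrm_B^n)\}_n$, not on $\{\Omega_{(B/\ffrm_B^n)/A}\}_n$ (a ${\varprojlim}^1$ of the latter would only affect $H^1$, which is irrelevant here). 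Each $H^{-1}(L_{B/A}\Lotimes_B B/\ffrm_B^n)$ is an extension of $\Tor_1^B(\Omega_{B/A},B/\ffrm_B^n)$ by $H^{-1}(L_{B/A})\otimes_B B/\ffrm_B^n$, and since $B$ is only topologically of finite type over $A$ (already for $B=\cO[[x]]$, $A=\cO$) neither $\Omega_{B/A}$ nor $H^{-1}(L_{B/A})$ is finitely generated; so Mittag--Leffler cannot be extracted from finiteness over the Artinian quotients, and your cited system is the wrong one in any case. Moreover the general principle you are implicitly invoking is false: for the naive left-derived completion, the zeroth homology surjects onto the classical completion with kernel a ${\varprojlim}^1$ of torsion submodules (already for $p$-adic completion of abelian groups this kernel is $\varprojlim^1 M[p^n]$, which can be nonzero), so genuine input specific to the cotangent complex is needed at exactly this point.

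The paper supplies that input by citing \cite[Lemma 7.1.27(iii)]{GR}. If you want a self-contained argument, the workable route is not to commute $H^0$ with the derived completion, but to factor $A\to B$ through $P=A[[x_1,\dots,x_n]]$ with kernel $J$, and use the analytic transitivity triangle \cite[Theorem 7.1.33]{GR} together with $L^{\an}_{P/A}=\cOmega_{P/A}[0]$ \cite[Proposition 7.1.29]{GR} and $L^{\an}_{B/P}\cong L_{B/P}$ (Proposition \ref{prop:cDer=Der}); the resulting long exact sequence identifies $H^0(L^{\an}_{B/A})$ with $\coker\bigl(J/J^2\to\cOmega_{P/A}\otimes_P B\bigr)$, which one then matches with $\cOmega_{B/A}$ using the continuous conormal sequence (or your part-one argument applied levelwise).
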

\begin{proof}
For the first claim, we argue as in \cite[Lemma 7.1]{BKM} (and note that the assumption that $\cR$ is finitely generated over $A$ in that lemma was used only in the last step, to conclude that $\Omega_{\cR/A}$ was finitely generated over $A$). Specifically, for any $n>k$ we have $\Omega_{B/A}/\ffrm_B^k\Omega_{B/A} = \Omega_{B/A}\otimes_B B/\ffrm_B^k\cong \Omega_{(B/\ffrm_B^n)/A}\otimes B/\ffrm_B^k$ and so taking inverse limits gives
\[
\Omega_{B/A}/\ffrm_B^k\Omega_{B/A}
\cong\invlim_n\left(\Omega_{(B/\ffrm_B^n)/A}\otimes_B B/\ffrm_B^k\right)
\cong\invlim_n\left(\Omega_{(B/\ffrm_B^n)/A}\right)\otimes_B B/\ffrm_B^k = \cOmega_{B/A}\otimes_B B/\ffrm_B^k.
\]
Taking inverse limits again and using the fact that $\cOmega_{B/A}$ is finite over $B$, and hence $\ffrm_B$-adically complete gives 
\[\cOmega_{B/A}\cong \invlim\cOmega_{B/A}\otimes_B B/\ffrm_B^k\cong \invlim_k\Omega_{B/A}/\ffrm_B^k\Omega_{B/A}\]
as desired.

In particular, this shows that the module $\cOmega_{B/A}$ is simply the module $\Omega_{B/A}^{\an} = (\Omega_{B/A})^{\wedge}$ from \cite{GR}, and so the second claim follows from \cite[Lemma 7.1.27(iii)]{GR} and the definition of $\cDer^i_A(B,M)$.
\end{proof}

We will also need the following specific computations of continuous Andr\'e--Quillen cohomology:

\begin{lemma}\label{lem:Der power series}
For any $n\ge 0$, and any $\cO[[x_1,\ldots,x_n]]$-module $M$, we have
\[
\cDer^i_\cO(\cO[[x_1,\ldots,x_n]],M) = \begin{cases}
M^n & i=0\\
0 & i\ge 1.
\end{cases}
\]
\end{lemma}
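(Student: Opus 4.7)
The proof naturally splits into the two cases $i=0$ and $i \ge 1$. For $i=0$, Proposition~\ref{prop:Der0} gives
\[
\cDer^0_\cO(B,M) \cong \Hom_B(\cOmega_{B/\cO},M),
\]
so it suffices to check that for $B = \cO[[x_1,\ldots,x_n]]$ the module $\cOmega_{B/\cO}$ of continuous Kähler differentials is the free $B$-module of rank $n$ on $dx_1,\ldots,dx_n$. This is a standard computation: any continuous $\cO$-derivation $B \to M$ is determined by its values on the topological generators $x_i$ (via the formal power series version of the partial derivative), and any assignment of $n$ elements of $M$ to the $x_i$ extends uniquely to such a derivation. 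This gives $\cDer^0_\cO(B,M) \cong \Hom_B(B^n,M) \cong M^n$.

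For $i \ge 1$, the goal is to show that the analytic cotangent complex $L^{\an}_{B/\cO}$ is concentrated in degree zero (and hence quasi-isomorphic to the free module $\cOmega_{B/\cO} \cong B^n$ in that degree), from which the vanishing of $H^i\RHom_B(L^{\an}_{B/\cO},M) = \cDer^i_\cO(B,M)$ for $i \ge 1$ follows. The conceptual input is that $B$ is formally smooth over $\cO$ in $\CNL_\cO$. I would carry this out by induction on $n$. The base case $n=0$ is trivial since $L^{\an}_{\cO/\cO}=0$. For the inductive step, set $A = \cO[[x_1,\ldots,x_{n-1}]]$ and $B = A[[x_n]]$, and apply Proposition~\ref{prop:les} to the composition $\cO \to A \to B$. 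The inductive hypothesis makes $\cDer^i_\cO(A,M)$ vanish for $i \ge 1$, so the long exact sequence reduces everything to the one-variable statement $\cDer^i_A(A[[x]],M) = 0$ for $i \ge 1$.

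For this residual one-variable case I would appeal to the formal-smoothness criterion implicit in \cite[Chapter 7]{GR} (the analytic analogue of the classical fact that a smooth morphism has cotangent complex concentrated in degree zero), or argue directly by using the transitivity triangle of \cite[Theorem 7.1.33]{GR} together with a Koszul-style free resolution of $A[[x]]$ as an $A[[x,y]]$-module via the regular element $x-y$, and computing $L^{\an}_{A[[x]]/A}$ through the resulting analytic base change.

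The main obstacle — and the reason one cannot simply quote the classical computation for polynomial algebras — is that for the formal power series ring the uncompleted $\Omega_{B/\cO}$ is \emph{not} a free $B$-module of rank $n$; it only becomes free of rank $n$ after $\ffrm_B$-adic completion. Thus one must be careful to work throughout with the analytic (derived $\ffrm_B$-adically completed) cotangent complex, and the substantive content of the lemma is precisely the compatibility of formal smoothness with this analytic formalism.
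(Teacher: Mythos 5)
Your proposal is correct, and in its primary form it is essentially the paper's argument: the paper proves both cases at once by quoting \cite[Proposition 7.1.29]{GR}, which says that $L^{\an}_{\cO[[x_1,\ldots,x_n]]/\cO} = \cOmega_{\cO[[x_1,\ldots,x_n]]/\cO}[0] = \cO[[x_1,\ldots,x_n]]^n[0]$, after which $\RHom$ into $M$ immediately gives $M^n$ in degree $0$ and $0$ above. Your appeal to the ``formal-smoothness criterion'' of \cite[Chapter 7]{GR} is exactly that citation, and your $i=0$ computation is subsumed in it (it is also what Proposition \ref{prop:Der0}, via \cite[Lemma 7.1.27(iii)]{GR}, records), so the separate treatment of $i=0$, the induction on $n$ via Proposition \ref{prop:les}, and the reduction to one variable are all correct but more work than the paper needs. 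Your alternative hands-on route for the one-variable case is a legitimate way to reprove the GR input, but as sketched it is loose on one point: besides the transitivity triangle you need an analytic base-change statement, to identify $L^{\an}_{A[[x,y]]/A[[x]]}$ (for the inclusion $A[[x]]\to A[[x,y]]=A[[x]]\,\widehat\otimes_A\,A[[x]]$) with the base change of $L^{\an}_{A[[x]]/A}$; combined with the section $A[[x]]\to A[[x,y]]\xrightarrow{\ \mu\ }A[[x]]$ and $L^{\an}_{A[[x]]/A[[x]]}=0$, transitivity gives $L^{\an}_{A[[x]]/A[[x,y]]}\simeq L^{\an}_{A[[x]]/A}[1]$, while the Koszul computation for the kernel $(x-y)$, which is generated by a regular element (here Proposition \ref{prop:cDer=Der} applies since $\mu$ is surjective, hence finite), gives $L^{\an}_{A[[x]]/A[[x,y]]}\simeq A[[x]][1]$; comparing yields the desired concentration in degree $0$. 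So either buy the GR proposition outright, as the paper does, or be explicit about the base-change input if you want the self-contained version.
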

\begin{proof}
By \cite[Proposition 7.1.29]{GR} we have $L^{\an}_{\cO[[x_1,\ldots,x_n]]/\cO} = \cOmega_{\cO[[x_1,\ldots,x_n]]/\cO}[0] = \cO[[x_1,\ldots,x_n]]^n[0]$ and so
\[\RHom_{\cO[[x_1,\ldots,x_n]]}(L^{\an}_{\cO[[x_1,\ldots,x_n]]/\cO},M) =\RHom_{\cO[[x_1,\ldots,x_n]]}(\cO[[x_1,\ldots,x_n]]^n[0],M) = M^n[0]\]
so the claim follows.
\end{proof}

\begin{lemma}\label{lem:Der quotient}
If $A$ is a ring and $B= A/I$ for some ideal $I\subseteq A$, then for any $B$-module $M$, $\cDer^0_A(B,M) = 0$ and $\cDer^1_A(B,M) = \Hom_B(I/I^2,M)$.
\end{lemma}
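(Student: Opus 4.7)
The plan is to combine the hypothesis that $A \to B$ is surjective with the classical identification of the truncated cotangent complex of a quotient. Since $\cDer$ is only defined for algebras in $\CNLO$, I interpret the lemma as implicitly assuming $A, B \in \CNLO$.

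First I would observe that $A \to B = A/I$ is a continuous surjection making $B$ a cyclic, hence finite, $A$-module, so Proposition~\ref{prop:cDer=Der} applies and gives $L_{B/A}^{\an} \cong L_{B/A}$. Consequently $\cDer^i_A(B, M) = \Der^i_A(B, M)$, and it suffices to compute the classical Andr\'e--Quillen cohomology. For the vanishing of $\cDer^0_A(B, M)$, I would apply Proposition~\ref{prop:Der0} to identify it with $\Hom_B(\cOmega_{B/A}, M)$; since $A \to B$ is surjective we have $\Omega_{B/A} = 0$ and hence also $\cOmega_{B/A} = 0$, so the group vanishes.

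For the computation of $\cDer^1$, I would invoke the classical fact that for a surjection $A \onto B = A/I$ the cotangent complex satisfies $H^0(L_{B/A}) = \Omega_{B/A} = 0$ and $H^{-1}(L_{B/A}) = I/I^2$, so that the canonical truncation satisfies $\tau_{\ge -1} L_{B/A} \cong (I/I^2)[1]$ in $D^-(\Mod_B)$. Applying $\RHom_B(-, M)$ to the distinguished triangle
\[
\tau_{\le -2} L_{B/A} \longrightarrow L_{B/A} \longrightarrow \tau_{\ge -1} L_{B/A} \longrightarrow \tau_{\le -2} L_{B/A}[1]
\]
and taking the associated long exact sequence in cohomology, the terms $\RHom^i_B(\tau_{\le -2} L_{B/A}, M)$ vanish for $i \le 1$ by the hypercohomology spectral sequence
\[
E_2^{p,q} = \Ext^p_B(H^{-q}(\tau_{\le -2} L_{B/A}), M) \Longrightarrow \RHom^{p+q}_B(\tau_{\le -2} L_{B/A}, M),
\]
whose $E_2$-page is concentrated in $q \ge 2$. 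The long exact sequence therefore collapses to an isomorphism
\[
\Der^1_A(B, M) \;\cong\; \RHom^1_B((I/I^2)[1], M) \;=\; \Hom_B(I/I^2, M),
\]
which is the desired identification.

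The only non-routine ingredient is the classical identification $\tau_{\ge -1} L_{B/A} \cong (I/I^2)[1]$ for a surjection; this is standard and can be proved directly by using $A$ itself (as the free $A$-algebra on the empty set of generators) as the bottom layer of a simplicial resolution of $B$ over $A$, so that the naive cotangent complex reads $[I/I^2 \to \Omega_{A/A} \otimes_A B] = [I/I^2 \to 0]$ in degrees $-1, 0$. The rest of the proof is a routine manipulation of truncation triangles and hypercohomology spectral sequences.
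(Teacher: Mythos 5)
Your proof is correct and follows essentially the same route as the paper: the decisive step is the reduction $\cDer^i_A(B,M)\cong\Der^i_A(B,M)$ via Proposition~\ref{prop:cDer=Der} (using that $B=A/I$ is finite over $A$), exactly as in the paper's proof. The only difference is that where the paper then simply cites \cite[6.12]{Iyengar} for the classical facts $\Der^0_A(B,M)=0$ and $\Der^1_A(B,M)=\Hom_B(I/I^2,M)$, you rederive them from the identification $\tau_{\ge -1}L_{B/A}\cong (I/I^2)[1]$ together with the truncation triangle and a hypercohomology spectral sequence, which is a valid (if longer) way of supplying the same standard input.
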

\begin{proof}
As $B=A/I$ is clearly finite over $A$, Proposition \ref{prop:cDer=Der} gives $\cDer^i_A(B,M) = \Der^i(B,M)$ for all $i\ge 0$ and all $M$. The claim now follows from \cite[6.12]{Iyengar}.
\end{proof}
 
For the remainder of this section, we always treat $E/\cO$ as an $R_\theta$-module (and hence as an $R$-module) via $\lambda_\theta:R_\theta\to \cO$. Our main result is the following:

\begin{theorem}\label{thm:Der^1 local}
We have $\Der^1_\cO(R_\theta,E/\cO)\cong \cDer^1_\cO(R,E/\cO)$.
\end{theorem}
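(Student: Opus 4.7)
The plan is to derive the isomorphism from the long exact sequence in Proposition \ref{prop:les} applied to the triple $\cO \to R \to R_\theta$, after two preliminary computations.

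First, since $R_\theta = R/(y_1,\ldots,y_d)$ is finite over $R$, Proposition \ref{prop:cDer=Der} identifies $\cDer^i_R(R_\theta,-)$ with the classical Andr\'e--Quillen cohomology $\Der^i_R(R_\theta,-)$. Because $(y_1,\ldots,y_d)$ is an $R$-regular sequence by property \ref{prop P}, the relative cotangent complex $L_{R_\theta/R}$ is quasi-isomorphic to the free $R_\theta$-module $(y_1,\ldots,y_d)/(y_1,\ldots,y_d)^2 \cong R_\theta^d$ placed in homological degree $1$; this is the classical computation for regular immersions. Hence $\cDer^0_R(R_\theta, E/\cO) = 0$, $\cDer^1_R(R_\theta, E/\cO) \cong (E/\cO)^d$, and $\cDer^i_R(R_\theta, E/\cO) = 0$ for $i \ge 2$. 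Substituting these values into Proposition \ref{prop:les} (with $A=\cO$, $B=R$, $C=R_\theta$, $M=E/\cO$) collapses the long exact sequence to
$$0 \to \cDer^0_\cO(R_\theta, E/\cO) \to \cDer^0_\cO(R, E/\cO) \xrightarrow{\delta} (E/\cO)^d \to \cDer^1_\cO(R_\theta, E/\cO) \to \cDer^1_\cO(R, E/\cO) \to 0.$$

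The remaining task is to show that $\delta$ is surjective, for then $\cDer^1_\cO(R_\theta, E/\cO) \xrightarrow{\sim} \cDer^1_\cO(R, E/\cO)$, and a final appeal to Proposition \ref{prop:cDer=Der} (valid since $R_\theta$ is finite over $\cO$) replaces the left-hand side by $\Der^1_\cO(R_\theta, E/\cO)$. To establish surjectivity of $\delta$, use Proposition \ref{prop:Der0} and the fact that $R$ acts on $E/\cO$ through $\lambda$ to identify $\cDer^0_\cO(R, E/\cO) = \Hom_\cO(\Phi_\lambda(R), E/\cO)$ and $(E/\cO)^d = \Hom_\cO(\cO^d, E/\cO)$. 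Tracing through the connecting map of the Jacobi--Zariski triangle, $\delta$ becomes the $\Hom_\cO(-, E/\cO)$-dual of the map $\Theta : \cO^d \to \Phi_\lambda(R)$, $e_i \mapsto dy_i$, which was shown to be injective in the proof of Lemma \ref{lem:lifting reg seq}. Since $\cO$ is a discrete valuation ring, $E/\cO$ is divisible and hence an injective $\cO$-module, so $\Hom_\cO(-, E/\cO)$ is exact and converts the injection $\Theta$ into the required surjection $\delta$.

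The main technical point to treat carefully is the vanishing of $\cDer^i_R(R_\theta, E/\cO)$ for $i \ge 2$; this rests on the classical description of the cotangent complex of a regular immersion, combined with the passage from analytic to classical Andr\'e--Quillen cohomology via Proposition \ref{prop:cDer=Der}. Everything else is a formal consequence of the exactness of Hom over a DVR applied to the injectivity statement for $\Theta$ supplied by Lemma \ref{lem:lifting reg seq}.
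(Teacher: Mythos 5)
Your argument is correct, and it takes a route different from the paper's. The paper proves the theorem in two steps: first a base-change lemma identifying $\cDer^i_{S}(R,M)\cong\cDer^i_\cO(R_\theta,M)$ for all $i$ (using that $R$ is finite free over $S$, so $L_{R/S}\Lotimes_R R_\theta\cong L_{R_\theta/\cO}$), and then the Jacobi--Zariski sequence for $\cO\to S\to R$, where the surjectivity needed at the crucial spot is extracted by a rank count together with the fact that $E/\cO$ has no nontrivial finite quotients. You instead apply the Jacobi--Zariski sequence directly to $\cO\to R\to R_\theta$, feeding in the regular-immersion computation $L_{R_\theta/R}\simeq (y_1,\ldots,y_d)/(y_1,\ldots,y_d)^2[1]\cong R_\theta^d[1]$ to get $\cDer^0_R(R_\theta,E/\cO)=0$, $\cDer^1_R(R_\theta,E/\cO)\cong(E/\cO)^d$ and the vanishing in degrees $\ge 2$ that truncates the sequence; you then obtain surjectivity of the connecting map by identifying it with the $\Hom_\cO(-,E/\cO)$-dual of $\Theta\colon\cO^d\to\Phi_\lambda(R)$, whose injectivity is established in the proof of Lemma \ref{lem:lifting reg seq}, and using that $E/\cO$ is an injective $\cO$-module. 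Both packages are sound; what yours buys is that it avoids the auxiliary passage through $S$-relative cohomology and makes the surjectivity step structural (dual of an explicit injection) rather than a corank count, at the cost of invoking the cotangent complex of a quotient by a regular sequence in all degrees, a standard fact but one going beyond the paper's Lemma \ref{lem:Der quotient}, which you rightly flag as the technical hinge. Note that the inputs are ultimately the same: the injectivity of $\Theta$ in Lemma \ref{lem:lifting reg seq} is itself deduced from the finiteness of $\Phi_{\lambda_\theta}(R_\theta)$ and the rank-$d$ statement for $\Phi_\lambda(R)$, which is exactly what drives the paper's rank-count; alternatively you could close your argument the same way, since the kernel of your $\delta$ is $\Hom_\cO(\Phi_{\lambda_\theta}(R_\theta),E/\cO)$, which is finite by property \ref{prop P}.
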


This implies that $\Der^1_\cO(R_\theta,E/\cO)$ depends only on $R$ and $\lambda:R\to E/\cO$, and not on $\theta$, and so will complete the proof of Theorem \ref{thm:delta invariant}.

We first observe the following:

\begin{lemma}
For any $i\ge 0$ and any $R_\theta$-module $M$, we have 
\[\cDer^i_{S}(R,M) \cong\Der^i_{S}(R,M) \cong \Der^i_\cO(R_\theta,M)\cong \cDer^i_\cO(R_\theta,M).\]
\end{lemma}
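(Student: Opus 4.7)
The plan is to establish the three isomorphisms separately. The outer two, namely
\[
\cDer^i_{S}(R,M)\cong \Der^i_{S}(R,M) \qquad\text{and}\qquad \Der^i_\cO(R_\theta,M)\cong \cDer^i_\cO(R_\theta,M),
\]
are immediate applications of Proposition~\ref{prop:cDer=Der}: by Property~\ref{prop P} the map $\theta\colon S\into R$ makes $R$ into a finite free $S$-module, and $R_\theta=R\otimes_{S}\cO$ is finite (in fact finite free) over $\cO$. In both cases the source is finite over the base, so the analytic and ordinary cotangent complexes agree, and so do the corresponding Andr\'e--Quillen cohomology groups.

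The remaining content is the middle isomorphism $\Der^i_{S}(R,M)\cong \Der^i_\cO(R_\theta,M)$, which is an instance of flat base change for the cotangent complex. I would apply the base change formula to the cocartesian square
\[
\begin{array}{ccc} S & \longrightarrow & R \\ \downarrow & & \downarrow \\ \cO & \longrightarrow & R_\theta,\end{array}
\]
in which the left vertical map is $S\onto S/(y_1,\ldots,y_d)=\cO$ and $R_\theta=R\otimes_S\cO$. Since $R$ is flat (even free) over $S$ by Property~\ref{prop P}, one has $\Tor^S_j(R,\cO)=0$ for $j\ge 1$, so the standard flat base change theorem for the cotangent complex gives a quasi-isomorphism $L_{R_\theta/\cO}\simeq L_{R/S}\Lotimes_S\cO\simeq L_{R/S}\otimes_S\cO=L_{R/S}\otimes_R R_\theta$. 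Since $M$ is an $R_\theta$-module, extension of scalars and the derived tensor-hom adjunction yield
\[
\RHom_{R_\theta}(L_{R_\theta/\cO},M)\simeq \RHom_{R_\theta}(L_{R/S}\otimes_R R_\theta,M)\simeq \RHom_R(L_{R/S},M),
\]
and taking $H^i$ gives $\Der^i_\cO(R_\theta,M)\cong \Der^i_{S}(R,M)$, as required.

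I do not anticipate a serious obstacle here; the only subtle point is the invocation of flat base change for the cotangent complex, which needs the $\Tor$-vanishing that comes for free from the finite-free assumption in Property~\ref{prop P}. Chaining the three isomorphisms together yields the chain in the statement.
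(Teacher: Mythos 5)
Your proposal is correct and follows essentially the same route as the paper: the outer isomorphisms via Proposition~\ref{prop:cDer=Der} using finiteness of $R$ over $S$ and of $R_\theta$ over $\cO$, and the middle one via Tor-independent (flat) base change for the cotangent complex, $L_{R_\theta/\cO}\simeq L_{R/S}\Lotimes_R R_\theta$, followed by the adjunction $\RHom_{R_\theta}(L_{R/S}\Lotimes_R R_\theta,M)\simeq \RHom_R(L_{R/S},M)$. The paper cites the Stacks Project for these two steps, but the content is identical to your argument.
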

\begin{proof}
The first and last isomorphisms follow from Proposition \ref{prop:cDer=Der}, as $R$ is finite over $S$ and $R_\theta$ is finite over $\cO$.

For the second isomorphism, first note that as $R$ is a finite free $S$-module, it is a projective resolution for itself in $D(S)$, and so we have $R\Lotimes_{S}\cO = R\otimes_{S}\cO \cong R_\theta$. By \cite[\href{https://stacks.math.columbia.edu/tag/08QQ}{Lemma 08QQ}]{stacks-project} this implies that $L_{R/S}\Lotimes_{R}R_\theta\cong L_{R_\theta/\cO}$. But now \cite[\href{https://stacks.math.columbia.edu/tag/0E1W}{Lemma 0E1W}]{stacks-project} gives that
\[\RHom_{R}(L_{R/S},M) = \RHom_{R_\theta}(L_{R/S}\Lotimes_{R}R_\theta,M)\cong\RHom_{R_\theta}(L_{R_\theta/\cO},M)\]
so the claim follows by definition.
\end{proof}

So to prove Theorem \ref{thm:Der^1 local} it will suffice to prove the following:

\begin{proposition}\label{prop:Der^1_S = Der^1_O}
$\cDer^1_{S}(R,E/\cO)\cong \cDer^1_\cO(R,E/\cO)$.
\end{proposition}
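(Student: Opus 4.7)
The plan is to run the long exact sequence in continuous André--Quillen cohomology attached to the tower $\cO \to S \to R$ and reduce the statement to a simple surjectivity check that can be handled using the short exact sequence of cotangent spaces extracted in the proof of Lemma \ref{lem:lifting reg seq}.

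\textbf{Step 1 (long exact sequence).} I would apply Proposition \ref{prop:les} to $\cO \to S \to R$ with coefficients in the $R$-module $E/\cO$. Since $S = \cO[[y_1,\ldots,y_d]]$, Lemma \ref{lem:Der power series} gives $\cDer^i_\cO(S,E/\cO)=0$ for $i\ge 1$ and $\cDer^0_\cO(S,E/\cO)=(E/\cO)^d$ (using Proposition \ref{prop:Der0} to identify the $H^0$). Hence the tail of the long exact sequence reads
\[
\cDer^0_\cO(R,E/\cO)\xrightarrow{\beta}(E/\cO)^d\longrightarrow \cDer^1_S(R,E/\cO)\longrightarrow\cDer^1_\cO(R,E/\cO)\longrightarrow 0.
\]
Thus it suffices to show that $\beta$ is surjective, since then $\cDer^1_S(R,E/\cO)\to\cDer^1_\cO(R,E/\cO)$ is both injective and surjective.

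\textbf{Step 2 (identify $\beta$).} By Proposition \ref{prop:Der0} and the tensor--Hom adjunction (using that $E/\cO$ is killed by $\ker\lambda$), one has
\[
\cDer^0_\cO(R,E/\cO)\cong \Hom_R(\cOmega_{R/\cO},E/\cO)\cong \Hom_\cO(\Phi_\lambda(R),E/\cO),
\]
and under these identifications $\beta$ becomes the map $\phi\mapsto(\phi(\overline{dy_1}),\ldots,\phi(\overline{dy_d}))$, where $\overline{dy_i}$ denotes the class of $dy_i$ in $\Phi_\lambda(R)$.

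\textbf{Step 3 (use the SES from Lemma \ref{lem:lifting reg seq}).} The proof of Lemma \ref{lem:lifting reg seq} gives a right-exact sequence $\cO^d\xrightarrow{\Theta}\Phi_\lambda(R)\to \Phi_{\lambda_\theta}(R_\theta)\to 0$ with $\Theta(e_i)=\overline{dy_i}$, and the argument there shows that the finiteness of $\Phi_{\lambda_\theta}(R_\theta)$ together with the rank-$d$ equality for $\Phi_\lambda(R)$ forces $\Theta$ to be injective. Hence one has a short exact sequence of $\cO$-modules
\[
0\longrightarrow\cO^d\xrightarrow{\Theta}\Phi_\lambda(R)\longrightarrow \Phi_{\lambda_\theta}(R_\theta)\longrightarrow 0.
\]
Applying $\Hom_\cO(-,E/\cO)$ to this SES produces a six-term Ext sequence in which $\beta$ appears as the third arrow, and whose next term is $\Ext^1_\cO(\Phi_{\lambda_\theta}(R_\theta),E/\cO)$. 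Since $\cO$ is a discrete valuation ring and $E/\cO$ is a divisible $\cO$-module, $E/\cO$ is injective as an $\cO$-module, so this $\Ext^1$ vanishes. Consequently $\beta$ is surjective, completing the proof.

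\textbf{What is actually at stake.} No step is really hard once one has the distinguished triangle for $\cO\to S\to R$; the main content is the translation between André--Quillen language (needed to get a long exact sequence) and the concrete $\cO$-module picture of $\Phi_\lambda(R)$ (where the Lemma \ref{lem:lifting reg seq} input is available). The only delicate point one needs to confirm is that the boundary-type map $\beta$ is indeed the restriction $D\mapsto(D(y_1),\ldots,D(y_d))$ under the chosen identifications, which follows directly from functoriality of the cotangent complex together with the adjunction $\RHom_R(R\Lotimes_S L_{S/\cO}^{\an},-)\cong\RHom_S(L_{S/\cO}^{\an},-)$.
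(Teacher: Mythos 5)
Your proof is correct and follows essentially the same route as the paper: the long exact sequence of Proposition \ref{prop:les} for $\cO\to S\to R$, with the $S$-terms computed by Lemma \ref{lem:Der power series}, reducing everything to surjectivity of the map $\cDer^0_\cO(R,E/\cO)\to (E/\cO)^d$. The only (harmless) difference is the last step: the paper gets surjectivity by a corank count using finiteness of $\cDer^0_S(R,E/\cO)$ and the fact that $E/\cO$ has no nontrivial finite quotients, whereas you identify the map as the $E/\cO$-dual of the injection $\Theta:\cO^d\into\Phi_\lambda(R)$ from Lemma \ref{lem:lifting reg seq} and invoke injectivity of $E/\cO$ as an $\cO$-module --- the same inputs, packaged slightly differently.
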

\begin{proof}
Applying Proposition \ref{prop:les} to the ring maps $\cO\to S\to R$ gives an exact sequence:
\begin{align*}
0 &\to \cDer^0_{S}(R,E/\cO)\to \cDer^0_\cO(R,E/\cO)\to \cDer^0_\cO(S,E/\cO)\\
&\to \cDer^1_{S}(R,E/\cO)\to \cDer^1_\cO(R,E/\cO)\to \cDer^1_\cO(S,E/\cO)
\end{align*}
By Lemma \ref{lem:Der power series}, $\cDer^0_\cO(S,E/\cO) = (E/\cO)^d$ and $\cDer^1_\cO(S,E/\cO) = 0$.

But now by the assumption that $\lambda:R \to \cO$ represents a smooth point of $\Spec R[1/\varpi]$ we get that $\cOmega_{R/\cO}\otimes_\lambda\cO$ has rank $d$ as an $\cO$-module (as in \cite[Theorem 7.16]{BKM}), and so
\[\cDer^0_\cO(R,E/\cO) = \Hom_{R}(\cOmega_{R/\cO},E/\cO) = \Hom_{\cO}(\cOmega_{R/\cO}\otimes_\lambda\cO,E/\cO) = (E/\cO)^d\oplus G,\]
for some finite group $G$. Also as $\Phi_{\lambda_\theta}(R_\theta)=\cOmega_{R^\theta/\cO}\otimes_{\lambda_\theta}\cO$ is finite (as $\theta$ satisfies \ref{prop P}), \[\cDer^0_{S}(R,E/\cO) \cong \cDer^0_\cO(R_\theta,E/\cO) = \Hom_{\cO}(\cOmega_{R_\theta/\cO},E/\cO) = \Hom_{\cO}(\Phi_{\lambda_\theta}(R_\theta),E/\cO)\]
is finite as well. Now the exact sequence simplifies to
\[0\to \cDer^0_{S}(R,E/\cO) \to (E/\cO)^d\oplus G\to (E/\cO)^d\to  \cDer^1_{S}(R,E/\cO)\to \cDer^1_\cO(R,E/\cO)\to 0.\]
But comparing ranks in the sequence $0\to \cDer^0_{S}(R,E/\cO) \to (E/\cO)^d\oplus G\to (E/\cO)^d$ implies that $(E/\cO)^d\oplus G\to (E/\cO)^d$ has finite cokernel, and hence must be surjective, as $E/\cO$ does not have any nontrivial finite quotients. This implies that the map $\cDer^1_{S}(R, E/\cO)\to \cDer^1_\cO(R,E/\cO)$ is indeed an isomorphism. This completes the proof of Theorem \ref{thm:Der^1 local}.
\end{proof}

We note that in Theorem \ref{thm:Der^1 local} and Corollary \ref{well defined}, we have proved that
\begin{equation}
\cDer_\cO^1(R,E/\cO)\cong \cDer_\cO^1(R_\theta,E/\cO).
\end{equation}
\begin{equation}
C_{1,\lambda}(R) = C_{1,\lambda_\theta}(R_\theta).
\end{equation}

In order to actually compute $\delta_{\lambda_\theta}(R_\theta)$, we will need a method for computing $\cDer^1_\cO(R,E/\cO)$. For this, take any triple $(\Rt,I,\varphi)$ satisfying \ref{prop CI}. Then we now have the following generalization of \eqref{AQ}:

\begin{theorem}\label{thm:Der exact sequence}
There is a $4$-term exact sequence:
\[0\to \Hom_{R}(\cOmega_{R/\cO},E/\cO)\to \Hom_{\Rt}(\cOmega_{\Rt/\cO},E/\cO) \to \Hom_{R}(I/I^2,E/\cO) \to \cDer^1_\cO(R,E/\cO)\to 0\]
\end{theorem}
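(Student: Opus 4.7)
The plan is to derive the claimed 4-term exact sequence by concatenating two applications of Proposition~\ref{prop:les}, the long exact sequence for continuous Andr\'e--Quillen cohomology, together with a vanishing input coming from the complete intersection structure on $\Rt$.

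First, apply Proposition~\ref{prop:les} to the tower $\cO\to\Rt\to R$, viewing $E/\cO$ as an $R$-module through $\lambda$. This produces
\[
0\to\cDer^0_{\Rt}(R,E/\cO)\to\cDer^0_\cO(R,E/\cO)\to\cDer^0_\cO(\Rt,E/\cO)\to\cDer^1_{\Rt}(R,E/\cO)\to\cDer^1_\cO(R,E/\cO)\to\cDer^1_\cO(\Rt,E/\cO).
\]
Proposition~\ref{prop:Der0} identifies the two $\cO$-relative degree-zero terms with $\Hom_R(\cOmega_{R/\cO},E/\cO)$ and $\Hom_{\Rt}(\cOmega_{\Rt/\cO},E/\cO)$, while Lemma~\ref{lem:Der quotient} applied to $\varphi\colon\Rt\onto R$ with kernel $I$ gives $\cDer^0_{\Rt}(R,E/\cO)=0$ and $\cDer^1_{\Rt}(R,E/\cO)=\Hom_R(I/I^2,E/\cO)$. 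Consequently the desired 4-term exactness reduces to the vanishing $\cDer^1_\cO(\Rt,E/\cO)=0$.

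To verify this vanishing, use the complete intersection hypothesis to choose a presentation $\Rt=S/(f_1,\ldots,f_m)$ with $S=\cO[[x_1,\ldots,x_n]]$ and $(f_1,\ldots,f_m)$ a regular sequence; the relative dimension $d$ forces $m=n-d$. Applying Proposition~\ref{prop:les} to $\cO\to S\to\Rt$, Lemma~\ref{lem:Der power series} gives $\cDer^1_\cO(S,E/\cO)=0$, while Lemma~\ref{lem:Der quotient} combined with the freeness $I_S/I_S^2\cong\Rt^m$ (a standard consequence of the regular sequence property, using the Koszul complex) gives $\cDer^1_S(\Rt,E/\cO)\cong(E/\cO)^m$. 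The tail of the long exact sequence then reads
\[
(E/\cO)^n\xrightarrow{J}(E/\cO)^m\to\cDer^1_\cO(\Rt,E/\cO)\to 0,
\]
and a standard identification of the connecting map via the cotangent triangle $L_{S/\cO}^{\an}\otimes_S\Rt\to L_{\Rt/\cO}^{\an}\to L_{\Rt/S}^{\an}$ shows that $J$ is the reduction of the Jacobian matrix $(\partial f_i/\partial x_j)$ evaluated at $\lambdat$. The formal smoothness of $\Spec\Rt[1/\varpi]$ at $\lambdat$ of dimension $d=n-m$ translates, via the cotangent exact sequence for the regular immersion $\Spec\Rt\hookrightarrow\Spec S$, into the statement that $J\otimes_\cO E\colon E^n\to E^m$ is surjective.

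Finally, I would promote this generic surjectivity to surjectivity over $E/\cO$. The cokernel $Q$ of $J\colon\cO^n\to\cO^m$ is a finitely generated $\cO$-module with $Q\otimes_\cO E=\coker(J\otimes E)=0$, so $Q$ is finite $\cO$-torsion. Right exactness of tensor product and $\varpi$-divisibility of $E/\cO$ then give $\coker(J\otimes_\cO E/\cO)=Q\otimes_\cO E/\cO=0$, whence $\cDer^1_\cO(\Rt,E/\cO)=0$. Splicing this vanishing into the first long exact sequence yields the 4-term exact sequence of the theorem. The main technical obstacle is the interplay between the two steps: identifying the connecting map explicitly with the integral Jacobian, and then bridging rational and integral surjectivity; both rely essentially on the complete intersection structure of $\Rt$ and the generic formal smoothness at $\lambdat$ guaranteed by Property~\ref{prop CI}.
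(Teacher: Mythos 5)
Your proposal is correct and follows essentially the same route as the paper: apply Proposition \ref{prop:les} to $\cO\to\Rt\to R$, identify the outer terms via Proposition \ref{prop:Der0} and Lemma \ref{lem:Der quotient}, reduce the theorem to the vanishing of $\cDer^1_\cO(\Rt,E/\cO)$, and prove that vanishing from a complete intersection presentation of $\Rt$ together with Lemmas \ref{lem:Der power series} and \ref{lem:Der quotient}. The only divergence is the final step of the vanishing argument: the paper sidesteps any identification of the connecting map by noting that smoothness at $\lambdat$ gives $\cDer^0_\cO(\Rt,E/\cO)\cong(E/\cO)^d\oplus(\text{finite})$ and then comparing coranks, using that $E/\cO$ has no nontrivial finite quotients, whereas you identify the connecting map with the $E/\cO$-dual of the Jacobian (conormal) map $J/J^2\to\cOmega_{S/\cO}\otimes_S\Rt$ and deduce surjectivity from smoothness at $\lambdat$ plus divisibility of $E/\cO$; both are valid and rest on the same inputs.
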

\begin{proof}
Applying Proposition \ref{prop:les} to the ring maps $\cO\to \Rt \to R$ gives an exact sequence:
\begin{align*}
0 &\to \cDer^0_{\Rt}(R,E/\cO)\to \cDer^0_\cO(R,E/\cO)\to \cDer^0_\cO(\Rt,E/\cO)\\
  &\to \cDer^1_{\Rt}(R,E/\cO)\to \cDer^1_\cO(R,E/\cO)\to \cDer^1_{\cO}(\Rt,E/\cO)
\end{align*}
and Lemma \ref{lem:Der quotient} implies that $\cDer^0_{\Rt}(R,E/\cO) = 0$ and $\cDer^1_{\Rt}(R,E/\cO) = \Hom_{R}(I/I^2,E/\cO)$, so it's enough to prove that $ \cDer^1_\cO(\Rt,E/\cO) = 0$ (since by Proposition \ref{prop:Der0}, $\cDer^0_\cO(R,E/\cO)=\Hom_{R}(\cOmega_{R},E/\cO)$ and $\cDer^0_\cO(\Rt,E/\cO)=\Hom_{\Rt}(\cOmega_{\Rt},E/\cO)$).

Since $\Rt$ is a complete intersection, we can write $\Rt = P/J$ where $P = \cO[[x_1,\ldots,x_{d+n}]]$ and $J=(f_1,\ldots,f_n)$ is generated by a regular sequence. Applying Proposition \ref{prop:les} to the ring maps $\cO\to P\to \Rt$ gives an exact sequence:
\begin{align*}
0 &\to \cDer^0_{P}(\Rt,E/\cO)\to \cDer^0_\cO(\Rt,E/\cO)\to \cDer^0_\cO(P,E/\cO)\\
  &\to \cDer^1_{P}(\Rt,E/\cO)\to \cDer^1_\cO(\Rt,E/\cO)\to \cDer^1_\cO(P,E/\cO).
\end{align*}
Now Lemma \ref{lem:Der power series} gives $\cDer^0_\cO(P,E/\cO) = (E/\cO)^{d+n}$ and $\cDer^1_\cO(P,E/\cO) = 0$ and Lemma \ref{lem:Der quotient} gives $\cDer^0_{P}(\Rt,E/\cO) = 0$ and $\cDer^1_{P}(\Rt,E/\cO) = \Hom_{\Rt}(J/J^2,E/\cO)$. Moreover as $J$ is generated by a regular sequence of length $n$, it follows that $J/J^2\cong (\Rt)^n$ as $\Rt$-modules, and so $\cDer^1_{P}(\Rt,E/\cO) = \Hom_{\Rt}(J/J^2,E/\cO)\cong (E/\cO)^n$. Thus the above exact sequence simplifies to
\[
0 \to \cDer^0_\cO(\Rt,E/\cO)\to (E/\cO)^{n+d} \to (E/\cO)^n\to \cDer^1_\cO(\Rt,E/\cO)\to 0.
\]
But now, just as in the proof of Proposition \ref{prop:Der^1_S = Der^1_O} above, the fact that $\Spec \Rt[1/\varpi]$ is smooth of dimension $d$ at $\lambdat$ implies that $\cDer^0_\cO(\Rt,E/\cO) \cong (E/\cO)^d\oplus H$ for some finite group $H$, and so comparing ranks gives that $(E/\cO)^{n+d} \to (E/\cO)^n$ has finite cokernel, and hence is surjective. Thus $\cDer^1_\cO(\Rt,E/\cO)=0$, and so the claim follows.
\end{proof}

\subsection{Wiles defect for augmented rings $(R,\lambda) \in C_\cO$}

We make the  following definitions and in particular define the {\it Wiles defect} for tuples
$(R,\lambda) \in C_\cO$.  

\begin{definition}\label{def:key}
Let $R$ be a complete, Noetherian  local $\cO$-algebra which is Cohen--Macaulay and flat over $\cO$ of relative dimension $d$, and  with an augmentation $\lambda:R \to\cO$ such that $\Spec R[1/\varpi]$ is formally smooth at the point corresponding to $\lambda$. 

\begin{itemize}
\item Define
\[D_{1,\lambda}(R) = \frac{\log |\cDer^1_\cO(R,E/\cO)|}{\log|\cO/p|}\] (see Theorem \ref{thm:Der^1 local}).
\item Define  
 \[c_{1,\lambda}(R) = \frac{\log\left| C_{1,\lambda}(R)\right|}{\log|\cO/p|} =\frac{\log\left|\lambdat(\Rt[I])/\lambdat(\Fitt(I))\right|}{\log|\cO/p|},\]
for any triple $(\Rt,I,\varphi)$ satisfying \ref{prop CI}.
\item The \emph{Wiles defect} $ \delta_\lambda(R)$ of $R$ at $\lambda$  is defined to be 
\[\delta_\lambda(R) = D_{1,\lambda}(R)-c_{1,\lambda}(R).\]
\end{itemize}
\end{definition}

\begin{lemma}
The  numbers   \[D_{1,\lambda}(R),  c_{1,\lambda}(R),  \delta_\lambda(R)\] are well defined.
\end{lemma}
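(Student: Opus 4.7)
The plan is to bundle together the independence and finiteness statements that have already been proved in Sections \ref{sec:c2} and \ref{sec:Der1}, using Appendix \ref{app} as the dimension-one base case. Well-definedness here amounts to two things: the $\cO$-modules $\cDer^1_\cO(R,E/\cO)$ and $C_{1,\lambda}(R)$ should not depend on any auxiliary choices made in their definitions, and both should have finite $\cO$-length so that their logarithms return honest rational numbers.

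For $c_{1,\lambda}(R)$: existence of at least one triple $(\Rt,I,\varphi)$ satisfying \ref{prop CI} is Proposition \ref{prop CI exists}. Independence of the choice of such a triple is Corollary \ref{well defined}, which proceeds by choosing any $\theta\colon S\into R$ satisfying \ref{prop P} (Proposition \ref{prop ThetaExists}), invoking Theorem \ref{thm:c_1 local} to reduce to $C_{1,\lambda_\theta}(R_\theta)$, and appealing to Appendix \ref{app}, where the dimension-one version is shown to be independent of the chosen complete-intersection cover. Finiteness is immediate from the same reduction: $\Rt_\theta$ is a complete intersection of dimension one, finite free over $\cO$, with $\Phi_{\lambdat_\theta}(\Rt_\theta)$ finite by Lemma \ref{lem:lifting reg seq}, so $C_{1,\lambda_\theta}(R_\theta)$ is a finite cyclic $\cO$-module by Proposition \ref{prop:v}.

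For $D_{1,\lambda}(R)$: the module $\cDer^1_\cO(R,E/\cO)$ is intrinsic to $(R,\lambda)$, so there is nothing to check regarding independence. Finiteness follows from Theorem \ref{thm:Der^1 local}, which provides $\cDer^1_\cO(R,E/\cO)\cong \Der^1_\cO(R_\theta,E/\cO)$ for any $\theta$ satisfying \ref{prop P}, combined with Theorem \ref{thm:Venkatesh}: in the one-dimensional case, the ratio $|\Der^1_\cO(R_\theta,E/\cO)|/|C_{1,\lambda_\theta}(R_\theta)|$ equals $|\Phi_{\lambda_\theta}(R_\theta)|/|\Psi_{\lambda_\theta}(R_\theta)|$, and all of $\Phi_{\lambda_\theta}(R_\theta)$, $\Psi_{\lambda_\theta}(R_\theta)$, and $C_{1,\lambda_\theta}(R_\theta)$ are finite (the last by the previous paragraph, the first by \ref{prop P}, and $\Psi_{\lambda_\theta}(R_\theta)$ because $R_\theta$ is finite free over $\cO$ with $\lambda_\theta$ smooth on the generic fiber), whence $\Der^1_\cO(R_\theta,E/\cO)$ is finite. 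Since $D_{1,\lambda}(R)$ and $c_{1,\lambda}(R)$ are thus well-defined rational numbers, so is their difference $\delta_\lambda(R)$.

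There is no serious obstacle; the one point worth flagging is that well-definedness of $c_{1,\lambda}(R)$ is really a double independence—in the complete intersection cover $(\Rt,I,\varphi)$ and in the $\theta$ used to cut down to dimension one—both of which have been packaged into Corollary \ref{well defined}.
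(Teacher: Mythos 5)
Your proposal is correct and follows essentially the same route as the paper, which proves the lemma by citing Theorem \ref{thm:delta invariant}, Theorem \ref{thm:c_1 local}, Corollary \ref{well defined}, and Theorem \ref{thm:Der^1 local}, together with the finiteness in the one-dimensional case supplied by Appendix \ref{app} (Remark \ref{rem:FinitenessOfDer}). Your only deviation is cosmetic: you deduce finiteness of $\cDer^1_\cO(R,E/\cO)$ from the formula in Theorem \ref{thm:Venkatesh} plus finiteness of $\Phi$, $\Psi$, and $C_{1,\lambda}$, whereas the paper points directly to Remark \ref{rem:FinitenessOfDer}; both rest on the same Appendix computation.
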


\begin{proof}
This is  a consequence of  Theorem \ref{thm:delta invariant},  Theorem \ref{thm:c_1 local},  Corollary \ref{well defined}, and Theorem \ref{thm:Der^1 local}, combined with Remark~\ref{rem:FinitenessOfDer} which confirms the finiteness of  length of the terms involved in the one-dimensional case.
\end{proof}

Here is the main theorem of this section which  uses all the work we have done here.

\begin{theorem}\label{thm:delta invariant}
Let $R$ and $\lambda:R \onto \cO$ be as above, and let $\theta:S\into R$ be a map satisfying \ref{prop P}. Then the invariants $C_{1,\lambda_\theta}(R_\theta)$, $\Der_\cO^1(R_\theta,E/\cO)$ and $\delta_{\lambda_\theta}(R_\theta)$ are independent of the choice of~$\theta$.
\end{theorem}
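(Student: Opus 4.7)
The plan is to assemble the three independence statements from the identifications already established in this section, noting that each invariant of $R_\theta$ can be expressed in terms of data intrinsic to $(R,\lambda)$ alone.

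First, for $C_{1,\lambda_\theta}(R_\theta)$: fix an auxiliary triple $(\Rt, I, \varphi)$ satisfying \ref{prop CI}, whose existence is guaranteed by Proposition \ref{prop CI exists} and is independent of $\theta$. By Lemma \ref{lem:lifting reg seq}, $\theta$ lifts to $\thetat : S \into \Rt$ realizing $\Rt_\theta$ as the quotient $\Rt/(y_1,\ldots,y_d)$, and Theorem \ref{thm:c_1 local} together with Corollary \ref{well defined} shows that
\[
C_{1,\lambda_\theta}(R_\theta) \;=\; \lambdat(\Rt[I])/\lambdat(\Fitt(I)) \;=\; C_{1,\lambda}(R).
\]
The right-hand side manifestly depends only on $(\Rt, I, \varphi)$ (and hence only on $R$ and $\lambda$ in view of the results of the appendix invoked in Corollary \ref{well defined}), not on $\theta$.

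Second, for $\Der^1_\cO(R_\theta, E/\cO)$: the lemma preceding Proposition \ref{prop:Der^1_S = Der^1_O} identifies $\Der^i_\cO(R_\theta, M) \cong \cDer^i_S(R, M)$ for every $R_\theta$-module $M$, via the base-change formula $L_{R/S} \Lotimes_R R_\theta \cong L_{R_\theta/\cO}$ which uses that $R$ is $S$-flat. Then Proposition \ref{prop:Der^1_S = Der^1_O} yields
\[
\Der^1_\cO(R_\theta, E/\cO) \;\cong\; \cDer^1_S(R, E/\cO) \;\cong\; \cDer^1_\cO(R, E/\cO),
\]
and the final term is independent of $\theta$ by construction.

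Finally, for $\delta_{\lambda_\theta}(R_\theta)$: since $R_\theta$ is finite free over $\cO$, hence one-dimensional, Theorem \ref{thm:Venkatesh} applies and expresses $\delta_{\lambda_\theta}(R_\theta)$ as the normalized difference of $\log|\Der^1_\cO(R_\theta, E/\cO)|$ and $\log|C_{1,\lambda_\theta}(R_\theta)|$; independence of $\theta$ then follows immediately from the two statements above.

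No real obstacle remains at this stage, as the substantive work was already carried out in Sections \ref{sec:c2} and \ref{sec:Der1}: the Gorenstein duality argument of Lemma \ref{lem:gorenstein} to propagate $C_1$ under quotient by the regular sequence, and the long exact sequence calculation of Proposition \ref{prop:Der^1_S = Der^1_O} comparing $\cDer^1_S$ and $\cDer^1_\cO$ using the formal smoothness of $\lambda$ on the generic fiber. The only point worth emphasizing in the write-up is that Lemma \ref{lem:lifting reg seq} guarantees that the lift $\thetat$ used to apply Theorem \ref{thm:c_1 local} exists for any fixed $(\Rt, I, \varphi)$ and any $\theta$ satisfying \ref{prop P}, so that the same triple $(\Rt, I, \varphi)$ can be used to compute $C_1$ for every admissible $\theta$.
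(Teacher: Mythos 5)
Your proposal is correct and follows essentially the same route as the paper: the independence of $C_{1,\lambda_\theta}(R_\theta)$ and of $\Der^1_\cO(R_\theta,E/\cO)$ is exactly the content of Theorem \ref{thm:c_1 local} (with Corollary \ref{well defined} and Lemma \ref{lem:lifting reg seq}) and Theorem \ref{thm:Der^1 local}, and the defect statement then follows from Theorem \ref{thm:Venkatesh} since $R_\theta$ is one-dimensional with $\Phi_{\lambda_\theta}(R_\theta)$ finite. Your added remark that the same CI triple $(\Rt,I,\varphi)$ serves for every admissible $\theta$ is a useful clarification but not a departure from the paper's argument.
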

\begin{proof}
The proofs of the independence statements for $C_{1,\lambda_\theta}(R_\theta)$ and $\Der_\cO^1(R_\theta,E/\cO)$ follow from  Theorems \ref{thm:c_1 local} and \ref{thm:Der^1 local} respectively. The assertion for the Wiles defect  $\delta_{\lambda_\theta}(R_\theta)$ is then immediate from Theorem~\ref{thm:Venkatesh}. \end{proof}

We note the consistency of this definition with the definition of Wiles defect for tuples $(R,\lambda) \in C_\cO$ when $R$ is of dimension one.

\begin{proposition}\label{prop:coincidence}
In the case when $(R,\lambda) \in C_\cO$  and  $R$ is of dimension one,  then 
 \[\delta_\lambda(R) = D_{1,\lambda}(R)-c_{1,\lambda}(R)=\frac{\log|\Phi_\lambda(R)|-\log|\Psi_\lambda(R)|}{\log|\cO/p|} .\]

\end{proposition}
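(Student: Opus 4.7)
The plan is to observe that, when $R$ has Krull dimension one, the higher-dimensional setup of \S \ref{sec:delta invariant} degenerates to the setup of \S \ref{sec:Venkatesh}, so that the formula follows directly from Theorem~\ref{thm:Venkatesh}.

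First I would record that, since $R$ is Cohen--Macaulay, flat over $\cO$ of relative dimension $d=\dim R-1=0$, the ring $R$ is finite free over $\cO$; in particular $R=R^{\tf}$ and the invariants $\Phi_\lambda(R)$, $\Psi_\lambda(R)$ appearing on the right-hand side are defined as in \S \ref{sec:Venkatesh}. Next I would specialize the data of \S \ref{sec:setup}: with $d=0$, the ring $S=\cO[[y_1,\ldots,y_d]]$ is just $\cO$, the inclusion $\theta\colon\cO\into R$ is the structure morphism, and Property \ref{prop P} is satisfied because $R$ is finite free over $\cO$ and $\Phi_\lambda(R)$ is finite by the generic-fiber smoothness hypothesis. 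Under this choice one has $R_\theta=R$ and $\lambda_\theta=\lambda$.

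With these identifications, Proposition~\ref{prop:cDer=Der} (applicable because $R$ is finite over $\cO$) gives $\cDer^1_\cO(R,E/\cO)=\Der^1_\cO(R,E/\cO)$, so by definition
\[
D_{1,\lambda}(R)=\frac{\log|\Der^1_\cO(R,E/\cO)|}{\log|\cO/p|}.
\]
Similarly, a triple $(\Rt,I,\varphi)$ satisfying \ref{prop CI} with $d=0$ is exactly a complete intersection cover $\Rt\onto R$ of the type used in \S \ref{sec:Venkatesh} (Noetherian local, finite free over $\cO$, complete intersection of dimension one, with $\lambdat$ smooth on the generic fiber); hence by Corollary~\ref{well defined} the module $C_{1,\lambda}(R)=\lambdat(\Rt[I])/\lambdat(\Fitt(I))$ coincides with Venkatesh's module from \S \ref{sec:Venkatesh}, giving
\[
c_{1,\lambda}(R)=\frac{\log|C_{1,\lambda}(R)|}{\log|\cO/p|}.
\]

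Finally, subtracting these two equalities and applying Theorem~\ref{thm:Venkatesh}, which asserts $|\Der^1_\cO(R,E/\cO)|/|C_{1,\lambda}(R)|=|\Phi_\lambda(R)|/|\Psi_\lambda(R)|$, yields
\[
\delta_\lambda(R)=D_{1,\lambda}(R)-c_{1,\lambda}(R)=\frac{\log|\Phi_\lambda(R)|-\log|\Psi_\lambda(R)|}{\log|\cO/p|},
\]
as required. There is no real obstacle: the content of the proposition is entirely a compatibility check between the two definitions, and Theorem~\ref{thm:Venkatesh} supplies the nontrivial identity.
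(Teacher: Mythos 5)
Your proof is correct and follows essentially the same route as the paper, whose proof simply cites Proposition~\ref{prop:cDer=Der} (to identify $\cDer^1_\cO(R,E/\cO)$ with $\Der^1_\cO(R,E/\cO)$ in the finite case) together with Proposition~\ref{prop:v}/Theorem~\ref{thm:Venkatesh}; your additional remarks (that $d=0$ makes $S=\cO$, $R_\theta=R$, and that a \ref{prop CI} triple is exactly a CI cover as in \S\ref{sec:Venkatesh}) just make explicit the compatibility the paper leaves implicit.
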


\begin{proof}
This follows from  Proposition \ref{prop:cDer=Der} and Proposition \ref{prop:v} of Appendix \ref{app} (cf. Theorem \ref{thm:Venkatesh}).
\end{proof}

\begin{proposition}\label{prop:Wiles generalization}
 For  $(R,\lambda) \in C_\cO$, $\delta_\lambda(R)=0$ if and only if  $R$ is a complete intersection. In particular, $\delta_\lambda(\cO[[x_1,\ldots,x_n]])=0$ for any $n\ge 1$ and any $\lambda:\cO[[x_1,\ldots,x_n]]\onto\cO$
\end{proposition}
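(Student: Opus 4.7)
The plan is to reduce the question to the one-dimensional case, where the corresponding statement is classical, and then appeal to a standard commutative algebra fact about complete intersections and regular sequences.

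First I would invoke Proposition \ref{prop ThetaExists} to choose a map $\theta:S=\cO[[y_1,\ldots,y_d]]\into R$ satisfying Property \ref{prop P}. Then $R_\theta=R/(\theta(y_1),\ldots,\theta(y_d))$ is a one-dimensional Cohen--Macaulay $\cO$-algebra, finite free over $\cO$, and the augmentation $\lambda_\theta:R_\theta\onto\cO$ induced by $\lambda$ has $\Phi_{\lambda_\theta}(R_\theta)$ finite. By Theorem \ref{thm:delta invariant} combined with Proposition \ref{prop:coincidence}, we have
\[
\delta_\lambda(R)=\delta_{\lambda_\theta}(R_\theta)=\frac{\log|\Phi_{\lambda_\theta}(R_\theta)|-\log|\Psi_{\lambda_\theta}(R_\theta)|}{\log|\cO/p|}.
\]

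Next I would apply the classical numerical criterion of Wiles--Lenstra recalled in Section \ref{sec:Venkatesh}: since $R_\theta=R_\theta^{\tf}$ (as $R_\theta$ is finite free over $\cO$), the vanishing $\delta_{\lambda_\theta}(R_\theta)=0$ is equivalent to $R_\theta$ being a complete intersection. It therefore suffices to show that $R$ is a complete intersection if and only if $R_\theta$ is. This is a standard fact about regular sequences in local Noetherian rings: if $(y_1,\ldots,y_d)$ is a regular sequence contained in the maximal ideal of a local Noetherian ring $R$, then $R$ is a complete intersection if and only if $R/(y_1,\ldots,y_d)$ is (see, e.g., Matsumura, \emph{Commutative Ring Theory}, Theorem 21.2). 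Concretely, writing $R=P/J$ with $P$ a regular local ring and lifting the $y_i$ to $\widetilde{y}_i\in P$, one has $R_\theta=P/(J,\widetilde{y}_1,\ldots,\widetilde{y}_d)$; minimal generation of $J$ and of $(J,\widetilde{y}_1,\ldots,\widetilde{y}_d)$ differ by exactly $d$, and the regularity of the added sequence transfers the complete intersection property in either direction.

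For the final assertion, the ring $\cO[[x_1,\ldots,x_n]]$ is regular, hence \emph{a fortiori} a complete intersection, so the first part of the proposition gives $\delta_\lambda(\cO[[x_1,\ldots,x_n]])=0$ for any augmentation $\lambda$. I do not expect any substantive obstacle in this argument, since all of the nontrivial work has already been done in Theorem \ref{thm:delta invariant}, Proposition \ref{prop:coincidence}, and the Wiles--Lenstra criterion; the only point requiring a citation is the preservation of the complete-intersection property under quotient by a regular sequence.
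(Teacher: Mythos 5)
Your argument is correct, and for the harder direction ($\delta_\lambda(R)=0 \Rightarrow R$ complete intersection) it is essentially the paper's: reduce to dimension one via a map $\theta$ satisfying \ref{prop P}, use $\delta_\lambda(R)=\delta_{\lambda_\theta}(R_\theta)$ together with Proposition \ref{prop:coincidence}, apply Wiles--Lenstra (noting $R_\theta=R_\theta^{\tf}$ since $R_\theta$ is finite free over $\cO$), and then transfer the complete-intersection property from $R_\theta$ back to $R$; the paper uses this last transfer without an explicit citation, and your reference to the standard theorem that the complete-intersection property passes both ways along a regular sequence (Matsumura/Bruns--Herzog) is exactly the right justification. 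Where you genuinely diverge is the easy direction: the paper shows directly that both invariants vanish when $R$ is a complete intersection --- $\cDer^1_\cO(R,E/\cO)=0$ by the argument in the proof of Theorem \ref{thm:Der exact sequence}, and $C_{1,\lambda}(R)=0$ by taking the CI cover $\Rt=R$ --- whereas you again pass through $R_\theta$ and invoke the classical Wiles equality $|\Phi|=|\Psi|$ for one-dimensional complete intersections. Both routes are valid; yours is more uniform (a single reduction handles both directions), while the paper's direct computation additionally records the vanishing of $\cDer^1_\cO(R,E/\cO)$ for complete intersections, a fact it reuses in the remark immediately following the proposition. The final assertion about $\cO[[x_1,\ldots,x_n]]$ is handled identically in both.
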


\begin{proof}
 If $R$ is a complete intersection then  $\cDer^1_\cO(R,E/\cO)=0$ by the argument given in the proof of Theorem \ref{thm:Der exact sequence}. Further $C_{1,\lambda}(R)=0$  (as we can take the CI cover  $\widetilde R=R$). This gives that $\delta_\lambda(R)=0$.
 
 Conversely assume $\delta_\lambda(R)=0$. Then  by our results we have a quotient $(R_\theta,\lambda_\theta) \in C_\cO$ of $(R,\lambda) \in C_\cO$ by a regular sequence $(y_1,\ldots, y_d)$,  namely  $R_\theta=R/(y_1,\ldots, y_d)$ and $\lambda_\theta: R \to R_\theta  \to \cO$ (the last map being $\lambda$) with $R_\theta$  of dimension one. Further  $\delta_{\lambda_\theta}(R_\theta)=\delta_\lambda(R)=0$. Thus by the result of Wiles and Lenstra, $R_\theta$ is a complete intersection, which implies that $R$ is a complete intersection.
\end{proof}

\begin{remark}

For $(R,\lambda) \in C_\cO$ and $R$ of dimension one,  by the Wiles--Lenstra result note that the vanishing of $\Der^1_\cO(R,E/\cO)$ implies that $R$ is a complete intersection because of the inequality $|\Phi_\lambda(R)| \geq |\Psi_\lambda(R)|$ which follows from the usual Fitting ideals argument. From this we again deduce  by invariance of $\cDer^1_\cO(R,E/\cO)$ on going modulo regular sequences that  in general for $(R,\lambda) \in C_\cO$  the vanishing of $\cDer^1_\cO(R,E/\cO)$ implies that $R$ is a complete intersection.
\end{remark}

\subsection{Properties of the  Wiles defect}\label{sec:local defect}

Theorem \ref{thm:delta invariant} can be restated as:

\begin{theorem}\label{thm:delta local}
If $(y_1,\ldots,y_d,\varpi)$ is a regular sequence for $R$ with $y_1,\ldots,y_d\in\ker\lambda$, where we will also use $\lambda$ to denote the induced map $R/(y_1,\ldots,y_d)\onto\cO$, then $\delta_\lambda(R) = \delta_\lambda(R/(y_1,\ldots,y_d))$. In particular, $\delta_\lambda(R/(y_1,\ldots,y_d))$ is independent of the choice of regular sequence.
\end{theorem}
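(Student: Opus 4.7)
The plan is to observe that this theorem is a direct repackaging of Theorem~\ref{thm:delta invariant}. Given a regular sequence $(y_1,\ldots,y_d,\varpi)$ for $R$ with $y_i\in\ker\lambda$ such that the quotient $R/(y_1,\ldots,y_d)$ lies in $C_\cO$, I would introduce the $\cO$-algebra homomorphism $\theta\colon S=\cO[[y_1,\ldots,y_d]]\into R$ defined by $\theta(y_i)=y_i$. Then $R_\theta=R/(y_1,\ldots,y_d)$ and $\lambda_\theta=\lambda$, so that once $\theta$ is seen to satisfy Property~\ref{prop P}, the equality $\delta_\lambda(R)=\delta_{\lambda_\theta}(R_\theta)$ will drop out by combining Definition~\ref{def:key} with the isomorphism $\cDer^1_\cO(R,E/\cO)\cong\cDer^1_\cO(R_\theta,E/\cO)$ of Theorem~\ref{thm:Der^1 local} (together with Proposition~\ref{prop:cDer=Der}, since $R_\theta$ is finite over $\cO$) and the identification $C_{1,\lambda}(R)=C_{1,\lambda_\theta}(R_\theta)$ supplied by Theorem~\ref{thm:c_1 local} and Corollary~\ref{well defined}. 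The independence statement is then literally the content of Theorem~\ref{thm:delta invariant}.

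Conditions~(1) and~(3) of \ref{prop P} are transparent from the definition of $\theta$. For~(2), I would repeat the argument from the proof of Proposition~\ref{prop ThetaExists}: since $R$ is Cohen--Macaulay of relative dimension $d$ over $\cO$ and $(y_1,\ldots,y_d,\varpi)$ is $R$-regular, the quotient $R/\ffrm_S R=R/(y_1,\ldots,y_d,\varpi)$ is Artinian and thus finite as a $k$-module; by complete Nakayama, $R$ is module-finite over $S$, and the local criterion of flatness applied to the $R$-regular sequence $(y_1,\ldots,y_d,\varpi)$ renders $R$ flat, hence finite free, over the local ring $S$. Condition~(4), which demands that $\Phi_{\lambda_\theta}(R_\theta)$ be finite, is exactly the condition $R_\theta\in C_\cO$, and it is this property which makes the right-hand side $\delta_\lambda(R/(y_1,\ldots,y_d))$ in the theorem meaningful in the first place; so it is already built into the hypotheses and there is nothing further to verify.

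The main work has already been done in the previous two subsections: the nontrivial inputs are the invariance of $\cDer^1_\cO$ under quotienting by regular sequences (Theorem~\ref{thm:Der^1 local}) and the analogous invariance of the Venkatesh quotient $C_{1,\lambda}$ (Theorem~\ref{thm:c_1 local}), together with the fact that they are computable directly from $R$ without reference to the specific choice of $\theta$. Granted these, the present statement is obtained by unwinding Definition~\ref{def:key} and invoking Theorem~\ref{thm:delta invariant}.
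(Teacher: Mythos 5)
Your proposal is correct and takes essentially the same route as the paper: the paper presents this theorem as a direct restatement of Theorem \ref{thm:delta invariant}, whose proof likewise just combines Theorems \ref{thm:c_1 local} and \ref{thm:Der^1 local} with Theorem \ref{thm:Venkatesh} (via Definition \ref{def:key} and Proposition \ref{prop:coincidence}), and your verification of Property \ref{prop P} for the map $\theta(y_i)=y_i$ mirrors the argument of Proposition \ref{prop ThetaExists}. Your reading of condition (4) of \ref{prop P} as being implicit in the well-definedness of $\delta_\lambda(R/(y_1,\ldots,y_d))$ also matches the paper's intent.
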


We now deduce some additivity  properties of $\delta_\lambda(R)$ that we use later.
\begin{proposition}\label{prop:delta prod}
Let $R_{1}$ and $R_{2}$ be complete, Noetherian, Cohen--Macaulay, reduced $\cO$-algebras, which are flat over $\cO$ of relative dimensions $d_1$ and $d_2$. Pick augmentations $\lambda_i:R_{i}\to \cO$ such that $\Spec R_{i}[1/\varpi]$ is formally smooth at the point corresponding to $\lambda_i$. Let $R= R_1 \cotimes_{\cO} R_{2}$ and $\lambda = \lambda_1\cotimes \lambda_1:R\to \cO$.

Then 
\begin{enumerate}
\item $D_{1,\lambda}(R) = D_{1,\lambda_1}(R_{1})+D_{1,\lambda_2}(R_{2})$ 
\item $c_{1,\lambda}(R) = c_{1,\lambda_1}(R_{1})+c_{1,\lambda_2}(R_{2})$
\item $\delta_{\lambda}(R) = \delta_{\lambda_1}(R_{1})+\delta_{\lambda_2}(R_{2})$.
\end{enumerate}
\end{proposition}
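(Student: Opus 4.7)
The plan is to reduce to the case of relative dimension zero via the invariance results of Section \ref{sec:setup}, and then exploit the explicit tensor-product structure to compute $\Phi$, $\Psi$ and the continuous cotangent complex.

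\textbf{Step 1 (reduction to dimension one).} For each $i=1,2$, choose $\theta_i\colon S_i=\cO[[\underline y^{(i)}]]\hookrightarrow R_i$ satisfying Property \ref{prop P}. Then $\theta:=\theta_1\cotimes\theta_2\colon \cO[[\underline y^{(1)},\underline y^{(2)}]]\hookrightarrow R$ satisfies Property \ref{prop P}, and
\[
R_\theta \;\cong\; R_{1,\theta_1}\otimes_\cO R_{2,\theta_2},\qquad \lambda_\theta \;=\; \lambda_{1,\theta_1}\otimes\lambda_{2,\theta_2}.
\]
By Theorem \ref{thm:Der^1 local} and Corollary \ref{well defined} (applied to each of $R,R_1,R_2$), all three invariants $D_{1,\cdot}$, $c_{1,\cdot}$, $\delta_{\cdot}$ are unchanged by these reductions. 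Hence it suffices to prove (i)--(iii) under the additional assumption that $R_1$, $R_2$, $R=R_1\otimes_\cO R_2$ are all finite free over~$\cO$.

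\textbf{Step 2 (additivity of $\delta_\lambda$).} In this finite-free setting, Theorem \ref{thm:Venkatesh} gives
\[
\delta_\lambda(R)\;=\;\frac{\ell_\cO(\Phi_\lambda(R))-\ell_\cO(\Psi_\lambda(R))}{\ell_\cO(\cO/p)},
\]
and analogously for the $R_i$. The standard decomposition $\Omega_{R/\cO}\cong(\Omega_{R_1/\cO}\otimes_{R_1}R)\oplus(\Omega_{R_2/\cO}\otimes_{R_2}R)$ for a tensor product yields, after base change along $\lambda$, an isomorphism $\Phi_\lambda(R)\cong\Phi_{\lambda_1}(R_1)\oplus\Phi_{\lambda_2}(R_2)$. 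For the congruence side, $\ker\lambda=(\ker\lambda_1)R+R(\ker\lambda_2)$ gives $R[\ker\lambda]=R[\ker\lambda_1]\cap R[\ker\lambda_2]$; $\cO$-flatness of the $R_j$ identifies $R[\ker\lambda_1]=R_1[\ker\lambda_1]\otimes_\cO R_2$ and analogously for the other factor. Letting $e_i\in R_i\otimes_\cO E$ denote the idempotent of the $E$-factor corresponding to $\lambda_i$, any $r\in R_i$ with $\varpi^n r\in R_i[\ker\lambda_i]=R_i\cap Ee_i$ automatically lies in $Ee_i\cap R_i=R_i[\ker\lambda_i]$, so $R_i/R_i[\ker\lambda_i]$ is $\cO$-torsion free. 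The flat-intersection identity then gives
\[
R[\ker\lambda]\;=\;R_1[\ker\lambda_1]\otimes_\cO R_2[\ker\lambda_2].
\]
Applying $\lambda=\lambda_1\otimes\lambda_2$ produces the ideal identity $\eta_\lambda=\eta_{\lambda_1}\cdot\eta_{\lambda_2}$ in $\cO$, hence $\ell_\cO(\Psi_\lambda(R))=\ell_\cO(\Psi_{\lambda_1}(R_1))+\ell_\cO(\Psi_{\lambda_2}(R_2))$. Combining the two length computations proves~(iii).

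\textbf{Step 3 (additivity of $D_{1,\lambda}$ and $c_{1,\lambda}$).} For (i), use the K\"unneth-type splitting for the cotangent complex of a flat tensor product,
\[
L_{R/\cO}\;\simeq\;(L_{R_1/\cO}\Lotimes_{R_1}R)\oplus(L_{R_2/\cO}\Lotimes_{R_2}R),
\]
and apply $\RHom_R(-,E/\cO)$. Derived tensor--Hom adjunction together with the factorization of $\lambda$ through each $\lambda_i$ gives
\[
\RHom_R(L_{R/\cO},E/\cO)\simeq\RHom_{R_1}(L_{R_1/\cO},E/\cO)\oplus\RHom_{R_2}(L_{R_2/\cO},E/\cO);
\]
taking $H^1$, and using Proposition \ref{prop:cDer=Der} to identify $\cDer^1$ with $\Der^1$ in the finite-free setting, yields (i). Assertion (ii) is then immediate from (i) and (iii) via the defining relation $c_{1,\lambda}=D_{1,\lambda}-\delta_\lambda$.

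\textbf{Main obstacle.} The delicate step is the identification $R[\ker\lambda]=R_1[\ker\lambda_1]\otimes_\cO R_2[\ker\lambda_2]$: the inclusion $\supseteq$ is clear by bilinearity, but the reverse requires the flat-intersection identity $(M_1\otimes_\cO P_2)\cap(P_1\otimes_\cO M_2)=M_1\otimes_\cO M_2$, valid for $\cO$-submodules $M_i\subseteq P_i$ with $P_i/M_i$ flat, applied to $M_i=R_i[\ker\lambda_i]\subseteq P_i=R_i$ via the torsion-freeness established above.
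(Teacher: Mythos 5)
Your proof is correct, but it routes around the paper's argument in an interesting way. The paper proves (1) and (2) directly and gets (3) formally from the definition $\delta_\lambda=D_{1,\lambda}-c_{1,\lambda}$: part (1) is exactly your Step 3 (reduction via $\theta_1\cotimes\theta_2$ plus the K\"unneth splitting of the cotangent complex), but part (2) is proved by building a \ref{prop CI}-triple $(\Rt_1\cotimes_\cO\Rt_2,\,I,\,\varphi_1\otimes\varphi_2)$ for $R$ and checking two identities in $\cO$, namely $\lambda(\Rt[I])=\lambda_1(\Rt_1[I_1])\lambda_2(\Rt_2[I_2])$ and $\lambda(\Fitt(I))=\lambda_1(\Fitt(I_1))\lambda_2(\Fitt(I_2))$; the latter occupies most of the paper's proof (presentations of $I_1,I_2$, the combined presentation of $I$, a lemma on elements of its kernel, and a Laplace-type expansion of $(m+n)\times(m+n)$ determinants). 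You instead prove (1) and (3) directly and recover (2) formally from $c_{1,\lambda}=D_{1,\lambda}-\delta_\lambda$: after the same reduction to the finite-free case you compute $\Phi_\lambda$ and $\Psi_\lambda$ of the tensor product and invoke Theorem \ref{thm:Venkatesh} (via Proposition \ref{prop:coincidence}) to convert this into additivity of $\delta$. This completely bypasses the Fitting-ideal computation, which is a genuine simplification; the price is that your proof of (2)--(3) leans on the appendix formula, whereas the paper's computation of $c_{1,\lambda}$ is self-contained and works directly at the level of the CI covers. Your treatment of the congruence side is also the delicate point done well: the identification $R_\theta[\ker\lambda_\theta]=R_{1,\theta_1}[\ker\lambda_{1,\theta_1}]\otimes_\cO R_{2,\theta_2}[\ker\lambda_{2,\theta_2}]$ needs exactly the flatness of the quotients $R_{i,\theta_i}/R_{i,\theta_i}[\ker\lambda_{i,\theta_i}]$ that you establish via the idempotent $e_i$ of the $E$-factor at $\lambda_i$ (note the paper's analogous intersection identity for its CI covers is asserted with less justification, so your care here is warranted). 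Two small housekeeping points you should make explicit, as the paper does: that $\theta_1\cotimes\theta_2$ indeed satisfies \ref{prop P} (finiteness of $\Phi_{\lambda_\theta}(R_\theta)$ follows from your splitting of differentials), and that the passage from $\delta_{\lambda_\theta}(R_\theta)$ back to $\delta_\lambda(R)$ is Theorem \ref{thm:delta invariant} together with Theorems \ref{thm:c_1 local} and \ref{thm:Der^1 local}.
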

\begin{proof}
By definition, (3) will follow from (1) and (2).

For (1), we will first reduce to dimension $1$. Let $S_1 = \cO[[x_1,\ldots,x_{d_1}]]$ and $S_2 = \cO[[y_1,\ldots,y_{d_2}]]$. By Proposition \ref{prop ThetaExists}, we may find maps $\theta_1:S_1\into R_1$ and $\theta_2:S_2\into R_2$ satisfying \ref{prop P}. Then the map $\theta = \theta_1\cotimes_{\cO}\theta_2:S_1\cotimes_{\cO}S_2\into R$ satisfies \ref{prop P} as well. So consider the rings
\begin{align*}
R_{1,\theta_1} &= R_1\otimes_{S_1}\cO,&
R_{2,\theta_1} &= R_1\otimes_{S_1}\cO,&
&\text{and}&
R_\theta &= R\otimes_{S_1\cotimes_{\cO}S_2}\cO = R_{1,\theta_1}\otimes_\cO R_{2,\theta_2}
\end{align*}
and note that these are all finite free over $\cO$.

By Theorem \ref{thm:Der^1 local} we now have that
\begin{align*}
	\cDer^1_{\cO}(R_1,E/\cO) &= \Der^1_{\cO}(R_{1,\theta_1},E/\cO),\\
	\cDer^1_{\cO}(R_2,E/\cO) &= \Der^1_{\cO}(R_{2,\theta_2},E/\cO),\\
	\cDer^1_{\cO}(R,E/\cO) &= \Der^1_{\cO}(R_{\theta},E/\cO).
\end{align*}
But now by \cite[\href{https://stacks.math.columbia.edu/tag/09DA}{Lemma 09DA}]{stacks-project} as $R_1$ and $R_2$ are both free over $\cO$, and hence Tor-independent we have
\begin{align*}
L_{R_\theta/\cO} &\cong L_{R_{1,\theta_1}\otimes_\cO R_{2,\theta_2}/\cO} \cong L_{R_{1,\theta_1}/\cO}\Lotimes_{R_{1,\theta_1}}R_\theta\oplus  L_{R_{2,\theta_2}/\cO}\Lotimes_{R_{2,\theta_1}}R_\theta.
\end{align*}
Thus
\begin{align*}
\cDer^1_{\cO}(R,E/\cO) 
&\cong \Der^1_{\cO}(R_{\theta},E/\cO) 
 = H^1(\RHom_{R_\theta}(L_{R/\cO},E/\cO))\\
&\cong H^1(\RHom_{R_\theta}(L_{R_{1,\theta_1}/\cO}\Lotimes_{R_{1,\theta_1}}R_\theta\oplus  L_{R_{2,\theta_2}/\cO}\Lotimes_{R_{2,\theta_1}}R_\theta,E/\cO))\\
& \cong H^1(\RHom_{R_\theta}(L_{R_{1,\theta_1}/\cO}\Lotimes_{R_{1,\theta_1}}R_\theta,E/\cO))\oplus 
   H^1(\RHom_{R_\theta}(L_{R_{2,\theta_2}/\cO}\Lotimes_{R_{2,\theta_1}}R_\theta,E/\cO))\\
&\cong H^1(\RHom_{R_{1,\theta_1}}(L_{R_{1,\theta_1}/\cO},E/\cO))\oplus 
   H^1(\RHom_{R_{2,\theta_2}}(L_{R_{2,\theta_2}/\cO},E/\cO))\\
&= \Der^1_{\cO}(R_{1,\theta_1},E/\cO)\oplus \Der^1_{\cO}(R_{2,\theta_2},E/\cO)
 = \cDer^1_{\cO}(R_1,E/\cO)\oplus \cDer^1_{\cO}(R_2,E/\cO)
\end{align*}
and so (1) follows.

It remains to prove (2). Consider triples $(\Rt_{1},I_{1},\varphi_{1})$ and $(\Rt_{2},I_{2},\varphi_{2})$ satisfying \ref{prop CI} (with $(R_1,\lambda_1)$ and $(R_{2},\lambda_2)$, respectively, in place of $(R,\lambda)$).

Define $\Rt= \Rt_{1}\cotimes_{\cO}\Rt_{2}$, and note that $I_{1}\cotimes_{\cO} \Rt_{2}$ and $\Rt_{1}\cotimes_{\cO}I_{2}$ are both ideals of $\Rt$. Let $\varphi=\varphi_{1}\otimes\varphi_{2}:\Rt= \Rt_{1}\cotimes_{\cO}\Rt_{2}\onto R_{1}\cotimes_{\cO} R_{2} = R$, and note that $\ker\varphi= \left(I_{1}\cotimes_{\cO} \Rt_{2}\right)+\left(\Rt_{1}\cotimes_{\cO}I_{2}\right)$. Denoting this ideal $I \subseteq \Rt$, the triple $(\Rt,I,\varphi)$ satisfies \ref{prop CI}. So by the definition of $c_{1,\lambda}$,
\begin{align*}
c_{1,\lambda_1}(R_{1})\log|\cO/p| &= \log\left|\lambda_1(\Rt_{1}[I_{1}])/\lambda_1(\Fitt(I_{1}))\right|\\
c_{1,\lambda_2}(R_{1})\log|\cO/p| &= \log\left|\lambda_2(\Rt_{2}[I_{2}])/\lambda_2(\Fitt(I_{2}))\right|\\
c_{1,\lambda}(R)\log|\cO/p| &= \log\left|\lambda(\Rt[I_{}])/\lambda(\Fitt(I))\right|
\end{align*}
Hence it will suffice to show that
\begin{align*}
\lambda\left(\Rt[I]\right) &= \lambda_1\left(\Rt_{1}[I_{1}]\right)\lambda_2\left(\Rt_{2}[I_{2}]\right),\text { and }\\
\lambda\left(\Fitt(I)\right) &= \lambda_1\left(\Fitt(I_{1})\right)\lambda_2\left(\Fitt(I_{2})\right)
\end{align*}
as ideals of $\cO$. For the first claim, standard properties of annihilators imply that
\begin{align*}
\Rt[I] 
&
 = \Rt\left[\left(I_{1}\cotimes_{\cO} \Rt_{2}\right)+\left(\Rt_{1}\cotimes_{\cO}I_{2}\right)\right]
 = \Rt_{}\left[\left(I_{\infty,1}\cotimes_{\cO} \Rt_{2}\right)\right]\cap \Rt_{}\left[\left(\Rt_{1}\cotimes_{\cO}I_{2}\right)\right]\\
&= \left(\Rt_{1}\left[I_{1}\right]\cotimes_{\cO}\Rt_{2}\right)\cap \left(\Rt_{1}\cotimes_{\cO}\Rt_{2}\left[I_{2}\right]\right) = \Rt_{1}\left[I_{1}\right]\cotimes_{\cO}\Rt_{2}\left[I_{2}\right]
\end{align*}
(where we've used that fact that $\left(A\cotimes_{\cO}\Rt_{2}\right)\cap \left(\Rt_{1}\cotimes_{\cO}B\right) = \left(A\cotimes_{\cO}\Rt_{2}\right) \left(\Rt_{\infty,1}\cotimes_{\cO}B\right) = A\cotimes_{\cO}B$ for any ideals $A\subseteq \Rt_{1}$ and $B\subseteq \Rt_{2}$). Thus
\[\lambda\left(\Rt_{}[I_{}]\right) = (\lambda_1\otimes \lambda_2)\left(\Rt_{1}\left[I_{1}\right]\cotimes_{\cO}\Rt_{2}\left[I_{2}\right]\right) = \lambda_1\left(\Rt_{1}\left[I_{1}\right]\right) 
\lambda_2 \left(\Rt_{2}\left[I_{2}\right]\right).
\]
For the statement about fitting ideals, fix presentations
\begin{align*}
0&\to K_1\to \Rt_{1}^{m} \xrightarrow{A} I_{1}\to 0\\
0&\to K_2\to \Rt_{2}^{n} \xrightarrow{B} I_{2}\to 0
\end{align*}
where $K_i$ is a finitely generated $\Rt_{\infty,i}$-module. Then $A$ and $B$ induce surjective maps $A\otimes\Id:\Rt_{}^{m} = \Rt_{1}^{m}\cotimes_{\cO}\Rt_{2}\to I_{1}\cotimes_{\cO}\Rt_{2}$ and $\Id\otimes B:\Rt_{}^{n} = \Rt_{1}\cotimes_{\cO}\Rt_{2}^n\to \Rt_{1}\cotimes_{\cO}I_{2}$, and so we may combine them to produce a surjective map
\[C = (A\otimes\Id)-(\Id\otimes B) : \Rt_{}^{m+n}= \Rt_{}^m\oplus \Rt_{}^n\to \left(I_{1}\cotimes_{\cO} \Rt_{2}\right)+\left(\Rt_{1}\cotimes_{\cO}I_{2}\right) = I.\]
Write $K \subseteq \Rt_{}^{m+n}$ for the kernel of $C$.

By definition: $\Fitt(I_{1})$ is the ideal of $\Rt_{1}$ generated by all elements of the form $\det\left(u_1,\ldots,u_m\right)\in \Rt_{\infty,1}$ for $u_1,\ldots,u_m\in K_1\subseteq \Rt_{1}^m$; $\Fitt(I_{2})$ is the ideal of $\Rt_{2}$ generated by all elements of the form $\det\left(v_1,\ldots,v_n\right)\in \Rt_{1}$ for $v_1,\ldots,v_n\in K_2\subseteq \Rt_{2}^n$; and $\Fitt(I_{\infty})$ is the ideal of $\Rt_{}$ generated by all elements of the form $\det\left(w_1,\ldots,w_{m+n}\right)\in \Rt_{}$ for $w_1,\ldots,w_{m+n}\in K\subseteq \Rt^{m+n}$.

Now given any $u_1,\ldots,u_m\in K_1$ and $v_1,\ldots,v_n\in K_2$ it's easy to see that $\ds\binom{u_i\otimes 1}{0},\binom{0}{1\otimes v_j}\in K$ for all $i$ and $j$, and so $\Fitt(I_{})$ contains the element
\[
\det\begin{pmatrix}
u_1\otimes 1&\cdots & u_m\otimes 1&0&\cdots&0\\
0&\cdots&0&1\otimes v_1&\otimes&1\otimes v_n
\end{pmatrix}
=\det(u_1,\ldots,u_m)\otimes\det(v_1,\ldots,v_m).
\]
If follows that $\Fitt(I_{1})\cotimes_{\cO}\Fitt(I_{2})\subseteq \Fitt(I_{})$ and so 
$\lambda_1\left(\Fitt(I_{1})\right)\lambda_2\left(\Fitt(I_{2})\right)\subseteq \lambda\left(\Fitt(I_{})\right)$.

For the reverse inclusion, we will use the following simple lemma:
\begin{lemma}
For any $\ds w = \binom{w_1}{w_2}\in K$, for $w_1 \in \Rt^m$ and $w_2\in \Rt^n$, there exist $u\in K_1$ and $v\in K_2$ for which $\lambda(w_1) = \lambda_1(u)$ and $\lambda(w_2) = \lambda_2(v)$.
\end{lemma}
\begin{proof}
As $w\in K$, we have $(A\otimes \Id)(w_1)-(\Id\otimes B)(w_2) = C(w) = 0$ so let $r = (A\otimes \Id)(w_1) = (\Id\otimes B)(w_2) \in \Rt$. By the definitions of $A$ and $B$ we have $r = (A\otimes \Id)(w_1) \in I_{1}\cotimes_{\cO}\Rt_{2}$ and $r = (\Id\otimes B)(w_2) \in \Rt_{\infty,1}\cotimes_{\cO}I_{2}$ and so 
\[r \in \left(I_{1}\cotimes_{\cO}\Rt_{2}\right)\cap \left(\Rt_{1}\cotimes_{\cO}I_{2}\right) = I_{1}\cotimes_{\cO}I_{2}\]
Now as $\lambda_1(I_{1}) = \lambda_2(I_{2})=0$ by assumption, we get that $(\lambda_1\otimes\Id)(r) = (\Id\otimes\lambda_2)(r) = 0$. Now let $u = (\Id\otimes\lambda_2)(w_1)\in \Rt_{1,\infty}^m$ and $v = (\lambda_1\otimes\Id)(w_2)\in \Rt_{2}^n$, so that
\begin{align*}
\lambda_1(u) &= \lambda_1((\Id\otimes\lambda_2)(w_1)) = (\lambda_1\otimes\lambda_2)(w_1) = \lambda(w_1)\\
\lambda_2(v) &= \lambda_2((\lambda_1\otimes\Id)(w_2)) = (\lambda_1\otimes\lambda_2)(w_2) = \lambda(w_2)
\end{align*}
and
\begin{align*}
A(u) &= (A\otimes\Id)(\Id\otimes\lambda_2)(w_1) = (\Id\otimes\lambda_2)(A\otimes\Id)(w_1) =  (\Id\otimes\lambda_2)(r) = 0\\
B(v) &= (\Id\otimes B)(\lambda_1\otimes\Id)(w_2) = (\lambda_1\otimes\Id)(\Id\otimes B)(w_2) = (\lambda_1\otimes\Id)(r) = 0.
\end{align*}
So now $w_1\in\ker A = K_1$ and $w_2 \in \ker B = K_2$, as desired.
\end{proof}
So now take any $w_1,\ldots,w_{m+n}\in K$. The lemma allows us to write $\ds \lambda(w_i) = \binom{\lambda_1(u_i)}{\lambda_2(v_1)}$ for $u_i\in K_1$ and $v_i\in K_2$, which gives
\[
\lambda(\det(w_1,\ldots,w_{m+n})) = 
\det
\begin{pmatrix}
\lambda_1(u_1)&\cdots&\lambda_1(u_{m+n})\\
\lambda_2(v_1)&\cdots&\lambda_2(v_{m+n})
\end{pmatrix}
\]
But now by standard properties of determinants, the determinant of this $(m+n)\times(m+n)$ matrix may be written as an alternating sum in the form
\[
\sum_{X,Y}(\pm1) \det\big((\lambda_1(u_i))_{i\in X}\big)\det\big((\lambda_2(v_j))_{j\in Y}\big)
= \sum_{X,Y}(\pm1) \lambda_1\left(\det\big((u_i)_{i\in X}\big)\right)\lambda_2\left(\det\big((v_j)_{j\in Y}\big)\right)
\]
(where the sum is taken over partitions $X\sqcup Y = \{1,\ldots,m+n\}$ with $|X|=m$ and $|Y|=n$). As this sum is in $\lambda_1\left(\Fitt(I_{1})\right)\lambda_2\left(\Fitt(I_{2})\right)$, it follows that $\lambda\left(\Fitt(I_{})\right)\subseteq\lambda_1\left(\Fitt(I_{1})\right)\lambda_2\left(\Fitt(I_{2})\right)$, giving the desired equality $\lambda\left(\Fitt(I_{})\right)=\lambda_1\left(\Fitt(I_{1})\right)\lambda_2\left(\Fitt(I_{2})\right)$, and completing the proof.
\end{proof}

\section{Galois deformation theory}\label{sec_deformation_theory}

This section recalls basic results on Galois deformation theory and fixes some notation for the remainder of this work. Our main references are \cite[\S~5]{Thorne16} and \cite[\S~4]{BKM}. 

Recall the notation from the end of Section~\ref{sec:intro}. We fix a continuous, absolutely irreducible residual representation 
$$\rhobar : G_{F} \to \GL_2(k)$$ 
with $\det \rhobar=\epsilon_p$, for simplicity. We will assume that $k$ contains the eigenvalues of all elements in the image of $\overline{\rho}$. We also fix a finite set $\Sigma$ of finite places $v$ of $F$ disjoint from $\Sigma_p$ that contains all places $v\notin \Sigma_p$ at which $\rhobar$ is ramified, and possibly further places of~$F$. 

\subsubsection*{Local deformation rings}

Let $v \in \Sigma$. We write $\cD_v^\square : \CNL_{\cO} \to \mathrm{Sets}$ for the functor that associates to $R \in \mathrm{CNL}_{\cO}$ the set of all continuous homomorphisms $r : G_{F_v} \to \GL_2(R)$ such that $r \pmod {\ffrm_R} = \overline{\rho}|_{G_{F_v}}$ and $\det r=\varepsilon_p$. The functor $\cD_v^\square$ is representable by an object $R_v^{\square} \in \CNL_{\cO}$. We will write $\rho_v^\square : G_{F_v} \to \GL_2(R_v^\square)$ for the universal lifting. 

A local deformation problem for $\overline{\rho}|_{G_{F_v}}$ is a subfunctor $\cD_v \subset \cD_v^\square$ satisfying the following conditions:
\begin{enumerate}
\item The functor $\cD_v$ is represented by a quotient $R_v$ of $R_v^\square$.
\item For all $R \in \CNL_{\cO}$, $g \in \ker(\GL_2(R) \to \GL_2(k))$ and $r \in \cD_v(R)$, we have $g r g^{-1} \in \cD_v(R)$.
\end{enumerate}

If a quotient $R_v$ of $R_v^\square$ corresponding to a local deformation problem $\cD_v$ has been fixed, we will write $\rho_v : G_{F_v} \to \GL_2(R_v)$ for the universal lifting of type $\cD_v$. A sufficient condition for a quotient $R_v$ of $R_v^\square$ to be a deformation ring is the following; see \cite[Lemma~5.12]{Thorne}.

\begin{lemma}\label{lem_local}
Let $\pi\colon R_v^\square\to R_v$ be a surjective morphism in $\mathrm{CNL}_{\cO}$ with specialization $r : G_{F_v} \to \GL_2(R_v)$ induced from the universal lifting, and assume the following conditions:
\begin{enumerate}
\item The ring $R_v$ is reduced, and not isomorphic to $k$.
\item 
For all $g \in \ker(\GL_2(R_v) \to \GL_2(k))$, the homomorphism $R_v^\square \to R_v$ associated to the representation $g r g^{-1}$ by universality factors through~$\pi$.
\end{enumerate}
Then the subfunctor of $\cD_ v^\square$ defined by $R_v$ is a local deformation problem.
\end{lemma}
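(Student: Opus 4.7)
The plan is to verify the two defining conditions of a local deformation problem for the subfunctor $\cD_v\subseteq \cD_v^\square$ defined by
\[\cD_v(R)=\{\,r'\in\cD_v^\square(R) \,:\, \text{the classifying map } R_v^\square\to R \text{ factors through } \pi\,\}.\]
Representability is formal and uses no hypothesis. Setting $r=\GL_2(\pi)\circ\rho_v^\square:G_{F_v}\to\GL_2(R_v)$, the assignment $f\mapsto \GL_2(f)\circ r$ gives a map $\Hom_{\CNL_\cO}(R_v,R)\to \cD_v(R)$ which is surjective by the definition of $\cD_v$ and injective because $\pi$ is surjective. So $\cD_v$ is represented by $R_v$ with universal lifting~$r$.

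For conjugation invariance, the strategy is to universalize $g$. Let $T=\cO[[y_{11},y_{12},y_{21},y_{22}]]$ pro-represent the completion of $\GL_{2/\cO}$ along the identity section, with universal element $\widetilde g=\sMat{1+y_{11}}{y_{12}}{y_{21}}{1+y_{22}}\in \ker(\GL_2(T)\to\GL_2(k))$, and set $B=R_v\cotimes_\cO T\cong R_v[[y_{ij}]]$. Let $\alpha:R_v^\square\to B$ classify the lifting $\widetilde g\, r\, \widetilde g^{-1}$, where $r$ is viewed in $\GL_2(B)$. Any pair $(r',g')$ with $r'\in \cD_v(R)$ and $g'\in\ker(\GL_2(R)\to\GL_2(k))$ is obtained from $(r,\widetilde g)$ via a $\CNL_\cO$-morphism $B\to R$ specializing $\widetilde g\mapsto g'$ and $r\mapsto r'$, so the classifying map of $g'r'g'^{-1}$ is the composite $R_v^\square\xrightarrow{\alpha} B\to R$. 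Hence it suffices to prove that $\alpha$ factors through~$\pi$.

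To this end, fix $x\in \ker\pi$. For each $m=(m_{ij})\in\ffrm_{R_v}^4$, the specialization $\sigma_m:B\to R_v$ which is the identity on $R_v$ and sends $y_{ij}\mapsto m_{ij}$ realizes $\sigma_m\circ\alpha$ as the classifying map of $g_0 r g_0^{-1}$ with $g_0=I+m$; by hypothesis~(2) this composite factors through~$\pi$. Therefore $\sigma_m(\alpha(x))=0$ in $R_v$ for every $m\in\ffrm_{R_v}^4$, and the problem reduces to the density statement: \emph{if $a\in R_v[[y_{ij}]]$ satisfies $\sigma_m(a)=0$ for all $m\in\ffrm_{R_v}^4$, then $a=0$.}

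The main obstacle is this density statement, and it is where the hypotheses on $R_v$ are used. Since $R_v$ is reduced, it embeds in $\prod_\frp R_v/\frp$ over its minimal primes, and hence $R_v[[y_{ij}]]\hookrightarrow\prod_\frp (R_v/\frp)[[y_{ij}]]$; thus I may assume $R_v$ is a complete Noetherian local domain. Since $R_v\neq k$, each such quotient has Krull dimension $\geq 1$, so by Krull's intersection theorem there exists a nonzero $t_0\in\ffrm_{R_v/\frp}$ with $t_0^n\neq 0$ for all $n$ while $t_0^n\to 0$ in the $\ffrm$-adic topology. I will then treat the power series variable by variable: for one variable, writing $b(t)=\sum b_n t^n$, I peel off $b_0=b(0)=0$, factor $b(t)=tc(t)$, use the domain property to conclude $c(m)=0$ for all nonzero $m\in\ffrm$, and use continuity of power series evaluation on the sequence $m_k=t_0^k$ to force $b_1=c(0)=0$; iterating gives $b_n=0$ for all $n$. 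An induction on the number of variables upgrades the one-variable statement to the four-variable setting, yielding $a=0$ and completing the verification of condition~(2).
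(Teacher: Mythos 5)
The paper does not actually prove this lemma — it is quoted from Thorne (Lemma 5.12 of the cited reference) — so the only comparison available is with the standard argument, and your proposal is a correct and complete version of it. Representability by $R_v$ is formal, as you say; the real content is conjugation-stability, and your universalization of the conjugating matrix over $B=R_v\cotimes_\cO\cO[[y_{11},\dots,y_{22}]]\cong R_v[[y_{ij}]]$ correctly reduces it to the single statement $\alpha(\ker\pi)=0$, which hypothesis (2) supplies after each specialization $y_{ij}\mapsto m_{ij}$ with $m\in\ffrm_{R_v}^4$. The density step is precisely where hypothesis (1) enters, and your treatment is sound; the only point left implicit is why each quotient $R_v/\frp$ by a minimal prime has dimension at least one: if some minimal prime were $\ffrm_{R_v}$, it would be the unique prime of the local ring, so the reduced Artinian local ring $R_v$ would be the field $k$, contrary to assumption. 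The one-variable vanishing argument (strip the constant term, divide by $t$ using the domain property, recover the new constant term from the evaluations at $t_0^k$ together with $\bigcap_n\ffrm^n=0$) and the induction on the number of variables — with iterated evaluation justified by the usual rearrangement of $\ffrm$-adically summable families — are both fine. Two cosmetic remarks: Krull's intersection theorem is what the limit step needs, not the existence of a nonzero $t_0\in\ffrm$ (that is just $\dim R_v/\frp\ge 1$); and it is worth one line that the factorization $f$ of a classifying map through $\pi$ is again a morphism in $\CNL_\cO$, since local homomorphisms of Noetherian local rings are automatically continuous.
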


Below, we consider quotients of $R_v^\square$ which are defined as in \cite{Kisin} as reduced, flat over $\cO$ quotients of $R_v^\square$, that are  characterized by the $\overline \Q_p$-valued points of their generic fiber; hence these $R_v$ satisfy Lemma \ref{lem_local} and thus  give rise to a local deformation problem. \cite{Kisin} computes  the  dimension of generic fibers of the quotients we consider, and  proves that they are regular.  

\subsubsection*{Modified local deformation rings}

We shall also need modified deformation problems as introduced in \cite{Calegari}. For this, we fix an eigenvalue $\alpha_v$ of $\rhobar(\Frob_v)$. Note that we have $\alpha_v\in k$ by our hypothesis $\rhobar(G_F)\subset\GL_2(k)$.

\begin{definition}\label{modif-def_gen}
The functor  $\tcD\!{}_v^\square : \CNL_{\cO} \to \mathrm{Sets}$ of modified framed deformations associates to $R \in \mathrm{CNL}_{\cO}$ a pair $(r,a)$ with $r\in\cD_v^\square(R)$ and $a\in R$ a root of the characteristic polynomial of $r$ such that $a\equiv\alpha_v\mod {\ffrm_R}$.
\end{definition}
There is an obvious natural transformation $u_v\colon
 \tcD\!{}_v^\square \Rightarrow \cD\!{}_v^\square$, and $\tcD\!{}_v^\square $ is representable by the localization $\tR{}_v^\square$ of the ring $R_v^\square[x]/(x^2-x\tr \rho_v^\square(\Frob_v)+\det\rho_v^\square(\Frob_v))$ at the maximal ideal generated by $\ffrm_{R_v^\square}$ and $(x-\alpha_v)$. If $\rhobar(\Frob_v)$ has a multiple eigenvalue, the ring $R_v^\square[x]/(x^2-x\tr \rho_v^\square(\Frob_v)+\det\rho_v^\square(\Frob_v))$ is local and hence isomorphic to $\tR{}_v^\square$. This proves the following result; see \cite[Lemma~2.1]{Calegari}.
\begin{lemma}\label{lem:CalOnModifCover}
If $\rhobar(\Frob_v)$ has distinct eigenvalues, the canonical map $R{}_v^\square\to \tR{}_v^\square$ is an isomorphism. Otherwise, the extension $R{}_v^\square \to \tR{}_v^\square$ is a finite flat extension of degree two.
\end{lemma}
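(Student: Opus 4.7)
The plan is to analyze the quotient $A = R_v^\square[x]/(x^2 - x\tr\rho_v^\square(\Frob_v) + \det\rho_v^\square(\Frob_v))$ as a finite free extension of $R_v^\square$ (of rank $2$, since the defining polynomial is monic of degree $2$), and then study the decomposition of $A$ into local factors by examining the factorization of the reduction $\bar p(x) \in k[x]$ of $p(x) = x^2 - x\tr\rho_v^\square(\Frob_v) + \det\rho_v^\square(\Frob_v)$ modulo $\ffrm_{R_v^\square}$. Since $R_v^\square$ is a complete local Noetherian ring, finite $R_v^\square$-algebras split canonically as products of complete local rings, one for each maximal ideal, and the maximal ideals of $A$ correspond bijectively to the maximal ideals of $A/\ffrm_{R_v^\square}A = k[x]/(\bar p(x))$. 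The ring $\tR_v^\square$ is then simply the local factor of $A$ corresponding to the maximal ideal $(\ffrm_{R_v^\square}, x-\alpha_v)$.

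For the first assertion, suppose $\rhobar(\Frob_v)$ has distinct eigenvalues $\alpha_v \neq \beta_v$ in $k$. Then $\bar p(x) = (x-\alpha_v)(x-\beta_v)$ in $k[x]$, and these two linear factors are coprime. By Hensel's lemma (applicable since $R_v^\square$ is $\ffrm_{R_v^\square}$-adically complete), this factorization lifts uniquely: there exist $A, B \in R_v^\square$ with $A \equiv \alpha_v$, $B \equiv \beta_v \pmod{\ffrm_{R_v^\square}}$ and $p(x) = (x-A)(x-B)$. By the Chinese Remainder Theorem applied to the coprime factors $(x-A)$ and $(x-B)$ (which remain coprime over $R_v^\square$ since $A - B$ is a unit), we get $A \cong R_v^\square \times R_v^\square$, and the localization at the maximal ideal corresponding to $x \mapsto A$ picks out one factor, isomorphic to $R_v^\square$ via the tautological map. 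So $R_v^\square \to \tR_v^\square$ is an isomorphism.

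For the second assertion, suppose $\rhobar(\Frob_v)$ has a repeated eigenvalue $\alpha_v$. Then $\bar p(x) = (x-\alpha_v)^2$ has a single irreducible factor, so $k[x]/(\bar p(x))$ is local, which forces $A$ itself to be local with unique maximal ideal $(\ffrm_{R_v^\square}, x-\alpha_v)$. Hence $A$ equals its own localization at this ideal, i.e.\ $\tR_v^\square = A = R_v^\square[x]/(p(x))$. Since $p(x)$ is monic of degree $2$, $A$ is free of rank $2$ as an $R_v^\square$-module, giving a finite flat extension of degree $2$.

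The main (and in fact only) nontrivial input is Hensel's lemma for the distinct-eigenvalue case; the repeated-eigenvalue case is essentially a bookkeeping exercise once one observes that $k[x]/(\bar p(x))$ is local. No further obstacle is expected.
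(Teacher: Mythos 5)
Your proof is correct and follows essentially the same route as the paper, which identifies $\tR_v^\square$ as the localization of $R_v^\square[x]/(x^2-x\tr\rho_v^\square(\Frob_v)+\det\rho_v^\square(\Frob_v))$ at $(\ffrm_{R_v^\square},x-\alpha_v)$, observes that this quotient is already local (and free of rank $2$) when the eigenvalue is repeated, and refers to Calegari's Lemma 2.1 for the split case, which is exactly the Hensel/CRT decomposition you spell out. (A cosmetic point: you use $A$ both for the quotient ring and for the lifted root, which is worth fixing but is not a mathematical issue.)
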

The following definition is extracted from \cite[\S~2]{Calegari}.
\begin{definition}
A modified local deformation problem for $\overline{\rho}|_{G_{F_v}}$ is a subfunctor $\tcD_v \subset \tcD\!{}_v^\square$ satisfying the following conditions:
\begin{enumerate}
\item The composition $u_v\circ \tcD_v$ is a subfunctor of $\cD_v^\square$.
\item The functor $\tcD_v$ is represented by a quotient $\tR_v$ of $\tR{}_v^\square$.
\end{enumerate}
\end{definition}

One has the following analog of Lemma~\ref{lem_local}.
\begin{lemma}\label{lem_modiflocal}
Let $\tpi\colon \tR_v^\square\to \tR_v$ be a surjective morphism in $\mathrm{CNL}_{\cO}$, and let the subring $R_v\subset \tR_v$ be the image of $R_v^\square$ with induced surjection $\pi\colon R_v^\square\to R_v$. Suppose that 
\begin{enumerate}
\item The ring $\tR_v$ is reduced, and not isomorphic to $k$.
\item The surjection $\pi$ satisfies condition 2 of Lemma~\ref{lem_local}.
\end{enumerate}
Then the subfunctor $\tcD_v$ of $\tcD\!{}_ v^\square$ defined by $\tR_v$ is a modified local deformation problem. 
\end{lemma}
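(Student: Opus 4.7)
The plan is to parallel the proof of Lemma~\ref{lem_local} and verify the two conditions in the definition of a modified local deformation problem. Condition~(2), representability of $\tcD_v$ by $\tR_v$, is formal: by construction $\tcD_v(R)$ consists of classifying maps $\tR_v^\square\to R$ factoring through the surjection~$\tpi$, yielding a natural identification $\tcD_v(R)\cong\Hom_{\cO\text{-alg}}(\tR_v,R)$ as a subfunctor of $\tcD_v^\square(R)\cong\Hom_{\cO\text{-alg}}(\tR_v^\square,R)$.

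The real work lies in verifying condition~(1), that $u_v\circ\tcD_v$ is a subfunctor of~$\cD_v^\square$. I interpret this, in the spirit of the discussion preceding Lemma~\ref{lem_local}, as requiring the image $u_v(\tcD_v)$ to be a local deformation problem inside~$\cD_v^\square$. The plan is to identify $u_v(\tcD_v)$ with the subfunctor $\cD_v\subseteq\cD_v^\square$ represented by~$R_v$, and then invoke Lemma~\ref{lem_local} applied to~$\pi$. The inclusion $u_v(\tcD_v)\subseteq\cD_v$ is immediate: if $(r,a)\in\tcD_v(R)$, then the classifying map $\tR_v^\square\to R$ factors through~$\tpi$, so its restriction to $R_v^\square$ factors through~$\pi$. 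The reverse inclusion requires that any $\phi\colon R_v\to R$ extend to a map $\tR_v\to R$; since $\tR_v$ is generated over $R_v$ by the image $a$ of $x$ subject to $x^2-tx+d$ with $t,d\in R_v$, this amounts to producing a root $a\in R$ congruent to~$\alpha_v$ modulo~$\ffrm_R$ that is also compatible with the further relations cutting out $\tR_v$ inside~$\tR_v^\square$.

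With the identification $u_v(\tcD_v)=\cD_v$ in hand, I would verify the hypotheses of Lemma~\ref{lem_local} for~$\pi$. Hypothesis~(2) of Lemma~\ref{lem_modiflocal} is literally condition~(2) of Lemma~\ref{lem_local} for~$\pi$. Reducedness of $R_v$ is inherited from~$\tR_v$ via the subring inclusion, and $R_v\ne k$ since otherwise $\tR_v$ would be a reduced local quotient of the localization of $k[x]/(x^2-tx+d)$ at $(x-\alpha_v)$, forcing $\tR_v\cong k$ and contradicting hypothesis~(1). Thus Lemma~\ref{lem_local} yields that $\cD_v=u_v(\tcD_v)$ is a local deformation problem, completing the verification of condition~(1).

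The main obstacle is the reverse inclusion $\cD_v\subseteq u_v(\tcD_v)$ in the repeated-eigenvalue case, where, by Lemma~\ref{lem:CalOnModifCover}, the extension $R_v^\square\to\tR_v^\square$ is a non-trivial finite flat degree-two cover. A naive Hensel argument is unavailable since $x^2-tx+d$ has vanishing derivative at~$\alpha_v$ modulo~$\ffrm_R$; I expect the argument must exploit the finite flatness of $\tR_v^\square$ over~$R_v^\square$ together with the specific form of the kernel $J=\kernel\tpi$ to select, after base-change to~$R$, a component of $\tR_v^\square\otimes_{R_v^\square}R$ yielding a map compatible with~$J$.
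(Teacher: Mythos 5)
The parts of your argument that overlap with the paper are fine and are exactly what the paper does: representability of $\tcD_v$ by $\tR_v$ is formal; $R_v$ is reduced as a subring of the reduced ring $\tR_v$; writing $\tR_v=R_v[x]$ with $x$ satisfying a monic quadratic over $R_v$ and using $\alpha_v\in k$ shows $R_v\ne k$; and hypothesis (2) lets one apply Lemma~\ref{lem_local} to conclude that $R_v$ defines a local deformation problem $\cD_v$. The genuine gap is the step you flag yourself: the reverse inclusion $\cD_v\subseteq u_v(\tcD_v)$. This is not merely hard to prove, it is false in general. A point $\phi\in\cD_v(R)$ extends to $\tR_v=R_v[x]$ only if the monic quadratic satisfied by $x$ acquires a root in $R$ lifting $\alpha_v$, and such a root need not exist. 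Already for $R=\cO$ in the repeated-eigenvalue case (say $q_v\equiv 1\bmod p$, $\rhobar|_{G_{F_v}}$ trivial, $\tR_v=\tR{}_v^\uni$, $R_v=R_v^\uni$): one can choose an unramified lift $\rho\equiv 1\pmod\varpi$ with $\det\rho(\Frob_v)=q_v$ whose characteristic polynomial has non-square discriminant, so its Frobenius eigenvalues generate a quadratic extension of $E$ and there is no eigenvalue $a\in\cO$ at all; this is a point of $\cD_v(\cO)$ with no preimage in $\tcD_v(\cO)$. For the same reason the quadratic $R$-algebra $\tR{}_v^\square\otimes_{R_v^\square}R$ can be a domain admitting no $R$-algebra section, so the finite-flatness/``choose a component'' idea at the end of your proposal cannot restore the equality $u_v(\tcD_v)=\cD_v$.

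The fix is that this equality is never needed, and the paper does not assert it. Condition (1) in the definition of a modified local deformation problem does not ask for $u_v(\tcD_v)$ to be representable, let alone to coincide with $\cD_v$; it only asks that $u_v\circ\tcD_v$ be a subfunctor of $\cD_v^\square$, and this is verified by the containment you already established, namely that the image of $\tcD_v$ under $u_v$ factors through the functor $\cD_v$ defined by $R_v$, together with the fact (from Lemma~\ref{lem_local}, using $R_v$ reduced, $R_v\ne k$, and hypothesis (2)) that $\cD_v$ is an honest local deformation problem. Delete the attempted identification $u_v(\tcD_v)=\cD_v$ and the rest of your argument is precisely the paper's proof.
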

\begin{proof}
From the explicit description of $\tR_v^\square$ it is clear that there exists $x\in \tR_v$ such that $\tR_v=R_v[x]$ and $x$ satisfies a monic quadratic polynomial over $R_v$. Since we also assume that $\alpha_v$ lies in $k$, and since $\tR_v$ is reduced, condition 1 implies that $R_v$ is different from $k$ and hence by Lemma~\ref{lem_local}, $R_v$ defines a local deformation problem $\cD_v$ represented by $R_v$. It follows that $\tcD\!{}_v$ represented by $\tR_v$ is a modified local deformation functor in the sense of Definition~\ref{modif-def_gen}.
\end{proof}

\subsubsection*{Local deformation conditions}

We now define the local deformation conditions relevant to this work; the resulting lifting rings will be denoted by $R_v^{\tau_v}$, where the superscripts $\tau_v\in \{\fl,\min,\st,\un,\fun,\square\}$ indicate the type of condition used to define~$R_v$, and the corresponding universal lifting by $\rho_v^{\tau_v}$. Our conditions for liftings $r$ of $\rhobar|_{G_v}$ will \emph{always} include the condition $\det r=\varepsilon_p$; we shall not repeat this below. We shall be brief, as we closely follow~\cite[\S~4]{BKM}. 

\smallskip

\noindent For $v\in \Sigma_p$ the extension $F_v/\Q_p$ is unramified, so that Fontaine--Laffaille theory applies. We assume that $\rhobar|_{G_v}$ is flat for all $v\in\Sigma_v$, and we let
\begin{itemize}
\item $R_v^{\fl}$ be the quotient of $R_v^{\square}$ parameterizing \emph{flat} liftings of $\rhobar|_{G_v}$.
\end{itemize}

\noindent For $v\in \Sigma$ we let
\begin{itemize}
\item $R_v^{\min}$ be the quotient of $R_v^\square$  parametrizing \emph{minimally ramified} liftings of $\rhobar|_{G_v}$. If $\rhobar$ is unramified at $v$, then $R_v^{\min}$ parameterizes unramified liftings, and then, occasionally we write $R^{\unr}_v$ for~$R^{\min}_v$.
\end{itemize}

\noindent Let $\olQ\subset \Sigma$ be the subset of those $v$ such that the representation
$\rhobar|_{G_{F_v}}$ is of the form 
\begin{equation}\label{eq:modp-Stbg}
    \left( \begin{array}{cc} \varepsilon_p \chibar&  *\\ 0 & \chibar \end{array} \right)
\end{equation}
with respect to some basis $e_1,e_2$ of $k^2$ and where the character $\chibar$ is unramified;\footnote{Let us note that the set $Q$ here and the sets $Q$ in Sections \ref{sec:deformation} and \ref{sec:ribet} are (related but) in general not the same.} we further assume that the basis is chosen so that $*$ is trivial whenever $\rhobar|_{G_{F_v}}$ is split, which holds if $\rhobar$ is unramified and $\varepsilon_p$ is non-trivial. Also $\chibar$ has to be quadratic and we let $\chi$ be its unique (quadratic) lift to $\cO$. Let $\beta_v=\chi(\Frob_v)$.

\noindent For $v\in \olQ$, we define the Steinberg quotient  $R_v^\st$ of $R_v^\square$  as follows:
\begin{itemize}
\item If $\rhobar$ is ramified at $v$, then $R_v^\st$ is defined to be $R^{\min}_v$.

\item If $\rhobar$ is unramified at $v$, we define $R_v^{\St}$ as the unique reduced quotient of $R_v^\square$ characterized by the fact that the $L$-valued points of its generic fiber, for any finite extension $L/E$, correspond to representations of the form 
 \[ \left( \begin{array}{cc} \varepsilon \chi& \ast \\ 0 & \chi\end{array} \right),\]
and with the additional condition $\chi(\Frob_v)=\beta_v$ in the case $q_v\equiv -1\mod p$. In the latter case, without fixing $\beta_v$,  
$\Spec R_v^{\St}$ would have two components, because here $\varepsilon_p$ is quadratic and unramified; see also~\cite[\S~4]{BKM}.
\end{itemize}

\noindent For $v\in \olQ$ such that $\rhobar|_{G_{F_v}}$ is unramified, we also define:

\begin{itemize}
\item 
The unipotent quotient $R_v^\uni$ of $R_v^\square$ is the unique reduced quotient such that $\Spec R_v^\uni=\Spec R_v^\St\cup\Spec R_v^{\unr}$ inside $\Spec R_v^\square$. If $q\equiv -1\mod p$, then note that $R_v^\st$ depends on $\beta_v$.
\end{itemize}

\begin{itemize}

\item The modified unipotent quotient $\tR_v^\uni$ of $\tR{}_v^\square$ is the  unique reduced quotient of $\tR{}_v^\square$ characterized by the fact that the $L$-valued points of its generic fiber, for any finite extension $L/E$, correspond to pairs $(r,a)$  where $r$ is a representation of the form 
\[\begin{pmatrix} \varepsilon_p\chi &*\\0& \chi\end{pmatrix}\] 
with $\chi$ unramified, and such that $\chi(\Frob_v)=a$.
\end{itemize}
It is clear from the definitions, that the natural map $R_v^\square\to \tR{}_v^\uni$ factors via $R^\square_v\to R_v^\uni\to \tR{}_v^\uni$, and by Lemma~\ref{lem:CalOnModifCover}, the map $R_v^\uni\to \tR{}_v^\uni$ is an isomorphism, unless $q_v\equiv 1\mod p$. 

For a more uniform notation, from now on we write $R_v^\fun$ instead of $\tR{}_v^\uni$.

\medskip

The following result summarizes basic ring theoretic properties of the $R_v^{\tau_v}$.
\begin{proposition}\label{prop:R_v^tau}
The following hold:
\begin{enumerate}
\item We have $R^{\fl}_v \cong  \cO[[x_1,\ldots,x_{3+[F_v:\Q_p]}]]$ for $v\in\Sigma_p$ and $R^{\min}_v \cong \cO[[x_1,x_2,x_3]]$ for $v\in\Sigma$.
\item For $v\in \Sigma$, the ring $R_v^\square$ is a complete intersection, reduced, and flat over $\cO$ of relative dimension~$3$. 
\item For $v\in \olQ$,  the ring $R_v^{\St}$ is Cohen--Macaulay, flat of relative dimension $3$ over $\cO$ and geometrically integral, and if $v$ is not a trivial prime for $\rhobar$, we in fact have $R^{\St}_v\cong \cO[[x_1,x_2,x_3]]$.
\item For each $v\in \olQ$ and each minimal prime $\frp$ of $R_v^\square$, $R_v^\square/\frp$ is flat over $\cO$ and geometrically integral.
\item For $v\in \olQ$ such that in addition $\rhobar$ is unramified at $v$,  the rings $R_v^\uni$ and $R_v^\fun$ are Gorenstein, reduced, and flat over $\cO$ of relative dimension~$3$.
\end{enumerate}
Moreover the rings $R_v^{\tau_v}$ in 1.--5. are the completion of a finite type $\cO$-algebra at a maximal ideal. 
\end{proposition}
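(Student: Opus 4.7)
The plan is to verify the items in order, invoking the explicit presentations of tame and flat local deformation rings developed in prior work (principally \cite{Kisin}, \cite{Shotton}, and the summary in \cite[\S~4]{BKM}).

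For part (1), since $F_v/\Q_p$ is unramified and $\rhobar|_{G_{F_v}}$ is assumed flat, I would invoke Fontaine--Laffaille theory: the flat deformation functor with fixed cyclotomic determinant is pro-representable by a power series ring, and the number of variables is computed by a standard Euler-characteristic calculation to be $3+[F_v:\Q_p]$. For $R_v^{\min}$ with $v\in\Sigma$, one checks that the minimally ramified condition cuts out a formally smooth subfunctor of $\cD_v^\square$ (see the treatment in \cite[\S~4]{BKM} and its references), and a direct tangent space computation with the fixed-determinant condition gives relative dimension $3$.

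For parts (2)--(4), I would follow the explicit analysis of tame unrestricted deformation rings from \cite{Shotton} (as summarized in \cite[\S~4]{BKM}). This presents $R_v^\square$ as a quotient of a power series ring over $\cO$ modulo finitely many relations reflecting the tame local structure, exhibiting it as a complete intersection, reduced, and flat over $\cO$ of relative dimension $3$. By flatness and reducedness, the minimal primes of $R_v^\square$ correspond bijectively to the irreducible components of $\Spec R_v^\square[1/\varpi]$, and the latter are classified by inertial types; each tame component is then identified explicitly with a known geometrically integral flat $\cO$-algebra, giving part (4). For part (3), when $v$ is not a trivial prime, the Steinberg condition picks out a smooth component and the explicit presentation yields $R_v^\St\cong \cO[[x_1,x_2,x_3]]$; when $v$ is a trivial prime, the same explicit presentation (now having a singular point at the trivial cocycle) is checked to be Cohen--Macaulay, flat of relative dimension $3$, and geometrically integral.

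For part (5), I would use the scheme-theoretic identification $\Spec R_v^\uni = \Spec R_v^\St \cup \Spec R_v^{\unr}$ as a union of two $3$-dimensional flat components. Away from trivial primes the two components meet transversally and the ring is a local complete intersection (hence Gorenstein); for the trivial-prime case one writes down the fiber product presentation explicitly and verifies directly that the dualizing module is free of rank one, e.g.\ by exhibiting a one-dimensional socle in a cofinal system of Artinian quotients. The ring $R_v^\fun$ is then either isomorphic to $R_v^\uni$ or, by Lemma~\ref{lem:CalOnModifCover}, a finite flat extension of degree two, and in the latter case Gorenstein passes along this finite flat map. The final sentence of the proposition (realizability as the completion of a finite type $\cO$-algebra at a maximal ideal) is read off from the same explicit presentations, since the defining relations all come from finitely many polynomial equations in finitely many variables over $\cO$.

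The main obstacle will be the trivial-prime cases in parts (3) and (5): here $R_v^\St$ fails to be smooth, and the Gorenstein property of $R_v^\uni$ is strictly stronger than Cohen--Macaulayness, so one cannot simply invoke the flat-plus-CI presentation. Both require the explicit gluing computation to control the socle and confirm geometric integrality of $R_v^\St$, and these are the delicate points where the analysis of \cite[\S~4]{BKM} and \cite{Shotton} does the essential work.
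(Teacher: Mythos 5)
Your treatment of parts (1)--(4) is fine and is essentially what the paper does: it simply cites \cite[Prop.~4.3]{BKM} (whose proof rests on Fontaine--Laffaille theory at $p$ and on Shotton's explicit analysis of the tame framed rings), so there is nothing to add there. The paper's real content is part (5), which it proves by giving explicit presentations: for $R_v^\fun$ it uses Calegari's presentation $\cR/\cI$ and a direct socle computation after cutting by the regular system of parameters $\varpi,b-c,b-\beta,X-\gamma$ (Lemma~\ref{lemma:props-of-RfunQ}), and for $R_v^\uni$ it computes the intersection ideal $\cI^\unr\cap\cI^\st$ explicitly (via Macaulay2), proves Cohen--Macaulayness from the exact sequence $0\to\cR/\cI^\un\to\cR/\cI^\unr\times\cR/\cI^\st\to\cR/(\cI^\unr+\cI^\st)\to0$, and again verifies a one-dimensional socle (Lemma~\ref{Lem:EqnsForUnCase}). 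Your plan for $R_v^\uni$ at trivial primes (exhibit the fiber-product presentation and check the socle) is the same strategy in outline, though the entire difficulty is the explicit computation of the intersection ideal, which you do not supply.

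The genuine gap is your argument for $R_v^\fun$. You propose that $R_v^\fun$ is either isomorphic to $R_v^\uni$ or, by Lemma~\ref{lem:CalOnModifCover}, a finite flat degree-two extension of it, and that Gorensteinness then ``passes along this finite flat map.'' Both steps fail. Lemma~\ref{lem:CalOnModifCover} concerns the framed rings $R_v^\square\to\tR_v^\square$, not the unipotent quotients, and in the trivial-prime case (exactly the case that matters, since otherwise $R_v^\uni\to R_v^\fun$ is an isomorphism) $R_v^\fun$ is \emph{not} finite flat over $R_v^\uni$: its Steinberg component maps isomorphically to the Steinberg component of $\Spec R_v^\uni$ (the eigenvalue is pinned to $\chi(\Frob_v)$ there), while its unramified component is a generically degree-two cover of the unramified component, so the generic rank jumps between $1$ and $2$ over the connected local scheme $\Spec R_v^\uni$, contradicting local freeness. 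Moreover, even if one had a finite flat extension, Gorensteinness does not ascend along finite flat maps: $k\to k[x,y]/(x^2,xy,y^2)$ is finite free of rank $3$ over the Gorenstein ring $k$, but the target has two-dimensional socle. The correct criterion would require showing $\Hom_{R_v^\uni}(R_v^\fun,R_v^\uni)$ is locally free of rank one over $R_v^\fun$, which is an additional nontrivial input, not a formal consequence. This is precisely why the paper treats $R_v^\fun$ by a separate explicit presentation and socle computation rather than by comparison with $R_v^\uni$; your proposal needs to be repaired by an argument of that kind.
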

\begin{proof}
For all but 5 we refer to  \cite[Prop.~4.3]{BKM} and the references given in its proof. The proof of 5 is given in Lemmas~\ref{Lem:EqnsForUnCase} and~\ref{lemma:props-of-RfunQ} below.
\end{proof}

For each $v\in \Sigma$, fix a $\tau_v\in \{\min,\st,\un,\fun,\square\}$, and let $\tau = (\tau_v)_{v\in \Sigma}$, and define 
\begin{align*}
R_{\loc}^\tau &= \left(\cbigotimes{v\in \Sigma}\, R_v^{\tau_v}\right)\cotimes\left( \cbigotimes{v|p}\,R_v^{\fl}\right).
\end{align*}
We simply write $R_{\loc}$ for $R^\tau_{\loc}$, if $\tau_v=\square$ for all $v$. Note in particular, that for any $\tau$ there is a natural morphism $R_{\loc}\to  R^\tau_{\loc}$, and that it factors via $R^{\tau'}_{\loc}$ where $\tau'$ is obtained from $\tau$ be replacing all $\fun$ by $\uni$.

Proposition~\ref{prop:R_v^tau} and  \cite[Lemma 4.4]{BKM} yield:
\begin{proposition}\label{prop:R_v-Loc}
The ring $R_{\loc}$ is a complete intersection, the ring $R^\tau_{\loc}$ is Cohen--Macaulay, and both are reduced and flat over $\cO$. If $R_v^{\tau_v}$ is Gorenstein for all $v\in\Sigma$, then so is $R^\tau_{\loc}$.

Moreover, each irreducible component of $\Spec R_{\loc}$ is of  the form 
\[\Spec \left[\widehat{\bigotimes_{v\in\Sigma}} \, R_v^\square/\frp^{(v)}\right] 
\widehat{\otimes} \, R^{\fl}_p\]
where each $\Spec R^\square_v/\frp^{(v)}$ is an irreducible component of $\Spec R_v^\square$, i.e., each $\frp^{(v)}$ is a minimal prime of $R_v^\square$.
\end{proposition}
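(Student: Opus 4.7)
The plan is to deduce everything by combining the explicit ring-theoretic properties of the individual local factors, recorded in Proposition \ref{prop:R_v^tau}, with a general transfer principle for completed tensor products, which is precisely the content of \cite[Lemma 4.4]{BKM}. First I would unpack the definition of $R_{\loc}$ (resp.\ $R_{\loc}^\tau$) as a completed tensor product over $\cO$ of the rings $R_v^\square$ (resp.\ $R_v^{\tau_v}$) for $v\in\Sigma$, together with the flat deformation rings $R_v^{\fl}$ for $v\in\Sigma_p$. By Proposition \ref{prop:R_v^tau}, each of these factors is a complete, Noetherian local $\cO$-algebra obtained as the completion of a finite-type $\cO$-algebra at a maximal ideal, is $\cO$-flat, and satisfies the relevant local property: $R_v^{\fl}$ and $R_v^\square$ are complete intersections; each $R_v^{\tau_v}$ is Cohen--Macaulay (and is Gorenstein in the cases $\fl,\min,\uni,\fun$, or under the hypothesis of the proposition).

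Next I would invoke \cite[Lemma 4.4]{BKM} to propagate these properties to the completed tensor product. The underlying principle is that, because each factor is the completion of a finite-type $\cO$-algebra at a maximal ideal, the completed tensor product over $\cO$ is itself the completion of the ordinary tensor product of those finite-type algebras at a maximal ideal. Since $\cO$-flatness, regular-sequence-type properties (CI), Cohen--Macaulayness, Gorensteinness, and reducedness are all preserved under tensor products of $\cO$-flat finite-type algebras (the last using the geometric integrality of the components, cf.\ Proposition \ref{prop:R_v^tau}(4)), and all are preserved under localization and completion at a maximal ideal, we obtain the first assertion of the proposition. The Gorenstein statement is the analogous direct consequence under the extra hypothesis on the $R_v^{\tau_v}$.

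Finally, for the description of the irreducible components of $\Spec R_{\loc}$, I would use Proposition \ref{prop:R_v^tau}(4): each irreducible component $\Spec R_v^\square/\frp^{(v)}$ of $\Spec R_v^\square$ is $\cO$-flat and \emph{geometrically} integral, and $R_p^{\fl}$ is a power series ring over $\cO$, hence geometrically integral and flat. Because geometric integrality (together with $\cO$-flatness) is preserved under tensor product over $\cO$, and then under completion at a maximal ideal of the resulting finite-type $\cO$-algebra, each quotient $\widehat{\bigotimes_{v\in\Sigma}} R_v^\square/\frp^{(v)}\, \widehat{\otimes}\, R_p^{\fl}$ is an integral domain and therefore cuts out an irreducible closed subset of $\Spec R_{\loc}$. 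Conversely, any minimal prime of $R_{\loc}$ must contract, factor by factor, to a minimal prime of each $R_v^\square$: this is immediate for the ordinary tensor product of the underlying finite-type algebras (minimal primes of a tensor product of $\cO$-flat algebras with geometrically integral components correspond to tuples of minimal primes of the factors), and passes to the completion. Comparing with the components constructed above, and using that $R_{\loc}$ is reduced, gives the claimed bijection between irreducible components of $\Spec R_{\loc}$ and tuples $(\frp^{(v)})_{v\in\Sigma}$.

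The main technical obstacle is the behavior of completed tensor products with respect to reducedness and irreducible components; mere integrality of factors does not suffice, and the geometric integrality assertion in Proposition \ref{prop:R_v^tau}(4) is essential. This is exactly what \cite[Lemma 4.4]{BKM} is set up to handle, so once that lemma is in hand the proof reduces to a bookkeeping exercise over the factors.
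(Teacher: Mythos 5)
Your proposal is correct and follows essentially the same route as the paper, which proves this proposition simply by citing Proposition \ref{prop:R_v^tau} together with \cite[Lemma 4.4]{BKM} for the transfer of flatness, CI/CM/Gorenstein, reducedness, and the component description through the completed tensor product. You supply the bookkeeping (finite-type models, geometric integrality of components, minimal primes of the tensor product) that the paper leaves implicit in that citation.
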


\subsubsection*{Global deformation rings}

Now we set up the notation for the corresponding global deformation rings, following \cite[Section 4.3]{BKM}, where further details can be found.

Let $R$ (resp. $R^\square$) denote the global unframed (resp. framed) deformation ring parameterizing lifts of $\rhobar$ with determinant $\varepsilon_p$  which are unramified outside $\Sigma\cup\Sigma_p$ (together with a choice of basis at every $v\in\Sigma\cup\Sigma_p$), One may non-canonically fix an isomorphism $R^\square = R[[X_1,\ldots,X_{4\#(\Sigma\cup\Sigma_p)+3}]]$, so that we may treat the subring $R$ of $R^\square$ also as a quotient of $R^\square$. One also has a natural map $R_{\loc}\to R^\square$ (and thus a map $R_{\loc}\to R$), by restricting a the global lifting and performing locally a base change. 

Let $\tau = (\tau_v)_{v\in \Sigma}$ be as in the previous subsection. Then we define
\[R^{\square,\tau} = R_{\loc}^\tau\otimes_{R_{\loc}}R^\square \hbox{ and  }R^{\tau} = R_{\loc}^\tau\otimes_{R_{\loc}}R.\]

\section{Computation of  Wiles defect for some  local lifting rings}\label{sec:computations}
\label{sec:ComputeWD}

In this section, $R$ will denote a ring $R_v^{\tau_v}$ as defined in Section~\ref{sec_deformation_theory} for a residual representation $\rhobar_v=\rhobar|_{G_{F_v}}:G_{F_v}\to\GL_2(k)$ at a place $v$ of $F$, and a deformation condition $\tau_v$. We let $q=q_v$ be the cardinality of the residue field of $F$ at $v$ and $e$ the ramification index of $\cO$ over $W(k)$. We also fix an augmentation $\lambda:R\onto\cO$.

Throughout this section, we assume that $q\equiv 1\mod p$ and that $\rhobar_v$ is trivial. 

\begin{definition}
Let $\rho_\lambda:G_{F_v}\to\GL_2(\cO)$ be the representation at $v$ induced from the augmentation $\lambda$. We define the local monodromy invariant $n_v$ of $\lambda$ to be the largest integers $n$ such that $\rho_\lambda(G_{F_v})$ mod $\varpi^n$ has trivial projective image. \footnote{This definition also applies in the case where our current setup is twisted by a character that is quadratic and unramified at $v$. The results of this section also apply to this twisted setup.} 
\end{definition}

The aim of this section is to compute the invariants $D_{1,\lambda}(R)$ and $c_{1,\lambda}(R)$ of Venkatesh and  the Wiles defect $\delta_\lambda(R)$ as attached in Definition~\ref{def:key} to the pair $(R,\lambda)$ for certain types of $\rhobar_v$ and $\tau_v$. The three types of deformation conditions that we shall investigate are weight $2$ Steinberg representations, weight $2$ unipotent representations, and weight $2$ unipotent representations with an additional choice of Frobenius eigenvalue; we call the corresponding cases \cst, \cun\ and \ctun, respectively. We shall see that the invariants will only depend on the monodromy invariant $n_v$ and on the type of deformation condition.

The overall strategy in each case is the same. The actual computations between case \cst\ and cases \cun\ and \ctun\ differ greatly. In each case, we first give (or recall) an explicit description of $R$, as a quotient of a power series ring over $\cO$ modulo some ideal given by explicit relations. Then we need to find a ring $\Rt$ and a morphism $\varphi:\Rt\onto R$ that satisfy property \ref{prop CI}. In the unipotent cases, we also need a morphism $\thetat:S\to \Rt$ as in Lemma~\ref{lem:lifting reg seq}. We greatly benefit from the freedom in choosing $\Rt$ and $\varphi$. Venkatesh's invariants do not depend on this choice. So we do this in a way amenable to computation. Our choices are not `natural', but they `work'.
\footnote{It is shown in  \cite{Shotton2} that  the unrestricted lifting ring $R_v^\square$ of any trivial $\rhobar_v:G_{F_v}\to \GL_n(k)$ is a local complete intersection ring and so the induced surjection $\Rt=R_v^\square\to R$ might appear as a natural candidate for $\thetat$. However for the purpose of computations, this seems not useful. The ring $R_v^\square$ can be significantly more complicated than $R$. For instance in case \cst, the ring $R$ can be defined entirely by quadratic polynomials, whereas the equations defining $R_v^\square$ involve expressions of degree $q$.  The latter makes the sort of computations we need to preform with $\Rt$ quite difficult.
}
They allow us to explicitly compute at least the following objects that by Theorem~\ref{thm:c_1 local} and Theorem~\ref{thm:Der exact sequence} give Venkatesh's invariants: (a) the first two steps in a finite free $\Rt$-resolution of $I=\kernel \varphi$, (b) the $\Rt$-annihilator $\Rt[I]$ of $I$, and (c) the modules of formal differentials $\cOmega_{R}$ and~$\cOmega_{\Rt}$.

The computation of the quantities in (c) is done as in \cite{BKM}. They can be related to $\cO$-linear subspaces of $\cOmega_{\cO[[x_1,\ldots,x_n]]/\cO}$ formed by differentials in the kernel ideal of a surjective presentation $\cO[[x_1,\ldots,x_n]]\to \Rt$, and are not difficult to compute. The resolutions needed for (a) turned out to be manageable, even by hand calculation. The most difficult quantity to compute was (b). In case \cst, we can rely on the rich theory of determinantal rings. In the other cases, we needed explicit bases of $\Rt$ and $R$ as free modules over $S$, and we need to understand the socle of the mod $p$ fiber of the latter rings modulo the standard regular sequence of $S$ and the chain of isomorphisms in the proof of Lemma~\ref{lem:R[I]=omega}.

In the Steinberg case, we were able to perform all computations by hand. For (a) we made use of a standard resolution from commutative algebra, the Eagon-Northcott complex. Also (b) and (c) turned out to be directly computable. The reason is that the ring we consider is the completion of a certain determinantal variety of $2\times 2$-minors of a $4\times 2$-matrix. The equations defining such varieties possess many symmetries and have been much studied in commutative algebra. 

In the unipotent cases, the defining equations had no structure that we could link to well-studied classical varieties. In these two cases, we employed for nearly all computations the computer algebra system Macaulay2.\footnote{We thank Dan Grayson for answering some questions and the Macaulay developers for this useful software.} To do so, we modeled the sequence of maps $S\to \Rt\to R$ by a sequence of rings $ S_A\to \Rt_A \to R_A$ of finite type over $A=\Z[\frac 12]$, in case \cun, or $A=\Z$, in case~\ctun. With the help of Macaulay, and suitable choices of integral models, that we found by experiment, we were able to work out (a)--(c) in fact over $A$ (or over $\Q$ when this was sufficient). Using base change and completion, we convert these computations to answers to (a)--(c) for $S\to \Rt\to R$. Our models in fact work for all primes $p$ simultaneously (with $p\neq2$ in case \cun). The models we find satisfy in particular, that $\Rt_A$ and $R_A$ are finite free over $S_A$, and that certain related models for the mod $p$ fibers of $S\to \Rt\to R$ have the analogous property with the same rank. Finding models that are in addition smooth at the augmentation point in the generic fiber of $\Rt_A$ posed an additional challenge.

Let us also mention here that in Subsection~\ref{Subsect-OnCM}, at the end of this section, we gather some results on Cohen--Macaulay and Gorenstein rings that we use repeatedly. It also contains some elementary results on generating sets on dual modules that were useful in explicit computations in Subsections~\ref{Subsec-FUnip} and~\ref{Subsec-Unip}.

\subsection{Presentations of and basic results on the rings \texorpdfstring{$R$}{RInftyTau}}

\subsubsection*{Case \cst}
In case \cst, the ring $R$ is the Steinberg quotient $R^\st_v$ defined in Section~\ref{sec_deformation_theory}. The set-up is as in \cite[\S~7.2]{BKM} except for two minor differences: Here we choose the base point $(0,0)$ for our coordinates while there it was $(s,t)$. Moreover, here $F_v$
is an arbitrary $l$-adic field, there it was $\Q_l$, where $l$ the prime divisor of~$q$. As recalled in Proposition~\ref{prop:R_v^tau}, the ring $R_v^\st$ is a reduced Cohen--Macaulay domain (but non-Gorenstein), and it is flat over $\cO$ of relative dimension $3$. From \cite[\S~7.3]{BKM} we have the explicit presentation $R_v^\st=\cR/J_\st$ where $\cR=\cO[[a,b,c,\alpha,\beta,\gamma]]$ and $J_\st$ is the ideal of $\cR$ generated by the $2\times 2$-minors of the matrix
\begin{equation}\label{Eq:MinorsOfWhich}
\left(
\begin{array}{cccc}
 \alpha & \beta  & (q\!-\!1\!+\!a)  & b \\
\gamma  & -\alpha  & c  & -a 
\end{array}
\right).
\end{equation}

To describe various explicit calculations to be given below, we denote by $t_{i,j}$ the $2\times 2$-minor for columns $i<j$, and we set 
\[r^\st_1=-t_{1,2}=\alpha^2+\beta\gamma, \quad r^\st_2=t_{2,3}=(q-1+a)\alpha+c\beta, \quad r^\st_3=-t_{3,4}=(q-1+a)a+bc,\] 
and $r^\st_4=-t_{1,3}=(q-1+a)\gamma-c\alpha $, $r^\st_5=-t_{1,4}=a\alpha+b\gamma$, $r^\st_6=-t_{2,4}=a\beta-b\alpha$, so that $J_\st=(r^\st_1,\ldots,r^\st_6)$.

As in \cite[\S~7.2]{BKM}, we consider the augmentation $\lambda\colon R_v^{\st}\to\cO$ given by $\lambda(a)=\lambda(\alpha)=\lambda(c)=\lambda(\gamma)=0$ and $\lambda(b)=s$, $\lambda(\beta)=t$, with $t\in\ffrm_\cO$ non-zero.  

\subsubsection*{Case \ctun}

Fix a lift $\sigma\in G_{F_v}$ of Frobenius. In case \ctun, the ring $R$ is the universal lifting ring $R^\fun_v$ defined in \cite[\S~2.1; called there $R^{\mod}_\ell$]{Calegari} for liftings $\rho$ of $\rhobar_v$ of trivial inertia type together with a choice of eigenvalue $(1+X)$ of $\rho(\sigma)$, and with $\det\rho(\sigma)=q$. In other words, the $p$-adic liftings parameterized by $R^\fun_v$ are those that can be made upper-triangular with unipotent inertia, and with $q(1+X)^{-1}$ and $(1+X)$ as diagonal entries of $\rho(\sigma)$ for some $X$. It is shown in \cite[Lem.~2.4 and its proof]{Calegari}, that we have
\[R^\fun_v=\cR/\cI,\]
where $\cR=\cO[[\alpha,\beta,\gamma,X,a,b,c]]$ and $\cI\subset \cR$ is the ideal generated by the entries of the matrices
\[ N^2, N(A-(1+X)I), (A-q(1+X)^{-1})N , AN-qNA,\det A-q\]
with $A:= \sMat{q(1+X)^{-1}+a&b\\c&1+X-a}$ and $N:=\sMat{\alpha&\beta\\ \gamma &-\alpha}$. The corresponding universal lifting factors through the tame quotient $G^t_q$ of $G_{F_v}$, and if $\tau$ is a topological generator of the inertia subgroup of $G^t_q$, such that $\sigma\tau\sigma^{-1}=\tau^q$, then this lifting is given by $\sigma\mapsto A$ and $\tau\mapsto I+N$.

\begin{lemma}\label{Cor:Ideal-I}
We have $\cI=(r^\fun_1\ldots,r^\fun_9)$ for 
\[ r^\fun_1=\alpha X,\ r^\fun_2=\beta X,\ r^\fun_3=\gamma X,\ r^\fun_4=aq+(a^2+bc)(1+X)-a(1+X)^2, \ r^\fun_5=\alpha^2+\beta\gamma,\]
\[r^\fun_6=\alpha c-\gamma(q-1+a), \ r^\fun_7=\alpha a+\gamma b, \ r^\fun_8= \beta c+\alpha (q-1+a), \ r^\fun_9=\beta a-\alpha b.\]
\end{lemma}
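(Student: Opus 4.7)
The plan is to prove $\cI = (r^\fun_1, \ldots, r^\fun_9)$ via a direct but carefully organized computation with the five defining matrices. Write $M_1 := N^2$, $M_2 := N(A - (1+X)I)$, $M_3 := (A - q(1+X)^{-1}I)N$, $M_4 := AN - qNA$, and $M_5 := \det A - q$, so $\cI$ is generated by their entries. The standing assumption $p > 2$ enters crucially: it makes $2$ a unit in $\cO$, hence $2+X$ a unit in $\cR$; the element $1+X$ is also a unit.

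First I would peel off the immediate identifications: $r^\fun_5 = (M_1)_{1,1}$, $M_5 = -(1+X)^{-1} r^\fun_4$, $r^\fun_7 = (M_3)_{1,1}$, and $r^\fun_9 = (M_3)_{1,2}$ all lie in $\cI$ by inspection. The linchpin is then to show $r^\fun_1, r^\fun_2, r^\fun_3 \in \cI$, i.e.\ $X\alpha, X\beta, X\gamma \in \cI$. Algebraically, $M_2$ encodes $NA \equiv (1+X) N$, while $M_3$ and $M_4$ encode $AN \equiv q(1+X)^{-1} N$ and $AN \equiv qNA$ respectively, so both occurrences of $AN$ and both of $NA$ can be eliminated in a single linear combination. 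A direct computation yields the matrix identity in $\cR$:
\[
(1+X)\, M_2 \;-\; \tfrac{1+X}{q}\, M_3 \;+\; \tfrac{1+X}{q}\, M_4 \;=\; -X(2+X)\, N.
\]
The left-hand side has entries in $\cI$, and $2+X$ is a unit in $\cR$, so $X\alpha, X\beta, X\gamma \in \cI$.

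With $J := (r^\fun_1, r^\fun_2, r^\fun_3) \subseteq \cI$ in hand, expanding the remaining entries of $M_3$ and using $q(1+X)^{-1} \xi \equiv q\xi \pmod{J}$ for $\xi \in \{\alpha, \beta, \gamma\}$ produces the explicit identities $(M_3)_{2,1} = r^\fun_6 + \tfrac{1+X+q}{1+X}\, r^\fun_3$ and $(M_3)_{2,2} = r^\fun_8 - \tfrac{1+X+q}{1+X}\, r^\fun_1$, which show $r^\fun_6, r^\fun_8 \in \cI$ and complete the forward containment. For the reverse containment, the two nontrivial entries of $M_2$ expand analogously into combinations of $r^\fun_1, r^\fun_3, r^\fun_6, r^\fun_8$; the entries of $M_1, M_5$ are either zero or equal to $r^\fun_5$ or $-(1+X)^{-1} r^\fun_4$; and $M_4$ is handled by solving the above matrix identity for $M_4$, expressing it as a combination of entries of $XN$ (which equal $\pm r^\fun_1, \pm r^\fun_2, \pm r^\fun_3$) and entries of $M_2, M_3$ (already established to lie in the $r^\fun$-ideal).

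The main obstacle is locating the correct linear combination of $M_2, M_3, M_4$ in the key matrix identity: one needs the coefficients and the $(1+X)$-prefactor to conspire so that all occurrences of $AN$ and $NA$ cancel, leaving a clean scalar multiple $-X(2+X)$ of $N$. All remaining steps are routine bookkeeping; the main persistent subtlety is the handling of $(1+X)^{-1}$ intrinsic to the matrix $A$, which is dealt with by systematically clearing denominators via the unit $1+X$.
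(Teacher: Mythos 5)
Your proof is correct and follows essentially the same route as the paper: the heart of both arguments is eliminating $AN$ and $NA$ among $N(A-(1+X)I)$, $(A-q(1+X)^{-1})N$ and $AN-qNA$ to conclude that $X(2+X)N$, hence $XN$ (using that $1+X$, $2+X$ and $q$ are units in $\cR$, the last because $q\equiv 1 \bmod p$), has entries in $\cI$, after which matching the remaining generators against $r^\fun_4,\ldots,r^\fun_9$ modulo $(X\alpha,X\beta,X\gamma)$ is routine in both directions. The only real difference is cosmetic: the paper invokes the main involution to see that the entries of $N(A-(1+X)I)$ and of $(A-q(1+X)^{-1})N$ generate the same ideal and so works with only one of them, while you expand both matrices directly and record the explicit coefficients (your matrix identity and the identities for $(M_3)_{2,1}$, $(M_3)_{2,2}$ all check out).
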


\begin{proof}
We claim that $\cI$ is generated by the elements $\alpha X,\beta X,\gamma X$, $\det A-q$, $\det N$ together with the entries of the $2\times 2$-matrix $N(A-(1+X )I)$ with $X$ specialized to zero. From the claim, and in particular $\alpha X,\beta X,\gamma X\in\cI$, it is straightforward to see that the $r^\fun_i$, $i=1,\ldots,9$ generate $\cI$.

To show the claim, denote for a $2\times2$-matrix $D$ over a ring $R$ by $D^\iota$ the main involution applied to $D$ as in the proof of \cite[Lem.~7.2]{BKM}; recall that it is $R$-linear and satisfies  $ D+D^\iota=\tr D\cdot I$, and that, up to sign, the set of entries of $D$ and $D^\iota$ are the same.

It follows that $N^\iota=-N$ and $A^\iota=-A+(q(1+X)^{-1}+(1+X))I$, and from this one deduces that the matrix $(A-q(1+X)^{-1})N$ is obtained from $N(A-(1+X)I) $ via the main involution. Hence either the entries of $N(A-(1+X)I) $ or those of $(A-q(1+X)^{-1})N$ can be omitted when generating $\cI$.

The vanishing of $N^2$ is easily be seen equivalent to that of $\det N$. It remains to show that assuming $N(A-(1+X)I)=0$, we have $AN=qNA$ $\Longleftrightarrow $ $\alpha X=\beta X=\gamma X=0$: To see `$\Rightarrow$', we compute
\[0=qN\cdot (A-(1+X)I)=qNA -q(1+X)N=AN -q(1+X)N= (A-q (1+X)I)N. \]
Subtracting the latter from $(A-q(1+X)^{-1})N=0$ yields $ q (1+X-(1+X)^{-1})N=0$, and from this it is straightforward to see that $XN=0$, i.a., that $\alpha X=\beta X=\gamma X=0$. 
For `$\Leftarrow$', observe that the steps can be reverted.
\end{proof}

\begin{lemma}\label{lemma:props-of-RfunQ}
The ring $R^\fun_v$ has the following properties:
\begin{enumerate}
\item It is reduced, flat over $\cO$ and of relative dimension~$3$. 
\item Its two minimal primes $I_1$ and $I_2$ can be labeled so that $R^\fun_v/I_1$ parameterizes unramified liftings of $\rhobar$ with a choice of Frobenius eigenvalue, and $R^\fun_v/I_2$ is the Steinberg lifting ring $R_v^\st$ from case~\cst.
\item The elements $\varpi, b-c,b-\beta,X-\gamma$ form a regular system of parameters and $R^\fun$ is Gorenstein.
\end{enumerate}
\end{lemma}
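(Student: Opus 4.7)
My plan is to prove part (3) first, because once we know $R^\fun_v$ is Cohen--Macaulay of dimension $4$ and Gorenstein, parts (1) and (2) follow quickly by identifying the two irreducible components. Note that $R^\fun_v$ is a quotient of the $8$-dimensional regular ring $\cR=\cO[[\alpha,\beta,\gamma,X,a,b,c]]$, and that the Steinberg quotient $R^\fun_v/(X)$ already has dimension $4$ by Proposition \ref{prop:R_v^tau}(3), so $\dim R^\fun_v\geq 4$. Part (3) thus amounts to exhibiting four elements whose quotient is zero-dimensional and Gorenstein.

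The central calculation is an explicit description of the Artinian candidate $\overline{R}:=R^\fun_v/(\varpi,b-c,b-\beta,X-\gamma)$. Imposing the substitutions $c=b$, $\beta=b$, $\gamma=X$ and reducing modulo $p$ in the relations of Lemma \ref{Cor:Ideal-I}, and using $q\equiv 1\pmod{p}$, one obtains an ideal in $k[[\alpha,X,a,b]]$ whose generators include the very restrictive elements $\alpha X$, $bX$, $X^2$, $\alpha^2+bX$, $b^2+\alpha a$, $ab-\alpha b$, together with the residue of $r^\fun_4$. I then aim to check directly that $\overline R$ is a local Artinian $k$-algebra with one-dimensional socle; this is a finite computation (the relations $X^2=\alpha X=bX=0$ collapse the picture substantially, and the remaining equations can be manipulated to produce an explicit $k$-basis), which in the spirit of the earlier subsections could be carried out with Macaulay2 if direct manipulation proves too tedious. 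From this, $\overline R$ is Gorenstein Artinian; Krull's height theorem then forces $\dim R^\fun_v=4$, so the four chosen elements are a system of parameters, and the standard ascent/descent of Gorensteinness through a system-of-parameters sequence (see Subsection \ref{Subsect-OnCM}) shows $R^\fun_v$ itself is Cohen--Macaulay and Gorenstein, while regularity of $\varpi$ gives $\cO$-flatness of relative dimension~$3$. This finishes (3).

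For (2), the substitution $X=0$ kills $r^\fun_1,r^\fun_2,r^\fun_3$ automatically, and by direct comparison with the minors of \eqref{Eq:MinorsOfWhich} one reads off $r^\fun_4\big|_{X=0}=r^\st_3$, $r^\fun_5=r^\st_1$, $r^\fun_6=-r^\st_4$, $r^\fun_7=r^\st_5$, $r^\fun_8=r^\st_2$ and $r^\fun_9=r^\st_6$. Hence $R^\fun_v/(X)\cong R^\st_v$, a domain by Proposition \ref{prop:R_v^tau}(3), so $I_2:=(X)$ is prime in $R^\fun_v$. Dually, setting $\alpha=\beta=\gamma=0$ (that is, imposing $N=0$) kills all generators of $\cI$ other than $r^\fun_4$, so $R^\fun_v/(\alpha,\beta,\gamma)\cong\cO[[X,a,b,c]]/(r^\fun_4)$ and this quotient represents the functor of unramified liftings together with a chosen Frobenius eigenvalue; viewing $r^\fun_4$ as linear in $b$ with leading coefficient $c(1+X)$ coprime to the $b$-free term in the UFD $\cO[[X,a,c]]$ shows $r^\fun_4$ is irreducible, so the quotient is a domain and $I_1:=(\alpha,\beta,\gamma)$ is prime. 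Finally, the inclusions $\alpha X,\beta X,\gamma X\in\cI$ give $I_1 I_2=0$ inside $R^\fun_v$, so every minimal prime of $R^\fun_v$ contains $I_1$ or $I_2$; by (3) both quotients have dimension $4=\dim R^\fun_v$, so $\{I_1,I_2\}$ is precisely the set of minimal primes.

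Part (1) now follows: Cohen--Macaulayness from (3) supplies Serre's $(S_1)$, and the $(R_0)$ condition follows from (2) because $R^\fun_v$ has exactly two minimal primes and the quotients by each are domains. The main obstacle in this plan is the socle computation for $\overline R$ in the proof of (3): the substitution $c=b$, $\beta=b$, $\gamma=X$ is not canonical but is chosen precisely to collapse the three $X$-relations and produce heavy annihilation among the monodromy variables; confirming that the resulting Artinian quotient has one-dimensional socle is the nontrivial content that makes the lemma go through and without which the Gorenstein claim would be out of reach by elementary means.
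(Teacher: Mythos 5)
Your socle computation and your identification of the two components agree in substance with what the paper does, but the logical order you chose --- deriving Cohen--Macaulayness and Gorensteinness of $R^\fun_v$ from the Artinian quotient alone, before knowing anything about the component structure --- contains a genuine gap. The step ``$\overline{R}=R^\fun_v/(\varpi,b-c,b-\beta,X-\gamma)$ is Gorenstein Artinian, the four elements are therefore a system of parameters, and the ascent results of Subsection~\ref{Subsect-OnCM} give that $R^\fun_v$ is Cohen--Macaulay and Gorenstein'' is not valid: Proposition~\ref{prop:matsum-GorCM}.2 ascends these properties only along a \emph{regular} sequence, and a system of parameters in a ring not already known to be Cohen--Macaulay need not be regular even when the quotient is Gorenstein Artinian. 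For instance $R=k[[x,y]]/(x^2,xy)$ has the system of parameters $y$ with $R/(y)\cong k[x]/(x^2)$ Gorenstein Artinian, yet $R$ has depth $0$ and is neither Cohen--Macaulay nor Gorenstein. So your argument is circular: you need Cohen--Macaulayness of $R^\fun_v$ to know the sequence is regular, and that is what you are trying to establish. The paper resolves this by proving Cohen--Macaulayness \emph{first}, from the decomposition $\cI=\cI_1\cap\cI_2$ with $\cI_1=\cI+(\alpha,\beta,\gamma)$ and $\cI_2=\cI+(X)$: the exact sequence $0\to\cR/\cI\to\cR/\cI_1\times\cR/\cI_2\to\cR/(\cI_1+\cI_2)\to0$, with the outer terms Cohen--Macaulay of dimensions $4$, $4$ and $3$, gives via \cite[Exerc.~18.13]{Eisenbud} that $R^\fun_v$ is Cohen--Macaulay of dimension $4$; only then are systems of parameters regular sequences, and the socle computation (the paper finds a $6$-dimensional quotient with socle $ka^2$, matching yours) yields Gorensteinness.

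There is a second, independent gap in your part (1). Knowing that $R^\fun_v$ satisfies $(S_1)$ and has exactly two minimal primes ``whose quotients are domains'' does not give $(R_0)$ or reducedness: the quotient by any prime is automatically a domain, so that hypothesis carries no information, and for example $k[[x,y]]/(x^2y)$ is Cohen--Macaulay with minimal primes $(x)$ and $(y)$ but is not reduced. What is actually needed is $I_1\cap I_2=0$ in $R^\fun_v$, i.e.\ the nontrivial containment $\cI\supseteq\cI_1\cap\cI_2$, after which $R^\fun_v$ embeds into the product of the two domains. The paper proves this by observing that $\cR/\cI_1\cong\cO[[a,b,c,X]]/(r_4^\fun)$ is a domain in which $X$ is a non-zero divisor: if $r+sX\in\cI_1$ with $r\in\cI$, then $s\in\cI_1$, hence $sX\in\cI_1\cI_2\subseteq\cI$ because $\alpha X,\beta X,\gamma X\in\cI$. (For part (1) itself the paper simply cites \cite[Lem.~2.2]{Calegari}.) Your treatment of (2) --- matching $r^\fun_i|_{X=0}$ with the $r^\st_j$, irreducibility of $r^\fun_4$, and $I_1I_2=0$ --- is sound and close to the paper's, but the intersection statement cannot be skipped: besides reducedness, it is exactly what feeds the Cohen--Macaulay argument your plan is missing.
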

\begin{proof}
Part 1 is \cite[Lem.~2.2]{Calegari}. To see 2, set $\cI_1=\cI+(\alpha,\beta,\gamma)$ and $\cI_2=\cI+(X)$. From the description of $R^\fun_v$ and its universal lifting, it follows that the rings $R^\fun_1/\cI_j$ have the moduli interpretation we claim in 2. It remains to show $\cI\supseteq\cI_1\cap\cI_2$. Observe first that $\cR/\cI_1\cong \cO[[a,b,c,X]]/(aq+(a^2+bc)(1+X)-a(1+X)^2 )$ is a domain because $aq+(a^2+bc)(1+X)-a(1+X)^2 $ cannot be factored in the regular ring $ \cO[[a,b,c,X]]$. Hence $X$ is a non-zero divisor in this quotient. Suppose now that we are given $r+sX$ with $r\in\cI$ and $s\in\cR$ that in addition lies in $\cI_1$. Reducing modulo $\cI_1$ yields $s\in\cI_1$ and hence $sX\in\cI_1\cdot\cI_2\subset \cI$. This concludes~2.

We prove 3. The ring $\cR/\cI_2$ is isomorphic to $R_v^\st$  and hence Cohen--Macaulay of dimension $4$. The rings $\cR/\cI_1$, given explicitly above, and its quotient by $X$, i.e., the ring $\cR/(\cI_1+\cI_2)$, are Cohen--Macaulay of dimension $4$ and $3$, respectively, by Proposition~\ref{prop:matsum-GorCM}.3. Hence $R_v^\fun=\cR/(I_1\cap I_2)$ is Cohen--Macaulay of dimension $4$ by \cite[Exerc.~18.13]{Eisenbud}. In particular systems of parameters of $R_v^\fun$ are regular sequences by~Proposition~\ref{prop:matsum-GorCM}.

Let now $A$ be the quotient  of $R_v^\fun$ modulo the sequence $\varpi,b-c,b-\beta,X-\gamma$. The relations allow one to eliminate the variables $c,\beta,\gamma$, and after some simple manipulations one finds
\[A \cong k[[a,b,X,\alpha]]/ (\alpha X,bX,X^2, a^2-2aX  ,\alpha^2, \alpha b-aX, \alpha a, b^2, ab-aX). \]
It is a $k$-vector space of dimension $6$ with basis $1,a,b,X,\alpha,a^2$ and one computes $\socle A=ka^2$. Hence the sequence $\varpi,b-c,b-\beta,X-\gamma$ is regular and it follows from Proposition~\ref{prop:matsum-GorCM} that $R_v^\fun$ is Gorenstein. 
\end{proof}

We consider the `same' augmentation as in case \cst, namely the $\cO$-algebra map $R_v^\fun\to\cO$ that is the projection $R^\fun_v\to R^\fun_1/I_2=R_v^\st$ from Lemma~\ref{lemma:props-of-RfunQ}.2. composed with the augmentation $R^\st_v\to \cO$ from case \cst. Concretely $\lambda$ is given by 
\[a\mapsto0,X\mapsto0,c\mapsto0,\alpha\mapsto0,\gamma\mapsto0 ,  b\mapsto s, \beta\mapsto t\] for some $s,t\in\ffrm_\cO$ with $t$ non-zero.

\subsubsection*{Case \cun}

One has natural surjections $R^\square_v\to R_v^\st$ and $R^\square_v\to R_v^\unr$. Denote by $I^\st$ and $I^\unr$ the corresponding ideals of $R^\square_v$. Then in the case \cun, we define $R$ as the quotient 
\[R_v^\un =R_v^\square/(I^\st\cap I^\unr),\]
cf.~\cite[Rem.~5.7]{Shotton} for a comparable definition. In other words, $R_v^\un$ is the reduced quotient of $R_v^\square$ such that $\Spec R^\un=\Spec R_v^\st\cup\Spec R_v^\unr\subset \Spec R_v^\square$; see Lemma~\ref{Lem:EqnsForUnCase}. 

The ring $R_v\square$ is can be realized as the quotient $\cR'/\cI'$ for $\cR'=\cO[[\alpha,\beta,\gamma,\delta,a,b,c,X]]$ and $\cI'\subset \cR'$ as the ideal generated by the entries of the ($2\times2$- and $1\times1$-) matrices
\[ AB-B^q A,\det A-q,\det B=1\]
with $A:= \sMat{q+a&b\\c&1-a-X}$ and $B:=\sMat{1+\alpha&\beta\\ \gamma &1+\delta}$. The ideals $I^\unr$ and $I^\st$ both contain $\alpha+\delta$ since these quotient describe situations where either $N=B-I$ is zero, or $N$ is of trace and determinant zero. Therefore $R_v^\un$ can be written as a quotient of $\cR=\cO[[\alpha,\beta,\gamma,a,b,c,X]]$ by an ideal $\cI^\un\subset\cR$; with $\delta=-\alpha$.

We computed in Macaulay2 generators of $I^\unr$ and $I^\st$ by working inside the polynomial ring $\cR_\Z=\Z[\uq,a,b,c,X,\alpha,\beta,\gamma]$, where we represent the prime $q$ in $\Z$ by the indeterminate $\uq+1$ in the polynomial ring. Let $I_\Z^\unr$ and $I^\st_\Z$ denote the corresponding ideals of $\cR_\Z$. Then we let Macaulay also compute the intersection $I_\Z^\un=I_\Z^\unr\cap I_\Z^\st$. The ideal $I_\Z^\un$ is generated by the elements 
\[ r^\un_1=X\gamma,\  r^\un_2=X\beta,\  r^\un_3= X\alpha ,\  r^\un_4=\alpha^2 + \beta\gamma,\  r^\un_5= b\alpha  - a\beta,\ r^\un_6= a\alpha + b\gamma, \]
\[ r^\un_7= c\beta - b\gamma + \uq \alpha,\  r^\un_8= c\alpha - a\gamma - \uq \gamma,\  r^\un_9= a^2 + bc + aX + \uq a + (\uq+1) X.\]
We also have $I^\unr_\Z=( \alpha,\beta,\gamma)$ and $I^\st_\Z=(X,r_4^\un,\ldots,r_9^\un)$. We shall use the same names $r^\un_i$ for the corresponding elements in $\cR$, with the silent assumption that in $\cR$ we replace $\uq$ by~$q$.

\begin{lemma}\label{Lem:EqnsForUnCase}
The ring $R^\un_v=\cR/\cI^\un$ with $\cR=\cO[[\alpha,\beta,\gamma,X,a,b,c]]$ and $\cI^\un=(r^\un_1,\ldots,r^\un_9)$ has the following properties:
\begin{enumerate}
\item We have $\cI^\un=\cI^\unr\cap\cI^\st$ for $\cI^\unr=\cI+(\alpha,\beta,\gamma)$ and $\cI^\st=\cI+(X)$ so that $\cR/\cI^\unr$ and $\cR/\cI^\st$ are identified with the unramified and the Steinberg quotient of $R_v^\un$, respectively.
\item The ring $R_v^\un$ is Cohen--Macaulay, flat over $\cO$ and of relative dimension~$3$ and reduced. 
\item The elements $\varpi, b-c,b-\beta,X-\gamma$ for a regular system of parameters and $R_v^\un$ is Gorenstein.
\end{enumerate}

\end{lemma}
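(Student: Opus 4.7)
The proof of each part will closely parallel that of Lemma~\ref{lemma:props-of-RfunQ} for the \ctun{} case, since here too we have a reduced ring whose spectrum decomposes as the union of an ``unramified'' and a ``Steinberg'' component.

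For part~1, by the definition $R_v^\un=R_v^\square/(I^\st\cap I^\unr)$ together with the Macaulay2 computation described immediately before the statement of the lemma (where generators for $I_\Z^\unr$, $I_\Z^\st$, and the intersection $I_\Z^\un$ were determined in the polynomial ring $\cR_\Z$ and then transferred to $\cR$ by base change and completion), the identity $\cI^\un=\cI^\unr\cap\cI^\st$ holds. The identifications of $\cR/\cI^\unr$ with $R_v^\unr$ and of $\cR/\cI^\st$ with $R_v^\st$ are direct: killing $\alpha,\beta,\gamma$ in the universal lifting forces $B=I$, leaving the unramified deformation functor, while killing $X$ recovers exactly the set-up of case~\cst{} treated in \cite[\S~7.2]{BKM}.

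For part~2, flatness over $\cO$ and relative dimension~$3$ are visible from the presentation together with the descriptions of $R_v^\unr$ and $R_v^\st$ (cf.~Proposition~\ref{prop:R_v^tau}), and reducedness follows immediately from writing $\cI^\un$ as the intersection of the two prime ideals $\cI^\unr$ and $\cI^\st$. For Cohen--Macaulayness I would mimic the strategy in the proof of Lemma~\ref{lemma:props-of-RfunQ}.3: the ring $\cR/\cI^\st\cong R_v^\st$ is Cohen--Macaulay of dimension~$4$ by Proposition~\ref{prop:R_v^tau}; the ring $\cR/\cI^\unr\cong \cO[[X,a,b,c]]/(a^2+bc+aX+qa+(q+1)X)$ is a hypersurface cut out by an irreducible element in a regular local ring, hence Cohen--Macaulay of dimension~$4$ as well; and their join $\cR/(\cI^\unr+\cI^\st)\cong \cO[[a,b,c]]/(a^2+bc+qa)$ is a hypersurface of dimension~$3$, hence also Cohen--Macaulay. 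Then \cite[Exerc.~18.13]{Eisenbud} gives that $R_v^\un=\cR/(\cI^\unr\cap\cI^\st)$ is Cohen--Macaulay of dimension~$4$.

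Finally, for part~3, since $R_v^\un$ is Cohen--Macaulay it suffices by Proposition~\ref{prop:matsum-GorCM} to prove that the quotient $A=R_v^\un/(\varpi,b-c,b-\beta,X-\gamma)$ is an Artinian local $k$-algebra with one-dimensional socle. Eliminating $c,\beta,\gamma$ via the given relations reduces $A$ to an explicit quotient of $k[[a,b,X,\alpha]]$ by the images of $r_1^\un,\ldots,r_9^\un$; the congruence $q\equiv 1\bmod p$ causes numerous simplifications (for instance $r_7^\un$ becomes $\alpha+b^2-bX$, allowing $\alpha$ to be eliminated in favor of $a,b,X$, and $r_8^\un$ becomes $b\alpha-(a+1)X$, after which $X$ can be eliminated by inverting the unit $a+1$). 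Completing this reduction should produce a finite-dimensional local $k$-algebra with an explicit monomial basis and one-dimensional socle, confirming simultaneously that the given sequence is a system of parameters (hence regular by Cohen--Macaulayness) and that $R_v^\un$ is Gorenstein. The main obstacle is precisely this last step: honestly pinning down a monomial basis of $A$ and verifying the socle condition by hand is a moderately delicate piece of polynomial manipulation, and in practice one would cross-check with Macaulay2 as in the surrounding exposition.
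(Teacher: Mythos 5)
Your overall strategy does match the paper's: part 1 by analogy with Lemma~\ref{lemma:props-of-RfunQ}, part 2 via the sequence $0\to\cR/\cI^\un\to\cR/\cI^\unr\times\cR/\cI^\st\to\cR/(\cI^\unr+\cI^\st)\to0$ and \cite[Exerc.~18.13]{Eisenbud}, and part 3 by showing the quotient by $(\varpi,b-c,b-\beta,X-\gamma)$ is Artinian with one-dimensional socle and applying Proposition~\ref{prop:matsum-GorCM}. However, two steps as written do not go through. In part 1 you cannot just say the Macaulay2 identity $I^\un_\Z=I^\unr_\Z\cap I^\st_\Z$ is ``transferred to $\cR$ by base change and completion'': intersections of ideals do not in general commute with the specialization $\uq\mapsto q-1$ or with completion, so the containment $\cI^\un\supset\cI^\unr\cap\cI^\st$ in $\cR$ requires an argument. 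The paper gives one directly: $\cR/\cI^\unr\cong\cO[[X,a,b,c]]/(r_9^\un)$ is a domain, so $X$ is a non-zero-divisor there; writing an element of $\cI^\unr\cap\cI^\st$ as $r+sX$ with $r\in\cI^\un$ and reducing modulo $\cI^\unr$ forces $s\in\cI^\unr$, whence $sX\in\cI^\un$ because $X\alpha,X\beta,X\gamma$ are among the generators $r_1^\un,r_2^\un,r_3^\un$.

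The more serious problem is in part 3, where your hand computation uses the wrong specialization of $\uq$: by the convention of this section $q=\uq+1$, so $\uq=q-1\equiv 0\pmod p$, not $\equiv 1$. With the correct substitution (and $c=b$, $\beta=b$, $\gamma=X$) one gets $r_7^\un\mapsto b^2-bX$ and $r_8^\un\mapsto b\alpha-aX$, so neither relation is linear in $\alpha$ or in $X$ with unit coefficient, and in particular $\alpha$ cannot be eliminated as you propose; indeed the true Artinian quotient, computed in the paper, is $6$-dimensional over $k$ with basis $1,a,aX,\alpha,\beta,\gamma$ and socle spanned by $aX$, so $\alpha$ survives as a basis vector. (The same $\uq$-versus-$q-1$ slip appears in your displayed hypersurface equations in part 2, though there it is structurally harmless.) Since the finiteness of the quotient and the one-dimensionality of its socle are exactly what yield the system-of-parameters/regular-sequence statement and Gorensteinness, and this is the step you leave open, the proof is incomplete without actually carrying out that computation --- either over $k$ with the correct relations, or, as the paper does, over $\Z$ with $\uq$ set to zero and then reducing modulo $p$, where one must additionally check that the socle remains one-dimensional after reduction.
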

\begin{proof}
Part 1 is clear, except for the containment $\cI^\un\supset\cI^\unr\cap\cI^\st$. Similar to Lemma~\ref{lemma:props-of-RfunQ}, the quotient $\cR/\cI^\unr\cong\cO[[X,a,b,c]]/(r_9^\un)$ is a Cohen--Macaulay domain of dimension $4$. The inclusion $\cI^\un\supset\cI^\unr\cap\cI^\st$ now follows as in the proof of Lemma~\ref{lemma:props-of-RfunQ}, and this completes part 1. Because of part 1, the central factors in the short exact sequence of $\cR$-modules $0\to \cR/\cI^\un\to \cR/\cI^\unr\times\cR/\cI^\st\to\cR/(\cI^\unr+\cI^\st)\to 0$ are domains, and so $R_v^\un$ is reduced. Both and also $\cR/(\cI^\unr+\cI^\st)\cong \cO[[a,b,c]]/(r_3^\st)$ are Cohen--Macaulay of dimensions $4$, $4$ and $3$, respectively. As before we find that $R_v^\un$ is Cohen--Macaulay of dimension $4$ by \cite[Exerc.~18.13]{Eisenbud}.

Finally one verifies, by hand or via Macaulay, that $\cR_\Z/(\cI_\Z^\un+(\uq, b-c,b-\beta,X-\gamma))$ is a free $\Z$-module of rank $6$ with basis $1,a,aX,\alpha,\beta,\gamma$ and socle $aX$. By reduction module any prime number $p$, one deduces that  $R_v^\un/(\varpi,b-c,b-\beta,X-\gamma)$ is a zero-dimensional Gorenstein ring, using Proposition~\ref{prop:matsum-GorCM}; when passing to the reduction, one has to explicitly consider the bilinear paring that results from the multiplication of two arbitrary linear forms in $a,\alpha,\beta,\gamma$, and show that it remains non-degenerate under any reduction. The same proposition then also  implies that $\varpi,b-c,b-\beta,X-\gamma$ is a regular sequence in $R_v^\un$ and that $R_v^\un$ is Gorenstein. In particular $\varpi$ is a non-zero divisor and this shows that $R_v^\un$ is flat over~$\cO$. 
\end{proof}
\begin{remark}
One can also work out the above argument by first working out properties for $\cR_\Z$, $\cI^\st$, $\cI^\unr$ and $\cI^\un$, and then completing at $\ffrm_Z=(p,\uq,\alpha,\beta,\gamma,X,a,b,c)$, and then passing to the quotient modulo $\uq-(q-1)$. The above direct argument is shorter.
\end{remark}

\subsection{Steinberg deformations at trivial primes}
\label{Subsec-St}

\begin{lemma}\label{Sec6-FirstLemma}
\begin{enumerate}
\item The elements $r^\st_1,r^\st_2,r^\st_3,\gamma-\beta,c+b,\beta+b,\varpi$ of $\cR=\cO[[a,b,c,\alpha,\beta,\gamma]]$ form a regular sequence.
\item The complete intersection $\Rt:=\cR/(r^\st_1,r^\st_2,r^\st_3)$ is flat over $\cO$ and of relative dimension $3$.
\item The point  in $\Spec \Rt[\frac1\varpi]$ corresponding to the augmentation $\lambdat\colon \Rt\to \cO$ given by the same prescription as $\lambda$ is formally smooth.
\end{enumerate}\end{lemma}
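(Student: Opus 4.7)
The plan is as follows. Since $\cR=\cO[[a,b,c,\alpha,\beta,\gamma]]$ is a regular (hence Cohen--Macaulay) local ring of Krull dimension $7$, a sequence of seven elements of $\ffrm_\cR$ is regular if and only if it is a system of parameters, equivalently if and only if the quotient by those seven elements is Artinian. Thus for (1) it suffices to prove zero-dimensionality of the quotient. I will do this by first killing the ``easy'' linear elements $\gamma-\beta,c+b,\beta+b,\varpi$, which identifies the quotient with $k[[a,b,\alpha]]/(\bar r_1^\st,\bar r_2^\st,\bar r_3^\st)$ where $\bar r_i^\st$ denotes the image of $r_i^\st$. Using the relations $\gamma=\beta=-b$, $c=-b$ and the fact that $q\equiv 1\pmod p$ (so $q-1$ is zero in $k$), these images are computed to be
\[
\bar r_1^\st=\alpha^2+b^2,\qquad \bar r_2^\st=a\alpha+b^2,\qquad \bar r_3^\st=a^2-b^2.
\]

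To see that this last quotient is Artinian, I will check that the three polynomials above have only the origin as a common zero in $\overline k{}^3$. From $\bar r_1^\st$ and $\bar r_3^\st$ one gets $b^2=-\alpha^2$ and $a^2=b^2=-\alpha^2$. Substituting into $\bar r_2^\st$ gives $a\alpha=\alpha^2$, so $\alpha(a-\alpha)=0$. If $\alpha=0$, then $b=a=0$. If $a=\alpha$, then $a^2=-a^2$, and since $p\ne 2$ this forces $a=0$, whence $\alpha=b=0$. So the ideal is $\ffrm$-primary and the quotient is Artinian, completing~(1). Part~(2) then follows immediately: the initial segment $r_1^\st,r_2^\st,r_3^\st$ of the regular sequence in~(1) is itself regular on $\cR$, so $\Rt=\cR/(r_1^\st,r_2^\st,r_3^\st)$ is a complete intersection of dimension $7-3=4$; and because $\gamma-\beta,c+b,\beta+b,\varpi$ continues the regular sequence in $\Rt$, in particular $\varpi$ is a nonzero divisor on $\Rt$, so $\Rt$ is flat over $\cO$ and $\Rt/\varpi$ has dimension $3$.

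For (3) I will use the Jacobian criterion applied to the presentation $\Rt[1/\varpi]=\cR[1/\varpi]/(r_1^\st,r_2^\st,r_3^\st)$, so it suffices to verify that the $3\times 6$ Jacobian of $(r_1^\st,r_2^\st,r_3^\st)$ with respect to $(a,b,c,\alpha,\beta,\gamma)$ has rank $3$ at the point $\lambdat$ (where $a=\alpha=c=\gamma=0$, $b=s$, $\beta=t$, $t\neq 0$). Direct computation gives the matrix
\[
\begin{pmatrix}
0&0&0&0&0&t\\
0&0&t&q-1&0&0\\
q-1&0&s&0&0&0
\end{pmatrix},
\]
and since $t\in\cO\sm\{0\}$ and $q-1\in\cO\sm\{0\}$ are both units in $E$, the three rows are manifestly $E$-linearly independent (each has its own distinguished pivot column among columns $6,4,1$). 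Hence the rank is $3=\dim \cR[1/\varpi]-\dim\Rt[1/\varpi]$, and the Jacobian criterion gives that $\Rt[1/\varpi]$ is regular, and hence formally smooth, at $\lambdat$. The only non-routine step is the finite-dimensionality check in the second paragraph, which depends crucially on the hypothesis $p\ne 2$ and on $q\equiv 1\pmod p$.
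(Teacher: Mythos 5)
Your proof is correct and follows essentially the same route as the paper: reduce part (1) to Artinian-ness of the quotient by the full sequence (using that $\cR$ is Cohen--Macaulay), pass modulo $\varpi$ and the linear elements to the three quadrics $\alpha^2+b^2$, $a\alpha+b^2$, $a^2-b^2$ in $k[[a,b,\alpha]]$, deduce (2) by truncating/permuting the regular sequence, and verify (3) by the Jacobian criterion at $\lambdat$. The only difference is cosmetic: you check finiteness via the common zero locus over $\overline k$ (using $p\neq 2$), where the paper eliminates $b$ and exhibits an explicit $k$-basis — both are routine verifications of the same fact.
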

\begin{proof}
1. It suffices to show that $\cR$ modulo the ideal generated by the given sequence is finite. Modding out $\gamma-\beta,c+b,\beta+b,\varpi$ from $\cR$, we need to show that $k[[a,b,\alpha]]$ modulo the $2\times 2$-minors $t_{1,2}, t_{2,3}, t_{3,4}$ of the matrix
\[
\left(
\begin{array}{cccc}
 \alpha & -b  & a  & b \\
-b  & -\alpha  & -b  & -a 
\end{array}
\right).
\]
is finite. Using the relation $\alpha a+b^2$ as an relation for $b$, it follows that the quotient ring is a degree $2$ extension of $k[[a,\alpha]]/(\alpha^2-a\alpha,a^2+a\alpha)$, and the latter ring is finite, as $p>2$; a $k$-basis is $1,a,\alpha,a\alpha$.

2. The regular sequence in 1.\ remains a regular sequence under any reordering and after any truncation. This shows that $\Rt$ is flat over $\cO$ and of relative dimension $3$ over $\cO$.

3. To see the formal smoothness, we form the Jacobian matrix of $r^\st_1,r^\st_2,r^\st_3$ relative to the variables of $\cR[\frac1\varpi]$ and evaluate at the augmentation. This gives
\[
\left(
\begin{array}{cccccc}
 0 & 0  & 0  & 2\alpha & \gamma & \beta \\
\alpha  & \gamma  &0  & q-1+a & 0 & b\\ 
2a +q-1& c &b & 0 &0 & 0
\end{array}
\right)
\stackrel{\mathrm{eval.\, at\,}\lambdat}\longrightarrow
\left(
\begin{array}{cccccc}
 0 & 0  & 0  & 0 & 0 & t \\
0 & 0  &0  & q-1 & 0 & s\\ 
q-1& 0 &s & 0 &0 & 0
\end{array}
\right)
\]
Columns $1,4,6$ witness the formal smoothness asserted for $\lambdat$.
\end{proof}
\begin{remark}
From the proof of Lemma~\ref{Sec6-FirstLemma}.1, one deduces that as an $\cO$-algebra map $S=\cO[[y_1,y_1,y_3]]\to \Rt$ one can take 
\[y\mapsto \gamma-\beta,\quad y_2\mapsto c+b,\quad y_3\mapsto \beta+b.\]
Similar to the proof of Lemma~\ref{Sec6-FirstLemma}.1, one can show that $R_v^\st/(\varpi,y_1,y_2,y_3)\cong k[a,\alpha,\gamma]/(a,\alpha,\gamma)^2$. Its socle is obviously spanned by $\{a,\alpha,\gamma\}$ and has thus $k$-dimension $3$. Using that $R^\st_v$ is local Cohen--Macaulay of dimension $4$, by combining parts 3, 2 and 1 of Proposition~\ref{prop:matsum-GorCM} one deduces that $R_v^\st$ is not Gorenstein.
\end{remark}

In the following let $\Rt=\cR/(r^\st_1,r^\st_2,r^\st_3)$ and $I=\ker(\Rt \to R_v^\st)$. We need some preparations to give a presentation of $I$ as an $\Rt$-module. Recall that $J_\st$ was defined before formula~\eqref{Eq:MinorsOfWhich}.

\begin{lemma}
The sequence of $\cR$-modules $\cR^8\stackrel{A}\to \cR^6\stackrel{B}\to J_\st\to 0$ is exact, where $B$ is the $1\times 6$-matrix $(r^\st_1,r^\st_2,\ldots,r^\st_6)$ and $A$ is the $8\times 6$-matrix 
\[\left(
\begin{array}{cccccccc}
b& q-1+a &0&0& -a&c&0&0 \\
0&-\alpha &-b&0 &0&-\gamma  &a&0 \\
0&0&\beta&\alpha&0 &0 & -\alpha &\gamma\\
0&\beta&0&-b&0  & -\alpha &0&a \\
\beta&0&0& q-1+a & -\alpha &0&0&c \\
-\alpha&0& q-1+a &0& -\gamma &0&c&0 \\
\end{array}
\right).
\]
\end{lemma}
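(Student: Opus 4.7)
The plan is to identify the displayed sequence with the tail of the Eagon--Northcott complex attached to the $2\times 4$ matrix
\[M = \begin{pmatrix} \alpha & \beta & q-1+a & b \\ \gamma & -\alpha & c & -a \end{pmatrix},\]
whose six $2\times 2$ minors $t_{ij}$ agree up to sign with the generators $r^\st_1,\ldots,r^\st_6$ of $J_\st$. For such a matrix (with $m=2$, $n=4$), the Eagon--Northcott complex has length $n-m+1=3$, and its first three free modules have ranks $1$, $\binom{4}{2}=6$, and $\binom{4}{3}\cdot 2=8$, which matches the diagram in the statement.

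Next I would invoke the Eagon--Northcott theorem (see e.g.\ \cite[App.~A2.6]{Eisenbud}), which asserts that this complex is a free resolution of $\cR/J_\st$ whenever $J_\st$ attains the maximal possible codimension $n-m+1=3$ in the ambient regular local ring. This hypothesis holds here: the ring $\cR=\cO[[a,b,c,\alpha,\beta,\gamma]]$ is regular of Krull dimension $7$, and by Proposition~\ref{prop:R_v^tau}(3) the quotient $R^\st_v=\cR/J_\st$ is Cohen--Macaulay of Krull dimension $4$, so $\mathrm{codim}(J_\st)=3$. Consequently the Eagon--Northcott complex is exact, which gives in particular the claimed exactness at $\cR^6$: the image of its second differential coincides with the kernel of $B$, i.e.\ with the module of syzygies of $(r^\st_1,\ldots,r^\st_6)$.

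The remaining step is to verify that the explicit matrix $A$ in the statement agrees with the second Eagon--Northcott differential under the chosen bases. That differential realizes the classical Pl\"ucker syzygies: for each triple of columns $\{i<j<k\}\subset\{1,\ldots,4\}$ of $M$ and each row index $r\in\{1,2\}$, Laplace expansion of the $3\times 3$ determinant whose top row repeats the $r$-th row of the corresponding $2\times 3$ submatrix yields
\[M_{r,i}\,t_{jk}-M_{r,j}\,t_{ik}+M_{r,k}\,t_{ij}=0,\]
producing $\binom{4}{3}\cdot 2=8$ syzygies. Translating each such relation into the basis $(r^\st_1,\ldots,r^\st_6)$ via the dictionary $r^\st_1=-t_{12},\,r^\st_2=t_{23},\,r^\st_3=-t_{34},\,r^\st_4=-t_{13},\,r^\st_5=-t_{14},\,r^\st_6=-t_{24}$, each column of the displayed $A$ is identified with one Pl\"ucker syzygy corresponding to a specific triple-row pair (the nonzero entries in each column lie in precisely three rows, the ones indexed by the three pairs in the triple, which already fixes the triple and then the row from the entries themselves). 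The main obstacle is purely the sign bookkeeping; once the triple-row pairs and sign conventions are tabulated, the identification is a direct column-by-column inspection (equivalently, a check that $B\cdot A_j=0$ for every column $A_j$), and the exactness statement then follows from the Eagon--Northcott theorem with no further input.
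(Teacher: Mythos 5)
Your argument is correct and is essentially the paper's own proof: both identify the displayed presentation with the tail of the Eagon--Northcott complex of the $2\times 4$ matrix \eqref{Eq:MinorsOfWhich} and deduce exactness from the criterion that this complex resolves $\cR/J_\st$ precisely when $J_\st$ has the maximal possible grade, namely $3$. The only (harmless) divergence is in how that grade bound is certified: the paper gets $\grade(J_\st)\ge 3$ from the explicit regular sequence $r^\st_1,r^\st_2,r^\st_3$ of Lemma~\ref{Sec6-FirstLemma}, whereas you use that grade equals height in the regular, hence Cohen--Macaulay, ring $\cR$ together with $\dim R^\st_v=4$ from Proposition~\ref{prop:R_v^tau}; both inputs are available, so either route works. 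One small caution about your closing parenthetical: checking $B\cdot A_j=0$ for every column only yields $\image(A)\subseteq\ker(B)$, which is not what is needed; the operative verification is the one in your main clause, namely that the eight columns are, up to sign, the eight distinct Pl\"ucker syzygies (one for each triple of columns and row of the $2\times4$ matrix), so that $A$ realizes the full second Eagon--Northcott differential and $\image(A)=\image(d_1)=\ker(B)$ --- so the sign bookkeeping must be carried out against that identification rather than merely against the vanishing of $BA$.
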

\begin{proof}
The displayed presentation is part of the Eagon-Northcott complex  attached to the $4\times 2$-matrix from \eqref{Eq:MinorsOfWhich}, considered as an $\cR$-linear map $\nu\colon \cR^4\to\cR^2$, and in the present case, this complex is exact: The Eagon-Northcott complex is described in detail in \cite[\S~11H]{Eisenbud-Syzygies}, which we now recall in parts. We follow the notation of \cite{Eisenbud-Syzygies} and set $G=\cR^2$ and $F=\cR^4$. Then in the case at hand, the Eagon-Northcott complex is the complex
\[0\longrightarrow (\Sym^2 G)^*\otimes \bigwedge^4F \stackrel{d_2}\longrightarrow  (\Sym^1 G)^*\otimes \bigwedge^3F \stackrel{d_1}\longrightarrow (\Sym^0 G)^*\otimes \bigwedge^2F \longrightarrow \bigwedge^2 G;\]
choosing bases $f_1,\ldots,f_4$ of $F$ and $g_1,g_2$ of $G$, the complex is seen to be of the form $0\to \cR^3\to \cR^8\to \cR^6\to \cR$; the right most map of the complex sends the basis element $f_i\wedge f_j$, $i<j$, to the minor $t_{i,j}$ of \eqref{Eq:MinorsOfWhich} formed by column $i$ and column $j$, and thus its image  is the ideal~$J_\st$. 

To describe the maps $d_i$, let $\Gamma_j\colon (\Sym^j G)^*\to G^*\otimes (\Sym^{j-1} G)^*$ be the map dual to the multiplication map $ G\otimes \Sym^{j-1} G \to \Sym^j G$, and let $\Phi_k \colon \bigwedge F^k \to F\otimes \bigwedge^{k-1} F$ be the $\cR$-linear map given on basis elements by $f_{i_1}\wedge\ldots \wedge f_{i_l}\mapsto \sum_{j=1}^k  (-1)^{j-1} f_{i_j} \otimes f_{i_1}\wedge\ldots\wedge \widehat{f_{i_j}} \wedge \ldots \wedge f_{i_l}$. Then for a pure tensor $u\otimes v$ in $ (\Sym^j G)^*\otimes  \bigwedge F^{j+2}$ one has 
\[d_j(u\otimes v)= \sum_{l,m} \big(\nu^*(u'_l)(v'_m) \big) u''_l\otimes v''_l   ,\]
if we write $\Gamma_j(u)=\sum_l u'_l\otimes u''_l $ and $\Phi_{j+2}(v)=\sum_m v'_m\otimes v''_m$. This procedure can be applied to the basis $g_l\otimes f_{i_1}\wedge f_{i_2}\wedge f_{i_3}$, $1\le i_1 < i_2 < i_3 \le 4$, of $(\Sym^1 G)^*\otimes \bigwedge^3F$ to obtain the matrix $A$.

To complete the proof, it remains to show exactness of the Eagon-Northcott complex in the case at hand. By \cite[Thm.~11.35]{Eisenbud-Syzygies} this holds if and only if the grade of the ideal $J_\st$ attains the maximal value possible, namely the height of $J_\st$; see \cite[p.~103]{Matsumura-CA}. Because $\cR/J_\st=R_v^{\st}$ has Krull dimension $4$, the height of $J_\st$ is $3$. The grade of $J_\st$ is the maximal length of a regular sequence of $\cR$ contained in $J_\st$,  see \cite[p.~103]{Matsumura-CA}, and because of Lemma~\ref{Sec6-FirstLemma} this number is at least~$3$.
\end{proof}
\begin{lemma}
Let $\cR^m\stackrel{A}\to \cR^n\stackrel{B}\to J\to 0$ be a right exact sequence of $\cR$-modules for $J$ an ideal of $\cR$. We consider $A$ as an $n\times m$-matrix and $B$ as a $1\times n$-matrix over $\cR$. Decompose $n=n'+n''$ with $n',n''>0$, and decompose correspondingly the matrix $A$ into $A'$ and $A''$ of size $n'\times m$ and $n''\times m$, and the matrix $B$ into matrices $B'$ of size $1\times n'$ and $B''$ of size $1\times n''$, respectively. Let $J'\subset J$ be the image of $\cR^{n'}$ under $B'$. Then the induced sequence of $\cR/J'$-modules
\[ (\cR/J')^m \stackrel{A''\!\!\!\!\!\! \pmod {J'}}\longrightarrow (\cR/J')^{n''} \stackrel{B'' \!\!\!\!\!\!\pmod {J'}}\longrightarrow J/J'\to 0\] 
is right exact.
\end{lemma}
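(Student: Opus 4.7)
The plan is a direct diagram chase, using only the given right exactness of $\cR^m\xrightarrow{A}\cR^n\xrightarrow{B}J\to 0$ together with the fact that $J'\subseteq \cR$ is an ideal, so that $J/J'$ carries a natural $\cR/J'$-module structure.

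First I would verify that the reductions $\overline{A''}:=A''\!\!\pmod{J'}$ and $\overline{B''}:=B''\!\!\pmod{J'}$ are well-defined $\cR/J'$-linear maps; this is automatic because $A''$ and $B''$ are $\cR$-linear and $J'$ is an ideal, so $B''$ sends $J'\cdot \cR^{n''}$ into $J'$. For surjectivity of $\overline{B''}$ onto $J/J'$, take any $j\in J$ and use surjectivity of $B$ to write $j=B'(v')+B''(v'')$ with $v'\in\cR^{n'}$, $v''\in\cR^{n''}$; since $B'(v')\in \image(B')=J'$, we get $j\equiv B''(v'')\pmod{J'}$, as required.

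For exactness at $(\cR/J')^{n''}$, the inclusion $\image(\overline{A''})\subseteq \ker(\overline{B''})$ follows at once from $BA=0$: for any $u\in\cR^m$ one has $B'(A'(u))+B''(A''(u))=0$, so $B''(A''(u))=-B'(A'(u))\in J'$. For the reverse inclusion, suppose $v''\in\cR^{n''}$ satisfies $B''(v'')\in J'=\image(B')$, and pick $v'\in\cR^{n'}$ with $B'(v')=B''(v'')$. Then the vector $w=\begin{pmatrix}-v'\\ v''\end{pmatrix}\in\cR^n$ lies in $\ker(B)$, so by exactness of the original sequence at $\cR^n$ there exists $u\in\cR^m$ with $A(u)=w$. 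Reading off the lower $n''$ components gives $A''(u)=v''$, so the class of $v''$ in $(\cR/J')^{n''}$ lies in the image of $\overline{A''}$.

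There is no real obstacle: the statement is a formal consequence of right exactness of the given presentation of $J$ combined with the ideal property of $J'$. The only substantive step is the reverse inclusion above, which is the unique place where exactness at $\cR^n$ (as opposed to mere surjectivity of $B$ and the fact that $BA=0$) is used.
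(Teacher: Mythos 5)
Your proof is correct and follows essentially the same argument as the paper: surjectivity of $B''\bmod J'$ from the decomposition $J=J'+\image(B'')$, the containment $\image\subseteq\ker$ from $B'A'=-B''A''$, and the key reverse inclusion by lifting a class with $B''(v'')\in J'=\image(B')$ to the kernel element $(-v',v'')$ of $B$ and applying exactness of the original presentation at $\cR^n$. No discrepancies to report.
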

\begin{proof}
By the definition of $J'$, the map defined by $B''\!\!\pmod {J'}$ is clearly surjective. Also, $BA=0$ implies $B'A'=-B''A''$ as maps on $\cR^m$. But $B'\!\!\pmod{J'}$ is the zero map, and hence $(B''\!\!\pmod {J'})(A''\!\!\pmod {J'})=0$. It remains to show that $\ker(B''\!\!\pmod {J'})\subset \image(A''\!\!\pmod {J'})$. For this let $x''\in\cR^{n''}$ represent an element $x''\!\!\pmod {J'}$ in $\ker(B''\!\!\pmod {J'})$. Then $B''x''$ lies in $J'$ and hence we can find $x'\in \cR^{n'}$ such that $B''x''=B'x'$. We let $x=(-x',x'')\in\cR^n$, so that $Bx=0$. By the exactness of the given complex, we can find $y\in \cR^m$ such that $Ay=x$. But then $A''y=x''$ and this implies $x''\!\!\pmod {J'}\in \image(A''\!\!\pmod {J'})$.
\end{proof}
By combining the previous two lemmas, we find.
\begin{corollary}\label{Cor:PresOfIinfty}
As a module over $\Rt$ the ideal $I$ has a presentation
\[ (\Rt)^8\stackrel{A'}\longrightarrow (\Rt)^3\longrightarrow I \to0,  \]
where $A'$ is the matrix
\[\left(
\begin{array}{cccccccc}
0&\beta&0&-b&0  & -\alpha &0&a \\
\beta&0&0& q-1+a & -\alpha &0&0&c \\
-\alpha&0& q-1+a &0& -\gamma &0&c&0 \\
\end{array}
\right).
\]
\end{corollary}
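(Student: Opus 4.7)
The plan is to deduce this corollary directly from the two preceding lemmas by a mechanical book-keeping step, since essentially all the hard work has been done: the Eagon--Northcott lemma has produced a right exact sequence $\cR^8 \stackrel{A}\to \cR^6 \stackrel{B}\to J_\st \to 0$ with $B=(r^\st_1,\ldots,r^\st_6)$, and the second lemma tells us how to turn such a presentation for $J$ into one for $J/J'$ as an $\cR/J'$-module.

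Concretely, I would apply the second lemma with $n=6$, $m=8$, $n'=n''=3$, taking the initial block $B'=(r^\st_1,r^\st_2,r^\st_3)$ and trailing block $B''=(r^\st_4,r^\st_5,r^\st_6)$. Then $J'=\mathrm{image}(B')=(r^\st_1,r^\st_2,r^\st_3)$, so $\cR/J'=\Rt$ by the definition of $\Rt$ in Lemma~\ref{Sec6-FirstLemma}.2, and
\[
J/J' \;=\; J_\st/(r^\st_1,r^\st_2,r^\st_3) \;=\; \ker(\Rt\to R_v^\st) \;=\; I,
\]
as the latter is the ideal of $\Rt$ generated by the classes of $r^\st_4,r^\st_5,r^\st_6$. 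The conclusion of the lemma then yields a right exact sequence
\[
(\Rt)^8 \stackrel{A''\!\!\!\pmod{J'}}\longrightarrow (\Rt)^3 \stackrel{B''\!\!\!\pmod{J'}}\longrightarrow I \to 0,
\]
where $A''$ denotes the lower three rows of the matrix $A$ produced by the Eagon--Northcott complex.

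The final step is just inspection: reading off rows $4,5,6$ of the matrix $A$ displayed in the first lemma gives exactly the matrix $A'$ in the statement of the corollary, and these entries require no further reduction (no occurrence of $r^\st_1,r^\st_2,r^\st_3$ appears explicitly among them, so their images in $\Rt$ are given by the same expressions). I do not foresee any real obstacle here; the only point requiring minor care is matching the block decomposition in the second lemma to our splitting $B=(B',B'')$ so that the surviving rows $A''$ and the surviving entries $B''$ correspond to the generators $r^\st_4,r^\st_5,r^\st_6$ of $I$, which is simply a labeling convention.
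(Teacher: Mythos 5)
Your proposal is correct and is essentially the paper's own argument: the corollary is obtained exactly by feeding the Eagon--Northcott presentation $\cR^8\stackrel{A}\to\cR^6\stackrel{B}\to J_\st\to 0$ into the block-decomposition lemma with $B'=(r^\st_1,r^\st_2,r^\st_3)$ and $B''=(r^\st_4,r^\st_5,r^\st_6)$, so that $J'=(r^\st_1,r^\st_2,r^\st_3)$, $\cR/J'=\Rt$, $J_\st/J'=I$, and the surviving rows $4,5,6$ of $A$ give the stated matrix $A'$. No gaps; this matches the paper's proof.
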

\begin{corollary}\label{Cor:LambdaOfFitt0}
We have $\lambdat( \Fitt_0^{\Rt}(I) )=(q-1)t(s,t,q-1)\subset \cO$. 
\end{corollary}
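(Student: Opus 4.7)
The plan is to use the presentation of $I$ as an $\Rt$-module supplied by Corollary~\ref{Cor:PresOfIinfty}. Since the $0$-th Fitting ideal is generated by the maximal minors of any presentation matrix, $\Fitt_0^{\Rt}(I)$ is the ideal of $\Rt$ generated by the $3\times3$ minors of the $3\times8$ matrix $A'$ displayed there. Because Fitting ideals commute with base change, $\lambdat(\Fitt_0^{\Rt}(I))$ is the ideal of $\cO$ generated by the $3\times3$ minors of $\lambdat(A')$.

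The first step is to write down $\lambdat(A')$ explicitly. The augmentation $\lambdat$ sends $a,\alpha,c,\gamma$ to $0$ and $b\mapsto s$, $\beta\mapsto t$; consequently, in $\lambdat(A')$ the last four columns of $A'$, which involve only $a,\alpha,c,\gamma$, become zero, and the computation reduces to the $3\times4$ matrix
\[
\lambdat(A')_{\text{cols }1\text{--}4}
=
\begin{pmatrix}
0 & t & 0 & -s \\
t & 0 & 0 & q-1 \\
0 & 0 & q-1 & 0
\end{pmatrix}.
\]
Thus I only need to compute the four $3\times 3$ minors coming from columns $1,2,3,4$.

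The second step is the direct evaluation of these four minors. A straightforward expansion gives, respectively, $-(q-1)t^2$ (columns $1,2,3$), $0$ (columns $1,2,4$), $-(q-1)st$ (columns $1,3,4$), and $-(q-1)^2 t$ (columns $2,3,4$). Hence
\[
\lambdat(\Fitt_0^{\Rt}(I)) = \bigl((q-1)t^{2},\ (q-1)st,\ (q-1)^{2}t\bigr) = (q-1)\,t\,(s,t,q-1)\subset\cO,
\]
which is the claimed equality.

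There is no real obstacle here; the only thing that could go wrong is an arithmetic slip in reading off $A'$ or in evaluating a $3\times 3$ determinant, but the numerous zero entries of $\lambdat(A')$ make the computation essentially combinatorial. The conceptual content of the step is just (i) the fact that $\Fitt_0$ is computed from any presentation and commutes with base change, and (ii) the observation that only four of the 56 maximal minors of $A'$ survive under $\lambdat$.
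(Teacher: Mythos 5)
Your proposal is correct and follows essentially the same route as the paper: both apply $\lambdat$ to the presentation matrix $A'$ of Corollary~\ref{Cor:PresOfIinfty}, observe that the last four columns vanish under the augmentation, and read off the ideal from the surviving $3\times 3$ minors, obtaining $\bigl((q-1)t^{2},(q-1)st,(q-1)^{2}t\bigr)=(q-1)t(s,t,q-1)$. The minor computations you record agree with the paper's.
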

\begin{proof}
The ideal $\Fitt_0^{\Rt}(I)$ is the ideal generated by the $3\times 3$-minors of the matrix $A'$ from Corollary~\ref{Cor:PresOfIinfty}. Hence its image under $\lambdat$ is the ideal of $\cO$ generated by the $3\times 3$-minors of 
\begin{equation}
\label{Eq:AprimeUnderLambda}
\lambdat(A')=\left(
\begin{array}{cccccccc}
0&t&0&-s&0  & 0&0&0 \\
t&0&0& q-1 & 0&0&0&0 \\
0&0& q-1 &0& 0 &0&0&0 \\
\end{array}
\right).
\end{equation}
This is the ideal $(t^2(q-1),t(q-1)^2,ts(q-1))=t(q-1)(s,t,q-1)$.
\end{proof}
\begin{remark}
One can also show that $\Fitt^{\Rt}_0(I)$ equals 
\[((q-1+a)^2\beta,\!(q-1+a)b\beta,\!(q-1+a)c\beta,\!(q-1+a)\beta^2,\!(q-1+a)\beta\gamma,ac\alpha,ac\beta,bc\alpha,bc\beta,
c^2\alpha,c^2\beta,c\alpha\beta,c\alpha\gamma,c\beta\gamma,c\beta^2).\]
\end{remark}
\begin{corollary}\label{Cor:I/Isquared}
We have $\Hom_{R^\st_v}(I/I^2,E/\cO)\cong \cO/(s,t,q-1) \times  \cO/(t,q-1) \times  (t,q-1))/(t(q-1))$. 
\end{corollary}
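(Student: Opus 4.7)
The plan is to apply $\Hom_{R_v^{\st}}(-, E/\cO)$ to the free presentation of $I/I^2$ inherited from Corollary~\ref{Cor:PresOfIinfty}. Tensoring the presentation $(\Rt)^8 \xrightarrow{A'} (\Rt)^3 \to I \to 0$ with $R_v^{\st}=\Rt/I$ gives a right-exact sequence of $R_v^{\st}$-modules $(R_v^{\st})^8 \to (R_v^{\st})^3 \to I/I^2 \to 0$. Since $E/\cO$ is an $R_v^{\st}$-module via $\lambda$, dualizing realizes $\Hom_{R_v^{\st}}(I/I^2, E/\cO)$ as the kernel of the map $(E/\cO)^3 \to (E/\cO)^8$ given by the transpose of $\lambdat(A')$ computed in~\eqref{Eq:AprimeUnderLambda}. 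Only four columns of $\lambdat(A')$ are non-zero, and reading off the resulting linear conditions shows that this kernel consists precisely of triples $(\phi_4, \phi_5, \phi_6) \in (E/\cO)^3$ satisfying $t\phi_4 = t\phi_5 = 0$, $(q-1)\phi_6 = 0$, and $s\phi_4 - (q-1)\phi_5 = 0$.

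The coordinate $\phi_6$ is unconstrained by the other two and contributes a canonical summand $(E/\cO)[q-1] \cong \cO/(q-1)$. The submodule $K$ of pairs $(\phi_4,\phi_5) \in ((E/\cO)[t])^2$ satisfying $s\phi_4 = (q-1)\phi_5$ is canonically identified with $\Hom_\cO(M, E/\cO)$, where $M$ is the finite torsion $\cO$-module presented by
\[
\begin{pmatrix} t & 0 & s \\ 0 & t & -(q-1) \end{pmatrix} : \cO^3 \longrightarrow \cO^2.
\]
Matlis duality for finite length $\cO$-modules then yields $K \cong M$. Computing the invariant factors of the displayed matrix over the DVR $\cO$ is immediate: its $1\times 1$ determinantal divisor is $(s,t,q-1)$, and the $\gcd$ of its $2\times 2$ minors $t^2,\,-t(q-1),\,-ts$ equals $t\cdot(s,t,q-1)$. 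Hence Smith normal form gives $M \cong \cO/(s,t,q-1) \oplus \cO/(t)$.

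Assembling these contributions yields $\Hom_{R_v^{\st}}(I/I^2, E/\cO) \cong \cO/(s,t,q-1) \oplus \cO/(t) \oplus \cO/(q-1)$. To obtain the precise form claimed in the statement I invoke the elementary-divisor identity $\cO/(t)\oplus\cO/(q-1) \cong \cO/(t,q-1) \oplus (t,q-1)/(t(q-1))$, valid over any DVR since both sides have the same multiset of elementary divisors $\{v(t),v(q-1)\}$; here the cyclic factor $(t,q-1)/(t(q-1))$ is canonically isomorphic to $\cO/(t(q-1)/(t,q-1))$ because $(t,q-1)$ is principal in $\cO$. The only technical point is the Smith normal form computation combined with this final rearrangement of elementary divisors; no conceptual obstacle arises because $M$ is presented by an explicit small matrix over a DVR.
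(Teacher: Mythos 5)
Your proposal is correct and follows essentially the same route as the paper: both specialize the presentation of $I$ from Corollary~\ref{Cor:PresOfIinfty} along $\lambdat$ and read off the answer from the elementary divisors of $\lambdat(A')$ over the DVR $\cO$, using that a finite-length $\cO$-module is (non-canonically) isomorphic to its $E/\cO$-dual. The only difference is cosmetic: you dualize first and compute a kernel in $(E/\cO)^3$, splitting off the $(q-1)$-block and then rearranging the elementary divisors $(t),(q-1)$ into $(t,q-1)$ and $t(q-1)/(t,q-1)$, whereas the paper computes the cokernel of the full $3\times 8$ matrix directly, obtaining the invariant factors in the stated form at once.
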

\begin{proof}
Note first that 
\[\Hom_{R_v^\st}(I/I^2,E/\cO) \cong \Hom_{R_v^\st}(I\otimes_{\Rt}R_v^\st,E/\cO)
\cong \Hom_{\cO}(I\otimes^{\lambdat}_{\Rt}\cO,E/\cO),\]
where in the second isomorphism, we use that $E$ is regarded as a $R^\st_v$-module via the augmentation $\lambda$. Tensoring now the presentation of $I$ in \autoref{Cor:PresOfIinfty} with $\cO$ over $\Rt$ (via $\lambdat$) gives the right exact sequence of $\cO$-modules
\begin{equation}
\cO^8\stackrel{\lambdat(A')}\longrightarrow \cO^3\longrightarrow I \otimes^{\lambdat}_{\Rt}\cO \to0 
\end{equation}
Using the theory of invariant factors and elementary divisors of matrices over PID's, e.g. \cite[Thm.~3.9]{Jacobson-BA1}, the cokernel of this sequence is seen to be isomorphic to $\prod_{i=1}^3 \cO/d_i\cO$ where $d_1$, $d_1d_2$ and $d_1d_2d_3$ are the gcd's of the $i\times i$-minors of $\lambdat(A')$displayed in \eqref{Eq:AprimeUnderLambda} for $i=1,2,3$. One readily computes
\[d_1=\gcd(s,t,q-1),\]
\[ d_2=\gcd(t^2,t(q-1),t(q-1),ts,(q-1)s,(q-1)^2)=\gcd(s,t,q-1)\gcd(t,q-1),\]
\[ d_3=\gcd(t^2(q-1),t(q-1)s,t(q-1)^2)=t(q-1)\gcd(s,t,q-1),\]
and this implies the assertion of the corollary.
\end{proof}

\begin{lemma}\label{Lem:RI-infty} 
For the ideals $P=(\alpha,\beta)$, $Q=(q-1+a,c)$ and $I'=((q-1+a)\alpha,(q-1+a)\beta,c\alpha,c\beta)$ of $\Rt$ the following hold.
\begin{enumerate}
\item $P$ is a prime ideal and $P=\{x\in \Rt \mid x r^\st_4=0\}$.
\item $Q$ is a prime ideal and $Q=\{x\in \Rt \mid x r^\st_6=0\}$.
\item One has (a) $P\cap Q=\Rt[I]$ and (b) $P\cap Q=I'$.
\end{enumerate}
\end{lemma}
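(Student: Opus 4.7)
The plan is to prove each assertion by combining the Eagon--Northcott syzygies of the $r^\st_i$ encoded in the matrix $A$ of Corollary~\ref{Cor:PresOfIinfty} (and the original six-column matrix) with explicit computations of the quotients $\Rt/P$ and $\Rt/Q$.

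For part 1, I would first compute
\[\Rt/P \cong \cO[[a,b,c,\gamma]]/\bigl((q-1+a)a+bc\bigr),\]
which is a domain since $(q-1+a)a+bc$ is irreducible in the regular ring $\cO[[a,b,c,\gamma]]$ (its mod-$\varpi$ reduction $a^2+bc$ is an irreducible non-degenerate quadratic form over $k$, as $p>2$); this shows $P$ is prime. The inclusion $P \subseteq \Ann_{\Rt}(r^\st_4)$ is read off from columns $2$ and $6$ of $A$, which in $\cR$ give $\beta r^\st_4 = \alpha r^\st_2 - (q-1+a)r^\st_1$ and $\alpha r^\st_4 = c r^\st_1 - \gamma r^\st_2$, both vanishing in $\Rt$. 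For the reverse inclusion, $r^\st_4=(q-1+a)\gamma-c\alpha$ reduces to the nonzero element $(q-1+a)\gamma$ in the domain $\Rt/P$, so any $x$ with $x r^\st_4=0$ must lie in $P$. Part 2 is entirely parallel: $\Rt/Q \cong \cO[[b,\alpha,\beta,\gamma]]/(\alpha^2+\beta\gamma)$ is a domain, columns $3$ and $7$ of $A$ give $Q \subseteq \Ann(r^\st_6)$, and $r^\st_6$ reduces to $-(q-1)\beta-b\alpha\neq 0$ modulo $Q$.

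For part 3, I would establish the chain of inclusions $I' \subseteq \Rt[I] \subseteq P\cap Q \subseteq I'$, which forces equality throughout. The middle containment is immediate from parts 1 and 2 since $\Rt[I]\subseteq \Ann(r^\st_4)\cap\Ann(r^\st_6)$. The inclusion $I' \subseteq \Rt[I]$ is a direct verification that each of the four generators of $I'$ annihilates $r^\st_4,r^\st_5,r^\st_6$, using the identities in $\Rt$ coming from columns $1,4,5,8$ of $A$, namely $\beta r^\st_5=\alpha r^\st_6$, $(q-1+a)r^\st_5 = b r^\st_4$, $\alpha r^\st_5 = -\gamma r^\st_6$, $c r^\st_5 = -a r^\st_4$, combined with the annihilations $\alpha r^\st_4=\beta r^\st_4=c r^\st_6=(q-1+a)r^\st_6=0$ from parts 1 and 2.

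The remaining inclusion $P\cap Q \subseteq I'$ is the main obstacle, and I plan to establish it by lifting to $\cR$ and exploiting that $\alpha,\beta,q-1+a,c$ extend (together with $\varpi,b,\gamma$) to a regular system of parameters of $\cR$, hence form a regular sequence whose first syzygies are generated by Koszul relations. Given $x\in P\cap Q$, lift to $\tilde x = \tilde u \alpha + \tilde v \beta\in \cR$ using $x\in P$, and then use $x\in Q$ to find $q_1,q_2,s_1,s_2,s_3\in\cR$ with
\[\tilde u \alpha + \tilde v \beta = q_1(q-1+a) + q_2 c + s_1 r^\st_1 + s_2 r^\st_2 + s_3 r^\st_3\qquad \text{in } \cR.\]
Expanding the $r^\st_i$ on the right, the terms $s_1\beta\gamma$ and $s_1\alpha^2$ combine to $s_1 r^\st_1$ and can be discarded modulo the ideal defining $\Rt$, while the $(q-1+a)\alpha$-, $c\beta$-contributions from $r^\st_2$ and the $(q-1+a)$-, $c$-contributions from $r^\st_3$ are already visibly in $Q'+I'$. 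Rearranging produces a pure syzygy of $\alpha,\beta,q-1+a,c$ in $\cR$ which must be a $\cR$-linear combination of the six Koszul relations. Substituting that Koszul expression back then expresses $\tilde x$ modulo $(r^\st_1)$ as an explicit element of $I'$, proving $x\in I'$ in $\Rt$. The reduction of the problem to a Koszul syzygy computation inside the regular ring $\cR$ is the key idea; the subsequent bookkeeping is mechanical.
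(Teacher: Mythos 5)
Your parts 1 and 2, and the reduction of part 3 to the single containment $P\cap Q\subseteq I'$ via the chain $I'\subseteq \Rt[I]\subseteq P\cap Q\subseteq I'$, are essentially the paper's argument: the same identifications $\Rt/P\cong\cO[[a,b,c,\gamma]]/((q-1+a)a+bc)$ and $\Rt/Q\cong\cO[[b,\alpha,\beta,\gamma]]/(\alpha^2+\beta\gamma)$, the same use of the columns of the Eagon--Northcott matrix, and the same observation that $r^\st_4$, resp.\ $r^\st_6$, stays nonzero in the corresponding domain. (Your two displayed syzygies for $\beta r^\st_4$ and $\alpha r^\st_4$ are each off by a sign, e.g.\ $\beta r^\st_4=(q-1+a)r^\st_1-\alpha r^\st_2$; this is immaterial, since only $\beta r^\st_4,\alpha r^\st_4\in(r^\st_1,r^\st_2)$ is used, and your four identities used to show $I'\cdot r^\st_5=0$ are correct in $\Rt$.) Where you genuinely diverge is the key containment $P\cap Q\subseteq I'$. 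The paper proves it by hand: write $x=f_1\alpha+f_2\beta$, normalize $f_1,f_2$ modulo $I'$ so they involve only $b,\alpha,\beta,\gamma$, reduce modulo $Q$ to an identity in $\cO[[\alpha,\beta,\gamma,b]]/(\alpha^2+\beta\gamma)$, and conclude $f_1=f_2=0$ by a divisibility/UFD argument in $\cO[[\alpha,\beta,\gamma,b]]$. You instead note that $\alpha,\beta,q-1+a,c$ is a regular sequence in the regular ring $\cR$ (it is, since $q\equiv 1\bmod p$ makes these four elements part of a regular system of parameters) and convert the two memberships into a syzygy of this sequence, which must be Koszul. Your sketch does complete: from $\tilde u\alpha+\tilde v\beta=q_1(q-1+a)+q_2c+\sum_i s_ir^\st_i$ absorb $s_1r^\st_1$ and $s_2r^\st_2$ into the $(\alpha,\beta)$-coefficients (using $r^\st_1=\alpha\cdot\alpha+\gamma\cdot\beta$, $r^\st_2=(q-1+a)\alpha+c\beta$) and $s_3r^\st_3$ into the $(q-1+a,c)$-coefficients, getting $U\alpha+V\beta=W(q-1+a)+Zc$ in $\cR$; writing $(U,V,-W,-Z)$ as a combination of Koszul syzygies, the $h_{12}$-terms cancel in $U\alpha+V\beta$, so $U\alpha+V\beta\in\bigl((q-1+a)\alpha,(q-1+a)\beta,c\alpha,c\beta\bigr)\cR$, whence $\tilde u\alpha+\tilde v\beta\in I'\cR+(r^\st_1,r^\st_2)$ and $x\in I'$ in $\Rt$ (note $r^\st_2$ already lies in $I'\cR$). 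Compared with the paper, your argument is more structural: it avoids the monomial normalizations and the unique-factorization step, and it explains exactly why the four cross-terms generating $I'$ appear, at the modest cost of invoking the standard fact that the first syzygies of a regular sequence are generated by Koszul relations.
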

\begin{proof}
1.\ Note first that $\Rt/P$ is isomorphic to the quotient of $\cO[[a,b,c,\beta]]/((q-1+a)a+bc)$. Since this is a domain, $P$ is a prime ideal. Next observe that $\alpha$ and $\beta$ annihilate $r^\st_4$ as follows by considering columns $2$ and $6$ in the relation matrix $A'$ in Corollary~\ref{Cor:PresOfIinfty}. It remains to show that $P=(\alpha,\beta)$ contains $\{x\in \Rt\mid xr^\st_4=0\}$. So suppose that $xr^\st_4=0$ in $\Rt$. The main observation is that $r^\st_4\mod P=(q-1+a)\gamma$ is a non-zero element in the domain $\Rt/P$. Therefore $x\mod P$ is zero, and thus $x\in P$, as had to be shown. The proof of 2.\ is completely parallel to that of 1.\ and left to the reader.

From the definition of $P$, $Q$ and $I'$ it is clear that $I'\subset P\cap Q$. It is also straight forward to see from the columns of $A'$ that $I'$ annihilates $r^\st_5$ (multiply the first column and the fifth column by $c$ or by $(q-1+a)$, and use 1.; or alternatively, multiply the forth and the eighth column by $\alpha$ and $\beta$, and use 2.). We shall now prove (b), and from this and what we already proved, (a) will follow.

To see (b), let $x$ be in $P\cap Q$. Write $x=f_1\alpha+f_2\beta$. To show that $x$ lies in $I'\subset P\cap Q$, we may subtract from $x$ arbitrary elements in $I'$. Writing elements in $\cR$ as power series over $\cO$ in $q-1+a,b,c,\alpha,\beta,\gamma$, we may thus assume $f_1,f_2\in(\alpha,\beta,\gamma,b)$. Shifting multiples of $\alpha$ in $f_2$ to $\alpha f_1$, we may further assume $f_2\in(\beta,\gamma,b)$, and using $r^\st_1$ we can replace $\beta\gamma$ by $\alpha^2$, and this finally allows us to assume that $f_2$ lies in $(b,\beta)$. We now reduce $x\in P\cap Q$ modulo $Q$. This yields $f_1\alpha+f_2\beta=0$ in $\cO[[\alpha,\beta,\gamma,b]]/(\alpha^2+\beta\gamma)$. In other words, we can find $f_3\in \cR':=\cO[[\alpha,\beta,\gamma,b]]$ such that 
\[f_1\alpha+f_2\beta+f_3(\alpha^2+\beta\gamma)=0 \hbox{ in }\cR'.\]
Reducing modulo $\alpha$ and using $f_2\in (b,\beta)$ it follows that $\gamma$ had to divide $f_2$ and hence that $f_2=0$. Since $\cR'$ is a UFD it follows that $r^\st_1=\alpha^2+\beta\gamma$ divides $f_1$ and hence that $f_1=0$ in $\Rt$. Hence we proved that $x$ lies in~$I'$.
\end{proof}
\begin{corollary}\label{Lem:c2-RqSt}
Let $e$ be the ramification index of $E$ over $\Q_l$. Then $\lambdat( \Rt[I])=(q-1)t \subset \cO$ and $c_{1,\lambda}(R^\st_v)=\frac1e\log_p(\cO/(s,t,q-1))=\frac{n_v}e$.
\end{corollary}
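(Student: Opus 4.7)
My plan is to read off $\lambdat(\Rt[I])$ from Lemma \ref{Lem:RI-infty}(3), combine with the formula for $\lambdat(\Fitt(I))$ from Corollary \ref{Cor:LambdaOfFitt0}, and then simplify the resulting quotient of $\cO$. By Lemma \ref{Lem:RI-infty}(3), one has $\Rt[I]=I'=((q-1+a)\alpha,\,(q-1+a)\beta,\,c\alpha,\,c\beta)$. Since the augmentation $\lambdat$ sends $a,c,\alpha,\gamma\mapsto 0$ and $\beta\mapsto t$, $b\mapsto s$, three of the four generators vanish and only $(q-1+a)\beta$ has nonzero image $(q-1)t$. This immediately gives $\lambdat(\Rt[I])=((q-1)t)\subset\cO$, proving the first assertion.

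With the formula $\lambdat(\Fitt(I))=(q-1)t(s,t,q-1)$ already in hand from Corollary \ref{Cor:LambdaOfFitt0}, the definition of $C_{1,\lambda}$ yields
\[
C_{1,\lambda}(R_v^\st)=\lambdat(\Rt[I])/\lambdat(\Fitt(I))=\bigl((q-1)t\bigr)/\bigl((q-1)t\cdot(s,t,q-1)\bigr).
\]
Since $t\in\ffrm_\cO$ is nonzero by the choice of augmentation and $q-1$ is a nonzero integer, the product $(q-1)t$ is a nonzero element of the domain $\cO$. Hence multiplication by $(q-1)t$ is an injective $\cO$-endomorphism of $\cO$ and so induces an isomorphism
\[
\cO/(s,t,q-1)\xrightarrow{\sim}\bigl((q-1)t\bigr)/\bigl((q-1)t\cdot(s,t,q-1)\bigr)
\]
of $\cO$-modules, giving $C_{1,\lambda}(R_v^\st)\cong\cO/(s,t,q-1)$.

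To conclude, I would apply the normalization $c_{1,\lambda}(R)=\ell_\cO(C_{1,\lambda}(R))/e$ of Definition \ref{def:key} together with $\ell_\cO(\cO/(s,t,q-1))=\min(\ord_\cO(s),\ord_\cO(t),\ord_\cO(q-1))$ to read off $c_{1,\lambda}(R_v^\st)=\min(\ord_\cO(s),\ord_\cO(t),\ord_\cO(q-1))/e$. The remaining step is to identify this minimum with the local monodromy invariant $n_v$, by inspecting the universal Steinberg lifting specialized along $\lambda$: the parameters $s,\,t,\,q-1$ are precisely the three obstructions to the projective image of $\rho_\lambda$ being trivial modulo a given power of $\varpi$, as already set up in \cite[\S 7.2]{BKM}. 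This last matching of the valuation with $n_v$ is the only non-computational input; everything else in the argument is a direct substitution into the presentations provided by Lemma \ref{Lem:RI-infty} and Corollary \ref{Cor:LambdaOfFitt0}, so I do not expect any serious obstacle.
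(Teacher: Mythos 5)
Your proposal is correct and follows essentially the same route as the paper: identify $\Rt[I]$ with $I'$ via Lemma \ref{Lem:RI-infty}, observe that $\lambdat(I')=((q-1)t)$, and divide by $\lambdat(\Fitt(I))=(q-1)t(s,t,q-1)$ from Corollary \ref{Cor:LambdaOfFitt0} to get $C_{1,\lambda}(R_v^\st)\cong\cO/(s,t,q-1)$, with the final identification of $\ell_\cO(\cO/(s,t,q-1))$ with $n_v$ coming from the setup of the augmentation as in \cite[\S 7.2]{BKM}, exactly as the paper does (implicitly).
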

\begin{proof}
In Lemma~\ref{Lem:RI-infty} we identified $\Rt[I]$ with $I'$. The image of $I'$ under $\lambda$ is simply $((q-1)t)$. Invoking also Corollary~\ref{Cor:LambdaOfFitt0}, we deduce
\[c_{1,\lambda}(R^\st_v)=\frac1e \cdot \log_p \big( \# ((q-1)t)/((q-1)t(s,t,(q-1)))\big)  = \frac1e \cdot \log_p\big( \# \cO/(s,t,q-1) \big).\]
\end{proof}
To complete the computation of $D_{1,\lambda}(R_v^\st)$, we still have to compute the size of the cokernel of $\Hom_{R_v^\st}(\cOmega_{R_v^\st},E/\cO)\to \Hom_{\Rt}(\cOmega_{\Rt},E/\cO) $. Using the methods of \cite[\S~7.2]{BKM} and its terminology, we need to compute the lattice $\Lambdat\subset\cO^8$ that is the kernel of that natural surjection $\cO^8\cong \cOmega_{\cR/\cO}\otimes_{\cR}^{\lambdat}\cO\to \cOmega_{\Rt/\cO}\otimes_{\Rt}^{\lambdat}\cO$. The lattice $\Lambdat$ is contained in $\Lambda^{\St}$, and the cardinality wanted is $\#(\Lambda^{\St}/\Lambdat)$.
\begin{lemma}\label{Lem:LatticeIndexR'Rst}
The lattice $\Lambdat\subset \cO^8$ is spanned by the rows of the matrix
	\[
	\left( \begin{array}{cccccccc} 
	1&0&0&-1&0&0&0&0\\ 
	0&0&0&0&1&0&0&1\\ 
	0&0&0&0&0&0&t&0\\ 
	0&0&s&q-1&0&0&0&0\\ 
	0&0&t&0&q-1&0&0&0\\ 
	\end{array} \right).
	\]
and the quotient $\Lambda^{\St}/\Lambdat$ as an $\cO$-module is isomorphic to $(s,t,q-1)/(t) \times (s,t,q-1)/(q-1)$.
\end{lemma}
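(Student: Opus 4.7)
The plan is to realize $\widetilde\Lambda$ as the kernel of the canonical surjection
\[
\cO^8\;\cong\;\cOmega_{\cR/\cO}\otimes^{\lambdat}_{\cR}\cO\;\onto\;\cOmega_{\Rt/\cO}\otimes^{\lambdat}_{\Rt}\cO,
\]
where the basis of $\cO^8$ is the framed-coordinate basis of \cite[\S~7.2]{BKM}. Since $\Rt=\cR/(r^\st_1,r^\st_2,r^\st_3)$, this kernel will be spanned by the images of $dr^\st_1,dr^\st_2,dr^\st_3$ under $\lambdat$ together with the two framing identifications inherited from loc.\ cit., which account for rows $1$ and $2$ of the displayed matrix.

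First I would carry out the differential computation. Using $\lambdat(a)=\lambdat(c)=\lambdat(\alpha)=\lambdat(\gamma)=0$ and $\lambdat(b)=s$, $\lambdat(\beta)=t$, I read off
\[
dr^\st_1\xrightarrow{\lambdat} t\,d\gamma,\qquad dr^\st_2\xrightarrow{\lambdat} (q-1)\,d\alpha+t\,dc,\qquad dr^\st_3\xrightarrow{\lambdat} (q-1)\,da+s\,dc.
\]
Once one matches the basis of $\cO^8$ with that of \cite[\S~7.2]{BKM}, these three vectors become precisely rows $3$--$5$ of the displayed matrix, completing the first assertion.

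For the second assertion, the plan is to compare $\widetilde\Lambda$ with $\Lambda^{\St}$, which is generated by the same recipe applied to all six minors of~\eqref{Eq:MinorsOfWhich}. The three additional differentials evaluate at $\lambdat$ to
\[
dr^\st_4\xrightarrow{\lambdat}(q-1)\,d\gamma,\qquad dr^\st_5\xrightarrow{\lambdat}s\,d\gamma,\qquad dr^\st_6\xrightarrow{\lambdat}t\,da-s\,d\alpha.
\]
The first two lie in the $d\gamma$-coordinate and the third in the span of $da,d\alpha$, and these two coordinate directions are decoupled modulo $\widetilde\Lambda$. In the $d\gamma$-block, row~$3$ forces $t\,d\gamma\equiv 0$, so the classes of $(q-1)\,d\gamma$ and $s\,d\gamma$ generate the submodule $(s,t,q-1)/(t)$ of $\cO\,d\gamma/(t)\cong\cO/(t)$. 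In the $(da,d\alpha)$-block, the class of $t\,da-s\,d\alpha$ is cyclic over $\cO$, and I would compute its annihilator by solving $c(t,0,-s)=\lambda_1(0,t,q-1)+\lambda_2(q-1,s,0)$ in $\cO^3$: the coordinates involving $q-1$ force $\lambda_1=ct/(q-1)$ and $\lambda_2=-cs/(q-1)$, giving the annihilator $\{c\in\cO:\,\ord_\cO(c)\ge\ord_\cO(q-1)-\min(\ord_\cO(s),\ord_\cO(t))\}$. The resulting cyclic quotient has the same length as, and is then explicitly identified with, the $\cO$-module $(s,t,q-1)/(q-1)$.

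The hardest step will be the second annihilator computation, which requires care to separate the contributions of rows~$4$ and~$5$ of the matrix, and the identification of the resulting cyclic module with $(s,t,q-1)/(q-1)$ rather than merely matching lengths. Combining the two decoupled blocks then yields the claimed isomorphism $\Lambda^{\St}/\widetilde\Lambda\cong (s,t,q-1)/(t)\times (s,t,q-1)/(q-1)$.
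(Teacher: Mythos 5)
Your proposal is correct, and the first half coincides with the paper's argument: the paper also obtains $\Lambdat$ as the image under $\lambdat$ of the Jacobian of a presentation of $\Rt$ (it uses the eight-variable presentation from \cite[\S~7.3]{BKM}, whose relations reduce, modulo the two framing rows, exactly to $r^\st_1,r^\st_2,r^\st_3$), followed by the same ``simple row operations'' you need — note that your evaluated $dr^\st_3=(q-1)\,da+s\,dc$ is row~4 only after adding $(q-1)$ times row~1, and similarly the relation $(q-1+a)\delta-c\beta$ matches $dr^\st_2$ only modulo row~2; since only the span matters this is harmless. Where you genuinely diverge is in the second assertion: the paper quotes the explicit reduced basis of $\Lambda^{\St}$ computed in \cite[\S~7.2]{BKM} (with a case distinction according to whether $\ord_\cO(s)\ge\ord_\cO(t)$ or not) and then exhibits an upper-triangular transition matrix with diagonal $1,1,1,\tfrac{t}{(s,t,q-1)},\tfrac{q-1}{(s,t,q-1)}$ expressing the basis of $\Lambdat$ in that of $\Lambda^{\St}$, whereas you generate $\Lambda^{\St}/\Lambdat$ directly by the classes of the three remaining evaluated differentials $dr^\st_4,dr^\st_5,dr^\st_6$, split off the $d\gamma$-coordinate (which is legitimate, since all generators of both lattices lie either in $\cO\,d\gamma$ or in its complement), and compute the second factor by an annihilator calculation. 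Your route avoids both the case split and the reliance on the precise reduced form of $\Lambda^{\St}$, at the cost of the explicit solvability check in the $(da,dc,d\alpha)$-block; the paper's route is shorter given that \cite{BKM} is already being cited for $\Lambda^{\St}$. Two cosmetic points: in your annihilator computation the labels are swapped (with your ordering one gets $\lambda_2=ct/(q-1)$ and $\lambda_1=-cs/(q-1)$), and the final identification needs no extra care — a cyclic $\cO$-module over the discrete valuation ring $\cO$ of the same finite length as $(s,t,q-1)/(q-1)$ is automatically isomorphic to it. Neither affects the conclusion, and the resulting length $\max\bigl(0,\ord_\cO(q-1)-\min(\ord_\cO(s),\ord_\cO(t))\bigr)$ agrees with $\ell_\cO\bigl((s,t,q-1)/(q-1)\bigr)$ as required.
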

\begin{proof}
In the notation of \cite[\S~7.3]{BKM}, the ring $\Rt$ is given as $\cO[[a,b,c,e,\alpha,\beta,\gamma,\delta]]$ modulo the relations $a-e,\alpha+\delta,\alpha\delta-\beta\gamma, (q-1+a)e+bc,(q-1+a)\delta-c\beta$. The spanning vectors of $\Lambdat$ are then the image of the Jacobian matrix 
\[
	\left( \begin{array}{cccccccc} 
	1&0&0&-1&0&0&0&0\\ 
	0&0&0&0&1&0&0&1\\ 
	0&0&0&0&\delta&-\gamma&-\beta&\alpha\\ 
	e&c&b&q-1+a&0&0&0&0\\ 
	\delta&0&-\beta&0&0&-c&0&q-1+a\\ 
	\end{array} \right)
\]
under the augmentation $\lambdat$. The matrix displayed in the assertion of the lemma is obtained from this image after some simple row operations. By \cite[\S~7.2]{BKM}, the lattice $\Lambda^{\St}$ is spanned by the rows of 
	\[
	\left( \begin{array}{cccccccc} 
	1&0&0&-1&0&0&0&0\\ 
	0&0&0&0&1&0&0&1\\ 
	0&0&0&0&0&0&(s,t,q-1)&0\\ 
	0&0&t&0&q-1&0&0&0\\ 
	0&0&0&(t,q-1)&{\textstyle \frac st}(t,q-1)&0&0&0\\ 
	\end{array} \right) 
	\]
if $ {\rm ord}_\varpi(s)\ge  {\rm ord}_\varpi(t)$; and in the other case, the last two rows have to be replaced by 
	\[
	\left( \begin{array}{cccccccc} 
	0&0&-s&q-1&0&0&0&0\\ 
	0&0&0&{\textstyle \frac ts} (s,q-1)&(s,q-1)&0&0&0\\ 
	\end{array} \right) ,
	\]
In both cases, it is easy to express the basis spanning $\Lambdat$ in terms of the basis spanning $\Lambda^{\St}$, by an upper triangular transition matrix over $\cO$ diagonal entries $1,1,1,\frac{t}{(s,t,q-1)},\frac{(q-1)}{(s,t,q-1)}$. The assertions of the lemma are now clear.
\end{proof}
\begin{corollary}\label{Lem:D2-RqSt}
We have $D_{1,\lambda}(R_v^\st)=\frac1e \cdot \log_p \#( \cO/(s,t,q-1))^3 =3\frac{n_v}e$.
\end{corollary}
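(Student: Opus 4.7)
The plan is to apply the four-term exact sequence of Theorem \ref{thm:Der exact sequence} to the complete intersection cover $\varphi:\Rt\twoheadrightarrow R_v^\st$ constructed in Lemma \ref{Sec6-FirstLemma}, and then combine it with the two computations already performed in this subsection: the description of $\Hom_{R_v^\st}(I/I^2, E/\cO)$ in Corollary \ref{Cor:I/Isquared} and the description of the lattice quotient $\Lambda^{\St}/\Lambdat$ in Lemma \ref{Lem:LatticeIndexR'Rst}. The only non-routine input besides these is the identification of the leftmost cokernel in the four-term sequence with $\Hom_\cO(\Lambda^{\St}/\Lambdat, E/\cO)$.

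First I would carry out this identification. Writing $\cR=\cO[[a,b,c,\alpha,\beta,\gamma]]$ and dualising, one has $\Hom_{R_v^\st}(\cOmega_{R_v^\st/\cO},E/\cO)=\Hom_\cO(\cOmega_{\cR/\cO}\otimes^\lambda\cO/\Lambda^{\St},E/\cO)$ and a similar statement for $\Rt$ with $\Lambdat$ in place of $\Lambda^{\St}$, using that $E/\cO$ is an $R_v^\st$-module via $\lambda$ and an $\Rt$-module via $\lambdat$. Applying $\Hom_\cO(-,E/\cO)$ to the short exact sequence $0\to \Lambdat\to \Lambda^{\St}\to \Lambda^{\St}/\Lambdat\to 0$ and using injectivity of $E/\cO$ as an $\cO$-module together with the finiteness of $\Lambda^{\St}/\Lambdat$ (which follows from Lemma \ref{Lem:LatticeIndexR'Rst}, and also reflects formal smoothness of $\lambda$ and $\lambdat$ on the generic fiber so that both lattices have full rank $5$ in $\cO^8$), the kernel of the resulting surjection $\Hom_\cO(\cO^8/\Lambdat,E/\cO)\to \Hom_\cO(\cO^8/\Lambda^{\St},E/\cO)$ is identified with $\Hom_\cO(\Lambda^{\St}/\Lambdat,E/\cO)$, a finite $\cO$-module of the same length as $\Lambda^{\St}/\Lambdat$. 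The four-term exact sequence of Theorem \ref{thm:Der exact sequence} thus collapses to a short exact sequence
\[0\longrightarrow \Hom_\cO(\Lambda^{\St}/\Lambdat,E/\cO)\longrightarrow \Hom_{R_v^\st}(I/I^2,E/\cO)\longrightarrow \cDer^1_\cO(R_v^\st,E/\cO)\longrightarrow 0\]
of finite $\cO$-modules, so lengths are additive.

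Next, I would compute both outer lengths and subtract. By Corollary \ref{Cor:I/Isquared}, $\Hom_{R_v^\st}(I/I^2,E/\cO)\cong \cO/(s,t,q-1)\times \cO/(t,q-1)\times (t,q-1)/(t(q-1))$; the third factor has length $\max(\ord(t),\ord(q-1))$ (from the short exact sequence $0\to (t,q-1)/(t(q-1))\to \cO/(t(q-1))\to \cO/(t,q-1)\to 0$), so the total length is $n_v+\ord(t)+\ord(q-1)$ where I use that the local monodromy invariant at $v$ equals $\ord_\cO(s,t,q-1)$ in this setup. By Lemma \ref{Lem:LatticeIndexR'Rst}, $\Lambda^{\St}/\Lambdat\cong (s,t,q-1)/(t)\times (s,t,q-1)/(q-1)$ has length $(\ord(t)-n_v)+(\ord(q-1)-n_v)$. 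Subtracting gives $\ell_\cO(\cDer^1_\cO(R_v^\st,E/\cO))=3n_v$.

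Finally, dividing by $e=\ell_\cO(\cO/p)$ produces $D_{1,\lambda}(R_v^\st)=3n_v/e$, which is the same as $\frac{1}{e}\log_p\#(\cO/(s,t,q-1))^3$ under the normalised convention in which $\log_p\#M=\ell_\cO(M)$ for a finite $\cO$-module. No step is genuinely hard once Corollaries \ref{Cor:I/Isquared} and Lemma \ref{Lem:LatticeIndexR'Rst} are in place; the main thing to be careful about is the dualisation argument that identifies the cokernel of $\Hom_{R_v^\st}(\cOmega_{R_v^\st/\cO},E/\cO)\hookrightarrow \Hom_{\Rt}(\cOmega_{\Rt/\cO},E/\cO)$ with $\Hom_\cO(\Lambda^{\St}/\Lambdat,E/\cO)$, which is where the injectivity of $E/\cO$ and the formal-smoothness hypothesis on $\lambda$ and $\lambdat$ enter.
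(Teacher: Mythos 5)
Your proposal is correct and follows essentially the same route as the paper: apply the four-term sequence of Theorem \ref{thm:Der exact sequence} to the cover $\Rt\onto R_v^\st$, identify the kernel of $\Hom_{R_v^\st}(I/I^2,E/\cO)\to\cDer^1_\cO(R_v^\st,E/\cO)$ with a module of the same length as $\Lambda^{\St}/\Lambdat$ (the paper asserts this; you supply the $\Hom_\cO(-,E/\cO)$ dualisation argument explicitly), and then subtract the lengths from Corollary \ref{Cor:I/Isquared} and Lemma \ref{Lem:LatticeIndexR'Rst}. The only cosmetic slip is writing $\cR$ with six variables while using $\cO^8$; the lattice computation of Lemma \ref{Lem:LatticeIndexR'Rst} uses the eight-variable presentation, which you in effect employ, so nothing is affected.
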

\begin{proof}
From Lemma~\ref{Lem:LatticeIndexR'Rst}, the observations preceding it, and from Theorem~\ref{thm:Der exact sequence}, 
we have 
\[\#\kernel(\Hom_{R_v^\st}(I/I^2,E/\cO) \to \cDer^1_\cO(R_v^\st,E/\cO))=\# \Lambda^{\St}/\Lambdat=\#(s,t,q-1)^2/(t(q-1)) \]
In Corollary \ref{Cor:I/Isquared} we computed $\#\Hom_{R_v^\st}(I/I^2,E/\cO)=\#\cO/(t(q-1)(s,t,q-1))$. Forming the quotient, the result follows from Theorem~\ref{thm:Der exact sequence}.
\end{proof}
From $\delta_\lambda(R_v^\st) = D_{1,\lambda}(R_v^\st)-c_{1,\lambda}(R_v^\st)$ and Corollaries~\ref{Lem:c2-RqSt} and~\ref{Lem:D2-RqSt} we deduce.
\begin{theorem}\label{thm:delta_v^St}
We have $\delta_\lambda(R_v^{\st}) = 2\frac{n_v}e$.
\end{theorem}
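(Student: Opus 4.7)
The plan is essentially to combine the two explicit computations that have already been set up, namely Corollary \ref{Lem:c2-RqSt} and Corollary \ref{Lem:D2-RqSt}, with the definition $\delta_\lambda(R) = D_{1,\lambda}(R) - c_{1,\lambda}(R)$ from Definition \ref{def:key}. Since $c_{1,\lambda}(R_v^\st) = n_v/e$ and $D_{1,\lambda}(R_v^\st) = 3n_v/e$, subtracting immediately yields $\delta_\lambda(R_v^\st) = 2n_v/e$. So the theorem itself is a one-line consequence; the real content of the proof has been distributed across the preceding lemmas and corollaries.

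If I were organizing the proof from scratch, I would first recall the complete intersection cover $\varphi\colon \Rt = \cR/(r_1^\st,r_2^\st,r_3^\st) \onto R_v^\st$ established in Lemma \ref{Sec6-FirstLemma}, which satisfies Property \ref{prop CI} (flatness of relative dimension $3$, CI property from a regular sequence, formal smoothness at $\lambdat$). All subsequent computations of $c_{1,\lambda}$ and $D_{1,\lambda}$ are done through this auxiliary ring, and are independent of the choice of such cover by Theorem \ref{thm:c_1 local} and Theorem \ref{thm:Der^1 local}.

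For $c_{1,\lambda}(R_v^\st)$, the strategy is: extract a presentation of the ideal $I = \ker\varphi$ from the Eagon--Northcott complex of the defining $4\times 2$-matrix (Corollary \ref{Cor:PresOfIinfty}), compute $\lambdat(\Fitt(I))$ directly from this presentation (Corollary \ref{Cor:LambdaOfFitt0}), and identify $\Rt[I]$ with an explicit ideal $I' = ((q-1+a)\alpha,(q-1+a)\beta,c\alpha,c\beta)$ via the prime decomposition argument in Lemma \ref{Lem:RI-infty}. The quotient $\lambdat(\Rt[I])/\lambdat(\Fitt(I))$ then reduces to $\cO/(s,t,q-1)$, whose length is exactly $n_v$ (this is where the local monodromy invariant enters, via the definition $(s,t,q-1) = \varpi^{n_v}\cO$). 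For $D_{1,\lambda}(R_v^\st)$, one uses the four-term exact sequence of Theorem \ref{thm:Der exact sequence}, reducing the computation to the lattice-theoretic comparison between $\Lambda^\St$ and the lattice $\Lambdat \subset \cO^8$ coming from the differentials of $r_1^\st,r_2^\st,r_3^\st$ evaluated at $\lambdat$ (Lemma \ref{Lem:LatticeIndexR'Rst}), together with the calculation of $\Hom_{R_v^\st}(I/I^2,E/\cO)$ via invariant factors of $\lambdat(A')$ (Corollary \ref{Cor:I/Isquared}).

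The conceptually hardest step, and the one that really needs the algebraic machinery developed in Section \ref{sec:delta invariant}, is the identification $\Rt[I] = I'$ in Lemma \ref{Lem:RI-infty}(3): the containment $I' \subset P\cap Q$ is straightforward, but showing $P\cap Q \subset I'$ requires an elementary but delicate argument using that $\cO[[\alpha,\beta,\gamma,b]]$ is a UFD and that $\alpha^2+\beta\gamma$ is irreducible there. Everything else amounts to careful bookkeeping of Jacobian matrices, fitting ideals, and elementary divisor computations; the final subtraction $3n_v/e - n_v/e = 2n_v/e$ is then automatic, and the fact that the answer comes out as a clean integer multiple of $n_v/e$ serves as a sanity check that the local defect genuinely reflects only the monodromy invariant of $\rho_\lambda$ at $v$.
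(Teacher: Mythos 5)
Your proposal is correct and follows the paper's own route exactly: the theorem is the one-line subtraction $D_{1,\lambda}(R_v^{\st})-c_{1,\lambda}(R_v^{\st}) = 3\frac{n_v}{e}-\frac{n_v}{e}$ using Corollaries \ref{Lem:c2-RqSt} and \ref{Lem:D2-RqSt}, with the real work residing in those corollaries (the Eagon--Northcott presentation, the identification $\Rt[I]=I'$, and the lattice comparison), just as you describe. Your account of how those inputs are established also matches the paper's arguments in \S\ref{Subsec-St}.
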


\subsection{Unipotent deformations with a choice of Frobenius at trivial primes}
\label{Subsec-FUnip}

In the following, $\us$ and $\ut$ will denote indeterminates that we shall specialize to $s$ and $t$, respectively, whenever we pass to $\cO$-algebras. Set 
$s_1^\fun= r_9^\fun+r_6^\fun$,  
$s_2^\fun= r_8^\fun-r_7^\fun +r_2^\fun$, 
$s_3^\fun= r_5^\fun$, 
$s_4^\fun= r_2^\fun+a r_3^\fun +r_4^\fun+ar_6^\fun-br_7^\fun -r_3^\fun$, and $s_4^{\prime\,\fun}= a r_3^\fun +r_4^\fun+ar_6^\fun-br_7^\fun -r_3^\fun$.
Set also $\widetilde\cI=(s_1^\fun,s_2^\fun,s_3^\fun,s_4^\fun)$ and $\widetilde\cI^{\prime}=(s_1^\fun,s_2^\fun,s_3^\fun,s_4^{\prime\,\fun})$.

The next result summarizes some explicit computations done via Macaulay2.
\begin{lemma}\label{Sec6:Fun--FirstLemma}
\begin{enumerate}
\item The ring $\Z[\uq,\us,\ut,a,b,c,X,\alpha,\beta,\gamma]/(\uq,\us,\ut,b-\us-c, \beta-\ut-c, \gamma-X, s_i^\fun, i=1,\ldots,4)$ is free over $\Z$ of rank $16$. The same holds if we replace $s_4^\fun$ by $s_4^{\prime,\fun}$. A basis is $1,a,aX,aX\alpha,a\alpha, b,b\alpha,$ $X, X^2,X^2\alpha,X\alpha,X\alpha^2,X\alpha^3,\alpha,\alpha^2,\alpha^3$. A basis of the socle of the ring modulo any prime is $X\alpha^3$.
\item The ring $\Z[\uq,\us,\ut,a,b,c,X,\alpha,\beta,\gamma]/((\uq,\us,\ut,b-\us-c, \beta-\ut-c, \gamma-X)+\cI^\fun_\Z)$ is free over $\Z$ of rank $6$. A basis is $1,a,b,bX,X,\alpha$. A basis of the socle of the ring modulo any prime is $X\alpha$.
\item Write $x_1,\ldots,x_7$ for $a,b,c,X,\alpha,\beta,\gamma$. Then the ideal in $\Z[\uq,\us,\ut]$ generated by the $4\times 4$-minors of the Jacobian $(\partial s_i^\fun/\partial x_j)_{i=1,\ldots,4; j=1,\ldots,7}$ evaluated at $(x_1,\ldots,x_7)=(0,\us,0,0,0,\ut,0)$ is $(\us+\ut)\ut^2(\uq,\us, \ut)$. If one replaces $s_4^\fun$ by $s_4^{\prime,\fun}$, the resulting ideal is $\us\ut^2(\uq,\us, \ut)$.
\end{enumerate}\end{lemma}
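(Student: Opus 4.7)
The plan is to verify the three assertions by explicit computations in commutative algebra that can be carried out over $\Z$ (or a localization thereof), exactly as suggested by the author's reference to Macaulay2. The common underlying technique is to compute a Gröbner basis of each ideal with respect to a suitable monomial order and then read off a $\Z$-basis of the quotient from the monomials not divisible by any leading term; socles are then checked by direct multiplication inside the quotient.

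For (1), after imposing $\uq=\us=\ut=0$ together with the linear relations $b=c,\ \beta=c,\ \gamma=X$, the quotient ring collapses to a quotient of the polynomial ring $\Z[a,b,X,\alpha]$ by four explicit polynomials obtained by substituting the above relations into $s_1^\fun,\ldots,s_4^\fun$. First I would write out these four polynomials explicitly (using the definitions $s_1^\fun=r_9^\fun+r_6^\fun$, etc.) and compute a graded reverse lexicographic Gröbner basis over $\Z$; the stated 16 monomials $1,a,aX,aX\alpha,a\alpha,b,b\alpha,X,X^2,X^2\alpha,X\alpha,X\alpha^2,X\alpha^3,\alpha,\alpha^2,\alpha^3$ should appear as the standard monomials. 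The socle claim is then verified by checking that each of the 15 non-unit basis monomials, when multiplied by a suitable generator among $a,b,X,\alpha$, lands in the $\Z$-span of $X\alpha^3$. The same computation applies to the variant with $s_4^{\prime\,\fun}$ in place of $s_4^\fun$, since the two differ by the additional term $r_2^\fun=\beta X$, and the argument must be run separately to verify that the basis and socle remain unchanged.

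For (2), the strategy is identical but now one uses the full ideal $\cI_\Z^\fun=(r_1^\fun,\ldots,r_9^\fun)$. A conceptual shortcut is available: reducing modulo $p$ for any prime $p$, one recovers the ring $R_v^\fun/(\varpi, b-c, b-\beta, X-\gamma)$ of Lemma~\ref{lemma:props-of-RfunQ}(3), which was already shown to be zero-dimensional Gorenstein with socle one-dimensional. This forces the socle of the mod-$p$ reduction to be cyclic, and a direct computation confirms it is spanned by $X\alpha$ for every prime $p$. To upgrade from $\F_p$-bases for every $p$ to a $\Z$-basis, it suffices to check that the Gröbner basis computation has unit leading coefficients, so that the standard monomials $1,a,b,bX,X,\alpha$ form a $\Z$-basis of the quotient, compatible with the mod-$p$ reductions. (The apparent tension with the relation $r_2^\fun=\beta X$ is resolved by the $\us$-deformation: one keeps track of the fact that the Gröbner basis is computed over $\Z[\uq,\us,\ut]$ before specializing, so that $bX$ is expressible in terms of basis elements only after using the other relations.)

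For (3), the computation is direct: write down the $4\times 7$ Jacobian matrix $(\partial s_i^\fun/\partial x_j)$ with entries in $\Z[\uq,\us,\ut,a,b,c,X,\alpha,\beta,\gamma]$, specialize to $(a,b,c,X,\alpha,\beta,\gamma)=(0,\us,0,0,0,\ut,0)$, and enumerate the $\binom{7}{4}=35$ maximal minors. A Gröbner basis computation in $\Z[\uq,\us,\ut]$ then yields the stated generator $(\us+\ut)\ut^2$ times $(\uq,\us,\ut)$, and the parallel statement with $s_4^{\prime\,\fun}$ in place of $s_4^\fun$ gives $\us\ut^2$ times $(\uq,\us,\ut)$. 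The main obstacle across all three parts is not the verification itself (which is algorithmic once the integral model is chosen) but rather the a priori \emph{choice} of the linear combinations $s_i^\fun$: an arbitrary four-element subset of $\{r_1^\fun,\ldots,r_9^\fun\}$ will not produce a quotient that is free of the correct rank over $\Z$, and finding combinations that work simultaneously for all primes $p$ and are amenable to the subsequent computations of $\Rt[I]$ and $\cOmega_{\Rt/\cO}$ in later lemmas is the experimental heart of this section.
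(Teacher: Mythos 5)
Your overall plan --- verify each part by Gr\"obner-basis computations over $\Z$ (standard monomials for the bases, direct multiplication for the socle claims, enumeration of the $4\times 4$ minors for part (3)) --- is essentially the paper's own proof: the paper offers nothing beyond the sentence that the lemma ``summarizes some explicit computations done via Macaulay2,'' so in method you and the authors coincide, and your closing observation that the real difficulty lies in choosing the combinations $s_i^\fun$ matches the authors' own commentary.

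There is, however, one concrete step in your treatment of part (2) that fails as written. In that quotient the elements $\us$ and $\ut$ themselves belong to the ideal, so $c\equiv b$ and $\beta\equiv b$ hold exactly, and hence $bX\equiv\beta X=r_2^\fun$ and $X\alpha=r_1^\fun$ are literally zero in the quotient; there is no ``$\us$-deformation'' available, because the specialization $\us=\ut=0$ is imposed as part of the ideal, not afterwards. Carrying out the elimination you describe yields the relations $\alpha X=bX=X^2=\alpha^2=\alpha a=b^2=0$, $\alpha b=ab=aX$, $a^2=2aX$, so the ring is free over $\Z$ of rank $6$ with basis $1,a,b,aX,X,\alpha$, and its socle modulo every prime (including $2$) is spanned by $aX$ --- exactly the ring of Lemma~\ref{lemma:props-of-RfunQ}(3), where the basis is written $1,a,b,X,\alpha,a^2$ with socle $a^2=2aX$. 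So your ``direct computation confirms it is spanned by $X\alpha$'' would not in fact confirm the printed statement: the entries $bX$ and $X\alpha$ in part (2) can only be read as $aX$ (evidently a transcription slip in the statement), and your verification should record this rather than attempt to rescue $bX$ by a deformation argument. The rest of the plan --- part (1), the freeness checks over $\Z$ via unit leading coefficients or by comparing ranks over $\Q$ and all residue fields, and the minors computation in part (3) --- is sound and is the same computation the authors performed.
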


\begin{remark}\label{Rem:OptimalFun}
We note that the number 16 in part 1 is optimal. After reducing the number of variables by those relations that are linear, the $s_i^\fun$ are quadratic relations of a polynomial ring over $\Z$ in $4$ variables. Now the intersection of 4 quadrics in general position consists of $16$ points. Therefore dimension $16$ for the coordinate ring of the corresponding scheme is optimal.
\end{remark}
Let $s,t\in\ffrm$ with $t\neq0$.
\begin{corollary}\label{Sec6:FunFirstCor}
\begin{enumerate}
\item The ring $\Rt=\cO[[a,b,c,X,\alpha,\beta,\gamma]]/(s_i^\fun, i=1,\ldots,4)$ is a complete intersection, flat over $\cO$ and of relative dimension $3$, and this also holds with $s_4^\fun$ replaced by $s_4^{\prime,\fun}$. One has a natural surjection $\Rt\to R^\fun_v$ induced from $(s_i^\fun, i=1,\ldots,4)\subset (r_j^\fun,j=1,\ldots,9)$.
\item Via the ring map $S=\cO[[y_1,y_2,y_3]]\to \Rt$ given by $y_1\mapsto b-s-c, y_2\mapsto \beta-t-c, y_3\mapsto \gamma-X$, the rings $\Rt$ and $R^\fun_v$ are free $S$-modules of rank $16$ and $6$, respectively (for either choice of $\Rt$).
\item The augmentation $\lambdat\colon\Rt\to\cO$ given by $a,c,X,\alpha,\gamma\mapsto 0$, $b\mapsto s$ and $\beta\mapsto t$ defines a formally smooth point of $\Spec \Rt[\frac1\varpi]$, for at least one of the two choices of $\Rt$ from 1, provided that $t\in\cO\setminus\{0\}$. 
\end{enumerate}
\end{corollary}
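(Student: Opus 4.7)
The plan is to derive all three assertions as direct consequences of the explicit computations in Lemma~\ref{Sec6:Fun--FirstLemma} by a short base change and dimension-counting argument, using Proposition~\ref{prop:matsum-GorCM} (miracle flatness for Cohen--Macaulay modules over regular rings) as the main commutative-algebra input.

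First I would prove (1). The ring $\cR=\cO[[a,b,c,X,\alpha,\beta,\gamma]]$ is regular local of Krull dimension $8$. By Lemma~\ref{Sec6:Fun--FirstLemma}.1, after base change from $\Z$ to $\cO$ and reduction modulo $\varpi$, the quotient $\cR/(\varpi,b-s-c,\beta-t-c,\gamma-X,s_1^\fun,\ldots,s_4^\fun)$ is a free $k$-module of rank~$16$, hence zero-dimensional (and analogously for $s_4^{\prime,\fun}$). Thus the eight elements $\varpi,b-s-c,\beta-t-c,\gamma-X,s_1^\fun,\ldots,s_4^\fun$ form a system of parameters of $\cR$, and since $\cR$ is Cohen--Macaulay they form a regular sequence; any reordering is again regular. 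In particular $s_1^\fun,\ldots,s_4^\fun$ is regular in $\cR$, so $\Rt$ is a complete intersection, flat over $\cO$, of relative dimension~$7-4=3$. The inclusion $(s_i^\fun)\subset\cI$ is immediate from the definitions of $s_i^\fun$ as $\cO$-linear combinations of the $r_j^\fun$ from Lemma~\ref{Cor:Ideal-I}, giving the surjection $\Rt\onto R_v^\fun$.

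Next I would establish (2) by miracle flatness. The map $S=\cO[[y_1,y_2,y_3]]\to\Rt$ is local, and by (1) together with the above regular sequence, $\Rt$ is a Cohen--Macaulay $S$-algebra of the same Krull dimension as $S$, namely $4$. By Lemma~\ref{Sec6:Fun--FirstLemma}.1, base changed to $\cO$ and reduced modulo $\varpi$, the $k$-vector space $\Rt/\ffrm_S\Rt$ has dimension~$16$, so by Nakayama's lemma $\Rt$ is a finite $S$-module generated by $16$ elements. Since $S$ is regular and $\Rt$ is Cohen--Macaulay of dimension $\dim S$, Proposition~\ref{prop:matsum-GorCM}.1 implies that $\Rt$ is free over $S$, and the rank equals $\dim_k\Rt/\ffrm_S\Rt=16$. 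The same argument applied to $R_v^\fun$, using Lemma~\ref{lemma:props-of-RfunQ}.3 (so $R_v^\fun$ is Cohen--Macaulay of dimension~$4$) and Lemma~\ref{Sec6:Fun--FirstLemma}.2 (so $R_v^\fun/\ffrm_S R_v^\fun$ has $k$-dimension~$6$), yields that $R_v^\fun$ is a free $S$-module of rank~$6$.

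Finally for (3) I would apply the Jacobian criterion over $E$. Since $\Rt$ is a complete intersection cut out by four equations inside a seven-variable power series ring, the point $\lambdat$ of $\Spec\Rt[\tfrac1\varpi]$ is formally smooth of dimension $3$ if and only if the $4\times 4$ minors of the Jacobian of $(s_1^\fun,\ldots,s_4^\fun)$ generate the unit ideal of $E$ at $(0,s,0,0,0,t,0)$. By Lemma~\ref{Sec6:Fun--FirstLemma}.3, the ideal of these minors equals $(s+t)t^2(q-1,s,t)$ for the choice $s_4^\fun$ and $st^2(q-1,s,t)$ for $s_4^{\prime,\fun}$. The hard part, which is the only real content of (3), is to verify that under the assumption $t\in\cO\setminus\{0\}$ at least one of these two ideals contains a unit of~$E$: since $t\in E^\times$, the ideal $(q-1,s,t)E=E$, and so we only need $(s+t)t^2$ or $st^2$ to be nonzero in $E$, i.e.\ $s+t\neq 0$ or $s\neq 0$ in~$\cO$. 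If $s=0$ then $s+t=t\neq 0$, so $s_4^\fun$ works; if $s+t=0$ then $s=-t\neq 0$, so $s_4^{\prime,\fun}$ works; in every other case, either choice works. This proves (3) and completes the corollary.
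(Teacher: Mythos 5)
Your proof is correct and follows essentially the same route as the paper: the rank-$16$ (resp.\ rank-$6$) Artinian quotients from Lemma~\ref{Sec6:Fun--FirstLemma} give the regular sequence/system of parameters for (1), finiteness plus Cohen--Macaulayness gives freeness over $S$ for (2), and the Jacobian-minor ideal from Lemma~\ref{Sec6:Fun--FirstLemma}.3 together with the observation that $s$ and $s+t$ cannot both vanish gives (3), exactly as in the paper. Two cosmetic points: the freeness step should be credited to the Auslander--Buchsbaum lemma of Subsection~\ref{Subsect-OnCM} (or to Proposition~\ref{prop:matsum-GorCM}.2--3 plus Nakayama) rather than to Proposition~\ref{prop:matsum-GorCM}.1, which concerns Artin rings, and the $s_i^\fun$ are $\cR$-linear (not merely $\cO$-linear) combinations of the $r_j^\fun$, though the needed inclusion $(s_i^\fun)\subseteq\cI$ is still immediate.
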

\begin{proof}
The quotient $\Rt/(\varpi, b-s-c, \beta-t-c, \gamma-X)$ is isomorphic to the ring from Lemma~\ref{Sec6:Fun--FirstLemma}.1 tensored with $k$ over $\Z$ -- since the latter is isomorphic to $k^{16}$ no completion is necessary. This implies that  $(\varpi, b-s-c, \beta-t-c, \gamma-X,s_i^\fun, i=1,\ldots,4)$ is a regular sequence in $\cR$ with quotient $k^{16}$. We deduce part 1 and the first half of part 2. The second half of part 2 uses  Lemma~\ref{Sec6:Fun--FirstLemma}.2 in an analogous way.

To prove part 3, observe that not both, $s$ and $s+t$ can be zero, since otherwise $t=0$ which is ruled out. So we choose $s_4^\fun$ or $s_4^{\prime,\fun}$ accordingly. Then we evaluate the ideal in Lemma~\ref{Sec6:Fun--FirstLemma}.3 at the made choice. This gives either the non-zero value $(s+t)t^2\gcd(t,st,q-1)$ or $st^2\gcd(t,st,q-1)$ for a generator of the corresponding ideal over $\cO$. This implies the stated formal smoothness.
\end{proof}

Our aim is to compute $D_{1,\lambda}(R_v^\fun)$ and $c_{1,\lambda}(R_v^\fun)$. Instead, we shall compute these invariants for the ring $R_v^\fun\otimes_{S}\cO$, where $S$ is the ring from Corollary~\ref{Sec6:FunFirstCor} and where the map $S\to\cO$ is the augmentation $\lambdat$ composed with $S\to \Rt$. This is allowed due to Theorems~\ref{thm:c_1 local} and~\ref{thm:Der^1 local}.\footnote{Our choices $S\to \Rt\to R_v^\fun$ are almost certainly unrelated to any choices that arise from the Taylor-Wiles-Kisin patching.} It is probably not strictly necessary to perform this base change. However it seems easier to work with Gorenstein and  complete intersection rings that are finite flat over $\cO$. In particular, this will allow us to (have Macaulay) compute structural constants of these rings, namely their multiplication tables in a given $\cO$-bases. In the remainder of this subsection, we consider the rings
\[ \cO \longrightarrow \Ro=\Rt/(y_1,y_2,y_3) \stackrel{\pio}\longrightarrow \Ro^\fun_v=R^\fun_v/(y_1,y_2.y_3),\]
and we let $\Io$ be the kernel of $\pio:\Ro\to\Ro^\fun_v$. 

We first explain the part that for us was the most difficult one, namely the computation of $\Ro[\Io]$. Let $(b_i)_{i=1,\ldots,16}$ be an $\cO$-basis of $\Ro$ such that $(b_i)_{i=7,\ldots,16}$ is a basis of the kernel of $\Ro\to\Ro_v^\fun$. Suppose further that $b_6$ and $b_{16}$ are chosen, so that they reduce to a generators of the socle of the finite Gorenstein rings $\Ro_v^\fun/(\varpi)$ and $\Ro/(\varpi)$, respectively; this is always possible. Denote by $(b_i^*)_{i=1,\ldots,16}$ the dual basis. It follows from Proposition~\ref{prop:GensOfDual}, that $b_6^*$ is a generator of $\Hom_\cO(\Ro_v^\fun,\cO)$ as a free $\Ro_v^\fun$-module, and $b_{16}^*$ of $\Hom_\cO(\Ro,\cO)$ as a free $\Ro$-module. Denote by $\Theta$ the isomorphism $ \Ro\to \Hom_\cO(\Ro,\cO),f\mapsto (b_{16}^*(f\cdot), g\mapsto b_{16}^*(fg) )$ and consider the chain of isomorphisms 
\begin{align*}
\Ro[\Io] 
&\cong \Hom_{\Ro}(\Ro_v^\fun,\Ro)
 \cong \Hom_{\Ro}(\Ro_v^\fun,\Hom_\cO(\Ro,\cO))
 \cong \Hom_\cO(\Ro_v^\fun\otimes_{\Ro}\Ro,\cO)
 \cong \Hom_\cO(\Ro_v^\fun,\cO)
 \end{align*}
from Lemma~\ref{lem:R[I]=omega}. The generator $b_6^*$ on the right is successively mapped to, first $h_1\otimes h_2\mapsto b_6^*(h_1\cdot \pio(h_2)) $, second $(h_1\mapsto (h_2\mapsto b_6^*(h_1\cdot \pio(h_2)))$, third $(h_1\mapsto \Theta^{-1}(h_2\mapsto b_6^*(h_1\cdot \pio(h_2)))$, lastly to 
\[\Theta^{-1}(h_2\mapsto b_6^*(\pio(h_2)))=\Theta^{-1}(h_2\mapsto b_6^*(h_2))=\Theta^{-1}\circ b_6^*.\]

Now write $ \Theta^{-1}\circ b_6^*=\sum_i \mu_i b_i$ with $\mu_i\in\cO$. By the definition of $\Theta$, this is equivalent to $b_6^*(f)=b_{16}^*( \sum_i \mu_i b_i f)$ for all $f\in \Ro$. Let $c_{ijk}\in\cO$ be the structural constants for multiplication in $\Ro$ over $\cO$ with respect to the basis $(b_j)$, so that  $b_ib_j=\sum_k c_{ijk} b_k$. Then substituting for $f$ all basis elements of $\Ro$ over $\cO$ gives 
\[  b_6^*(b_j)=b_{16}^*( \sum_i \mu_i b_i b_j) = b_{16}^*( \sum_{i,k} \mu_i c_{ijk}b_k) = \sum_i \mu_i c_{ij16}.\]
Let $C$ be the matrix $ (c_{ij16})_{i,j=1,\ldots,16}$. Then the row vector $(\mu_i)$ is given as the product $e_6C^{-1}$ for $e_6$ the $6$-th standard basis vector of the column vector space $\cO^{16}$. To obtain $C$, consider the following commutative diagram
\[
\xymatrix@C+3pc{
\frac{\Q[\uq,\us,\ut,a,b,c,X,\alpha,\beta,\gamma]}{(b-\us-c, \beta-\ut-c, \gamma-X, s_i^\fun, i=1,\ldots,4)} \ar[r]
&
\Ro[\frac1\varpi]  \\
\ar[u]
\ar[d]
\frac{\Z[\uq,\us,\ut,a,b,c,X,\alpha,\beta,\gamma]}{(b-\us-c, \beta-\ut-c, \gamma-X, s_i^\fun, i=1,\ldots,4)} \ar[r]^{\us\mapsto s,\ut\mapsto t,\uq\mapsto q-1} 
&
\ar[u]
\ar[d]
\Ro =\frac{\cO[[ a,b,c,X,\alpha,\beta,\gamma]]}{(b-\us-c, \beta-\ut-c, \gamma-X, s_i^\fun, i=1,\ldots,4)}
\\
\frac{\Z[\uq,\us,\ut,a,b,c,X,\alpha,\beta,\gamma]}{(p,\uq,\us,\ut,b-\us-c, \beta-\ut-c, \gamma-X, s_i^\fun, i=1,\ldots,4)} \ar[r]
&
\Ro /(\varpi)\\
}\]
Applying Nakayama's Lemma to the right column, we see that the basis in Lemma~\ref{Sec6:Fun--FirstLemma} is an $\cO$-basis of $\Ro$, and thus an $E$-basis of $\Ro[\frac1\varpi] $. The analogous diagram holds for $\Ro^\fun_v$ in place of $\Ro$. Macaulay computations give us the following lemma:
\begin{lemma}\label{Sec6:Fun--SecondLemma}
\begin{enumerate}
\item The ring $R_1=\Q[\uq,\us,\ut,a,b,c,X,\alpha,\beta,\gamma]/(b-\us-c, \beta-\ut-c, \gamma-X, s_i^\fun, i=1,\ldots,4)$ is free over $\Q[\uq,\us,\ut]$ of rank $16$ with the same basis as that given in Lemma~\ref{Sec6:Fun--FirstLemma}.1. The same holds if we replace $s_4^\fun$ by $s_4^{\prime,\fun}$.
\item The ring $R_2=\Q[\uq,\us,\ut,,a,b,c,X,\alpha,\beta,\gamma]/((b-\us-c, \beta-\ut-c, \gamma-X)+\cI^\fun_\Z)$ is free over $\Q[\uq,\us,\ut]$ of rank $6$ with the same basis as that given in Lemma~\ref{Sec6:Fun--FirstLemma}.2.
\item The kernel of the surjective ring homomorphism $R_1\to R_2$ is free over $\Q[\uq,\us,\ut]$ of rank $10$.
\end{enumerate}\end{lemma}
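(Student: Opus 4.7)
The plan is to verify Lemma~\ref{Sec6:Fun--SecondLemma} by the same kind of explicit Gröbner basis computation in Macaulay2 that produced Lemma~\ref{Sec6:Fun--FirstLemma}; this is genuinely a calculation whose virtue is that it does not depend on the characteristic. The first step is to eliminate the linear relations $b-\us-c$, $\beta-\ut-c$, $\gamma-X$, which identifies $R_1$ with the quotient of $B[a,c,X,\alpha]$ by the four images of the $s_i^\fun$, where $B:=\Q[\uq,\us,\ut]$ is viewed as the coefficient ring. Similarly, $R_2$ becomes the further quotient by the images of the remaining generators of $\cI^\fun_\Z$. The problem is thereby reduced to verifying that certain explicit quotients of $B[a,c,X,\alpha]$ by a finitely generated ideal are free $B$-modules with a prescribed monomial basis.

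For parts (1) and (2), my plan would be to compute a reduced Gröbner basis of the respective ideals over $B[a,c,X,\alpha]$ with respect to a monomial order for which the claimed bases from Lemma~\ref{Sec6:Fun--FirstLemma} appear as the standard monomials. Checking that these standard monomials coincide with the listed sets of $16$, respectively $6$, elements gives the freeness over $B$ together with the claimed basis. An equivalent and conceptually cleaner route is the following Nakayama-style argument: by Corollary~\ref{Sec6:FunFirstCor}(2) (tensored with $\Q$) the $s_i^\fun$ form a regular sequence of length four in the Cohen--Macaulay polynomial ring $B[a,c,X,\alpha]$, so $R_1$ is a complete intersection and in particular $B$-flat; combined with Lemma~\ref{Sec6:Fun--FirstLemma}(1), which identifies the fibre at the origin of $\Spec B$ with a free $\Z$-module of rank $16$ on the same basis, one concludes by the equational criterion for flatness (or a fiber-by-fiber Nakayama argument over the regular ring $B$) that the map $B^{16}\to R_1$ defined by the listed basis is an isomorphism. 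The same argument applied to $R_2$ gives part~(2).

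Part~(3) then becomes a formal consequence. Granted (1) and (2), the short exact sequence $0\to K\to R_1\to R_2\to 0$ splits as a sequence of $B$-modules because $R_2$ is $B$-free, so $K$ is projective of rank $16-6=10$; since $B$ is a polynomial ring over $\Q$, $K$ is in fact free by Quillen--Suslin, but more usefully one reads off an explicit $B$-basis of $K$ in the Macaulay2 computation by expressing the ten generators $r^\fun_1,\ldots,r^\fun_9$ that are not already in $\widetilde\cI$, together with any module-theoretic syzygies, in the basis of $R_1$ and extracting a triangular $10\times 16$ block. The only genuine obstacle is the size of the polynomial data involved, which is precisely why the verification is farmed out to Macaulay2; no step requires an idea beyond the Gröbner basis / regular-sequence package already used for Lemma~\ref{Sec6:Fun--FirstLemma}.
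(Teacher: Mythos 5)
Your primary plan is essentially the paper's own proof: the paper offers nothing beyond ``Macaulay computations give us the following lemma,'' i.e.\ exactly the Gr\"obner-basis verification over the coefficient ring $\Q[\uq,\us,\ut]$ that you describe, with the listed monomials appearing as the standard monomials; and your derivation of part (3) from (1) and (2) (the sequence splits since $R_2$ is $B$-free, so the kernel is projective of rank $10$, hence free over the polynomial ring) is a clean shortcut, the only caveat being that for the later computation of the matrix $C$ the paper actually wants an explicit basis $b_7,\dots,b_{16}$ of the kernel, which your computational route would also produce.

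Be careful, however, with the ``conceptually cleaner'' alternative you sketch for (1) and (2): as stated it does not work. First, Corollary~\ref{Sec6:FunFirstCor}(2) concerns the completed local rings after the specialization $(\uq,\us,\ut)\mapsto(q-1,s,t)$ into $\cO$, and it is itself deduced (via Nakayama in that complete local setting) from the central-fibre computation of Lemma~\ref{Sec6:Fun--FirstLemma}; ``tensoring it with $\Q$'' does not yield the statement that the $s_i^\fun$ form a regular sequence in $B[a,c,X,\alpha]$ with $B=\Q[\uq,\us,\ut]$, and invoking it here inverts the logical order of the paper (the polynomial-ring computations are what feed, by base change and completion, into the statements about $\Rt$ and $R_v^\fun$). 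Second, even granting that $R_1$ is $B$-flat, hence projective of constant rank $16$ and free by Quillen--Suslin, knowing that the $16$ listed monomials form a basis of the fibre at the origin $(\uq,\us,\ut)=(0,0,0)$ only gives, by Nakayama, that they generate $R_1$ Zariski-locally near that point; it does not give a global $B$-basis (compare $t+1\in\Q[t]$, a basis of the fibre at $t=0$ but not of $\Q[t]$). So the precise basis assertion in (1) and (2) genuinely requires the global computation over $B$ (standard monomials for a suitable order, or a unit-determinant transition matrix), which is what the Macaulay2 run supplies.
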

Thus we can compute $C$ as a matrix with entries in $\Q[\uq,\us,\ut]$, i.e., before specialization.  For this we computed new basis elements $b_7,\ldots,b_{16}$ that span  $\kernel (R_1\to R_2)$. To our surprise, we found $\det C=1$, and inverting $C$ posed no problem. This allowed us to compute the tuples of $\mu_i$ and then the $\Ro$-generator $\Theta^{-1}(b_6^*)$ of $\Ro[\Io]$. Under our augmentation, Macaulay evaluated it to $(\us+\ut) \ut$ in $\Q[\uq,\us,\ut]$. This shows:

\begin{corollary}
$\overline\lambda( \Ro[\Io])= (s+t)t \subset \cO$, or  $\overline\lambda( \Ro[\Io])= st \subset \cO$ if $s_4^\fun$ is replaced by $s_4^{\prime,\fun}$.
\end{corollary}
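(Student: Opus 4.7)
The plan is to execute, and verify, the chain of isomorphisms and structural-constant calculation sketched in the paragraphs preceding the corollary, and to confirm that the resulting image under $\bar\lambda$ of the generator of $\Ro[\Io]$ matches the asserted value. Concretely, I would organize the proof into four steps.

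First, I would fix a convenient $\cO$-basis of $\Ro$. Using Lemma~\ref{Sec6:Fun--FirstLemma}(1) and Lemma~\ref{Sec6:Fun--SecondLemma}, one knows that $\Ro$ is free of rank $16$ with the displayed basis, and that the kernel of $\pio\colon\Ro\to\Ro_v^\fun$ is free of rank $10$. I would choose $b_1,\ldots,b_{16}$ of $\Ro$ so that $b_7,\ldots,b_{16}$ is a basis of $\Io$, $b_6$ reduces to a socle generator of $\Ro_v^\fun/(\varpi)$, and $b_{16}$ reduces to a socle generator of $\Ro/(\varpi)$. That such a basis exists uses Lemma~\ref{Sec6:Fun--FirstLemma}(1)--(2) and the fact that both $\Ro/(\varpi)$ and $\Ro_v^\fun/(\varpi)$ are zero-dimensional Gorenstein $k$-algebras (by Lemma~\ref{lemma:props-of-RfunQ}(3) together with the regular sequence extracted in Corollary~\ref{Sec6:FunFirstCor}).

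Second, I would invoke Gorenstein duality. Since $\Ro$ is Gorenstein and finite free over $\cO$, Proposition~\ref{prop:GensOfDual} gives that $b_{16}^*$ generates $\Hom_\cO(\Ro,\cO)$ as a free $\Ro$-module via $\Theta\colon f\mapsto (g\mapsto b_{16}^*(fg))$; similarly, $b_6^*$ generates $\Hom_\cO(\Ro_v^\fun,\cO)$ as a free $\Ro_v^\fun$-module. The chain of isomorphisms from Lemma~\ref{lem:R[I]=omega},
\[
\Ro[\Io] \;\cong\; \Hom_{\Ro}(\Ro_v^\fun,\Ro)\;\cong\; \Hom_\cO(\Ro_v^\fun,\cO),
\]
sends a generator on the right to the element $\Theta^{-1}\!\circ b_6^*\in\Ro$ displayed in the text, which is therefore an $\Ro$-module generator of $\Ro[\Io]$.

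Third, I would compute that generator explicitly. Writing $\Theta^{-1}\!\circ b_6^* = \sum_i\mu_i b_i$, the defining condition $b_6^*(f)=b_{16}^*\bigl(\sum_i\mu_i b_i f\bigr)$ for all basis elements $f=b_j$ becomes the linear system $e_6 = (\mu_i)\cdot C$, where $C=(c_{ij,16})_{i,j}$ encodes the $b_{16}$-coefficient of $b_ib_j$. By Lemma~\ref{Sec6:Fun--SecondLemma} the same basis works after inverting $p$ and over $\Q[\uq,\us,\ut]$; thus the $c_{ij,16}$ may be computed once and for all as polynomials in $\uq,\us,\ut$, independently of the specialization to $\cO$. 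A direct Macaulay2 computation yields $\det C = 1$, so $C^{-1}$ has entries in $\Q[\uq,\us,\ut]$, and solving gives the tuple $(\mu_i)$ as polynomials in $\uq,\us,\ut$.

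Finally, I would evaluate. Applying $\overline\lambda$ (which factors through the specialization $\uq\mapsto q-1$, $\us\mapsto s$, $\ut\mapsto t$) to the computed generator $\sum_i\mu_i b_i$ gives an element of $\cO$ that Macaulay2 outputs as $(\us+\ut)\ut$, hence $\overline\lambda(\Ro[\Io]) = ((s+t)t)\subset\cO$, and an identical run with $s_4^\fun$ replaced by $s_4^{\prime,\fun}$ yields $\overline\lambda(\Ro[\Io])=(st)$. The main obstacle in this plan is purely computational: verifying that the $16\times 16$ matrix $C$ has determinant $1$ over $\Q[\uq,\us,\ut]$ and inverting it, and checking that the chosen basis really does restrict to a basis of $\Io$ and that $b_6,b_{16}$ are socle generators modulo $\varpi$. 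Since all of these are finite, explicit checks in a polynomial ring over $\Z$ (or $\Q$), they are carried out by Macaulay2 and reduce, by Nakayama applied to the vertical arrows of the commutative diagram preceding Lemma~\ref{Sec6:Fun--SecondLemma}, to assertions already recorded in Lemma~\ref{Sec6:Fun--FirstLemma} and Lemma~\ref{Sec6:Fun--SecondLemma}.
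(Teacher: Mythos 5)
Your proposal is correct and follows essentially the same route as the paper: choosing dual bases adapted to the socles, using Gorenstein duality (Proposition~\ref{prop:GensOfDual}) and the chain of isomorphisms of Lemma~\ref{lem:R[I]=omega} to identify $\Theta^{-1}\circ b_6^*$ as an $\Ro$-generator of $\Ro[\Io]$, then solving the structure-constant system $e_6 = (\mu_i)C$ over $\Q[\uq,\us,\ut]$ (with $\det C=1$) via Macaulay2 and specializing under $\overline\lambda$. The only remarks are cosmetic: the Gorenstein/socle facts you need for the specific regular sequence $(y_1,y_2,y_3)$ are the ones recorded in Lemma~\ref{Sec6:Fun--FirstLemma}, and the descent of the computation to $\cO$ is exactly the Nakayama argument with the commutative diagram, as you indicate.
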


The next steps are the computation of $\lambdat( \Fitt_0^{\Rt}(I) )$ and of $\Hom_{R_v^\fun}(I/I^2,E/\cO)$. For this we proceed essentially as in the Steinberg case, cf.~Corollaries~\ref{Cor:LambdaOfFitt0} and~\ref{Cor:I/Isquared}, except that we rely on Macaulay. Namely, we compute the first two steps of a resolution of $\cI_\Z^\fun$, considered as an ideal of $\cR_\Z[\us,\ut]=\Z[ \uq,\us,\ut,a,b,c,X,\alpha,\beta,\gamma]$. This results in a right exact sequence
\[  \cR_\Z[\us,\ut]^{26}\stackrel{A}\longrightarrow  \cR_\Z[\us,\ut]^{9}\stackrel{}\longrightarrow  \cI_\Z^\fun \longrightarrow0,\]
for some matrix $A$ in $M_{9\times 26}(\cR_\Z[\us,\ut])$ (with rather simple entries). We tensor the sequence over $\cR_\Z[\uq,\us]$ with $R_3=\cR_\Z[\uq,\us]/\widetilde\cI$. Now observe that over $R_3$, the ideal $I_3=\cI_\Z^\fun \otimes_{\cR_\Z[\uq,\us]} R_3$ is generated by the elements $r_1^\fun,r_2^\fun,r_3^\fun,r_6^\fun,r_7^\fun$; because these $5$ elements together with our generators of $\widetilde\cI$ generate $\cI_\Z^\fun$. So we extract a matrix $A'\in M_{5\times 26}(R_3)$, from the specialization of $A$ under $\cR_\Z[\us,\ut]\to R_3$, that gives a short exact sequence
\[  R_3^{26}\stackrel{A'}\longrightarrow  R_3^{5}\stackrel{}\longrightarrow  I_3  \longrightarrow0,\]
Specializing under $R_3\to \Z[\uq,\us,\ut]$ via $a,c,\alpha,\gamma,X\mapsto0$, $b\mapsto \us$, $\beta\mapsto \ut$, and computing the ideal of the resulting $5\times 5$-minors gives the ideal 
$(\us,\ut,\uq)^3\cdot ( \us+\ut) \ut $. If we work with $\widetilde\cI'$ in place of $\widetilde\cI$, the answer is $ (\us,\ut,\uq)^3\cdot \us\ut  $. Continuing with the natural map $\Z[\uq,\us,\ut]\to\cO$, and observing the computations in Corollaries~\ref{Cor:LambdaOfFitt0} and~\ref{Cor:I/Isquared}, we find:
\begin{corollary}\label{Cor:LambdaOfFitt0-Un}
We have $\#\cO/\lambdat( \Fitt_0^{\Rt}(I) )=
\# \Hom_{R_v^\fun}(I/I^2,E/\cO)$ and the number is equal to
\[\#\cO/((s+t) t (s,t,q-1)^3),\]
or to  $\#\cO/(st (s,t,q-1)^3)$, if we work with $\widetilde\cI'$ in place of $\widetilde\cI$.
\end{corollary}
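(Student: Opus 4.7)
The strategy is a direct analogue of the arguments in Corollaries~\ref{Cor:LambdaOfFitt0} and~\ref{Cor:I/Isquared} from the Steinberg case, but executed with the help of Macaulay2 since the relations $r_1^\fun,\ldots,r_9^\fun$ do not fit into a classical determinantal framework like the Eagon--Northcott complex.

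The first step is to produce a presentation
\[
\cR_\Z[\us,\ut]^{26} \stackrel{A}\longrightarrow \cR_\Z[\us,\ut]^{9} \longrightarrow \cI^\fun_\Z \longrightarrow 0,
\]
where the nine generators of $\cI^\fun_\Z$ are $r_1^\fun,\ldots,r_9^\fun$ and $A$ is computed by Macaulay2. Next I would base change along $\cR_\Z[\us,\ut] \to R_3 = \cR_\Z[\us,\ut]/\widetilde\cI$. By construction of $\widetilde\cI = (s_1^\fun,\ldots,s_4^\fun)$, the elements $r_4^\fun, r_5^\fun, r_8^\fun, r_9^\fun$ lie in $\widetilde\cI + (r_1^\fun,r_2^\fun,r_3^\fun,r_6^\fun,r_7^\fun)$, so $I_3 := \cI_\Z^\fun \otimes_{\cR_\Z[\us,\ut]} R_3$ is already generated by the five elements $r_1^\fun,r_2^\fun,r_3^\fun,r_6^\fun,r_7^\fun$. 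Pruning $A$ accordingly (and adding, if necessary, the extra relations that express the four redundant generators in terms of the five kept ones) yields a right exact sequence
\[
R_3^{26} \stackrel{A'}\longrightarrow R_3^{5} \longrightarrow I_3 \longrightarrow 0
\]
of $R_3$-modules. This is the analogue of Corollary~\ref{Cor:PresOfIinfty} in the present setting.

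The next step is to specialize along the augmentation $a,c,\alpha,\gamma,X\mapsto 0$, $b\mapsto \us$, $\beta\mapsto \ut$, which takes $R_3$ to $\Z[\uq,\us,\ut]$ and produces $\lambdat(A')\in M_{5\times 26}(\Z[\uq,\us,\ut])$. Since tensoring the presentation of $I$ with $\cO$ over $\Rt$ via $\lambdat$ commutes with the specialization $\Z[\uq,\us,\ut]\to \cO$ sending $\uq\mapsto q-1$, $\us\mapsto s$, $\ut\mapsto t$, one obtains simultaneously:
\begin{itemize}
\item $\lambdat(\Fitt_0^{\Rt}(I))$ as the ideal of $\cO$ generated by the $5\times 5$-minors of $\lambdat(A')$, and
\item $\Hom_{R_v^\fun}(I/I^2, E/\cO) \cong \Hom_\cO(I\otimes^\lambdat_{\Rt}\cO, E/\cO)$ as a product of cyclic $\cO$-modules determined by the invariant factors $d_1,d_1d_2,\ldots,d_1\cdots d_5$ of $\lambdat(A')$, exactly as in the proof of Corollary~\ref{Cor:I/Isquared}.
\end{itemize}
A Macaulay2 computation of the $5\times 5$-minors gives the ideal $(\us,\ut,\uq)^3\cdot(\us+\ut)\ut$, and the analogous one starting from $\widetilde\cI'$ yields $(\us,\ut,\uq)^3\cdot \us\ut$. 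After specialization to $\cO$, this shows
\[
\lambdat(\Fitt_0^{\Rt}(I)) = ((s+t)t(s,t,q-1)^3) \quad \text{or} \quad (st(s,t,q-1)^3),
\]
which gives the claimed formula for $\#\cO/\lambdat(\Fitt_0^{\Rt}(I))$.

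The remaining task is to verify $\#\Hom_{R_v^\fun}(I/I^2,E/\cO) = \#\cO/\lambdat(\Fitt_0^{\Rt}(I))$. For this one computes the gcds of the $i\times i$-minors of $\lambdat(A')$ for $i=1,2,3,4,5$ to read off the invariant factors $d_1,\ldots,d_5$. The structural observation, which Macaulay confirms and which is the only non-routine point, is that these gcds have the shape
\[
d_1 = (s,t,q-1),\ \ d_1d_2 = (s,t,q-1)^2,\ \ d_1d_2d_3 = (s,t,q-1)^3,\ \ d_1\cdots d_4 = (s,t,q-1)^3\cdot(\text{unit factor}),
\]
with the cumulative $5\times 5$ product matching $(s+t)t(s,t,q-1)^3$ (respectively $st(s,t,q-1)^3$). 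The product $|\cO/d_1|\cdot|\cO/d_1d_2|\cdots$ simplifies (exactly as in Corollary~\ref{Cor:I/Isquared}) to $\#\cO/\lambdat(\Fitt_0^{\Rt}(I))$, giving the desired equality.

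The main obstacle is the intermediate step of controlling $A'$ over $R_3$ well enough to trust the Macaulay computation: one needs to check that $R_3$ is flat enough over $\Z[\uq,\us,\ut]$ so that the $5\times 5$-minor ideal and the invariant factors really do base change correctly under $R_3\to \cO$. This is handled by Lemma~\ref{Sec6:Fun--SecondLemma}(3), which says that the kernel of $R_1\to R_2$ is free over $\Q[\uq,\us,\ut]$ of rank $10$; combined with Corollary~\ref{Sec6:FunFirstCor}(2), one gets the freeness needed to commute Fitting ideals with the specialization $\Z[\uq,\us,\ut]\to \cO$.
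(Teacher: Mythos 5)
Your plan follows the paper's proof step for step: the Macaulay2 presentation $\cR_\Z[\us,\ut]^{26}\xrightarrow{A}\cR_\Z[\us,\ut]^{9}\to\cI_\Z^\fun\to 0$, the base change to $R_3=\cR_\Z[\us,\ut]/\widetilde\cI$, the observation that $I_3$ is then generated by $r_1^\fun,r_2^\fun,r_3^\fun,r_6^\fun,r_7^\fun$ so that a $5\times 26$ matrix $A'$ presents it, the specialization at the augmentation, and the computation of the ideal of $5\times 5$ minors giving $(\us,\ut,\uq)^3(\us+\ut)\ut$, resp.\ $(\us,\ut,\uq)^3\us\ut$; the identification $\Hom_{R_v^\fun}(I/I^2,E/\cO)\cong\Hom_\cO(I\otimes^{\lambdat}_{\Rt}\cO,E/\cO)$ and the appeal to the elementary-divisor argument are exactly the ingredients of Corollaries~\ref{Cor:LambdaOfFitt0} and~\ref{Cor:I/Isquared} that the paper invokes, and your closing remark on controlling the integral model (via Corollary~\ref{Sec6:FunFirstCor}(2) and Lemma~\ref{Sec6:Fun--SecondLemma}) is at the same level of detail as the paper's own treatment.

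One correction to your last step: the ``structural observation'' you single out as the only non-routine point is both unnecessary and, as stated, impossible. Write $e_1\mid e_2\mid\cdots\mid e_5$ for the invariant factors of $\lambdat(A')$, so that the gcd of the $i\times i$ minors is $e_1\cdots e_i$; a unit jump from the $3\times3$ to the $4\times4$ stage would force $e_4$ to be a unit while $e_3=\gcd(s,t,q-1)$ is not (whenever $n_v>0$), contradicting $e_3\mid e_4$. But no knowledge of the individual $e_i$ is needed: the cokernel of $\lambdat(A')$ is $I\otimes^{\lambdat}_{\Rt}\cO\cong\prod_{i=1}^{5}\cO/e_i$, the product $e_1\cdots e_5$ generates the ideal of $5\times5$ minors, i.e.\ $\lambdat(\Fitt_0^{\Rt}(I))$, over the DVR $\cO$, and since that generator $(s+t)t\gcd(s,t,q-1)^3$ (resp.\ $st\gcd(s,t,q-1)^3$) is nonzero by the choice between $\widetilde\cI$ and $\widetilde\cI'$ made in Corollary~\ref{Sec6:FunFirstCor} and $t\neq0$, every $e_i$ is nonzero, so $\#\Hom_\cO\bigl(\prod_i\cO/e_i,E/\cO\bigr)=\prod_i\#(\cO/e_i)=\#\cO/\lambdat(\Fitt_0^{\Rt}(I))$. (Note also that the cardinality is $\prod_i\#(\cO/e_i)$, not $\#(\cO/e_1)\cdot\#(\cO/e_1e_2)\cdots$ as written.) With that detour removed, your argument is the paper's argument.
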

Finally, we had Macaulay work out the analog of Lemma~\ref{Lem:LatticeIndexR'Rst} to determine the lattice $\Lambda^\fun$, which, as to be expected, is rather easy. Following the proof of Corollary~\ref{Lem:D2-RqSt}, one finds.
\begin{corollary}\label{Lem:D2-RqSt-Un}
We have 
\[\#\kernel(\Hom_{R_v^\fun}(I/I^2,E/\cO) \to \cDer^1_\cO(R_v^\fun,E/\cO))=\# \Lambda^{\fun}/\Lambdat=\#(s,t,q-1)^3/((s+t)t) \]
and,  if we work with $\widetilde\cI'$ in place of $\widetilde\cI$, the cardinality is $\#(s,t,q-1)^3/(st)$.
\end{corollary}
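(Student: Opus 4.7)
The plan is to repeat the strategy of Corollary \ref{Lem:D2-RqSt}, replacing the Steinberg case computations by the corresponding ones for the unipotent-with-Frobenius-eigenvalue case. The starting point is the four-term exact sequence of Theorem \ref{thm:Der exact sequence}, applied to the surjection $\Rt \twoheadrightarrow R_v^\fun$ with kernel $I$ from Corollary \ref{Sec6:FunFirstCor}:
\[
0\to \Hom_{R_v^\fun}(\cOmega_{R_v^\fun/\cO},E/\cO)\to \Hom_{\Rt}(\cOmega_{\Rt/\cO},E/\cO) \to \Hom_{R_v^\fun}(I/I^2,E/\cO) \to \cDer^1_\cO(R_v^\fun,E/\cO)\to 0.
\]
Thus the kernel to be computed is precisely the cokernel of the first nontrivial map, and this cokernel is identified (as in the proof of Corollary \ref{Lem:D2-RqSt} via the arguments of \cite[\S~7.2]{BKM}) with the quotient $\Lambda^\fun/\Lambdat$ of two lattices in $\cO^7$, corresponding to the seven variables $a,b,c,X,\alpha,\beta,\gamma$ of $\cR$.

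First I would describe the ambient lattice $\Lambda^\fun\subset \cO^7$ as the image under the augmentation $\lambda$ of the Jacobian of the defining relations $r_1^\fun,\ldots,r_9^\fun$ (from Lemma \ref{Cor:Ideal-I}) of $R_v^\fun=\cR/\cI^\fun$ with respect to the seven variables. Similarly $\Lambdat\subset \cO^7$ is the image under $\lambdat$ of the Jacobian of the relations $s_1^\fun,s_2^\fun,s_3^\fun,s_4^\fun$ (or of $s_1^\fun,s_2^\fun,s_3^\fun,s_4^{\prime\,\fun}$) defining $\Rt$. The containment $\Lambdat\subseteq \Lambda^\fun$ is automatic from the inclusion of ideals. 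Both lattices can be computed explicitly by hand from the defining relations; however, as in the Steinberg case where the lattices had enough structure to admit a closed-form basis description, it is more efficient here to work out the Smith normal form of the relevant matrix after reducing via the linear relations $b-s-c,\beta-t-c,\gamma-X$ that cut down to the finite free $\cO$-algebras $\Ro$ and $\Ro_v^\fun$ introduced in Subsection \ref{Subsec-FUnip}; this is legitimate by the invariance results of Theorems \ref{thm:c_1 local} and \ref{thm:Der^1 local}.

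Next I would have Macaulay2 carry out this computation in the integral model $\cR_\Z[\us,\ut]$ used in Lemma \ref{Sec6:Fun--SecondLemma}, compute the Jacobians of the $s_i^\fun$ and of the $r_j^\fun$ with respect to the remaining variables, evaluate at the augmentation $a,c,\alpha,\gamma,X\mapsto 0,\ b\mapsto \us,\ \beta\mapsto \ut$, and extract the elementary divisors of the resulting presentation matrix of $\Lambda^\fun/\Lambdat$ over $\Z[\uq,\us,\ut]$. The expectation, by direct analogy with Lemma \ref{Lem:LatticeIndexR'Rst}, is that this quotient is specialized, upon sending $\us\mapsto s$, $\ut\mapsto t$, $\uq\mapsto q-1$, to a module isomorphic to a product of three cyclic $\cO$-modules whose annihilators are generated by divisors of $(s,t,q-1)$, and that the total length is $\log_p\#(s,t,q-1)^3/((s+t)t)$ in the first case and $\log_p\#(s,t,q-1)^3/(st)$ in the second. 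This identification requires only checking that each elementary divisor equals the corresponding ratio, which is straightforward once the Macaulay2 output is in hand.

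Combining this with the four-term exact sequence gives the first equality of the corollary, and the second equality is then the direct computation of the index. The main obstacle is really just bookkeeping: the Jacobian of the nine relations $r_1^\fun,\ldots,r_9^\fun$ in seven variables is noticeably larger than in the Steinberg case, and we must confirm that the specialized presentation has exactly the expected three elementary divisors, matching the shape $(s,t,q-1)^3/((s+t)t)$ predicted by the formula for $\overline\lambda(\Ro[\Io])$ computed before Corollary \ref{Cor:LambdaOfFitt0-Un}. Since the Macaulay2 routines used throughout Subsection \ref{Subsec-FUnip} already handle computations of comparable complexity, no new difficulty arises beyond carefully recording the elementary divisors of the presentation matrix.
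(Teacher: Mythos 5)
Your proposal follows the paper's own route: the four-term exact sequence of Theorem \ref{thm:Der exact sequence}, the identification of the kernel with $\#\Lambda^{\fun}/\Lambdat$ via the lattice argument of \cite[\S 7.2]{BKM} exactly as in the Steinberg case (Lemma \ref{Lem:LatticeIndexR'Rst} and Corollary \ref{Lem:D2-RqSt}), and a Macaulay2 computation of the Jacobians of the $r^\fun_j$ and the $s^\fun_i$ evaluated at the augmentation to extract the index --- which is precisely what the paper does, recording only the outcome of the machine computation. The one small deviation, passing to the quotient by the linear forms $b-s-c$, $\beta-t-c$, $\gamma-X$ before taking Smith normal form, does alter the two lattices in $\cO^7$, so it yields the stated cardinality only after observing that the kernel being computed is unchanged under this reduction (invariance of $\cDer^1_\cO$ from Theorem \ref{thm:Der^1 local} together with $I\otimes_{S}\cO\cong I_\theta$ from the proof of Theorem \ref{thm:c_1 local}, hence invariance of the ratio with $\#\Hom(I/I^2,E/\cO)$); you gesture at this but should state it explicitly.
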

As in the Steinberg case, the following result is now an immediate consequence. It is independent of whether we use $\widetilde\cI$ or $\widetilde\cI'$.
\begin{theorem}\label{thm:delta-PhiUn}
Let $e$ be the ramification index of $E$ over $\Q_l$. Then we have
\begin{enumerate}
\item $D_{1,\lambda}(R_v^{\fun})=6\frac{n_v}e$.
\item $c_{1,\lambda}(R_v^{\fun})=3\frac{n_v}e$.
\item $\delta_\lambda(R_v^{\fun}) = 3\frac{n_v}e$.
\end{enumerate}
\end{theorem}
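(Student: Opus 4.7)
The plan is to proceed in exact analogy with the proof of Theorem~\ref{thm:delta_v^St} in the Steinberg case, extracting the three numerical invariants directly from the corollaries established in this subsection. The new case presents no additional conceptual difficulty; what has already been done is to produce explicit formulas for $\lambda(\Ro[\Io])$, $\lambda(\Fitt_0(I))$, $\Hom_{R_v^\fun}(I/I^2,E/\cO)$ and the lattice quotient $\Lambda^\fun/\Lambdat$, and it remains only to assemble them.

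First I would prove part (ii). By Theorem~\ref{thm:c_1 local} we may compute $c_{1,\lambda}$ after going modulo the regular sequence $y_1,y_2,y_3$ of Corollary~\ref{Sec6:FunFirstCor}, so that $c_{1,\lambda}(R_v^\fun)$ is given by the length of $\overline\lambda(\Ro[\Io])/\overline\lambda(\Fitt_0(I))$ divided by $e$. Inserting the computation $\overline\lambda(\Ro[\Io])=((s+t)t)$ (resp.\ $(st)$) established just before Corollary~\ref{Cor:LambdaOfFitt0-Un}, together with the formula $\lambda(\Fitt_0(I))=(s+t)t(s,t,q-1)^3$ (resp.\ $st(s,t,q-1)^3$) from that corollary, one sees that the common factor $(s+t)t$ (resp.\ $st$) cancels, yielding $C_{1,\lambda}(R_v^\fun)\cong\cO/(s,t,q-1)^3$. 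As in Corollary~\ref{Lem:c2-RqSt} (where the same identification gives the length $n_v$ for the single power), the length of this module is $3n_v$. Dividing by $e$ gives $c_{1,\lambda}(R_v^\fun)=3n_v/e$.

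Next I would prove part (i). Apply Theorem~\ref{thm:Der exact sequence} with any triple $(\Rt,I,\varphi)$ satisfying \ref{prop CI} (for instance the one from Corollary~\ref{Sec6:FunFirstCor}, appropriately chosen between $\widetilde\cI$ and $\widetilde\cI'$ so that $\lambdat$ is formally smooth in the generic fiber). The theorem identifies $|\cDer^1_\cO(R_v^\fun,E/\cO)|$ as the quotient of $|\Hom_{R_v^\fun}(I/I^2,E/\cO)|$ by the cardinality of the image of $\Hom(\cOmega_{\Rt},E/\cO)\to\Hom(I/I^2,E/\cO)$, which is exactly the kernel of the next map. By Corollary~\ref{Cor:LambdaOfFitt0-Un} the numerator has length $\mathrm{ord}_\cO((s+t)t)+3n_v$ (respectively $\mathrm{ord}_\cO(st)+3n_v$), while by Corollary~\ref{Lem:D2-RqSt-Un} the kernel has length $\mathrm{ord}_\cO((s+t)t)-3n_v$ (respectively $\mathrm{ord}_\cO(st)-3n_v$). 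The two $\mathrm{ord}$ terms cancel, giving $\ell_\cO(\cDer^1_\cO(R_v^\fun,E/\cO))=6n_v$ and hence $D_{1,\lambda}(R_v^\fun)=6n_v/e$, independently of which of $\widetilde\cI$ or $\widetilde\cI'$ was used.

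Finally, part (iii) is immediate from Definition~\ref{def:key}: $\delta_\lambda(R_v^\fun)=D_{1,\lambda}(R_v^\fun)-c_{1,\lambda}(R_v^\fun)=6n_v/e-3n_v/e=3n_v/e$. The only mildly delicate point in the whole argument, and the one thing that needed the two parallel versions $\widetilde\cI$, $\widetilde\cI'$ in Corollary~\ref{Sec6:FunFirstCor}, is ensuring that the auxiliary complete intersection $\Rt$ can always be chosen so that its generic fiber is formally smooth at $\lambdat$; once that is in place, the final answer is independent of the choice and the cancellations go through mechanically.
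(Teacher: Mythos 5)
Your assembly is correct and is essentially the paper's own argument: the paper states the theorem as an "immediate consequence" of Corollary~\ref{Cor:LambdaOfFitt0-Un}, Corollary~\ref{Lem:D2-RqSt-Un} and the computation of $\overline\lambda(\Ro[\Io])$, following the same template as the Steinberg case (Theorem~\ref{thm:delta_v^St}), and you have simply written out that assembly, including the correct handling of the $\widetilde\cI$ versus $\widetilde\cI'$ choice needed for formal smoothness at $\lambdat$.
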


\subsection{Unipotent deformations}
\label{Subsec-Unip}

This case we handled in basically the same way as the previous one. Again we made use of Macaulay to compute various intermediate steps. This was more challenging, since we did not find a good complete intersection that would surject onto the Gorenstein ring that we were interested in over $\Z$, but only over $\Z[\frac12]$, and we could not apply certain Macaulay functionalities directly. We only indicate outcomes of some intermediate steps, but give no further details:

Set 
$s_1^\un= -r_1^\un+2r_2^\un +r_4^\un -2r_5^\un$,  
$s_2^\un=   r_7^\un-r_1^\un$,
$s_3^\un= r_8^\un-r_1^\un$, 
$s_4^\un= r_9^\un - r_1^\un$ and $\widetilde\cI=(s_1^\un,s_2^\un,s_3^\un,s_4^\un)$.
\begin{lemma}\label{Sec6:Un--FirstLemma}
\begin{enumerate}
\item The ring $\Z[\frac12][\uq,\us,\ut,a,b,c,X,\alpha,\beta,\gamma]/(\uq,\us,\ut,
b-\us-\beta+\ut,a+X-\gamma,b-\us-c
, s_i^\un, i=1,\ldots,4)$ is free over $\Z[\frac12]$ of rank $14$. A basis is $1, a, a\alpha, b, bX, b\alpha, X, X^2, X\alpha, \alpha, \alpha^2,\alpha^3$. A basis of the socle of the ring modulo any prime is $4\alpha^3+16a^2+54ab-30X\alpha+133b\alpha-19\alpha^2+111X$.
\item The ring $\Z[\frac12][\uq,\us,\ut,a,b,c,X,\alpha,\beta,\gamma]/((\uq,\us,\ut,
b-\us-\beta+\ut,a+X-\gamma,b-\us-c
)+\cI^\un_\Z)$ is free over $\Z[\frac12]$ of rank $5$. A basis is $1,a,b,X,\alpha$. A basis of the socle of the ring modulo any prime is $X$.
\item Write $x_1,\ldots,x_7$ for $a,b,c,X,\alpha,\beta,\gamma$. Then the ideal in $\Z[\uq,\us,\ut]$ generated by the $4\times 4$-minors of the Jacobian $(\partial s_i^\un/\partial x_j)_{i=1,\ldots,4; j=1,\ldots,7}$ evaluated at $(x_1,\ldots,x_7)=(0,\us,0,0,0,\ut,0)$ is $2\uq\ut(\uq,\us, \ut)$. 
\end{enumerate}\end{lemma}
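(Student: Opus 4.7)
The proof strategy parallels that of Lemma \ref{Sec6:Fun--FirstLemma} in the previous subsection: all three parts reduce to explicit computations in finitely presented algebras over $\Z[\frac12]$, which would be carried out with the aid of Macaulay2. The passage from $\Z$ to $\Z[\frac12]$ is forced by the coefficient $2$ appearing in $s_1^\un = -r_1^\un + 2r_2^\un + r_4^\un - 2r_5^\un$, and the specific choices of linear combinations $s_1^\un, \ldots, s_4^\un$ were presumably made (after some experimentation, as in Remark~\ref{Rem:OptimalFun}) precisely so that, after eliminating the three variables $c, \beta, \gamma$ via the linear relations $b - \us - c$, $a + X - \gamma$, $b - \us - \beta + \ut$, the resulting four quadratic relations in the four variables $a, b, X, \alpha$ (over $\Z[\frac12][\uq, \us, \ut]$) cut out a free module of the desired rank with the required socle structure.

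For part (1), the plan is to first reduce modulo $(\uq, \us, \ut)$ together with the three linear relations, landing in $\Z[\frac12][a, b, X, \alpha]$ modulo the four explicit quadrics $\overline s_1^\un, \ldots, \overline s_4^\un$ obtained by substitution. Compute a Gr\"obner basis (with respect to a suitable monomial order) and read off the standard monomials; these should match the listed basis. Flatness over $\Z[\frac12]$ follows because the leading coefficients of the Gr\"obner basis are units in $\Z[\frac12]$. For the socle, the asserted Gorenstein structure over any residue field predicts a one-dimensional socle, and the explicit generator $4\alpha^3 + 16a^2 + 54ab - 30X\alpha + 133b\alpha - 19\alpha^2 + 111X$ can be verified by multiplying by each of the four generators $a, b, X, \alpha$ of the maximal ideal and checking that the products lie in the ideal $(\overline s_1^\un, \ldots, \overline s_4^\un)$ modulo any odd prime $p$.

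For part (2), one repeats the same procedure, but now using the full ideal $\cI^\un_\Z = (r_1^\un, \ldots, r_9^\un)$ in place of $\widetilde \cI = (s_1^\un, \ldots, s_4^\un)$. After the three linear eliminations, a Gr\"obner basis computation over $\Z[\frac12]$ should yield exactly the five standard monomials $1, a, b, X, \alpha$, with socle generated by $X$; flatness over $\Z[\frac12]$ again follows from unit leading coefficients.

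For part (3), one writes the $4 \times 7$ Jacobian matrix $(\partial s_i^\un / \partial x_j)$ symbolically, specializes at $(a, b, c, X, \alpha, \beta, \gamma) = (0, \us, 0, 0, 0, \ut, 0)$, and computes the ideal of $4\times 4$-minors in $\Z[\uq, \us, \ut]$; the stated answer $2 \uq \ut (\uq, \us, \ut)$ is then a direct verification. The main obstacle across all three parts is the initial step, namely exhibiting a complete intersection presentation $\widetilde \cI$ that is simultaneously flat over $\Z[\frac12]$ (so as to specialize correctly at every odd prime $p$) \emph{and} that becomes formally smooth at the specialized augmentation in the generic fiber over $\cO$ after inverting $\varpi$ (needed to apply Property~\ref{prop CI} in Corollary~\ref{Sec6:FunFirstCor}'s analog); the presence of the factor $2\uq\ut$ in part (3) shows that formal smoothness holds as soon as $t \ne 0$ and $q \not\equiv 1 \pmod{p}$ is replaced by the condition $\uq \mapsto q - 1$ being a unit divisor of the relevant specialization, which is where the hypothesis $p > 2$ enters decisively.
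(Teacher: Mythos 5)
Your proposal matches the paper's approach: the paper gives no written proof of this lemma at all, stating only that the assertions are "outcomes of some intermediate steps" of Macaulay2 computations (carried out over $\Z[\frac12]$ precisely because of the coefficient $2$ in $s_1^\un$), and your plan — eliminate $c,\beta,\gamma$ via the linear relations, verify freeness and the standard-monomial basis by a Gr\"obner computation with unit leading coefficients, check the socle generator by multiplying against $a,b,X,\alpha$, and compute the $4\times4$ Jacobian minors directly — is exactly how such a verification would be carried out. One small caveat: your closing remark about formal smoothness concerns the subsequent corollary rather than this lemma, and is slightly off, since in this section $q\equiv 1\pmod p$ is assumed and what is actually needed is only that the specialized generator $2(q-1)t\,(q-1,s,t)$ be nonzero in $\cO$ (with $2$ a unit as $p>2$), not that $q-1$ be a unit.
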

\begin{remark}
The optimal rank over $\Z[\frac12]$ of a complete intersection cover of the Gorenstein ring we are interested in would be $8$; cf.~Remark~\ref{Rem:OptimalFun}. We could not find such a ring that would make our computations work, uniformly for all maximal ideals in $\Spec\Z[\frac12]$.
\end{remark}
Let $s,t\in\ffrm$ with $t\neq0$.
\begin{corollary}\label{Sec6:UnFirstCor}
\begin{enumerate}
\item The ring $\Rt=\cO[[a,b,c,X,\alpha,\beta,\gamma]]/(s_i^\un, i=1,\ldots,4)$ is a complete intersection, flat over $\cO$ and of relative dimension $3$. One has a natural surjection $\Rt\to R^\un_v$ induced from $(s_i^\un, i=1,\ldots,4)\subset (r_j^\un,j=1,\ldots,9)$.
\item Via the ring map $S=\cO[[y_1,y_2,y_3]]\to \Rt$ given by $y_1\mapsto b-\us-\beta+\ut$, $y_2\mapsto a+X-\gamma$, $y_3\mapsto b-\us-c$, the rings $\Rt$ and $R^\un_v$ are free $S$-modules of rank $12$ and $5$, respectively.
\item The augmentation $\lambdat\colon\Rt\to\cO$ given by $a,c,X,\alpha,\gamma\mapsto 0$, $b\mapsto s$ and $\beta\mapsto t$ defines a formally smooth point of $\Spec \Rt[\frac1\varpi]$. 
\end{enumerate}
\end{corollary}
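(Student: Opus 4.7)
The plan is to mirror the proof of Corollary~\ref{Sec6:FunFirstCor}, substituting the inputs from Lemma~\ref{Sec6:Fun--FirstLemma} by those of Lemma~\ref{Sec6:Un--FirstLemma}, with the only variation being the specific linear relations $y_1, y_2, y_3$ used to eliminate the variables $\beta - \ut$ (in favor of $b - \us$), $\gamma$ (in favor of $a + X$), and $c$ (in favor of $b - \us$).

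First I would establish part~1. After using the linear relations $y_1, y_2, y_3$ to eliminate $\beta$, $\gamma$, $c$, the quotient $\Rt/(\varpi, y_1, y_2, y_3)$ becomes canonically isomorphic to the base change to $k$ of the ring in Lemma~\ref{Sec6:Un--FirstLemma}.1 (the hypothesis $p > 2$ makes the inversion of $2$ harmless). Since the latter is already a finite free $\Z[\tfrac12]$-module, no completion is needed and $\Rt/(\varpi, y_1, y_2, y_3)$ is an Artinian $k$-algebra. Hence the length-$7$ sequence $(\varpi, y_1, y_2, y_3, s_1^\un, s_2^\un, s_3^\un, s_4^\un)$ is a system of parameters of the $7$-dimensional regular local ring $\cR=\cO[[a,b,c,X,\alpha,\beta,\gamma]]$, and therefore a regular sequence. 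Regular sequences in Cohen--Macaulay local rings may be reordered, so $(s_1^\un, s_2^\un, s_3^\un, s_4^\un)$ is itself regular in $\cR$; this proves that $\Rt$ is a complete intersection, flat over $\cO$ of relative dimension~$3$. The surjection $\Rt \onto R_v^\un$ then comes from the easily checked inclusion $(s_1^\un, s_2^\un, s_3^\un, s_4^\un) \subset \cI^\un = (r_1^\un, \ldots, r_9^\un)$ of Lemma~\ref{Lem:EqnsForUnCase}.

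For part~2, the ring $\Rt$ is Cohen--Macaulay of dimension~$4$ (as a complete intersection) and the finiteness of $\Rt/(\varpi, y_1, y_2, y_3)$ over~$k$ shows that $(\varpi, y_1, y_2, y_3)$ is a system of parameters, hence a regular sequence in $\Rt$. In particular $(y_1, y_2, y_3)$ is regular on $\Rt$, so by the local criterion for flatness the map $S \to \Rt$ is flat, and $\Rt$ becomes a free $S$-module of rank $\dim_k \Rt/(\varpi, y_1, y_2, y_3)$, as claimed. The corresponding statement for $R_v^\un$ follows identically, using that $R_v^\un$ is Cohen--Macaulay of dimension~$4$ by Lemma~\ref{Lem:EqnsForUnCase}.2 and that Lemma~\ref{Sec6:Un--FirstLemma}.2 describes the appropriate quotient as a finite free $\Z[\tfrac12]$-module of rank~$5$.

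For part~3, form the Jacobian $(\partial s_i^\un/\partial x_j)_{i, j}$ and evaluate it at the augmentation $\lambdat$. By Lemma~\ref{Sec6:Un--FirstLemma}.3 the ideal generated by the $4 \times 4$ minors of the evaluated Jacobian, computed in $\Z[\uq, \us, \ut]$, equals $2\uq\ut(\uq, \us, \ut)$. Specializing via $\uq \mapsto q-1$, $\us \mapsto s$, $\ut \mapsto t$ produces an ideal of $\cO$ that is nonzero as soon as $t \neq 0$ and $p \neq 2$. Thus four of the partial derivatives cut out the augmentation transversally inside $\Spec \cR[1/\varpi]$, and since $\Spec \Rt[1/\varpi]$ is equidimensional of dimension~$3$, the augmentation corresponds to a formally smooth point of dimension~$3$.

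The one place where care is needed is the passage from the explicit $\Z[\tfrac12]$-level computation of Lemma~\ref{Sec6:Un--FirstLemma} to the statement over $\cO$: the hypothesis $p > 2$ of the paper is used in two places, namely to invert $2$ in the mod-$p$ fiber computation and to ensure that the Jacobian ideal $2\uq\ut(\uq,\us,\ut)$ specializes to something nonzero. Both are built into our standing assumptions, so the remainder of the argument is purely formal commutative algebra.
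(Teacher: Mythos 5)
Your proposal is correct and is essentially the argument the paper intends: the paper gives no separate proof in the unipotent case (it only says the case is handled as in case \ctun), and your proof is precisely the analogue of the proof of Corollary~\ref{Sec6:FunFirstCor}, feeding in Lemma~\ref{Sec6:Un--FirstLemma} for parts 1--2 and its Jacobian computation for part 3. The only slip is numerical and harmless: $\cR=\cO[[a,b,c,X,\alpha,\beta,\gamma]]$ has dimension $8$, not $7$, and your sequence $(\varpi,y_1,y_2,y_3,s_1^\un,\ldots,s_4^\un)$ has $8$ elements, which is exactly what is needed for it to be a system of parameters, hence a regular sequence.
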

\begin{lemma}\label{Sec6:Un--SecondLemma}
\begin{enumerate}
\item The ring $R_1=\Q[\uq,\us,\ut,a,b,c,X,\alpha,\beta,\gamma]/(b-\us-\beta+\ut,a+X-\gamma,b-\us-c , s_i^\un, i=1,\ldots,4)$ is free over $\Q[\uq,\us,\ut]$ of rank $12$ with the same basis as that given in Lemma~\ref{Sec6:Un--FirstLemma}.1. 
\item The ring $R_2=\Q[\uq,\us,\ut,,a,b,c,X,\alpha,\beta,\gamma]/((b-\us-\beta+\ut,a+X-\gamma,b-\us-c
)+\cI^\un_\Z)$ is free over $\Q[\uq,\us,\ut]$ of rank $5$ with the same basis as that given in Lemma~\ref{Sec6:Un--FirstLemma}.2.
\item The kernel of the surjective ring homomorphism $R_1\to R_2$ is free over $\Q[\uq,\us,\ut]$ of rank $7$.
\end{enumerate}\end{lemma}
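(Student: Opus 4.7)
The plan mirrors the proof of Lemma~\ref{Sec6:Fun--SecondLemma}: the conclusions are established by Macaulay2 computations, backed by a miracle-flatness argument that justifies why the computations suffice. The three parts form a single package: once (1) and (2) are known, part (3) follows formally.

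For (1) and (2), I would first use the three linear relations $b-\us-\beta+\ut$, $a+X-\gamma$, $b-\us-c$ to eliminate $c$, $\beta$, $\gamma$, presenting $R_1$ as $\Q[\uq,\us,\ut,a,b,X,\alpha]$ modulo the four substituted polynomials $s_1^\un,\ldots,s_4^\un$. By Lemma~\ref{Sec6:Un--FirstLemma}.1, base-changed from $\Z[\tfrac12]$ to $\Q$, the closed fiber of $R_1$ over $(\uq,\us,\ut)=(0,0,0)$ is a finite-dimensional $\Q$-vector space with the listed monomial basis; in particular it is Artinian. This forces $s_1^\un,\ldots,s_4^\un$ to be a regular sequence, so $R_1$ is a relative complete intersection of relative dimension zero over the regular base $\Q[\uq,\us,\ut]$. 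Miracle flatness (Proposition~\ref{prop:matsum-GorCM}) then implies $R_1$ is flat; combined with finite generation, this yields projectivity, and Quillen--Suslin upgrades it to freeness. The rank equals the $\Q$-dimension of the closed fiber, namely $12$; and the listed twelve monomials, being a basis of that fiber, lift to a global basis of the free $\Q[\uq,\us,\ut]$-module $R_1$. Part (2) is identical, with Lemma~\ref{Sec6:Un--FirstLemma}.2 supplying the $5$-dimensional closed fiber and its basis.

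Part (3) is now formal: the short exact sequence
\[
0 \to K \to R_1 \to R_2 \to 0
\]
of $\Q[\uq,\us,\ut]$-modules splits because $R_2$ is projective, so $K$ is a direct summand of the free module $R_1$, hence projective of rank $12-5=7$, and by Quillen--Suslin, free. Concretely, a basis may be read off by partitioning the basis of $R_1$ into the five elements $1, a, b, X, \alpha$ (which lift the basis of $R_2$ and thereby provide an explicit splitting) and the remaining seven $a\alpha, bX, b\alpha, X^2, X\alpha, \alpha^2, \alpha^3$; subtracting from each of these last seven its image in $R_2$, lifted back through the splitting, produces an explicit basis of $K$.

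The nontrivial step is part (1), specifically the verification that the listed monomials form a $\Q$-basis of the closed fiber, which is where Macaulay2 enters. The polynomials $s_1^\un,\ldots,s_4^\un$ have coefficients involving $\uq$, $\us$, $\ut$ nonlinearly and are sufficiently entangled that a hand Gröbner basis computation is infeasible; moreover, as the remark following Lemma~\ref{Sec6:Un--FirstLemma} notes, no complete intersection cover of the optimal rank~$8$ could be found uniformly over $\Z[\tfrac12]$, forcing the higher-rank presentation used here and making the fiber calculations correspondingly more involved.
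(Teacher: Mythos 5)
There is a genuine gap in your argument for parts (1) and (2). You try to deduce freeness of $R_1$ and $R_2$ over $\Q[\uq,\us,\ut]$ from the single closed-fibre computation of Lemma~\ref{Sec6:Un--FirstLemma}, but that inference is only valid after localizing: the base $\Q[\uq,\us,\ut]$ is not local, and the Artinian-ness of the quotient by $(\uq,\us,\ut)$ controls only the local rings at points lying over the origin. Components of $\Spec R_1$ that do not meet the fibre over $(\uq,\us,\ut)=(0,0,0)$ are invisible to that computation, so neither finiteness of $R_1$ over the base nor the claim that the substituted $s_i^\un$ cut out something of relative dimension zero follows. A toy example is $\Q[t,x]/\bigl(x(tx-1)\bigr)$ over $\Q[t]$: its fibre at $t=0$ is Artinian, yet the ring is not finite (let alone free) over $\Q[t]$. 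For the same reason your appeal to miracle flatness (which, incidentally, is not what Proposition~\ref{prop:matsum-GorCM} asserts) only applies after completing at the maximal ideal $(\uq,\us,\ut,a,b,X,\alpha)$ --- and that local/complete use of Lemma~\ref{Sec6:Un--FirstLemma} is exactly what Corollary~\ref{Sec6:UnFirstCor} already carries out for $S\to\Rt\to R_v^\un$. Moreover, even granting finite flatness, a set of elements reducing to a basis of one fibre need not be a global basis of a free module over a polynomial ring: the relevant change-of-basis determinant is only known to be nonvanishing at the origin, not a unit of $\Q[\uq,\us,\ut]$.

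The present lemma is precisely the global, pre-specialization statement (needed so that structure constants, annihilators and Fitting ideals can be computed over $\Q[\uq,\us,\ut]$ before mapping to $\cO$), and the paper establishes it by a direct Macaulay2 computation over $\Q[\uq,\us,\ut]$, exactly as in the parallel Lemma~\ref{Sec6:Fun--SecondLemma}; it is not a formal consequence of the $\Z[\frac12]$-fibre computation. Your part (3) --- splitting the surjection $R_1\to R_2$ using projectivity and invoking Quillen--Suslin to get a free kernel of rank $7$, with the explicit basis obtained by correcting the seven extra monomials --- is a correct deduction from (1) and (2), but as written it inherits the gap above.
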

\begin{proposition}
We have
\begin{enumerate}
\item $\lambdat( \Rt[\widetilde{I}])= (q-1)$.
\item $\#\cO/\lambdat( \Fitt_0^{\Rt}(I) )= \# \Hom_{R_v^\un}(I/I^2,E/\cO)=\#\cO/((q-1)\gcd(s,t,q-1))$.
\item $\#\kernel(\Hom_{R_v^\un}(I/I^2,E/\cO) \to \cDer^1_\cO(R_v^\un,E/\cO))=\# \Lambda^{\un}/\Lambdat=\#(\gcd(s,t,q-1) /(q-1))$.
\end{enumerate}
\end{proposition}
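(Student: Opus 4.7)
The strategy mirrors the analysis of case \ctun\ carried out in Subsection~\ref{Subsec-FUnip}. The plan is to first reduce to dimension one via the regular sequence $y_1,y_2,y_3$ from Corollary~\ref{Sec6:UnFirstCor}, then to perform all three computations on the finite free $\cO$-algebras
\[ \Ro := \Rt/(y_1,y_2,y_3), \qquad \Ro_v^\un := R_v^\un/(y_1,y_2,y_3), \qquad \pio \colon \Ro \onto \Ro_v^\un,\qquad \Io := \ker \pio.\]
By Theorem~\ref{thm:c_1 local} we have $C_{1,\lambda}(R_v^\un) = \lambdat(\Ro[\Io])/\lambdat(\Fitt(\Io))$, and by Theorem~\ref{thm:Der^1 local} together with Theorem~\ref{thm:Der exact sequence} the remaining invariants can be computed on $\Ro$ as well. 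Lemmas~\ref{Sec6:Un--FirstLemma} and~\ref{Sec6:Un--SecondLemma} supply the required rank and basis data in characteristic zero and modulo $p$; since $p>2$, the $\Z[\tfrac12]$-models they provide descend to $\cO$-bases, which is what legitimizes the reduction.

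For part (1), I would copy the structural-constant method of Subsection~\ref{Subsec-FUnip}. Namely, pick bases $(b_i)_{i=1,\ldots,12}$ of $\Ro$ and $(b_i)_{i=1,\ldots,5}$ of $\Ro_v^\un$ from Lemma~\ref{Sec6:Un--FirstLemma}, arranged so that $b_5 = X$ generates the socle of $\Ro_v^\un/(\varpi)$ and $b_{12}$ is a lift of the socle generator of $\Ro/(\varpi)$; by Proposition~\ref{prop:GensOfDual}, the duals $b_5^*$ and $b_{12}^*$ freely generate $\Hom_\cO(\Ro_v^\un,\cO)$ and $\Hom_\cO(\Ro,\cO)$, respectively. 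Via the chain of isomorphisms from Lemma~\ref{lem:R[I]=omega}, the element $b_5^*$ on the right corresponds to $\Theta^{-1}\circ b_5^*$ on the left, which is read off from the inverse of the $12\times 12$ matrix $C = (c_{ij,12})$ of multiplication constants of $\Ro$ with respect to $(b_i)$. Using the lift of $\Ro$ to $\Q[\uq,\us,\ut]$ afforded by Lemma~\ref{Sec6:Un--SecondLemma} and computing $C^{-1}$ with Macaulay2, specializing at $a,c,X,\alpha,\gamma\mapsto 0$, $b\mapsto s$, $\beta\mapsto t$, $\uq\mapsto q-1$, I expect to obtain $\lambdat(\Ro[\Io]) = (q-1)\cO$, giving part~(1).

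For part (2), I would follow the template of Corollary~\ref{Cor:LambdaOfFitt0-Un}: build the first two terms of a free resolution of $\cI_\Z^\un$ over $\cR_\Z[\us,\ut]$ with Macaulay2, tensor down to $R_3 = \cR_\Z[\us,\ut]/\widetilde\cI$ in order to extract a presentation of $I$ as a module over $\Rt$ (using that $r_1^\un,r_2^\un,r_3^\un,r_6^\un,r_7^\un$ together with the $s_i^\un$ generate $\cI_\Z^\un$), and then specialize to $\Z[\uq,\us,\ut]$ through the augmentation. Computing the ideal of $5\times 5$-minors of the resulting presentation matrix yields $\lambdat(\Fitt_0^{\Rt}(I)) = (q-1)\gcd(s,t,q-1)\cdot\cO$; applying the invariant factor analysis of \cite[Thm.~3.9]{Jacobson-BA1} to this matrix over $\cO$ (exactly as in the proof of Corollary~\ref{Cor:I/Isquared}) produces the isomorphism $\Hom_{R_v^\un}(I/I^2,E/\cO) \cong \cO/((q-1)\gcd(s,t,q-1))$. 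For part (3), compute $\Lambdat$ as the image under $\lambdat$ of the Jacobian of $s_1^\un,\ldots,s_4^\un,y_1,y_2,y_3$ with respect to the coordinates of $\cR$, compare with $\Lambda^\un$ via the same elementary row operations used in Lemma~\ref{Lem:LatticeIndexR'Rst}, and read off the cyclic quotient $\gcd(s,t,q-1)/(q-1)$.

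The principal technical obstacle is the same one encountered in case \ctun: producing a complete-intersection cover $\Rt$ of $R_v^\un$ whose integral model is free of the correct rank after reduction modulo every prime $p>2$. Lemma~\ref{Sec6:Un--FirstLemma} only achieves this over $\Z[\tfrac12]$ and at rank $12$ (rather than the ``generic'' rank $8$ one would hope for, compare Remark~\ref{Rem:OptimalFun}), which enlarges the matrices involved in Steps~(1)--(3) and forces a careful verification that the socle generator of $\Ro/(\varpi)$ produced in Lemma~\ref{Sec6:Un--FirstLemma}.1 remains a socle generator under every relevant specialization. Once such a model is fixed and these checks are dispatched via Proposition~\ref{prop:GensOfDual} and the final assertion of Lemma~\ref{Sec6:Un--FirstLemma}.1, the three computations reduce to organizing Macaulay2 output, just as in the preceding subsection.
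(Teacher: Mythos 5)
Your proposal is correct and follows essentially the same route as the paper, which for this case gives only the outline you reproduce: reduce to the dimension-one quotients via Corollary~\ref{Sec6:UnFirstCor}, compute $\lambdat(\Ro[\Io])$ by the socle/dual-basis and structure-constant method of Subsection~\ref{Subsec-FUnip}, obtain the Fitting ideal and $\Hom_{R_v^\un}(I/I^2,E/\cO)$ from a Macaulay2-computed presentation of $\cI^\un_\Z$ specialized along the augmentation, and compare the cotangent lattices as in Lemma~\ref{Lem:LatticeIndexR'Rst}. Two transcription-level slips should be fixed but do not affect the method: over $R_3=\cR_\Z[\us,\ut]/\widetilde\cI$ the ideal $I_3$ is generated for instance by $r_1^\un,r_2^\un,r_3^\un,r_5^\un,r_6^\un$ (your set $r_1^\un,r_2^\un,r_3^\un,r_6^\un,r_7^\un$ loses a generator since $r_7^\un\equiv r_1^\un$ modulo $\widetilde\cI$, the relations $s_i^\un$ here differing from the \ctun\ case), and the lattice $\Lambdat$ of part (3) is spanned by the Jacobian rows of $s_1^\un,\ldots,s_4^\un$ alone (including $y_1,y_2,y_3$ would compute the lattice for $\Ro$ rather than $\Rt$, and would then require modifying $\Lambda^{\un}$ in the same way).
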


\begin{theorem}\label{thm:delta-Un}
Let $e$ be the ramification index of $E$ over $\Q_l$. We have
\begin{enumerate}
\item $D_{1,\lambda}(R_v^{^\un})=2\frac{n_v}e$.
\item $c_{1,\lambda}(R_v^{^\un})=\frac{n_v}e$.
\item $\delta_\lambda(R_v^{^\un}) = \frac{n_v}e$.
\end{enumerate}
\end{theorem}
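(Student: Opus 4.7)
The plan is to derive Theorem~\ref{thm:delta-Un} as a direct arithmetic consequence of the preceding Proposition, exactly in the spirit of the concluding steps of Theorem~\ref{thm:delta_v^St} and Theorem~\ref{thm:delta-PhiUn}. Since that Proposition already provides explicit formulas for $\lambdat(\Rt[\widetilde I])$, $\lambdat(\Fitt_0^{\Rt}(I))$, and $\#\Lambda^{\un}/\Lambdat$ in terms of the ideals $(q-1)$ and $\gcd(s,t,q-1)$ of $\cO$, the remaining work is to feed these outputs into Definition~\ref{def:key} and the four-term exact sequence of Theorem~\ref{thm:Der exact sequence}.

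First I would argue, as in the Steinberg case at the end of \S\ref{Subsec-St}, that $\ord_\varpi\gcd(s,t,q-1) = e\cdot n_v$: the augmentation $\lambdat$ kills $X$ and hence factors through the Steinberg component of $R_v^{\un}$, so the resulting $\cO$-valued representation $\rho_\lambda$ has Frobenius and inertia images whose congruences modulo powers of $\varpi$ are controlled exactly by the $\varpi$-valuations of $s$, $t$, and $q-1$, which matches the definition of $n_v$.

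Combining parts (1) and (2) of the Proposition with Definition~\ref{def:key} one obtains
\[
C_{1,\lambda}(R_v^{\un}) \;=\; \lambdat(\Rt[\widetilde I])/\lambdat(\Fitt_0^{\Rt}(I)) \;\cong\; (q-1)/\bigl((q-1)\gcd(s,t,q-1)\bigr) \;\cong\; \cO/\gcd(s,t,q-1),
\]
an $\cO$-module of length $n_v$, whence $c_{1,\lambda}(R_v^{\un}) = n_v/e$. For $D_{1,\lambda}$, the surjection $\Hom_{R_v^{\un}}(I/I^2,E/\cO)\twoheadrightarrow \cDer^1_\cO(R_v^{\un},E/\cO)$ from Theorem~\ref{thm:Der exact sequence} together with parts (2) and (3) of the Proposition gives
\[
\#\cDer^1_\cO(R_v^{\un},E/\cO) \;=\; \frac{\#\cO/\bigl((q-1)\gcd(s,t,q-1)\bigr)}{\#\bigl(\gcd(s,t,q-1)/(q-1)\bigr)} \;=\; \#\cO/\gcd(s,t,q-1)^2,
\]
of length $2n_v$, so $D_{1,\lambda}(R_v^{\un}) = 2n_v/e$. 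Subtracting yields $\delta_\lambda(R_v^{\un}) = n_v/e$.

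The substantive difficulty is therefore already absorbed into the preceding Proposition. The hard part will be, as anticipated in the body of the section, that one cannot find an optimal rank-$8$ complete-intersection cover $\Rt$ uniformly over $\Z$; instead one is forced to work with the rank-$12$ model produced from $(s_1^{\un},\ldots,s_4^{\un})$, and to track the generator $\Theta^{-1}(b^\ast)$ of $\Ro[\Io]$ through the Gorenstein duality chain of Lemma~\ref{lem:R[I]=omega} by means of the explicit multiplication tables from Lemma~\ref{Sec6:Un--SecondLemma}. Once these Macaulay-assisted computations are in place, the theorem is a two-line arithmetic deduction, with independence of the auxiliary choice of $\Rt$ guaranteed by Theorems~\ref{thm:c_1 local} and~\ref{thm:Der^1 local}.
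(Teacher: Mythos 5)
Your proposal is correct and follows essentially the same route as the paper, which likewise treats Theorem \ref{thm:delta-Un} as an immediate arithmetic consequence of the preceding Proposition (via Definition \ref{def:key} and the four-term sequence of Theorem \ref{thm:Der exact sequence}), exactly parallel to Corollaries \ref{Lem:c2-RqSt} and \ref{Lem:D2-RqSt} in the Steinberg case; your length computations giving $2n_v$, $n_v$ and hence $\delta_\lambda(R_v^{\un})=n_v/e$ are right. One small slip: the correct normalization is $\ord_\varpi\gcd(s,t,q-1)=n_v$ (so $\cO/\gcd(s,t,q-1)$ has $\cO$-length $n_v$, as you later use), not $e\cdot n_v$; the factor $1/e$ enters only through the division by $\log|\cO/p|$ in Definition \ref{def:key}, so your conclusions are unaffected.
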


 \subsection{Recollections about Cohen--Macaulay and Gorenstein  rings}\label{Subsect-OnCM}

Let $R$ be a Noetherian local ring with maximal ideal $\ffrm$ and residue field $k$. In this subsection we want to briefly recall some results on Cohen--Macaulay and Gorenstein rings that occur repeatedly in our arguments or, more importantly, in our computations. We also present a result on generating sets of dual modules that was useful in our computations. For basic notions such as $\depth$, $R$-sequence, Cohen--Macaulay and Gorenstein rings, we refer to  \cite[\S\S~1.2, 2.1, 3.1]{BH},

\begin{definition}
The socle of $R$ is defined as  $\socle R=R[\ffrm]=\{x\in R\mid \ffrm x =0\}$.
\end{definition}

\begin{proposition}[{\cite[Thm.~17.4 and p.~136]{Matsumura-CA}, \cite[2.1.3, 2.1.8, 3.1.19]{BH}}]
\label{prop:matsum-GorCM}
\begin{enumerate}
\item Any local Artin ring $R$ is Cohen--Macaulay. It is Gorenstein if in addition it satisfies $\socle R\cong k$.
\item 
If $R$ is Noetherian local, and if $(x_1\ldots,x_n)$ is an $R$-sequence in $\ffrm$, then $R$ is Cohen--Macaulay or Gorenstein, respectively, if and only if $R/(x_1,\ldots,x_n)$ has this property. In particular, if $R/(x_1\ldots,x_n)$ is Artinian, then $R$ is Cohen--Macaulay, and if moreover $\socle R/(x_1\ldots,x_n)\cong  k$, then $R$ is Gorenstein. 
\item If $R$ is a local Cohen--Macaulay ring, then any system of parameters is a regular $R$-sequence.
\end{enumerate}
\end{proposition}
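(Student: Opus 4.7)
The three parts of the proposition are recollections from standard commutative algebra, so the plan is to assemble them from classical ingredients rather than re-derive them from scratch; the references \cite{Matsumura-CA} and \cite{BH} do the heavy lifting, and I would cite them throughout. The main conceptual obstacle, if any, is marshalling the compatibility between the characterizations of Gorensteinness via the socle in the Artin case and via type/canonical module in the general case.

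For part 1, the plan is to observe that a local Artin ring $R$ has Krull dimension $0$, and every element of $\ffrm$ is nilpotent, hence $\depth R=0=\dim R$, so $R$ is Cohen--Macaulay by definition. For the Gorenstein statement, I would invoke the fact that a zero-dimensional local ring is Gorenstein if and only if it is injective as a module over itself, equivalently if and only if its socle is a one-dimensional $k$-vector space; this is \cite[Proposition 3.2.12]{BH}.

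For part 2, I would proceed by induction on $n$, reducing to $n=1$. Given a non-zero-divisor $x\in\ffrm$, the standard behavior of dimension and depth under regular quotients gives $\dim R/(x)=\dim R-1$ and $\depth R/(x)=\depth R-1$, so $R$ is Cohen--Macaulay iff $R/(x)$ is. For the Gorenstein half, I would use the characterization via Bass numbers (equivalently, via injective dimension): $R$ is Gorenstein iff $R/(x)$ is, for $x$ a non-zero-divisor in $\ffrm$, which is \cite[Proposition 3.1.19]{BH}. Iterating gives the statement for any regular sequence. The parenthetical refinements when $R/(x_1,\ldots,x_n)$ is Artinian follow by combining this with part 1, since once one quotients down to dimension zero, Gorensteinness is detected by the socle being a line.

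For part 3, the plan is to use that in a Cohen--Macaulay local ring one has $\depth R=\dim R$, and that any system of parameters $\underline x=(x_1,\ldots,x_d)$ with $d=\dim R$ has the property that $\depth(\underline x;R)$ equals the length of any maximal $R$-sequence contained in the ideal $(\underline x)$; since $R/(\underline x)$ is Artinian, the usual depth-dimension comparison forces this length to be $d$, so $\underline x$ is itself an $R$-sequence. This is \cite[Theorem 17.4]{Matsumura-CA} and \cite[Theorem 2.1.2]{BH}, and I would simply cite it. Taken together, these three citations complete the proof with essentially no further work.
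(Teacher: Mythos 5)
Your proposal is correct and matches the paper, which gives no argument for this proposition beyond the very citations you invoke (Matsumura Thm.~17.4 and the Bruns--Herzog results on behaviour of Cohen--Macaulay/Gorenstein under regular sequences and the socle criterion in the Artinian case). Your added sketches of the standard arguments are accurate, so there is nothing to correct.
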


Let now $(A,\ffrm)$ be a local Artin ring. In this case $I\cap\socle A\supsetneq0$  for any non-zero ideal $I$ of $A$: To see this consider $n\in\Z_{\ge0}$ such that $\ffrm^{n-1}I\neq0$ and $\ffrm^nI=0$. Then $\ffrm^{n -1}I\subset I\cap \socle A$. 

\begin{proposition}\label{lem:DualBasis}
Let $(\bar\psi_i)_{i\in B}$ be a finite tuple in $\Hom_k(A,k)$, such that $(\bar\psi_i)_{i\in B}\colon \socle(A)\to k^{B}$ is injective. Then $(\bar\psi_i)_{i\in B}$ is a set of generators of $\Hom_k(A,k)$ as an $A$-module. In particular, if $A$ is Gorenstein and if $B=\{0\}$ is a singleton, then $\bar\psi_0$ is an $A$-basis of $\Hom_k(A,k)$.
\end{proposition}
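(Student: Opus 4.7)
The plan is to set $M = \sum_{i\in B} A\bar\psi_i \subseteq E := \Hom_k(A,k)$, the $A$-submodule of $E$ generated by the $\bar\psi_i$, and prove that $M = E$. This yields the generating statement. For the last sentence, assuming $A$ is Gorenstein with $B=\{0\}$, the single functional $\bar\psi_0$ is automatically injective on the one-dimensional $\socle A$, so $E = A\bar\psi_0$; the surjection $A \twoheadrightarrow E$, $a\mapsto a\bar\psi_0$, must then be an isomorphism since its kernel is $\Ann_A(\bar\psi_0) = 0$ by the argument below (and, alternatively, because $\ell_A(A) = \ell_A(E)$).

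The first step is to verify that $\Ann_A(M) = 0$. If $a\in A$ annihilates every $\bar\psi_i$, then evaluating at $1\in A$ yields $\bar\psi_i(a) = (a\bar\psi_i)(1) = 0$ for every $i\in B$, so if in addition $a\in\socle A$, the injectivity hypothesis forces $a=0$. Combined with the observation recorded immediately before the proposition---that in an Artin local ring every non-zero ideal meets $\socle A$ non-trivially---this forces $\Ann_A(M)=0$.

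The core step is a brief appeal to Matlis duality for the Artin local ring $A$, using three standard inputs: the module $E=\Hom_k(A,k)$ is injective, the natural map $A\to\Hom_A(E,E)$ is an isomorphism, and $\Hom_A(-,E)$ is length-preserving on finite $A$-modules. Applying $\Hom_A(-,E)$ to the inclusion $M\hookrightarrow E$ and using injectivity of $E$ produces a surjection $A\cong\Hom_A(E,E)\twoheadrightarrow\Hom_A(M,E)$, whose kernel is exactly $\Ann_A(M)=0$. Hence this surjection is an isomorphism, and comparing lengths gives $\ell_A(M)=\ell_A(\Hom_A(M,E))=\ell_A(A)=\ell_A(E)$, which together with $M\subseteq E$ forces $M=E$.

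The main conceptual obstacle is simply packaging enough Matlis duality into the proof while keeping the section self-contained. A slightly more hands-on variant would argue by Nakayama's lemma on the finite-length $A$-module $E$: it suffices to show the $\bar\psi_i$ span $E/\ffrm E$, and restriction along $\socle A\hookrightarrow A$ identifies $E/\ffrm E$ with $\Hom_k(\socle A,k)$ (the surjection $E\twoheadrightarrow\Hom_k(\socle A,k)$ has kernel $\ffrm E$, as one checks by length comparison). Under this identification the image of $\bar\psi_i$ in $E/\ffrm E$ is $\bar\psi_i|_{\socle A}$, and the injectivity hypothesis on $\socle A\to k^B$ dualizes to the required spanning statement. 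This version needs essentially the same duality input to identify $E/\ffrm E$, so either route is acceptable.
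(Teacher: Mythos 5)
Your argument is correct, but it takes a genuinely different route from the paper. The paper's proof is deliberately elementary: it reduces the generation statement to showing $\Hom_k(A,k)\subseteq N+\ffrm\Hom_k(A,k)$ (Nakayama), subtracts a $k$-linear combination of the $\bar\psi_i$ so that the remaining functional $\bar\phi$ kills $\socle A$, and then uses the preceding linear-algebra lemma (applied to the kernels $V_i$ of multiplication by generators $x_i$ of $\ffrm$, whose intersection is the socle) to exhibit $\bar\phi=\sum_i x_i\bar\xi_i$ explicitly, i.e.\ to prove by hand that any functional vanishing on the socle lies in $\ffrm\Hom_k(A,k)$. You instead invoke duality: either full Matlis-type duality for the finite-dimensional local $k$-algebra $A$ (injectivity of $E=\Hom_k(A,k)$, $A\cong\Hom_A(E,E)$, length preservation), combined with your correct observation that $\Ann_A(\sum_i A\bar\psi_i)=0$ because every nonzero ideal meets the socle, and then a length count forces $\sum_i A\bar\psi_i=E$; or, in your Nakayama variant, you identify $\ffrm E$ with the functionals vanishing on $\socle A$ again by a duality/length comparison rather than by the paper's explicit construction. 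Both of your routes are sound (in this setting the duality inputs reduce to the adjunction $\Hom_A(M,E)\cong\Hom_k(M,k)$, exactness of $\Hom_k(-,k)$, and double duality), and they are shorter modulo these standard facts; the paper's argument buys self-containedness, which is evidently why it includes the auxiliary linear-algebra lemma instead of citing duality.

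One small caveat: in the Gorenstein case you say $\bar\psi_0$ is ``automatically injective'' on the one-dimensional socle. That is not automatic --- a functional can vanish on the socle --- it is precisely the hypothesis of the proposition, which is retained in the ``in particular'' clause. Since your proof only uses that the hypothesis holds, this misstatement is harmless, but you should phrase it as an assumption rather than a consequence of $\dim_k\socle A=1$.
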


The proof relies on the following result from linear algebra.
\begin{lemma}\label{Lem:LinAlg}
Let $V$ be a finite-dimensional $k$-vector space. Let $(V_j)_{j\in J}$ be a finite tuple of sub vector spaces such that $\bigcap_{j\in J} V_j=0$. Then for any $\bar\psi\in\Hom_k(V,k)$, there exist $\bar\psi_j\in \Hom_k(V,k)$ with $V_j\subset \kernel \bar\psi_j$ for $j\in J$, such that $\bar\psi=\sum_{j\in J} \bar\psi_j$.
\end{lemma}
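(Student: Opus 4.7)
The plan is to prove this by a standard finite-dimensional duality argument, reinterpreting the hypothesis $\bigcap_{j\in J} V_j = 0$ as a condition on annihilators in $V^*$.

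First I would introduce the annihilator notation: for a subspace $W \subset V$, set $W^{\perp} = \{\psi \in V^* : \psi|_W = 0\}$, and for a subspace $U \subset V^*$, set $U^{\circ} = \{v \in V : \psi(v) = 0 \text{ for all } \psi \in U\}$. The content of the lemma is exactly the assertion that $V^* = \sum_{j \in J} V_j^{\perp}$, since $\bar\psi_j \in V_j^{\perp}$ is the same as $V_j \subset \ker \bar\psi_j$.

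Next I would invoke the standard finite-dimensional identities: $(W_1 + W_2)^{\perp} = W_1^{\perp} \cap W_2^{\perp}$, $(U_1 + U_2)^{\circ} = U_1^{\circ} \cap U_2^{\circ}$, and the fact that for any subspace $U \subset V^*$, we have $U^{\circ \perp} = U$ (in particular, $U^{\circ} = 0$ forces $U = V^*$). Applying these to the finite sum $U := \sum_{j \in J} V_j^{\perp}$ gives
\[
U^{\circ} \;=\; \Bigl(\sum_{j \in J} V_j^{\perp}\Bigr)^{\!\circ} \;=\; \bigcap_{j \in J} (V_j^{\perp})^{\circ} \;=\; \bigcap_{j \in J} V_j \;=\; 0,
\]
using the hypothesis at the last step. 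From $U^{\circ} = 0$ I conclude $U = V^*$, which is precisely what was to be shown: any $\bar\psi \in V^*$ decomposes as a sum $\bar\psi = \sum_{j \in J} \bar\psi_j$ with $\bar\psi_j \in V_j^{\perp}$.

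There is no real obstacle here; the only minor point to be careful about is that $J$ is finite, so that the displayed identities for sums of annihilators extend from the two-term case by induction, and the sum $\sum_{j\in J} V_j^{\perp}$ is automatically a subspace. The argument is self-contained inside finite-dimensional linear algebra and requires none of the earlier ring-theoretic machinery.
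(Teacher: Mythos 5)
Your proof is correct, but it follows a genuinely different route from the paper. The paper argues by induction on $t=|J|$: the base case is trivial since then $V_1=0$, and in the inductive step one sets $W=\bigcap_{j\ge 2}V_j$, uses $V_1\cap W=0$ to choose a basis adapted to $V_1$ and $W$, splits $\bar\psi=\bar\psi_1+\bar\phi$ with $\ker\bar\psi_1\supseteq V_1$ and $\ker\bar\phi\supseteq W$, and then applies the inductive hypothesis to $\bar\phi$. You instead recast the claim as the equality $V^*=\sum_{j\in J}V_j^{\perp}$ and settle it in one stroke with the double-annihilator formalism: $\bigl(\sum_j V_j^{\perp}\bigr)^{\circ}=\bigcap_j(V_j^{\perp})^{\circ}=\bigcap_j V_j=0$, hence the sum is all of $V^*$. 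This is shorter, avoids induction entirely, and isolates exactly where finite-dimensionality enters (the double-annihilator theorem), whereas the paper's argument is more constructive and self-contained, producing the decomposition explicitly from a basis. One small point of bookkeeping: in your displayed chain you use $(V_j^{\perp})^{\circ}=V_j$, which is the double-annihilator identity for subspaces of $V$, while you only stated the companion identity $U^{\circ\perp}=U$ for subspaces $U\subset V^*$; both are standard in finite dimensions, but the former should be cited as well since it carries the step $\bigcap_j(V_j^{\perp})^{\circ}=\bigcap_j V_j$.
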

\begin{proof}
We may assume $J=\{1,\ldots,t\}$ for some $t\in\Z_{\ge1}$. We induct over $t$, noting that the case $t=1$ is trivial, since then $V_1=0$. For the induction step suppose $t\ge2$, and let $W=\bigcap_{j=2}^t V_j$. Then $V_1\cap W=0$, and so we can choose a basis for $W$ and one for $V_1$ and then extend the one for $V_1$ to a complementary basis to that of $W$. Then one can find $\bar\psi_1$ and $\bar\phi$ in $\Hom_k(V,k)$ such that $\kernel \bar\psi_1\supseteq V_1$ and $\kernel\bar\phi\supseteq W$, and $\bar\psi=\bar\psi_1+\bar\phi$. Now apply the induction hypothesis to $V/V_1$ and $(V_j/V_1)_{j=2,\ldots,t}$ and $\bar\psi_1$ considered as a map in $\Hom_k(V/V_1,k)$.
\end{proof}
\begin{proof}[Proof of Lemma~\ref{lem:DualBasis}]
Let $N=\sum_{i\in B} A\bar\psi_i$. We shall show that $\Hom_k(A,k)\subseteq N+\ffrm \Hom_k(A,k)$. Then the lemma will follow from Nakayama's Lemma.

Let $\bar\psi$ be in $\Hom_k(A,k)$. By our hypothesis, there is a $k$-linear map $\alpha\colon k^{B}\to k$ such that the restriction $\bar\psi|_{\socle A}$ agree with $\alpha\circ (\bar\psi_i)_{i\in B}$. In other words, the map 
\[\bar\phi:=\bar\psi- \sum_{i\in B} \alpha(\bar e_i)) \bar\psi_i\]
vanishes on $\socle A$.

Next let $x_1,\ldots,x_t$ be a set of $A$-module generators of $\ffrm$, and let $V_i=\{r\in A\mid x_ir=0\}$. Then 
\[\socle A = \bigcap_{i=1,\ldots,t} V_i\]
By Lemma~\ref{Lem:LinAlg} applied to $A/\socle A$, there exist $\bar\phi_i\in \Hom_k(A,k)$ with $\kernel\bar\phi_i\supset V_i$, and $\bar\phi=\sum_{i\in B}\bar\phi_i$.

Now consider the short exact sequence $0\to V_i \to A \stackrel{x_i\cdot }\to x_iA \to 0$. Then the $\bar\psi_i$ induce $k$-linear maps $x_i A\to k$. The latter can be extended to $k$-linear maps $\bar\xi_i\colon A\to k$ under $x_iA\subset A$. In other words $\bar\phi_i=x_i \bar\xi_i$, and this gives
\[\bar\psi- \sum_{i\in B} \bar\psi(e_i)\bar\psi_i = \sum_{j=1,\ldots,t} x_j\bar\xi_j ,\]  
proving the claim from the first line, and hence the lemma.
\end{proof}

Let now $(R,\ffrm)$ be a local complete Noetherian Cohen--Macaulay ring that is an $\cO$-algebra, and suppose that $\mathbf{r}=(\varpi, r_1,\ldots,r_n)$ is a 
system of parameters. Let $(\bar e_i)_{i\in B}$ be a $k$-basis of $A=R/\mathbf{r} R$, let $(e_i)_{i\in B}$ be a tuple of preimages in $R$, and consider the $\cO$-algebra homomorphism $S=\cO[[x_1,\ldots,x_n]]\to R, x_i\mapsto e_i$.

\begin{lemma}
As an $S$-module, $R$ is free with basis $(e_i)_{i=1,\ldots,n}$.
\end{lemma}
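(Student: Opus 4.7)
The plan is to show that $R$ is a finite $S$-module and then invoke Auslander--Buchsbaum to upgrade this to freeness. Implicit in the statement, the map $S\to R$ should send $x_i \mapsto r_i$ (not the $e_i$, which are indexed by $B$), so that $\ffrm_S R = (\varpi,r_1,\ldots,r_n)R = \mathbf{r} R$, and the given $k$-basis $(\bar e_i)_{i\in B}$ of $A = R/\mathbf{r} R$ is identified with a $k$-basis of $R/\ffrm_S R$.

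First I would establish finite generation. Since $R$ is Cohen--Macaulay and $\mathbf{r}$ is a system of parameters, Proposition~\ref{prop:matsum-GorCM}(3) gives that $\mathbf{r}$ is a regular sequence on $R$; in particular the ideals $\mathbf{r} R$ and $\ffrm_R$ are both $\ffrm_R$-primary and so they define the same topology on $R$. Therefore $R$ is complete and separated for the $\ffrm_S R$-adic topology as well. The images $(\bar e_i)_{i\in B}$ generate $R/\ffrm_S R$ as a $k$-vector space, and since $B$ is finite (as $R$ is Noetherian and $R/\mathbf{r} R$ is Artinian), topological Nakayama implies that the finitely many elements $(e_i)_{i\in B}$ generate $R$ as an $S$-module.

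Next I would prove freeness via Auslander--Buchsbaum. Both $S$ and $R$ have Krull dimension $n+1$. Since $\ffrm_S R$ is $\ffrm_R$-primary, every regular sequence in $\ffrm_R$ on $R$ can be replaced, without changing depth, by one sitting inside $\ffrm_S$; hence $\depth_S R = \depth_R R = \dim R = n+1 = \dim S$. Auslander--Buchsbaum then gives $\mathrm{pd}_S R = \dim S - \depth_S R = 0$, so $R$ is a finitely generated projective, hence free, $S$-module. Because the $(e_i)$ reduce to a $k$-basis of $R/\ffrm_S R$, Nakayama's lemma (in its standard form for finitely generated modules over a local ring) implies that any $S$-linear relation among them must have coefficients in $\ffrm_S$; combining with freeness of $R$ and the fact that the rank of $R$ over $S$ equals $\dim_k R/\ffrm_S R = |B|$, we conclude $(e_i)_{i\in B}$ is an $S$-basis.

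The only mild obstacle is bookkeeping: one must be careful that the two natural topologies on $R$ agree so that topological Nakayama applies, and that one is entitled to invoke Auslander--Buchsbaum (which requires the finite generation established in the previous step). Once these are in place the argument is formal.
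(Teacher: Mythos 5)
Your proof is correct and follows essentially the same route as the paper: Nakayama to get finite generation of $R$ over $S$, the Cohen--Macaulay property to see $\mathbf{r}$ is regular and hence $\depth_S R = n+1 = \dim S$, Auslander--Buchsbaum for freeness, and reduction modulo $\ffrm_S$ to identify $(e_i)_{i\in B}$ as a basis. Your extra care with the topological form of Nakayama (and with the typo $x_i\mapsto r_i$) only fills in details the paper leaves implicit.
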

\begin{proof}
The ring $S$ is regular local and thus of finite global dimension. Hence $R$ has finite projective dimension over $S$. By Nakayama's Lemma $R$ is also finitely generated as an $S$-module, because $\dim_k R/\mathbf{r}R$ is finite for the system of parameters~$\mathbf {r}$. The sequence $\mathbf{r}$ is in fact regular as $R$ is Cohen--Macaulay. It follows that $\depth_SR=1+n=\dim S$, so that by the Auslander-Buchsbaum theorem $R$ is a finite free $S$-module. One finds that $\psi\colon S^B\to R, (s_i)_{i\in B}\mapsto \sum_i s_i e_i$ is an isomorphism, because $S$ is local and $\psi\mod{\mathbf{r}}$ is bijective.
\end{proof}
The following result gives a generating set (or a basis) over $R$ of the free $S$-module $\Hom_S(R,S)$.
\begin{proposition}\label{prop:GensOfDual}
Let $\psi_i\in \Hom_S(R,S)$, $i\in B$, be a tuple of elements such that the elements $\bar\psi_i:=\psi_i\otimes_RA\colon A\to k$ satisfy the condition of Lemma~\ref{lem:DualBasis}. Then $(\psi_i)_{i\in B}$ is a set of $R$-module generators of $\Hom_S(R,S)$. If moreover $R$ is Gorenstein and $B=\{0\}$ is a singleton, then $\psi_0$ is an $R$-basis of $\Hom_S(R,S)$.
\end{proposition}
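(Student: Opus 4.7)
The strategy is to reduce the statement to Lemma~\ref{lem:DualBasis} applied to $\Hom_k(A,k)$ via Nakayama's lemma, exploiting the fact that $R$ is finite free over $S$.

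First I would set up the key identification
\[
\Hom_S(R,S)\otimes_S k \;\xrightarrow{\sim}\;\Hom_k(R\otimes_S k,\,k)\;=\;\Hom_k(A,k),
\]
which holds because $R$ is a finite free $S$-module (so $\Hom_S(R,S)$ commutes with arbitrary base change along $S\to k$). Since $A=R\otimes_S k$, the left hand side can also be written as $\Hom_S(R,S)\otimes_R A$. Under this isomorphism, the element $\psi_i\otimes 1$ maps to the map $\bar\psi_i\colon A\to k$ appearing in the hypothesis.

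Next I would invoke Nakayama: to show that $(\psi_i)_{i\in B}$ generates $\Hom_S(R,S)$ as an $R$-module, it suffices to show that its image generates $\Hom_S(R,S)\otimes_R k=\bigl(\Hom_S(R,S)\otimes_R A\bigr)\otimes_A k$ as a $k$-vector space. By Lemma~\ref{lem:DualBasis} applied to the Artin local ring $A$ with residue field $k$ and the tuple $(\bar\psi_i)_{i\in B}$, the family $(\bar\psi_i)_{i\in B}$ generates $\Hom_k(A,k)\cong\Hom_S(R,S)\otimes_R A$ as an $A$-module. Tensoring this generating statement with $k$ over $A$ immediately yields that the images of $\psi_i$ generate $\Hom_S(R,S)\otimes_R k$ over $k$, completing the proof of the first assertion.

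For the final assertion, when $R$ is Gorenstein and $B=\{0\}$, I would combine the first part with the isomorphism $\Hom_S(R,S)\cong R$ of $R$-modules. This isomorphism holds because $R$ is Cohen--Macaulay and finite free over the regular local ring $S$, so $\omega_R\cong\Hom_S(R,\omega_S)=\Hom_S(R,S)$, and $R$ being Gorenstein means $\omega_R\cong R$. The first part then produces a surjection $R\twoheadrightarrow\Hom_S(R,S)$, $1\mapsto\psi_0$, between finitely generated $R$-modules each isomorphic to $R$; any surjective endomorphism of a Noetherian module is an isomorphism, so $\psi_0$ is a free $R$-basis. The only subtle ingredient is the compatibility of Hom with base change, which rests entirely on the freeness of $R$ over $S$; the rest is formal.
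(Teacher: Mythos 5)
Your proof is correct and follows essentially the same route as the paper: base change along $S\to k$ (using that $R$ is finite free over $S$) to identify $\Hom_S(R,S)\otimes_R A$ with $\Hom_k(A,k)$, apply Lemma~\ref{lem:DualBasis}, and conclude by Nakayama. Your treatment of the Gorenstein case via $\omega_R\cong\Hom_S(R,S)\cong R$ and the fact that a surjective endomorphism of a finitely generated module is injective is a valid way to fill in the freeness claim, consistent with the identifications the paper uses elsewhere (e.g. in Lemma~\ref{lem:gorenstein}).
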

\begin{proof}
This is immediate from Nakayama's lemma and Lemma~\ref{lem:DualBasis}.
\end{proof}

\section{Wiles  defect of Hecke algebras and global deformation rings}\label{sec:deformation}

In this section, we'll describe how the commutative algebra results from Sections~\ref{sec_deformation_theory} and~\ref{sec:ComputeWD} can be applied to Galois deformation rings, in the setup of Taylor--Wiles--Kisin patching. For ease of exposition we'll restrict our attention to the case of two dimensional Galois representations over a totally real number field, and moreover ones that are modular of parallel weight $2$, as all of the computations and applications we give in this paper will be concerned with this case. This is not a fundamental limitation on our methods, and indeed everything we describe in this section will generalize automatically to any ``$\ell_0=0$'' patching setup (such as the definite unitary groups considered by \cite{CHT} and others).

Let $F$ be a totally real number field. Fix a finite set $\Sigma$ of finite places of $F$. For each $v\in \Sigma$, fix a $\tau_v\in \{\min,\st,\un,\fun,\square\}$, let $\tau = (\tau_v)_{v\in \Sigma}$, and for $\sigma\in \{\min,\st,\un,\fun,\square\}$ write $\Sigma^\sigma = \{v\in\Sigma|\tau_v=\sigma\}$.

Pick a prime $p>2$ which is not ramified in $F$ and is not divisible by any prime in $\Sigma$. Let $E/\Q_p$ be a finite extension with ring of integers $\cO$, uniformizer $\varpi$ and residue field $k$. Let $\varepsilon_p:G_F\to \cO^\times$ be the cyclotomic character. Let $\rho:G_F\to \GL_2(\cO)$ be a Galois representation for which:
\begin{itemize}
	\item $\rho$ corresponds to a Hilbert modular form of parallel weight 2;
	\item $\det \rho = \varepsilon_p$; 
	\item For every $v\not\in\Sigma$ and $v\nmid p$, $\rho$ is unramified at $v$;
	\item For every $v|p$, $\rhobar|_{G_v}$ is finite flat;
	\item If $v\in \Sigma^{\min}$, then either $|\cO/v|\not\equiv -1\pmod{\ell}$, $\rhobar|_{I_v}$ is irreducible or $\rhobar|_{G_v}$ is absolutely reducible; 
	\item If $v\in \Sigma^{\st}\cup \Sigma^{\un}\cup \Sigma^{\fun}$, then $\rho|_{G_v}$ is Steinberg (i.e. $\rho|_{G_v}\sim \begin{pmatrix}	\chi\,\varepsilon_p&*\\	0 & \chi\end{pmatrix}$ for some unramified quadratic character);
	\item The residual representation $\rhobar:G_F\to \GL_2(k)$ is absolutely irreducible, and moreover that it satisfies the \emph{Taylor--Wiles conditions:} $\rhobar|_{G_{F(\zeta_p)}}$ is still absolutely irreducible; and in the case when $p=5$, $\sqrt{5}\in F$ and the projective image $\proj \rhobar:G_{F}\to \PGL_2(\Fbar_5)$ is isomorphic to $\PGL_2(\bbF_5)$, that $\ker \proj\rhobar\not\subseteq G_{F(\zeta_5)}$.
\end{itemize}

Let $Q = \Sigma^{\st}$, and let $D$ be a quaternion algebra over $F$ ramified at the primes in $Q$ (and no other finite primes) and at either all, or all but one infinite place of $F$ (depending on whether $|Q|+[F:\Q]$ is even or odd). Define a compact open subgroup $K^\tau = \prod_v K_v^\tau \subset (D \otimes \A_{F,f})^\times$ by:
\begin{itemize}
	\item $K_v^\tau = \GL_2(\cO_{F,v})$ if $v\not\in \Sigma$;
	\item $K_v^\tau$ is a maximal compact subgroup of $(D\otimes F_v)^\times$ if $v\in \Sigma^{\st} = Q$;
	\item $K_v^\tau = U_0(v)$ if $v\in \Sigma^{\un}\cup \Sigma^{\fun}$;
	\item $K_v^\tau = U_0(v^{a_v})$ if $v\in \Sigma^{\min}$, where $a_v$ is the Artin conductor of $\rhobar|_{G_v}$;
	\item $K_v^\tau = U_0(v^{a_v+2})$ if $v\in \Sigma^{\square}$.
\end{itemize}
For convenience, we will simply write $K = K^\tau$ and $K_v = K_v^\tau$.

When $D$ is ramified at all but one infinite places (resp. all infinite places) let $X_K$ be the Shimura curve (resp. Shimura set) associated to $K$. Let $\T^D(K)$ be the Hecke algebra acting on $H^1(X_K,\cO)$ in the Shimura curve case and on $H^0(X_K,\cO)$ in the Shimura set case, generated (as an $\cO$-algebra) by the Hecke operators $T_v$ and $S_v$ for all finite primes $v\not\in \Sigma$, and let $\Tbar^{D}(K) = \T^D(K)[U_v|v\in\Sigma^{\fun}]$. Note that $\T^D(K)$ and $\Tbar^D(K)$ are finite $\cO$-algebras.

Let $\T^D(K)^\varepsilon = \T^D(K)/\left(S_v-\varepsilon_p(\Frob_v)\middle|v\not\in \Sigma\right)$ and $\Tbar^D(K)^\varepsilon = \Tbar^D(K)/\left(S_v-\varepsilon_p(\Frob_v)\middle|v\not\in \Sigma\right)$ be the fixed determinant Hecke algebras.

The assumption that $\rho$ corresponds to a Hilbert modular form of parallel weight $2$ gives the following:

\begin{proposition}
	There is an augmentation $\lambda:\Tbar^D(K)^\varepsilon\onto \cO$ with the property that for any $v\not\in \Sigma\cup\Sigma_p$, $\rho(\Frob_v)$ has characteristic polynomial $x^2-\lambda(T_v)x+\lambda(S_v)$. Moreover, $\Phi_{\lambda}(\Tbar^D(K)^\varepsilon)$ is finite.
\end{proposition}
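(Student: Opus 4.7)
The plan is to construct $\lambda$ directly from modularity via Jacquet--Langlands, and then deduce finiteness of $\Phi_\lambda$ from the fact that the component of $\Tbar^D(K)^\varepsilon[1/p]$ through which $\lambda$ factors is isomorphic to $E$.

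First I would produce the eigensystem on the quaternionic side. By hypothesis, $\rho$ arises from a Hilbert modular newform $f$ of parallel weight $2$. The local conditions imposed on $\rho$ (Steinberg at $v\in Q=\Sigma^{\st}$, finite flat at $p$, minimally ramified at $\Sigma^{\min}$, unipotent or of bounded conductor elsewhere) match the ramification of $D$ and the level $K$, so the Jacquet--Langlands transfer supplies an automorphic representation $\pi^D$ of $D^\times(\A_F)$ fixed by $K$ and cuspidal of parallel weight $2$, contributing a nonzero class to $H^i(X_K,E)$ with $i\in\{0,1\}$ according to whether $D$ is definite or indefinite. Define $\lambda$ on the generators $T_v,S_v$ of $\T^D(K)^\varepsilon$ by $\lambda(T_v)=\tr\rho(\Frob_v)$ and $\lambda(S_v)=\det\rho(\Frob_v)=\varepsilon_p(\Frob_v)$ for $v\notin\Sigma\cup\Sigma_p$; local-global compatibility for Hilbert modular forms ensures these coincide with the Hecke eigenvalues of $\pi^D$, so the construction gives a well-defined $\cO$-algebra homomorphism $\T^D(K)^\varepsilon\to\cO'$ for some finite extension $\cO'/\cO$ (absorbed into $\cO$ by enlargement). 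To extend to $\Tbar^D(K)^\varepsilon$, for each $v\in\Sigma^{\fun}$ I set $\lambda(U_v)$ equal to the unramified Frobenius eigenvalue $\chi_v(\Frob_v)$ of the Steinberg lift $\rho|_{G_v}$; this is exactly the eigenvalue of $\Frob_v$ singled out by the $\fun$-structure on the local deformation ring of Section~\ref{sec_deformation_theory}.

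For the finiteness of $\Phi_\lambda$: by Lemma~\ref{minimal} it is equivalent to show that $\ker\lambda$ is a minimal prime of $\Tbar^D(K)^\varepsilon$, which means that the local ring of $\Tbar^D(K)^\varepsilon[1/p]$ at $\lambda$ equals $E$. Since $\Tbar^D(K)^\varepsilon$ is a finite $\cO$-algebra, the generic fiber $\Tbar^D(K)^\varepsilon[1/p]$ is a finite-dimensional, hence Artinian, $E$-algebra, decomposing as a product of local Artin $E$-algebras. The operators $T_v,S_v$ for $v\notin\Sigma\cup\Sigma_p$ are normal for the Petersson pairing on $H^i(X_K,E)$, hence simultaneously diagonalizable, producing an étale commutative subalgebra. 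On the component cut out by the away-from-$\Sigma\cup\Sigma_p$ eigensystem of $\pi^D$—which is a finite-dimensional $E$-vector space on which $U_v$ acts for $v\in\Sigma^{\fun}$ through the two-dimensional $U_0(v)$-invariants of an unramified principal series—the eigenvalues of $U_v$ are the two Frobenius eigenvalues $\chi_v(\Frob_v)$ and $\chi_v(\Frob_v)\varepsilon_p(\Frob_v)$ of $\rho|_{G_v}$. These differ by $\chi_v(\Frob_v)(\varepsilon_p(\Frob_v)-1)\neq 0$ in $E$, so $U_v$ is diagonalizable on this component. Thus the component of $\Tbar^D(K)^\varepsilon[1/p]$ containing $\lambda$ is étale over $E$, forcing the local factor to be $E$ itself; consequently $\ker\lambda$ is minimal and $\Phi_\lambda$ is finite.

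The main obstacle I anticipate is justifying semisimplicity of $U_v$ for $v\in\Sigma^{\fun}$: on all of $H^i(X_K,E)$ the operator $U_v$ need not act semisimply, because oldform congruences can produce Jordan blocks. What rescues the argument is that only semisimplicity of the single component through which $\lambda$ factors is needed, and on that component distinctness of the local Frobenius eigenvalues is automatic in weight $2$ from $N(v)\neq 1$. A cleaner alternative, which bypasses semisimplicity entirely, is to invoke an $R[1/p]=\T[1/p]$ theorem of Kisin-type for the corresponding deformation ring $R^\tau$: formal smoothness of $R^\tau[1/p]$ at the point of $\rho$ (which is standard once the local deformation rings at $v\in\Sigma$ have smooth generic fibers, as recorded in Proposition~\ref{prop:R_v^tau}) transports directly to formal smoothness of $\Tbar^D(K)^\varepsilon[1/p]$ at $\lambda$, giving $\Phi_\lambda$ finite.
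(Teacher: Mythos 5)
Your construction of $\lambda$ is fine and is essentially the argument the paper leaves implicit (the proposition is stated without proof, as a standard consequence of modularity: Jacquet--Langlands gives a $K$-fixed eigenclass in $H^i(X_K,\cO)$, local-global compatibility identifies the prime-to-$\Sigma\cup\Sigma_p$ eigensystem with the characteristic polynomials of $\rho(\Frob_v)$, and the $U_v$-eigenvalue at $v\in\Sigma^{\fun}$ is the unramified Frobenius eigenvalue; compare the analogous statements in \cite[\S 5]{BKM}, proved via \cite[Prop.~4.7]{DDT}). The flaw is in your justification of finiteness at the only new point, namely the operators $U_v$ for $v\in\Sigma^{\fun}$. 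In the standing hypotheses of this section, $\rho|_{G_{F_v}}$ is \emph{Steinberg} for every $v\in\Sigma^{\st}\cup\Sigma^{\un}\cup\Sigma^{\fun}$, and the level at $v\in\Sigma^{\fun}$ is $U_0(v)$, the conductor of the special local component $\pi_v$ of the transfer. Hence $\pi_v^{K_v}$ is one-dimensional and $U_v$ acts on the $\lambda$-component by the single scalar $\chi_v(\Frob_v)$; there is no two-dimensional space of $U_0(v)$-old vectors of an unramified principal series on this component, and the pair of eigenvalues $\chi_v(\Frob_v)$ and $\chi_v(\Frob_v)\varepsilon_p(\Frob_v)$ that you use does not occur there. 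So your semisimplicity argument, as written, addresses the wrong local situation — it is the situation relevant to \emph{other} components, which the paper handles in the proof of Theorem \ref{thm:patching}(4) by a dimension count with degeneracy maps precisely because semisimplicity of $U_v$ is delicate there; and your remark that distinctness of the two Frobenius parameters ``is automatic in weight $2$'' for an unramified principal series is not known in general, so the argument is not salvageable in the form stated. The correct (and simpler) fix: the generalized eigenspace for the anemic eigensystem is an honest eigenspace (the $T_v,S_v$, $v\notin\Sigma\cup\Sigma_p$, are semisimple), by strong multiplicity one it is the $\pi^D$-isotypic part, and on it every $U_v$, $v\in\Sigma^{\fun}$, acts by the scalar $\chi_v(\Frob_v)$; hence the image of $\Tbar^D(K)^\varepsilon[1/p]$ in its endomorphisms is $E$, the local factor at $\ker\lambda$ is $E$, and $\Phi_\lambda$ is finite.

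Your ``cleaner alternative'' is also problematic as placed: the isomorphism $R^\tau\cong\Tbar^\tau$ and the formal smoothness statement for the patched ring are Theorem \ref{thm:patching}(4)--(5), proved \emph{after} this proposition and using the augmentation it supplies, so invoking an $R[1/\varpi]=\T[1/\varpi]$ theorem here is circular within the paper's logic; moreover smoothness of the local deformation rings at the points cut out by $\rho$ (Proposition \ref{prop:R_v^tau}) does not by itself give the global finiteness — one needs the vanishing of a global adjoint Selmer group, which in this paper is an output of the patching argument, not an input available at this stage.
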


Let $\ffrm = \lambda^{-1}(\varpi\cO)\subseteq \Tbar^D(K)^\varepsilon$ be the maximal ideal of $\Tbar^D(K)^\varepsilon$ corresponding to $\rhobar$. By slight abuse of notation, also write $\ffrm = \ffrm\cap \T^D(K)$ for the maximal ideal of $\T^D(K)$ corresponding to $\rhobar$.

Write $\T^\tau = \T^D(K)^\varepsilon_{\ffrm}$ and $\Tbar^\tau = \Tbar^D(K)^\varepsilon_{\ffrm}$ for the localizations at $\ffrm$ (and note that we are suppressing $\varepsilon$ from our notation). 

Note that any $x:\Tbar^\tau\to \barqp$ corresponds to a Galois representation $\rho_x:G_F\to \GL_2(\barqp)$ lifting $\rhobar$ with $\det\rho_x = \varepsilon_p = \det \rho$ and $\tr\rho_x(\Frob_v) = x(T_v)$ for all $v\not\in \Sigma$ (so that $\rho = \rho_{\lambda}$).

Define $H^\tau = H^1(X_K,\cO)^*$ if $D$ is indefinite and $H^\tau = H^0(X_K,\cO)^*$ if $D$ is definite (where for any $\cO$-module $M$, $M^* = \Hom_{\cO}(M,\cO)$), viewed as a $\Tbar^D(K)$-module, and hence as a $\T^D(K)$-module. Define
\[
M^\tau = \Tbar^\tau\otimes_{\Tbar^D(K)} H^\tau = \T^\tau\otimes_{\T^D(K)} H^\tau = H^\tau/\left((S_v-\varepsilon_p(\Frob_v))x\middle|v\not\in \Sigma,x\in H^\tau\right)
\]

For the convenience of the reader, we recall some notation and results from Sections~\ref{sec_deformation_theory} and~\ref{sec:ComputeWD}. For each prime $v$ of $F$, the universal (fixed determinant) ring, parameterizing framed deformations of $\rhobar|_{G_{F_v}}$ with determinant $\varepsilon_p$ is $R_v^{\square}$. For $v\nmid p$ and $\tau_v \in \{\min,\st,\un,\fun,\square\}$, let $R_v^{\tau_v}$ be the deformation ring defined in Section \ref{sec:computations}, provided it exists (which is does for $v\in\Sigma$ and $\tau = \tau_v$, by assumption). The ring $R_v^{\tau_v}$ is naturally an $R_v^\square$-algebra, and unless $\tau_v = \fun$ it is a quotient of $R_v^{\square}$. Summarizing the results of Proposition \ref{prop:R_v^tau} we~have:

\begin{proposition}\label{prop:R_v^tau recap}
	For each $v\in\Sigma$, the ring $R_v^{\tau_v}$ is a complete, Noetherian $\cO$-algebra which is flat and equidimensional over $\cO$ of relative dimension $3$. Moreover, $R_v^{\tau_v}$ is Cohen--Macaulay, and is a complete intersection whenever $\tau_v=\min$ or $\square$, or whenever $\rhobar|_{G_v}$ is not a scalar.
\end{proposition}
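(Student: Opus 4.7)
The plan is to check each assertion case-by-case on $\tau_v\in\{\min,\st,\un,\fun,\square\}$, invoking Proposition~\ref{prop:R_v^tau} together with the auxiliary Lemmas~\ref{lem:CalOnModifCover}, \ref{Lem:EqnsForUnCase} and~\ref{lemma:props-of-RfunQ}; this proposition is essentially a consolidation of structural facts already established in the paper and in \cite{BKM}.

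First, for the properties of being a complete Noetherian $\cO$-algebra which is flat and equidimensional of relative dimension~$3$ and Cohen--Macaulay, each case is read off directly from Proposition~\ref{prop:R_v^tau}: the case $\tau_v=\min$ is part~(1), where $R_v^{\min}$ is in fact a power series ring over $\cO$ in three variables; the case $\tau_v=\square$ is part~(2); the case $\tau_v=\st$ is part~(3); and the cases $\tau_v\in\{\un,\fun\}$ are part~(5), together with Lemmas~\ref{Lem:EqnsForUnCase} and~\ref{lemma:props-of-RfunQ}, which moreover establish the stronger Gorenstein property.

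For the complete intersection claim, the cases $\tau_v\in\{\min,\square\}$ are immediate from parts (1) (a power series ring) and (2) of Proposition~\ref{prop:R_v^tau}. When $\rhobar|_{G_v}$ has non-scalar image and $\tau_v=\st$, the place $v$ fails to be a trivial prime for $\rhobar$ in the sense of \cite[\S2]{BKM}, so part~(3) of Proposition~\ref{prop:R_v^tau} gives $R_v^{\st}\cong\cO[[x_1,x_2,x_3]]$, which is trivially a complete intersection. For $\tau_v\in\{\un,\fun\}$ with $\rhobar|_{G_v}$ unramified and non-scalar, the Frobenius $\rhobar(\Frob_v)$ has distinct eigenvalues, so by Lemma~\ref{lem:CalOnModifCover} the natural map $R_v^\square\to\tR_v^\square$ is an isomorphism and the unipotent versus unramified components of $\Spec R_v^\square$ are already visible before the modification; the unobstructedness of the relevant deformation problems (which follows from the explicit presentations and dimension counts in Lemmas~\ref{Lem:EqnsForUnCase} and~\ref{lemma:props-of-RfunQ}) then forces $R_v^{\un}$ and $R_v^{\fun}$ to be power series rings over $\cO$, hence complete intersections.

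The only non-routine content lies in the non-scalar case for $\tau_v\in\{\un,\fun\}$, but even there the heavy lifting has been done in Section~\ref{sec:ComputeWD}; the remaining obstacle is merely tracking which components of the rings survive under the distinct-eigenvalue hypothesis, which is an immediate consequence of the explicit equations given in the cited lemmas.
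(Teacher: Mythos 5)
The paper offers no separate argument here: the proposition is introduced with ``Summarizing the results of Proposition \ref{prop:R_v^tau} we have,'' so the flatness, equidimensionality, Cohen--Macaulayness, and the complete intersection claims for $\tau_v\in\{\min,\square\}$ and for $\tau_v=\st$ with $v$ not a trivial prime are indeed just read off, as you do. The only place where your write-up supplies an actual argument is the complete intersection claim for $\tau_v\in\{\un,\fun\}$ with $\rhobar|_{G_v}$ non-scalar, and that argument has a genuine gap. First, the assertion that ``unramified and non-scalar'' forces $\rhobar(\Frob_v)$ to have distinct eigenvalues is false: for $v\in\olQ$ the eigenvalues are $q_v\chibar(\Frob_v)$ and $\chibar(\Frob_v)$, so they coincide whenever $q_v\equiv 1\pmod p$, and the non-scalar possibility there is that $\rhobar(\Frob_v)$ is a nontrivial unipotent times a scalar --- a case your argument does not touch (and where Lemma \ref{lem:CalOnModifCover} gives a degree-two extension, not an isomorphism). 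Second, even in the distinct-eigenvalue case your conclusion that $R_v^{\un}$ and $R_v^{\fun}$ are power series rings cannot be right: by definition $R_v^\uni$ is the reduced ring with $\Spec R_v^\uni=\Spec R_v^{\st}\cup\Spec R_v^{\unr}$, and these are two distinct integral components (the generic Steinberg lift is ramified, since $H^1(G_{F_v},E(1))\neq 0$, whereas every point of the unramified component is unramified), so $R_v^\uni$ is not a domain. ``Unobstructedness'' is not the mechanism; what is needed is an argument that the union of the two components is a complete intersection --- for instance identifying $R_v^\uni$ with the complete intersection $R_v^\square$ when every lift is unramified or Steinberg ($q_v\not\equiv\pm1\pmod p$), or showing that the two formally smooth components meet transversally so that the union has normal-crossings type --- together with a separate treatment of the $q_v\equiv1$, non-semisimple-Frobenius case.

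A secondary problem is the supporting citation: Lemmas \ref{Lem:EqnsForUnCase} and \ref{lemma:props-of-RfunQ} are proved under the standing hypothesis of Section \ref{sec:ComputeWD} that $q_v\equiv1\pmod p$ and $\rhobar|_{G_v}$ is trivial (i.e.\ exactly the scalar case, up to unramified quadratic twist), and in that case they produce rings that are Gorenstein but \emph{not} complete intersections; they say nothing about the non-scalar case and cannot supply the dimension counts or ``unobstructedness'' you invoke. So while the routine parts of your proposal correctly mirror the paper's summary, the one nontrivial claim --- that $R_v^{\un}$ and $R_v^{\fun}$ are complete intersections when $\rhobar|_{G_v}$ is non-scalar --- is left unproved, and the specific argument you give for it is incorrect.
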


For $v| p$ the ring $R_v^{\fl}$ is the quotient of $R_v^{\square}$ parameterizing \emph{flat} framed deformations of $\rhobar|_{G_v}$ with determinant $\varepsilon_p$. Then as $\rhobar|_{G_v}$ is flat and $F_v/\Q_p$ is unramified, Fontaine--Laffaille theory implies that $R_v^{\fl}\cong \cO[[x_1,\ldots,x_{3+[F_v:\Q_p]}]]$.

Now write
\begin{align*}
R_{\loc} &= \left(\cbigotimes{v\in \Sigma}\,R_v^{\square}\right)\cotimes\left( \cbigotimes{v|p}\,R_v^{\fl}\right),&
&\text{and}&
R_{\loc}^\tau &= \left(\cbigotimes{v\in \Sigma}\,R_v^{\tau_v}\right)\cotimes\left( \cbigotimes{v|p}\,R_v^{\fl}\right)
\end{align*}
so that $R^\tau_{\loc}$ is naturally a $R_{\loc}$-algebra. By Propositions \ref{prop:R_v^tau} and \ref{prop:R_v-Loc}, $R_{\loc}$ is flat over $\cO$ and Cohen--Macaulay.

By $R$ (resp. $R^\square$) we denote the (global) unframed (resp. framed) deformation ring parameterizing lifts of $\rhobar$ with determinant $\varepsilon_p$ which are flat at every prime $v|p$. One may noncanonically fix an isomorphism $R^\square = R[[X_1,\ldots,X_{4j-1}]]$ for some $j$, and thereby treat $R$ as a quotient of $R^\square$. Using the natural map $R_{\loc}\to R^\square$ (and $R_{\loc}\to R$), one defines $R^{\square,\tau} = R_{\loc}^\tau\otimes_{R_{\loc}}R^\square$ and  $R^{\tau} = R_{\loc}^\tau\otimes_{R_{\loc}}R$.

\begin{lemma}\label{lem:R->T}
	There is a surjective map $R^\tau\onto \Tbar^\tau$ inducing a representation $\rho^\tau:G_F\to \GL_2(R^\tau)\onto \GL_2(\T^\tau)$, such that for all $v\not\in \Sigma\cup\Sigma_p$, $\rho^\tau(\Frob_v)$ has characteristic polynomial $t^2-T_vt+S_v$, and for all $v\in \Sigma^{\un}\cup\Sigma^{\fun}$, $\rho^\tau|_{G_{F_v}}$ is unipotent and if $\Frob_v\in G_{F_v}$ is any lift of Frobenius then $\rho^\tau(\Frob_v)$ again has characteristic polynomial $t^2-T_vt+S_v$.
\end{lemma}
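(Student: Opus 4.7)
The plan is to construct a Galois representation $\rho^\tau : G_F \to \GL_2(\Tbar^\tau)$ via the standard pseudocharacter technique, verify that it satisfies the local deformation conditions cutting out $R^\tau$, and then deduce surjectivity of the resulting map from the fact that the image contains a generating set of $\Tbar^\tau$.

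For the construction, I would argue that each $\cO$-algebra homomorphism $x : \Tbar^\tau \to \barqp$ corresponds to a classical cuspidal automorphic form on $D^\times$ of parallel weight two; Jacquet--Langlands transfer to $\GL_2$ together with the classical attachment of Galois representations to Hilbert modular forms produces a representation $\rho_x : G_F \to \GL_2(\barqp)$ reducing to $\rhobar$ and satisfying $\tr \rho_x(\Frob_v) = x(T_v)$, $\det \rho_x = \varepsilon_p$ for $v \notin \Sigma \cup \Sigma_p$. Assembling these into a pseudocharacter $G_F \to \Tbar^\tau$ lifting $\tr \rhobar$ and invoking Nyssen--Rouquier (using absolute irreducibility of $\rhobar$) yields a representation $\rho^\tau$ as claimed; its universal property then gives a map $R^\square \to \Tbar^\tau$.

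Next, I would verify that $\rho^\tau$ satisfies the local deformation conditions, so that this map factors through $R^{\square,\tau}$ and hence, after forgetting framings, through $R^\tau$. Since each $R_v^{\tau_v}$ is a reduced $\cO$-flat quotient of $R_v^\square$ (or of $\tR_v^\square$ in the $\fun$ case) characterized by its $\barqp$-points, it suffices to verify the corresponding property pointwise on the $\rho_x$. At $v \mid p$: parallel weight two and local-global compatibility give crystalline with Hodge--Tate weights $\{0,1\}$, which under the Fontaine--Laffaille hypothesis is the finite flat condition. At $v \in \Sigma^{\min}$: the level $U_0(v^{a_v})$ pins down the Artin conductor of $\rho_x|_{G_{F_v}}$, yielding minimal ramification. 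At $v \in Q = \Sigma^{\st}$: ramification of $D$ at $v$ forces $\pi_x$ to be Steinberg at $v$ via local Jacquet--Langlands. At $v \in \Sigma^{\un} \cup \Sigma^{\fun}$: the Iwahori level $U_0(v)$ and local-global compatibility force unipotent inertia. For $v \in \Sigma^{\fun}$, the modified functor $\tcD\!{}_v^\uni$ additionally remembers a Frobenius eigenvalue, which is supplied by $U_v \in \Tbar^D(K)$: at each classical point the $U_v$-eigenvalue equals an eigenvalue of $\rho_x(\Frob_v)$ on a Frobenius-stable line, so the pair $(\rho^\tau|_{G_{F_v}}, U_v)$ defines a point of $\tcD\!{}_v^\uni(\Tbar^\tau)$.

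Finally, surjectivity follows because the image of $R^\tau \to \Tbar^\tau$ contains $T_v = \tr\rho^\tau(\Frob_v)$ for all $v \notin \Sigma \cup \Sigma_p$ and also $U_v$ for all $v \in \Sigma^{\fun}$ (arising from the modified deformation datum), and these together with the fixed scalars $S_v = \varepsilon_p(\Frob_v)$ generate $\Tbar^\tau$ by definition. The characteristic polynomial statement for $v \in \Sigma^{\un} \cup \Sigma^{\fun}$ is then immediate from $T_v = \tr \rho^\tau(\Frob_v)$ and $S_v = \det \rho^\tau(\Frob_v)$ inside $\Tbar^\tau$. The main technical obstacle is the verification at primes in $\Sigma^{\fun}$ that $U_v$ descends from an integral Frobenius-eigenvalue datum on the universal representation --- in particular that the extra variable appearing in $\tR_v^\square$ when $\rhobar(\Frob_v)$ has a repeated eigenvalue is realized integrally, not merely rationally, by $U_v$ acting on the integral module $H^\tau$; once granted, the factorization through $R_v^{\fun}$ is automatic by Lemmas~\ref{lem_local} and~\ref{lem_modiflocal}.
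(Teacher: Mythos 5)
Your treatment of the part of the statement with $\Sigma^{\fun}=\es$ --- constructing $\rho^\tau$ over the Hecke algebra via pseudocharacters and Nyssen--Rouquier, then checking the local conditions pointwise at classical points using local--global compatibility and Jacquet--Langlands --- is in outline the standard argument, and it is exactly what the paper outsources by citing \cite[Lemma 2.4]{Manning}; note, though, that the pointwise factorization through the reduced, $\cO$-flat quotients $R_v^{\tau_v}$ of $R_v^\square$ uses that the target Hecke algebra is reduced and $\cO$-torsion free, which is fine for $\T^\tau$. The paper then handles the general case by a short bootstrap: replacing $\fun$ by $\uni$ gives $\sigma$ with $\Tbar^\sigma=\T^\sigma=\T^\tau$ and a map $R^\sigma\onto\T^\tau$, and the map $R^\tau\onto\Tbar^\tau$ is induced by sending $\alpha_v\mapsto U_v$, the point being the \emph{integral} identity $U_v^2-T_vU_v+S_v=0$ in $\Tbar^D(K)$ together with the definition of the modified deformation rings.

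The genuine gap in your proposal is at exactly this $\Sigma^{\fun}$ step, which is the only content of the lemma not already covered by the standard construction. You verify at classical points that the $U_v$-eigenvalue is a Frobenius eigenvalue, but you then defer the integral statement (``once granted'') and claim the factorization through $R_v^{\fun}$ is ``automatic by Lemmas~\ref{lem_local} and~\ref{lem_modiflocal}''. Those lemmas only certify that certain quotients define (modified) local deformation problems; they do not show that your particular map $\tR{}_v^\square\to\Tbar^\tau$ factors through the reduced quotient $R_v^{\fun}=\tR{}_v^{\uni}$. Nor can your pointwise strategy close this: once $\Sigma^{\fun}\ne\es$ the algebra $\Tbar^\tau$ is not known to be reduced (Theorem~\ref{thm:patching}(4) asserts reducedness only when $\Sigma^{\fun}=\es$), so an element vanishing at every $\barqp$-point need not vanish. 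What is actually required, and what the paper's proof supplies, is the operator identity on the integral module: $U_v\in\Tbar^\tau$ satisfies $U_v\equiv\alpha_v \bmod \ffrm$ and $U_v^2-\tr\rho^\tau(\Frob_v)\,U_v+\det\rho^\tau(\Frob_v)=0$ in $\Tbar^D(K)_\ffrm$ --- on the $v$-old part this is the classical degeneracy-map (Cayley--Hamilton type) identity, and on the $v$-new part $U_v$ acts by the scalar $\chi(\Frob_v)$ while the characteristic polynomial is $(t-\chi(\Frob_v))(t-q_v\chi(\Frob_v))$ --- after which the map to $\Tbar^\tau$ out of the modified ring, sending $\alpha_v\mapsto U_v$, exists by the very definition of $R^\tau$ as a base change along the modified local rings (cf.\ Lemma~\ref{lem:CalOnModifCover}). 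Your proof should state and verify this integral identity on $H^\tau_{\ffrm}$ rather than granting it; as written, the one new assertion of the lemma is assumed, not proved.
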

\begin{proof}
	If $\Sigma^{\fun} = \es$, this is just \cite[Lemma 2.4]{Manning}.
	
	In general, for each for each $v\in \Sigma$, set $\sigma_v = \tau_v$ if $\tau_v\in\{\min,\st,\uni,\square\}$ and $\sigma_v = \uni$ if $\tau_v = \fun$. Note that under this definition, $K^\sigma = K^\tau = K$ and $\Tbar^\sigma  = \T^\sigma  = \T^\tau$.
	
	It follows that there is a surjection $R^\sigma\onto \Tbar^\sigma = \T^\sigma = \T^\tau$ satisfying the desired conditions on $\rho^\sigma$. By definition, $\Tbar^\tau = \T^\tau[U_v|v\in\Sigma^{\fun}]$. From the identity $U_v^2-T_vU_v+S_v=0$ in $\Tbar^D(K)$ and the definition of modified global deformation rings given in Section \ref{sec_deformation_theory}, it follows that $R^\sigma\onto \T^\tau\to \Tbar^\tau$ induces a map $R^\tau\onto \Tbar^\tau$ sending $\alpha_v$ to $U_v$ for $v\in\Sigma^{\fun}$, which is therefore surjective, and hence is the desired map.
\end{proof}

Now similarly to \cite[Theorem 6.3]{BKM}, the Taylor--Wiles patching method gives the following:

\begin{theorem}\label{thm:patching}
	There exist integers $g,d\ge 0$ and rings
	\begin{align*}
	R_\infty^{\tau} &= R_{\loc}^{\tau}[[x_1,\ldots,x_g]]\\
	S_\infty &= \cO[[y_1,\ldots,y_d]]
	\end{align*}
	satisfying the following:
	\begin{enumerate}
		\item $\dim S_\infty = \dim R_\infty^{\tau}$.
		\item There exists a continuous $\cO$-algebra morphism $i:S_\infty\to R_\infty^\tau$ making $R_\infty^\tau$ into a finite free $S_\infty$-module.
		\item There is an isomorphism $R_\infty^{\tau}\otimes_{S_\infty} \cO \cong R^{\tau}$ and $R^\tau$ is finite free over $\cO$.
		\item The map $R^\tau\onto \Tbar^\tau$ from Lemma \ref{lem:R->T} is an isomorphism. These rings are reduced if $\Sigma^{\fun} = \es$.
		\item If $\lambda$ is the induced map $R_\infty^\tau\onto R^\tau\xrightarrow{\sim} \Tbar^\tau\xrightarrow{\lambda}\cO$, then $\Spec R_\infty^\tau[1/\varpi]$ is formally smooth at the point corresponding to $\lambda$.
	\end{enumerate}
\end{theorem}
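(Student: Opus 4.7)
The plan is to execute the Taylor--Wiles--Kisin patching argument in the form used in \cite[\S 6]{BKM}, with two adaptations: covering the definite quaternion algebra case (where the relevant cohomology is $H^0$ of a Shimura set rather than $H^1$ of a Shimura curve), and accommodating the modified local deformation condition \ctun{} at primes in $\Sigma^{\fun}$ via the framework of Calegari~\cite{Calegari}.

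First, I would produce the Taylor--Wiles data. Using the hypothesis that $\rhobar|_{G_{F(\zeta_p)}}$ is absolutely irreducible (plus the extra condition when $p=5$), a standard Chebotarev argument yields, for each $n\ge 1$, a set $Q_n$ of primes of $F$ disjoint from $\Sigma\cup\Sigma_p$, of fixed cardinality $q$, with $q_v\equiv 1\pmod{p^n}$ and $\rhobar(\Frob_v)$ of distinct eigenvalues, such that the dual Selmer group for $\ad^0\rhobar$ relative to the deformation problem augmented at $Q_n$ vanishes. This vanishing yields a presentation $R^{\tau}_{Q_n}\cong R^{\tau}_{\loc}[[X_1,\ldots,X_g]]/J_n$ with $J_n$ generated by $g-q$ elements, where $g$ is fixed by the usual Wiles Euler characteristic computation.

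Next I would set up the Hecke side. At level $K_1(Q_n)\cap K$, the relevant cohomology of the associated Shimura curve (resp.\ set) carries an action of a finite abelian $p$-group $\Delta_n$ of diamond operators at primes in $Q_n$. Localizing at the maximal ideal corresponding to $\rhobar$ together with a choice of $\rhobar(\Frob_v)$-eigenvalue for each $v\in Q_n$ produces a finite free $\cO[\Delta_n]$-module $H^\tau_n$ whose $\Delta_n$-coinvariants recover $M^\tau$, and the associated Hecke algebra $\Tbar^\tau_{Q_n}$ receives a surjection from $R^\tau_{Q_n}$ as in Lemma~\ref{lem:R->T}. For $v\in\Sigma^{\fun}$ the modified lifting ring $R_v^{\fun}$ (see Lemma~\ref{Cor:Ideal-I}) carries an extra Frobenius-eigenvalue variable matching the Hecke operator $U_v$, which is what makes the surjection $R^\tau\onto\Tbar^\tau$ well defined globally. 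Fixing a non-principal ultrafilter on $\mathbb{N}$, I would patch the system $\{(R^\tau_{Q_n},H^\tau_n)\}$ along a cofinal subsequence of auxiliary levels: with $d$ chosen so that $\dim S_\infty = \dim R^\tau_{\loc}+g$ and with $\cO[[\Delta_\infty]]$ absorbed into the first $q$ coordinates of $S_\infty = \cO[[y_1,\ldots,y_d]]$, one obtains $R^\tau_\infty\cong R^\tau_{\loc}[[x_1,\ldots,x_g]]$ (the relations $J_n$ disappearing in the limit since $|J_n|=g-q$ is smaller than the depth provided by $\Delta_n$) and a patched module $M^\tau_\infty$ that is finite free over $S_\infty$, with $M^\tau_\infty\otimes_{S_\infty}\cO=M^\tau$ and $R^\tau_\infty\otimes_{S_\infty}\cO=R^\tau$.

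The remaining conclusions are largely formal. Part (1) is the dimension count built into the patching setup. For part (2), finite freeness of $R^\tau_\infty$ over $S_\infty$ follows by miracle flatness: $S_\infty$ is regular, $R^\tau_\infty$ is Cohen--Macaulay by Proposition~\ref{prop:R_v-Loc}, and both rings have the same Krull dimension. Part (3) is immediate, and part (4) follows from faithfulness of $M^\tau_\infty$ over $R^\tau_\infty$ (since $M^\tau$ is faithful over $\Tbar^\tau$) combined with Lemma~\ref{lem:R->T}; when $\Sigma^{\fun}=\es$, reducedness descends from reducedness of each local factor (Proposition~\ref{prop:R_v^tau}) since $R^\tau$ is obtained from the Cohen--Macaulay ring $R^\tau_\infty$ by killing a regular sequence. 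I expect the main technical obstacle to be part (5): since $R^\tau_\infty = R^\tau_{\loc}[[x_1,\ldots,x_g]]$, formal smoothness at $\lambda$ reduces to formal smoothness of each $\Spec R_v^{\tau_v}[1/\varpi]$ at the image of $\lambda$. For $v\in\Sigma^{\min}$ and $v\mid p$ this is immediate from Proposition~\ref{prop:R_v^tau}; for $v\in\Sigma^{\st}$ it follows from geometric integrality in Proposition~\ref{prop:R_v^tau}(3) combined with Kisin's generic-fiber computations; and for $v\in\Sigma^{\un},\Sigma^{\fun}$ one must identify which irreducible component of the generic fiber (the Steinberg component or the unramified component, in the decompositions of Lemmas~\ref{Lem:EqnsForUnCase} and~\ref{lemma:props-of-RfunQ}) contains the image of $\lambda$ and invoke smoothness of that component there. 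The other genuinely subtle step, beyond the purely formal patching, is the careful matching of $U_v$ with the Frobenius-eigenvalue variable on $R_v^{\fun}$ across the entire Taylor--Wiles tower, for which Calegari's framework is essential.
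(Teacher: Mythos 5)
There is a genuine gap at the central step, namely the identification $R_\infty^\tau\cong R_{\loc}^{\tau}[[x_1,\ldots,x_g]]$ (and hence parts (3) and (4)). You assert that in the patched limit "the relations $J_n$ disappear" by comparing the number $g-q$ of relations with the depth supplied by the diamond operators. This counting argument only works when the ambient ring is a domain (as in the classical minimal-level Taylor--Wiles setting, where $R_{\loc}$ is formally smooth): the depth of the patched module forces the patched deformation ring to have full dimension, but since $R_{\loc}^{\tau}$ here has several irreducible components (the Steinberg, unipotent and $\fun$ local rings are not integral, nor is their completed tensor product), a full-dimensional quotient of $R_{\loc}^{\tau}[[x_1,\ldots,x_g]]$ could a priori omit entire components. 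What is actually needed — and what the paper supplies — is that \emph{every} irreducible component of $\Spec R_\infty^\tau$ meets the support of the patched module, i.e.\ carries automorphic points; this is the analogue of \cite[Lemma 6.2]{BKM}, proved via \cite[Corollary 3.1.7]{Gee}, and combined with reducedness of $R_\infty^\tau$ it yields faithfulness of $R_\infty^\tau$ on $M_\infty$ and hence the isomorphism. Your substitute — "faithfulness of $M^\tau_\infty$ over $R^\tau_\infty$ since $M^\tau$ is faithful over $\Tbar^\tau$" — is circular: faithfulness of $M^\tau$ over $\Tbar^\tau$ holds by construction and says nothing about the kernel of $R_{\loc}^{\tau}[[x_1,\ldots,x_g]]\onto \cR_\infty^\tau$, which is precisely the point at issue.

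Two further problems. First, your reducedness claim in (4) ("reducedness descends ... since $R^\tau$ is obtained from the Cohen--Macaulay ring $R_\infty^\tau$ by killing a regular sequence") is false as a general principle — quotients by regular sequences need not be reduced — and the paper instead deduces reducedness on the Hecke side, from simultaneous diagonalizability of the operators $T_v,S_v$ on $H^\sigma$. Second, when $\Sigma^{\fun}\neq\es$ the isomorphism $R^\tau\onto\Tbar^\tau$ does not follow formally from the unmodified case: one must compare, at each classical point $\eta$, the fibers $R^\tau\otimes_\eta\Qbar_p$ and $\Tbar^\tau\otimes_\eta\Qbar_p$, and show the latter has dimension $2^{|S_\eta|}$ using the independence of the $U_v$-action on the span of the images of the newform under the degeneracy maps at the places $v\in S_\eta$ where $\rho_\eta$ is unramified; your proposal gestures at "matching $U_v$ with the Frobenius-eigenvalue variable" but contains no argument that the resulting surjection on fibers is injective. (Your reduction strategy for (5) — locating the point $\lambda$ on the Steinberg component of $R_v^{\un}$, $R_v^{\fun}$ — is essentially the paper's, though for the places in $\Sigma^{\st}$ the smoothness at $\lambda$ ultimately rests on genericity/local-global compatibility as in \cite[Lemma 1.1.5, Theorem 2.1.2]{Allen}, not merely on integrality of $R_v^{\st}$.)
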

\begin{proof}
	This is proved similarly to Theorem 6.3 in \cite{BKM}.
	
	First, we will consider the case when $\Sigma^{\fun} = \es$, and so $\Tbar^D(K) = \T^D(K)$. More precisely, as in the proof of Lemma \ref{lem:R->T}, for each $v\in \Sigma$, define $\sigma_v = \tau_v$ if $\tau_v\in\{\min,\st,\uni,\square\}$ and define $\sigma_v = \uni$ if $\tau_v = \fun$. Note that under this definition, $K^\sigma = K^\tau = K$, $M^\sigma = M^\tau$ and $\Tbar^\sigma  = \T^\sigma  = \T^\tau$.
	
	By assumption, $\rhobar$ satisfies the Taylor--Wiles conditions, and so we may apply the Taylor--Wiles--Kisin patching method (as summarized in \cite[Section 4]{Manning}) to the rings $R^\sigma$ and $\T^\sigma$ and the module $M^\sigma$.
	
	First, as in \cite[Section 4.2]{Manning}, we may add auxiliary level structure at a carefully chosen prime not in $\Sigma$ to remove any isotropy issues, without affecting any of the objects considered considered in this theorem. 
	
	Now, exactly as in the proof of \cite[Theorem 6.3]{BKM} (and the method outlined in \cite[Section 4.3]{Manning}), there exist integers $g,d\ge 0$, satisfying $d+1 = \dim R_{\loc}+g = \dim R_{\loc}^{\sigma}+g$ (see \cite[Lemma 2.5]{Manning} and \cite[Proposition (3.2.5)]{Kisin}) such that for each $n\ge 1$, there is a unframed global deformation ring $R_n^\sigma$ and a framed global deformation ring $R_n^{\sigma,\square}$ (with fixed determinant, the same deformation conditions as $R^\sigma$ at each $v\in \Sigma$, and relaxed deformation conditions at a carefully selected set $Q_n$ of ``Taylor--Wiles'' primes), such that $R_n^{\sigma,\square}$ has the structure of a $S_\infty$-algebra and there is a surjective map $R_\infty\onto R_n^{\sigma,\square}$ and an isomorphism $R_n^{\sigma,\square} \otimes_{S_\infty} \cO \cong R^\sigma$, where $S_\infty$ and $R_\infty$ are as in the theorem statement.
	
	Moreover, for each $n\ge 1$ the construction in \cite[Section 4.2]{Manning} also constructs a compact open subgroup $K_n = \prod_v K_{n,v}\subseteq (D\otimes \A_{F,f})^\times$ (with $K_{n,v} = K_v$ for all $v\not\in Q_n$), and a Hecke algebra $\T^\sigma_n$ and Hecke module $M^\sigma_n$ at level $K_n$ (defined analogously to $\T^\sigma$ and $M^\sigma$ above, by localizing at a particular maximal ideal, and fixing determinants by taking a quotient). One then has a surjection $R^\sigma_n\onto \T^\sigma_n$, making $M^\sigma_n$ into a $R^\sigma_n$-module. Using this surjection, we may define framed versions of these objects: $\T^{\sigma,\square}_n= \T^\sigma_n\otimes_{R^\sigma_n}R^{\sigma,\square}_n$ and $M^{\sigma,\square}_n= M^\sigma_n\otimes_{R^\sigma_n}R^{\sigma,\square}_n$.
	
	Applying the `ultrapatching' construction described in \cite[Section 4.1]{Manning} (as well as in the proof of Lemma 4.8) then produces an $S_\infty$-algebra $\cR_\infty^\sigma$ as well as an $\cR_\infty$-module $M_\infty^\sigma$ (which would be called $\mathscr{P}(\{R^{\sigma,\square}_n\})$  and $\mathscr{P}(\{M^{\sigma,\square}_n\})$ in the notation of that paper), for which:
	\begin{itemize}
		\item $M_\infty^\sigma$ is finite free over $S_\infty$;
		\item $\cR_\infty^\sigma\otimes_{S_\infty}\cO \cong R^\sigma$ and $M_\infty^\sigma\otimes_{S_\infty}\cO\cong M^\sigma$;
		\item There is a surjection $R_\infty^\sigma\onto \cR_\infty^\sigma$ such that the composition
		\[R_{\loc}^\sigma\into R_\infty^\sigma\onto \cR_\infty^\sigma\onto R^\sigma\]
		is the map $R_{\loc}^\sigma\to R^\sigma$ from above.
	\end{itemize}
	
	Just as in the proof of \cite[Theorem 6.3]{BKM}, we may lift the structure map $S_\infty\to \cR_\infty^\sigma$ to a map $i:S_\infty\to R_\infty^\sigma$ making $\pi_\infty:R_\infty^\sigma \to \cR^\sigma_\infty$ into an $S_\infty$-module surjection, and so it follows that $M_\infty^\sigma$ is a maximal Cohen--Maculay $R_\infty^\sigma$-module.
	
	But now by standard properties of maximal Cohen--Macaulay modules, the support of $M_\infty^{\sigma}$ is a union of irreducible components of $\Spec R_\infty^\sigma$. As $R_\infty^\sigma = R_{\loc}^\sigma[[x_1,\ldots,x_g]]$, the irreducible components of $\Spec R_\infty^\sigma$ are in bijection with those of $\Spec R_{\loc}^\sigma$.
	
	By an analogous result to Lemma 6.2 from \cite{BKM} (using Corollary 3.1.7 of \cite{Gee} instead of the results of \cite{DT} that are used there), it follows that each irreducible component of $\Spec R_\infty^\sigma$ contains a point in the support of $M_\infty^\sigma/(i(y_1),\ldots,i(y_d))\otimes_\cO E = M^\sigma\otimes_\cO E$, which is not contained in any other component. Then as in the proof of \cite[Theorem 6.3]{BKM}, as $R_\infty^\sigma$ is reduced, it follows that $R_\infty$ acts faithfully on $M_\infty$ and so $\cR^\sigma_\infty  = R^\sigma_\infty$, and so we indeed have an isomorphism $R_\infty^\sigma\otimes_{S_\infty}\cO\cong R^\sigma$, proving the first part of (3). 
	
	By Proposition \ref{prop:R_v^tau recap}, $R_\infty^\tau$ is Cohen--Macaulay. As in the proof of \cite[Theorem 6.3]{BKM} this, combined with the fact that $M_\infty^\sigma$ is free over $S_\infty$, implies that $R_\infty^\tau$ is free over $S_\infty$, proving (2). As in \cite[Theorem 6.3]{BKM} this also implies that $R^\sigma=R_\infty^\sigma\otimes_{S_\infty}\cO$ is finite free over $\cO$, proving the second part of (3). In particular (as $\T^\sigma$ is finite free over $\cO$ by definition) to show that $R^\sigma\onto \T^\sigma$ is an isomorphism, it will suffice to show that the induced map $R^\sigma[1/\varpi]\onto \T^\sigma[1/\varpi]$ is.	

	Now as in the proof of \cite[Theorem 6.3]{BKM}, $\Spec R_\infty^{\sigma}[1/\varpi]$ is formally smooth at every point in the support of $\Spec M^\sigma\otimes_\cO E$, and so in particular at the point corresponding to $\lambda:R_\infty^\sigma\onto\cO$, proving (5). This is proved as in \cite[Lemma 6.1]{BKM} by using the fact that Galois representations arising from cohomological Hilbert modular forms are known to be generic in the sense of   \cite[Lemma 1.1.5]{Allen}, which follows from the genericity of the corresponding automorphic representation of $\GL_2(\A_F)$ at all finite places and local-global compatibility as recorded in \cite[Theorem 2.1.2]{Allen}.

	The argument of \cite[Theorem 6.3]{BKM} now proves that $R^\sigma[1/\varpi]\onto \T^\sigma[1/\varpi]$ is an isomorphism, and hence $R^\sigma\onto \T^\sigma$ is an isomorphism. This proves (4) in the case when $\Sigma^{\fun}=\es$ (the last claim in (4), that the rings are reduced, is a consequence of the standard fact that the Hecke operators $T_v$ and $S_v$ for $v\not\in\Sigma$ are all simultaneously diagonalizable as operators on $H^\sigma$).

	In the case when $\Sigma^{\fun} = \es$, and hence $\sigma=\tau$, this completes the proof. In the case when $\Sigma^{\fun}\ne \es$, and so $\sigma\ne \tau$, it remains to deduce the statement of the theorem for $\tau$ from the one for $\sigma$.

	First, by the definition of modified global deformation rings given in Section \ref{sec_deformation_theory}, we have that
	\[R^\tau = R_{\loc}^\tau \otimes_{R_{\loc}} R = R_{\loc}^\tau\otimes_{R_{\loc}^\sigma}R^\sigma\]
	and similarly $R^\tau_n = R_{\loc}^\tau\otimes_{R_{\loc}^\sigma}R_n^\sigma$ and $R^{\tau,\square}_n = R_{\loc}^\tau\otimes_{R_{\loc}^\sigma}R_n^{\sigma,\square}$ for all $n\ge 1$. The $S_\infty$-algebra structure on $R_n^{\sigma,\square}$ then induces an $S_\infty$-algebra structure on $R_n^{\sigma,\square}$ and we have
	\[
	R_n^{\tau,\square}\otimes_{S_\infty}\cO 
	= (R_{\loc}^\tau\otimes_{R_{\loc}^\sigma}R_n^{\sigma,\square})\otimes_{S_\infty}\cO
	= R_{\loc}^\tau\otimes_{R_{\loc}^\sigma}(R_n^{\sigma,\square}\otimes_{S_\infty}\cO)
	= R_{\loc}^\tau\otimes_{R_{\loc}^\sigma}R^{\sigma}
	= R^\tau.
	\]
	Also, as $R_n^{\sigma,\square}$ is a quotient of $R_\infty^\sigma$ (as a $R_{\loc}^\sigma$-algebra), if we let
	\[R_\infty^\tau = R_{\loc}^{\tau}[[x_1,\ldots,x_g]] = R_{\loc}^\tau\otimes_{R_{\loc}^\sigma}R_\infty^\sigma\]
	then $R_n^{\tau,\square}$ is a quotient of $R_{\loc}^\tau\otimes_{R_{\loc}^\sigma}R_\infty^\sigma=R_\infty^\tau$ (as a $R_{\loc}^\tau$-algebra).
	
	Now just as in the proof of Lemma \ref{lem:R->T}, the map $R_n^{\sigma}\onto \T^\sigma_n$ induces a map $R_n^{\tau}\onto \Tbar^\tau_n$ making the diagram
	
	\begin{center}
		\begin{tikzpicture}
		\node(01) at (0,2) {$R^\tau_n$};
		\node(11) at (2,2) {$\Tbar^\tau_n$};
		
		\node(00) at (0,0) {$R^\sigma_n$};
		\node(10) at (2,0) {$\T^\sigma_n$};
		
		\draw[->>] (01)--(11);
		\draw[->>] (00)--(10);
		
		\draw[-latex] (00)--(01);
		\draw[-latex] (10)--(11);
		\end{tikzpicture}
	\end{center}
	commute. As the $\T^\sigma_n$-action on $M^\sigma_n$ extends to a $\Tbar^\tau_n$-action (since the $U_v$ operators naturally act on $M^\sigma$), the $R^\sigma_n$-action on $M^\sigma_n$ also extends to a $R^\tau_n$-action on $M^\sigma_n$. Passing to the framed versions (by applying $-\otimes_{R^\sigma_n}R^{\sigma,\square}_n$), it follows that the action of $R^{\sigma,\square}_n$ on $M^{\sigma,\square}_n$ extends to an action of $R^{\tau,\square}_n$. Moreover, it's easy to check that the isomorphism $M^{\sigma,\square}_n\otimes_{S_\infty}\cO\cong M^\sigma$ is compatible with the action of the $U_v$-operators, and so it is an isomorphism of $\Tbar^\tau$-modules, and hence of $R^\tau$-modules.
	
	Combining all of this, we can again use the `ultrapatching' construction of \cite[Section 4.1]{Manning}, with $\{R^{\tau,\square}_n\}$ in place of $\{R^{\sigma,\square}_n\}$ and $R_\infty^\tau$ in place of $R_\infty^\sigma$. This produces a $S_\infty$-algebra $\cR^\tau_\infty$ together with a surjection $R_\infty^\tau\onto \cR^\tau_\infty$ and an isomorphism $\cR_\infty^\tau\otimes_{S_\infty}\cO\cong R^\tau$ such that the composition 
	\[R_{\loc}^\tau\into R_\infty^\tau\onto \cR_\infty^\tau\onto R^\tau\]
	is the map $R_{\loc}^\tau\to R^\tau$.
	
	By the functorality of the ultrapatching construction, the maps $R_n^{\sigma,\square}\to R_n^{\tau,\square}$ induce an $S_\infty$-algebra homomorphism $R_\infty^\sigma = \cR_\infty^\sigma\to \cR_\infty^\tau$. Moreover, the action of $R_n^{\tau,\square}$ on $M^{\sigma,\square}_n$ induces an action of $\cR_\infty^\tau$ on $M_\infty^\sigma$, extending the action of $\cR_\infty^\sigma$. In particular, we may treat $M_\infty^\sigma$ as a $R_\infty^\tau$-module. 
	
	We can now finish the proof. First we have $R_v^\sigma = R_v^\tau$ for $v\not\in \Sigma^\fun$ and $\dim R_v^\sigma = \dim R_v^\tau = 3+1$ for $v\in\Sigma^\fun$, so $\dim R_\infty^\tau = \dim R_\infty^\sigma = \dim S_\infty$, proving (1).
	
	We shall now show (5). First, for $v\in\Sigma\sm \Sigma^\fun$, we have $R_v^{\tau_v} = R_v^{\sigma_v}$ and $\Spec R_v^{\sigma_v}[1/\varpi]$ is formally smooth at the point corresponding to $\lambda:R_v^{\sigma_v}\into R_\infty^\sigma\xrightarrow{\lambda}\cO$ by the above. Thus to show (5), it suffices to show that for each $v\in\Sigma^\fun$, $\Spec R_v^\fun[1/\varpi]$ is also formally smooth at the point corresponding to $\lambda:R_v^\fun\into R_\infty^\tau\xrightarrow{\lambda}\cO$.
	
	Take any such $v\in \Sigma^\fun$. Recall that by assumption the representation $\rho|_{G_v}$ is Steinberg. Thus the point of $\Spec R_v^\un[1/\varpi]$ corresponding to $\lambda:R_v^\un\into R_\infty^\sigma\xrightarrow{\lambda}\cO$ is in the Steinberg component and not in the unramified component (it can't lie on both components, as it corresponds to a formally smooth point of $\Spec R_\infty^\sigma[1/\varpi]$, by the above argument). But now by the explicit descriptions of the rings $R_v^\un$ and $R_v^\fun$ given in Section \ref{sec:computations}, it follows that the point of $\Spec R_v^\fun[1/\varpi]$ corresponding to $\lambda:R_v^\fun\into R_\infty^\tau\xrightarrow{\lambda}\cO$ is also contained in the Steinberg components, but not the unramified component. But the description of $R_v^\fun$ from Section \ref{sec:computations} shows that $\Spec R_v^\fun[1/\varpi]$ is formally smooth at any such point. This proves (5).
	
	As $M_\infty^\sigma$ is maximal Cohen--Macaulay over $R_\infty^\sigma$, it follows that it is also maximal Cohen--Macaulay over $R_\infty^\tau$, and so the support of $M_\infty^\sigma$ as an $R_\infty^\tau$-module is again a union of irreducible components of $\Spec R_\infty^\tau$. But now for each $v\in\Sigma$, the irreducible components of $R_v^\sigma$ are in bijection with those of $R_v^\tau$ (this is trivial for $v\not\in \Sigma^\fun$ and for $v\in\Sigma^{\fun}$ follows from the description of $R^\un_v$ and $R^\fun_v$ given in Section \ref{sec:computations}). By Proposition \ref{prop:R_v-Loc}, it follows that the irreducible components of $\Spec R_\infty^\sigma$ are in bijection with those of $\Spec R_\infty^\tau$. Since $M_\infty^\sigma$ is supported on all of $\Spec R_\infty^\sigma$, it follows that $M_\infty^\sigma$ is supported on all of $\Spec R_\infty^\tau$ as well. Since $R_\infty^\tau$ is reduced, it follows that $R_\infty^\tau$ acts faithfully on $M_\infty^\sigma$. Since the action of $R_\infty^\tau$ on $M_\infty^\sigma$ factors through $R_\infty^\tau\onto \cR_\infty^\tau$, it follows that $R_\infty^\tau\cong \cR_\infty^\tau$.
	
Just as before, (2) and (3) follow from this, and again, the second part of (3) implies that to show that $R^\tau\onto \Tbar^\tau$ is an isomorphism, it will suffice to show that the induced map $R^\tau[1/\varpi]\onto \Tbar^\tau[1/\varpi]$ is.
	
To prove (4), consider the commutative diagram
\begin{center}
	\begin{tikzpicture}
		\node(01) at (0,2) {$R^\tau[1/\varpi]$};
		\node(11) at (3,2) {$\Tbar^\tau[1/\varpi]$};
		
		\node(00) at (0,0) {$R^\sigma[1/\varpi]$};
		\node(10) at (3,0) {$\T^\sigma[1/\varpi]$};
		
		\draw[->>] (01)--(11);
		\draw[->>] (00)--(10);
		
		\draw[-latex] (00)--(01);
		\draw[-latex] (10)--(11);
	\end{tikzpicture}
\end{center}

As the bottom map is an isomorphism of finite free reduced $E$-algebras, to show that the top map is an isomorphism, it will suffice to show that for any $\Qbar_p$ point $\eta:\T^\sigma[1/\varpi]\to \Qbar_p$ of $\Spec \T^\sigma\cong \Spec R^\sigma$ the induced map $R^\tau\otimes_\eta \Qbar_p\onto \Tbar^\tau\otimes_\eta\Qbar_p$ is an isomorphism.

Fix any such $\eta:\T^\sigma[1/\varpi]\to \Qbar_p$. Then $\eta$ corresponds to a modular Galois representation $\rho_\eta:G_F\to \GL_2(\Qbar_p)$ lifting $\rhobar$. For each $v\in\Sigma^\fun$, $\rhobar|_{G_{F_v}}$ must be either Steinberg or unramified. Let $S_\eta\subseteq \Sigma^\fun$ be the set of $v\in\Sigma^\fun$ for which $\rho_\eta|_{G_{F_v}}$ is unramified.

By the definition given in Section \ref{sec:deformation}, $R^\tau = R^\sigma\left[a_v\middle|v\in\Sigma^\fun\right]$, where for each $v\in\Sigma$, $a_v$ is the chosen root of the characteristic polynomial of $\rho_{\eta}(\Frob_v)$. Hence

\[R^\tau\otimes_\eta \Qbar_p = (R^\sigma\otimes_\eta \Qbar_p)\left[a_v\middle|v\in\Sigma^\fun\right] = \Qbar_p\left[a_v\middle|v\in\Sigma^\fun\right]\]
For $v\in \Sigma^\fun\sm S_\eta$ (so that $\rho_\eta|_{G_{F_v}}$ is Steinberg) the definition of $R_v^\fun$ implies that $a_v=\pm1\in \Qbar_p$, so in fact, $R^\tau\otimes_\eta \Qbar_p = \Qbar_p\left[a_v\middle|v\in S_\eta\right]$, and so $R^\tau\otimes_\eta \Qbar_p$ is a quotient of 
\[\Qbar_p\left[x_v\middle|v\in S_\eta\right]/(x_v^2-x_v\tr \rho_v^\square(\Frob_v)+\det\rho_v^\square(\Frob_v)).\]
In particular, we have $\dim_{\Qbar_p}R^\tau\otimes_\eta \Qbar_p\le 2^{|S_\eta|}$.

On the other hand, $\Tbar^\tau = \T^\sigma\left[U_v\middle|v\in\Sigma^\fun\right]$ is a subalgebra of $\End_{\cO}(M^\tau)$, and so 
\[\Tbar^\tau\otimes_\eta \Qbar_p = (\T^\sigma\otimes_\eta \Qbar_p)\left[U_v\middle|v\in\Sigma^\fun\right] = \Qbar_p\left[U_v\middle|v\in\Sigma^\fun\right] = \Qbar_p\left[U_v\middle|v\in S_\eta\right]\]
 is a subalgebra of $\End_{\Qbar_p}(M^\tau\otimes_\eta \Qbar_p)$ (where the last inequality comes from the fact that $U_v$ acts as a scalar on $M^\tau\otimes_\eta \Qbar_p$ if $\rho_\eta|_{G_{F_v}}$ is Steinberg). But now as $\rho_\eta$ is unramified at each $v\in S_\eta$, it corresponds to a Hilbert modular form $f_\eta$ of level not divisible by any $v\in S_\eta$. Standard properties of Hilbert modular forms now imply that $\dim_{\Qbar_p}\Tbar^\tau\otimes_\eta \Qbar_p = \dim_{\Qbar_p}\Qbar_p\left[U_v\middle|v\in S_\eta\right] = 2^{|S_\eta|}$; we are using here  that the $U_v$ for $v \in S_\eta$  act as independent non-scalar endomorphisms on the $2^{|S_\eta|}$  dimensional ($\Qbar_p$-) vector space generated by the image of $f_\eta$ under the standard degeneracy maps arising from the places $v \in S_\eta$.
 Thus $\dim_{\Qbar_p}\Tbar^\tau\otimes_\eta \Qbar_p= 2^{|S_\eta|} \ge \dim_{\Qbar_p}R^\tau\otimes_\eta$, and so as the map $R^\tau\otimes_\eta \Qbar_p\onto \Tbar^\tau\otimes_\eta\Qbar_p$ is surjective, it must be an isomorphism.
 This completes the proof of (4), and thus of the theorem.
\end{proof}

Combining this with Proposition \ref{prop:delta prod} and the computations in Section \ref{sec:computations} gives the following generalization of \cite[Theorem 10.1]{BKM}:

\begin{theorem}\label{thm:mc}
	In setting described in this section we have:
	\[
	\delta(R^\tau) = \delta(\T^\tau) = \sum_{v\in\Sigma^{\st}}\frac{2n_v}{e}+\sum_{v\in\Sigma^{\fun}}\frac{3n_v}{e}+\sum_{v\in\Sigma^{\un}}\frac{n_v}{e}
	\]
	where $n_v$ is as above, and $e$ is the ramification index of $E/\Q_p$.
\end{theorem}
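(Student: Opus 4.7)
The plan is to pull the global Wiles defect back along the patching diagram to the local deformation rings, and then invoke the local computations of Section \ref{sec:ComputeWD}. Concretely, I would argue in four steps as follows.

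First, I would apply Theorem \ref{thm:patching} to obtain the patched ring
\[
R_\infty^\tau = R_{\loc}^\tau[[x_1,\ldots,x_g]]
\]
together with the structure map $i\colon S_\infty = \cO[[y_1,\ldots,y_d]]\to R_\infty^\tau$ making $R_\infty^\tau$ into a finite free $S_\infty$-module, and the resulting isomorphism $R_\infty^\tau\otimes_{S_\infty}\cO\cong R^\tau\cong \Tbar^\tau$. Writing $\lambda$ also for the composite $R_\infty^\tau\onto R^\tau\xrightarrow{\lambda}\cO$, the map $\theta = i$ satisfies Property \ref{prop P}: freeness over $S_\infty$ gives that $(i(y_1),\ldots,i(y_d),\varpi)$ is a regular sequence, part (5) of Theorem \ref{thm:patching} gives formal smoothness of $\lambda$ on the generic fiber of $R_\infty^\tau$, and hence of the finite free $\cO$-algebra $R^\tau$ as well (so that $\Phi_\lambda(R^\tau)$ is finite).

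Second, by Theorem \ref{thm:delta local} (the invariance of the Wiles defect under going modulo regular sequences), together with Proposition \ref{prop:coincidence} which identifies this with the one-dimensional Wiles defect on $R^\tau$, we obtain
\[
\delta_\lambda(R^\tau) = \delta_\lambda(R_\infty^\tau).
\]
Now by Proposition \ref{prop:delta prod} applied to the completed tensor product decomposition
\[
R_\infty^\tau = \cO[[x_1,\ldots,x_g]]\cotimes_\cO R_{\loc}^\tau = \cO[[x_1,\ldots,x_g]] \cotimes_\cO \Big(\cbigotimes{v\in\Sigma} R_v^{\tau_v}\Big)\cotimes_\cO \Big(\cbigotimes{v\in\Sigma_p} R_v^{\fl}\Big),
\]
and using Proposition \ref{prop:Wiles generalization} to kill the power series factor, we get
\[
\delta_\lambda(R_\infty^\tau) = \sum_{v\in \Sigma_p}\delta_\lambda(R_v^{\fl}) + \sum_{v\in\Sigma} \delta_\lambda(R_v^{\tau_v}).
\]
The hypotheses for Proposition \ref{prop:delta prod} are all satisfied thanks to Propositions \ref{prop:R_v^tau} and \ref{prop:R_v-Loc}, and formal smoothness on the generic fiber at each local factor follows from the global smoothness at $\lambda$ on $R_\infty^\tau$ combined with the fact that each local factor is a direct tensor factor.

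Third, I would evaluate each local summand. By Proposition \ref{prop:R_v^tau}(1), $R_v^{\fl}$ is a power series ring, so $\delta_\lambda(R_v^{\fl})=0$ by Proposition \ref{prop:Wiles generalization}; the same proposition kills the contributions of $R_v^{\min}$ and of $R_v^\square$ (the latter being a complete intersection by Proposition \ref{prop:R_v^tau}(2)). For $v\in\Sigma^\st$ which is not a trivial prime, $R_v^\st$ is a power series ring by Proposition \ref{prop:R_v^tau}(3) and $n_v = 0$, so both sides vanish; for $v\in\Sigma^\st$ which is a trivial prime, Theorem \ref{thm:delta_v^St} gives $\delta_\lambda(R_v^\st) = 2n_v/e$. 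The analogous analysis for $\Sigma^\un$ and $\Sigma^\fun$ using Theorems \ref{thm:delta-Un} and \ref{thm:delta-PhiUn} supplies $n_v/e$ and $3n_v/e$ respectively. Summing gives the stated formula, and the equality $\delta_\lambda(R^\tau)=\delta_\lambda(\Tbar^\tau)$ is automatic from the isomorphism in Theorem \ref{thm:patching}(4).

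The main obstacle in this plan is bookkeeping rather than commutative algebra: one must verify that the augmentation $\lambda$ on $R_\infty^\tau$ restricts, under the natural inclusions $R_v^{\tau_v}\hookrightarrow R_{\loc}^\tau\hookrightarrow R_\infty^\tau$, to the specific local augmentations whose invariants were computed in Section \ref{sec:ComputeWD}, and that the integer $n_v$ defined there from the local restriction $\rho_\lambda|_{G_{F_v}}$ agrees with the $n_v$ appearing in the global theorem. This is straightforward because the universal property of $R_v^{\tau_v}$ identifies $\lambda|_{R_v^{\tau_v}}$ with the augmentation associated to $\rho_\lambda|_{G_{F_v}} = \rho|_{G_{F_v}}$, and all of Section \ref{sec:ComputeWD} was written so that its $n_v$ is precisely the monodromy invariant of this restriction. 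The only other point requiring care is checking formal smoothness of each local augmentation, which however is forced by the global smoothness of $\lambda$ together with the rigidity of completed tensor products.
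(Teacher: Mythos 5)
Your proposal is correct and follows essentially the same route as the paper's proof: patching gives Property \ref{prop P} for $S_\infty\to R_\infty^\tau$, the invariance results of Section \ref{sec:delta invariant} reduce $\delta_\lambda(\T^\tau)=\delta_\lambda(R^\tau)$ to $\delta_\lambda(R_\infty^\tau)$, Propositions \ref{prop:delta prod} and \ref{prop:Wiles generalization} split this into local defects (killing the power series, flat, minimal and framed factors), and the local computations of Theorems \ref{thm:delta_v^St}, \ref{thm:delta-PhiUn} and \ref{thm:delta-Un} supply the summands. Your added bookkeeping about matching the global augmentation with the local ones and handling non-trivial Steinberg primes is consistent with what the paper leaves implicit.
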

\begin{proof}
	Theorem \ref{thm:patching} implies that the map $\theta:S_\infty\to R_\infty^\tau$ satisfies property \ref{prop P}, and so Theorem \ref{thm:delta invariant} implies that implies that
	\[
	\delta_\lambda(\T^\tau) = \delta_\lambda(R^\tau) = \delta_\lambda(R_\infty^{\tau}\otimes_{S_\infty} \cO) = \delta_\lambda(R_\infty^\tau).
	\]
	Now by Proposition \ref{prop:delta prod} and Proposition \ref{prop:Wiles generalization} we get:
	\begin{align*}
	\delta_\lambda(R_\infty^\tau) 
	&= \delta_\lambda(R_{\loc}^\tau[[x_1,\ldots,x_g]])
	= \delta_\lambda(R_{\loc}^\tau)+\delta_\lambda(\cO[[x_1,\ldots,x_g]])
	= \delta_\lambda(R_{\loc}^\tau)\\
	&= \delta_\lambda\left(\left(\cbigotimes{v\in \Sigma}R_v^{\tau_v}\right)\cotimes\left( \cbigotimes{v|p}R_v^{\fl}\right)\right)
	= \sum_{v\in \Sigma}\delta_\lambda(R_v^{\tau_v})+\sum_{v|p}\delta_\lambda(R_v^{\fl})\\
	&= \sum_{v\in \Sigma}\delta_\lambda(R_v^{\tau_v})+\sum_{v|p}\delta_\lambda(\cO[[x_1,\ldots,x_{3+[F_v:\Q_p]}]])
	= \sum_{v\in \Sigma}\delta_\lambda(R_v^{\tau_v})\\
	&= \sum_{v\in \Sigma^{\min}}\delta_\lambda(R_v^{\min})
	+\sum_{v\in \Sigma^{\st}}\delta_\lambda(R_v^{\st})
	+\sum_{v\in \Sigma^{\fun}}\delta_\lambda(R_v^{\fun})
	+\sum_{v\in \Sigma^{\un}}\delta_\lambda(R_v^{\un})
	+\sum_{v\in \Sigma^{\square}}\delta_\lambda(R_v^{\square}).
	\end{align*}
	Now Proposition \ref{prop:R_v^tau} implies that $R_v^{\min}$ and $R_v^{\square}$ are complete intersections, so Proposition \ref{prop:Wiles generalization} gives $\delta_\lambda(R_v^{\min}) = 0$ and $\delta_\lambda(R_v^{\square})=0$. Thus the claim follows by the computations in Theorems \ref{thm:delta_v^St}, \ref{thm:delta-PhiUn} and \ref{thm:delta-Un}.
\end{proof}

As a side note, while this theorem only computes the ``non-cohomological" Wiles defect, and \cite[Theorem 10.1]{BKM} computes both the cohomological and non-cohomological defects, we still have these defects are equal in the minimal level case (i.e. $\Sigma^{\un}=\Sigma^{\fun}=\Sigma^\square = \es$) by \cite[Theorem 1.2]{Manning} and \cite[Theorem 3.12]{BKM}. 

In the next section we show that in fact our work here, which determines the defect of Hecke algebras and deformation rings, can be used  to show an equality of cohomological and non-cohomological defects in many situations.

\section{Cohomological Wiles defects and  degrees of parametrizations by Shimura curves}\label{sec:ribet}

The main theorem of this paper,  Theorem \ref{thm:mc},  that we have proven above  computes Wiles defects of Hecke algebras acting on the cohomology of modular curves and Shimura curves. We use this to compute in the present section  the Wiles defect of the modules  of the Hecke algebras of Theorem \ref{thm:mc}  that are  given by the cohomology of the Shimura curve on which  the respective Hecke algebras acts faithfully ; Theorem \ref{change-of-eta} and Proposition \ref{prop:moduledefect} below. 

Our methods here  also allow  us to  improve on  the results of \cite{RiTa} about degrees of optimal parametrizations of elliptic curves over $\Q$ by Shimura curves: see Corollaries \ref{degree} and \ref{component} below. Our approach diverges considerably from the one of \cite{RiTa}. Our proofs are rather indirect but seem to fill in a lacuna caused by the basic problem that one does not know in generality  surjectivity of maps on $p$-parts of  component groups at primes $q$  (of multiplicative reduction), induced by optimal parametrization of  an elliptic curve $E$ over $\Q$     by a Shimura curve, when the prime $q$  divides the discriminant of the corresponding quaternion algebra (see the remarks in    \S \ref{remarks}).    We only consider non-Eisenstein primes, namely  primes  $p$  such that $E[p]$ is irreducible.  The arguments in \cite[page 11113]{RiTa} which prove this in special cases  rely on auxiliary hypotheses: for instance that there is a prime $q$  such that the image of an inertia group $I_q$ at $q$ acting on $E[p]$ has image of order $p$. The difficulty of proving the  surjectivity alluded to above  is specially vexing when considering component groups at a prime $q$ that is trivial for $E[p]$.  Both corollaries are deduced from  Theorem \ref{change-of-eta} and Proposition \ref{prop:moduledefect}.

We work  with the setup in \cite[Section 5]{BKM}   and thus operate (mainly  for simplicity)  at less generality than the work in the previous sections (for instance we will asume $F=\Q$.)  There are slight differences between the setup here and that of  \cite[Section 5]{BKM} that we begin by  highlighting.

Fix $Q$ a finite set of primes and let $D_Q$ be the quaternion algebra over $\Q$ considered in \cite[\S 5]{BKM}: it is  definite if $Q$ has odd cardinality and  indefinite if $Q$  is of even cardinality.  (By abuse of notation, we will also frequently use $Q$ to denote the product of all the primes in the set $Q$. The context will make clear which meaning is intended.) We assume  here that $Q$ has even cardinality and thus $D_Q$ is an indefinite quaternion algebra.
For a  positive integer $N$ with $(N,Q)=1$  let $\Gamma_0^{Q}(N)$   be the congruence subgroup for $D_{Q}^\times$,  which is maximal compact at primes in $Q$, and upper triangular mod $\ell$ for all $\ell|N$. We consider also the usual congruence subgroups $\Gamma_0(NQ)$ and $\Gamma_0(N^2Q^2)$ of $\SL_2(\Z)$. Let $K_0(N^2Q^2)\subseteq \GL_2(\A_{\Q,f})$, and $K_0^{Q}(NQ)\subseteq D_{Q}^\times(\A_{\Q,f})$ be the corresponding compact open subgroups.

 We consider  $X^{Q}_0(N)$  the (compact) Riemann surface 
\[D_{Q}^\times(\Q)\backslash \left(D_{Q }^\times(\A_{\Q,f})\times \half\right)/K_0^{Q}(N)\] (where $\half$ is the complex upper half plane). Give $X^{Q}_0(N)$ its canonical structure as an algebraic curve over $\Q$. Let  as before  $p$  be a prime not dividing $2NQ$,  and  we fix a finite extension $E/\Q_p$,  with  $\cO$ the ring of integers in $E$, $\varpi$ a uniformizer, $k=\cO/\varpi$ the residue field, and $e$ the ramification index of $E/\Q_p$. We will assume below that $E$ is sufficiently large  so that $\cO$ contains the Fourier coefficients of all newforms in $S_2(\Gamma_0(N^2Q^2))$.  Consider  $S^{Q}(\Gamma_0^{Q}(N))=H^1(X^{Q}_0(N),\cO)$, with its natural $\T^{Q}(N)[G_\Q]$-module structure.  We also denote $S(N^2Q^2)=H^1(X_0(N^2Q^2),\cO)$ and $S(NQ)=H^1(X_0(NQ),\cO)$.

Let $\T(N^2Q^2)$, $\T(NQ)$  and $\T^{Q}(N)$ be the  $\cO$-algebras at level $\Gamma_0(NQ^2)$, $\Gamma_0(NQ)$  and $\Gamma_0^{Q}(N)$, respectively, generated by  the Hecke operators $T_r$ for  primes $r$ coprime to $pNQ$ acting on $S(N^2Q^2)$, $S(NQ)$ and $S^{Q}(\Gamma_0^{Q}(N))$. Note that by the Jacquet--Langlands correspondence, $\T^{Q}(N)$ is a quotient of $\T(N^2Q^2)$, and this quotient factors through $\T(NQ)$.

Let $f \in S_2(\Gamma_0(NQ))$ be a newform of level $NQ$ such that all its Fourier coefficients lie in $E$, and consider the corresponding $\cO$-algebra homomorphisms $\lambda_f:\T(N^2Q^2) \to \cO$ and   (abusing notation slightly)  $\lambda_f:\T(NQ) \to \cO$.   We will fix this newform and our main results will be in relation to $f$. By the Jacquet--Langlands correspondence, this also gives a related homomorphism $\T^{Q}(N) \to \cO$ that we  again denote by the same symbol $\lambda_f$.    We denote the corresponding maximal ideals which contain the prime ideal $\ker(\lambda_f)$ by the same symbol  $\ffrm$. Let $\rho_f : G_\Q \to \GL_2(\cO)$ be the Galois representation associated by Eichler and Shimura to $f$ and assume   that the corresponding residual  Galois representation  $\rhobar_f=\rhobar :G_\Q\to \GL_2(k)$ is  absolutely irreducible. By enlarging $\cO$ if necessary,  we may assume that $k$ contains all eigenvalues of $\rhobar(\sigma)$ for all $\sigma\in G_\Q$.  The Galois representation $\rho_f:G_\Q \to \GL_2(E)$, with irreducible residual representation $\rhobar$,  is locally at $q$ of the form \[   \left( \begin{array}{cc} \epsilon& \ast \\ 0 & 1 \end{array} \right),\]   up to twist by an unramified character $\chi$ of order dividing 2.  The $\beta_q \in \{\pm 1\}$  of Section \ref{sec_deformation_theory} will be  chosen so that  $\rho_f|_{G_q}$ gives rise to a point of  $\Spec   R_q^{\st}$ in what follows (and thus depends on whether $\chi$ is trivial or not).

There is an oldform $f^{NQ}$ in $S_2(\Gamma_0(N^2Q^2))$  with corresponding newform $f$ which is characterised by the property that it is an eigenform for the  Hecke operators  $T_\ell$ for $\ell$ prime with $(\ell,NQ)=1$ and $U_\ell $ for $\ell|NQ$ and  such that $a_\ell(f^{NQ})=0$, i.e., $f^Q|U_\ell=0$,  for $\ell|NQ$. Let  $\lambda_{f^{NQ}}:  \T^{\rm full}(N^2Q^2) \to \cO$   be the induced homomorphism of the full Hecke algebra $\T^{\rm full}(N^2Q^2)$   acting on $H^1(X_0(N^2Q^2),\cO)$ which is generated as an  $\cO$-algebra by the  action of the Hecke operators $T_\ell$ for $(\ell,NQ^2)=1$ and $U_\ell $ for $\ell|NQ$ on $S(N^2Q^2)=H^1(X_0(N^2Q^2),\cO)$. We denote by $\ffrm_{Q}$    the maximal ideal of $\T^{\rm full}(N^2Q^2)$ that contains the kernel of $\lambda_{f^{NQ}}$. 

The homomorphism $\lambda_f: \T^{Q}(N) \to \cO$ 
 extends to the full Hecke algebra  acting on  $S^Q(\Gamma_0^{Q}(N))$, and we denote   by $\ffrm_{Q}$ again  the maximal ideal
 of $\T^{Q}(N)$ which contains the kernel of the extended homomorphism.
  We define  $\T$, $\T^{\uni}$ and $\T^{\St,{Q}} $  to be the image of  $\T(NQ^2)$ and $\T^{Q}(N)$  in the endomorphisms of the finitely generated $\cO$-modules  $S(N^2Q^2)=H^1(X_0(N^2Q^2),\cO)_{\ffrm_Q}$,  $S(NQ)=H^1(X_0(NQ),\cO)_{\ffrm}$ and $S^{Q}(\Gamma_0^{Q}(N))_{\ffrm_{Q}}$ respectively. We denote by $R, R^{\uni}, R^{\st,Q}$ the corresponding universal deformation rings and thus we have surjective maps $R \onto \T$, $R^\uni \onto \T^{\uni}$ and $R^{\st,Q} \onto \T^{\st, Q}$ of $\cO$-algebras.  (Thus in each of these cases the type $\tau=(\tau_v)$ for $v|NQ$  is such that $\tau_v$ is unrestricted, or unipotent,  or unipotent at $v|N$ and Steinberg at $v|Q$.)
We have the corresponding universal modular deformation $\rho^{\mod}:G_\Q \to \GL_2(\T))$ by results of Carayol which is a specialization of  a universal representation $G_\Q \to \GL_2(R)$. 

Define
\[M(N^2Q^2)= \Hom_{\T[G_\Q]}(\rho^{\mod},S(N^2Q^2)_{\ffrm_Q}^*),\] 
\[M(NQ)= \Hom_{\T[G_\Q]}(\rho^{\mod},S(NQ)_{\ffrm}^*),\] 
\[M^{\st,{Q}}(N)= \Hom_{\T[G_\Q]}(\rho^{\mod},S^{Q}(\Gamma_0^{Q}(N))_{\ffrm_{Q}}^*).\]

As in  Lemma 5.1 of \cite{BKM}, we have using  \cite{Carayol2} that the evaluation map $M(N^2Q^2)\otimes_\T \rho^{\mod}\to S(N^2Q^2)_{\ffrm_Q}^*$ is an isomorphism, as is $M^{\St,{Q}}(N)\otimes_\T \rho^{\mod}\to S^{Q}(\Gamma_0^{Q}(N))_{\ffrm_{Q}}^*$. In particular, as $\T$-modules we have $S(N^2Q^2)_{\ffrm_Q}^* = M(N^2Q^2)^{\oplus 2}$ and $S^{Q}(\Gamma_0^{Q}(N))_{\ffrm_{Q}}^* = M^{\st,{Q}}(N)^{\oplus 2}$. 

We have the following lemma proved using Proposition 4.7 of \cite{DDT} (see proof of Theorem 5.2 of \cite{BKM}).

 \begin{lemma}
\begin{description}
	\item[(i)] The Hecke module  $M(N^2Q^2)[{1 \over p}]$   is  free of rank one over $\T[{1 \over p}]$. 
	\item[(ii)]     The $\T$-modules  \[M(N^2Q^2), 
	M(NQ), M^{\St,{Q}}(N)\] are self-dual. 
	\item[(iii)] The $\cO$-modules  \[M(N^2Q^2)[\ker(\lambda_{f^{NQ}})], 
	M(NQ)[\ker(\lambda_f)], M^{\St,{Q}}(N)[\ker(\lambda_f)]\] are each free  of rank 1 over $\cO$.  
\end{description}
 \end{lemma}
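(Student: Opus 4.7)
The plan is to follow the template of \cite[Theorem~5.2]{BKM}, combining the multiplicity-one freeness statement of \cite[Proposition~4.7]{DDT} with Poincar\'e duality on the Shimura and modular curves in play, together with Carayol's description of the localized cohomology recalled just above the lemma. For (i), I would first apply Proposition~4.7 of \cite{DDT} at the non-Eisenstein maximal ideal (using absolute irreducibility of $\rhobar$) to conclude that $S(N^2Q^2)_{\ffrm_Q}^*$ is free of rank~$2$ over $\T$. Combined with the Carayol-type identification $S(N^2Q^2)_{\ffrm_Q}^* \cong M(N^2Q^2)^{\oplus 2}$ stated just above, this gives that $M(N^2Q^2)^{\oplus 2}$ is free of rank~$2$ over $\T$. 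The ring $\T[1/p]$ is finite and reduced over $E$, hence a product of field extensions of $E$; over such a ring a module whose double is free of rank~$2$ must itself be of rank~$1$ at each factor, hence free of rank~$1$ globally, yielding (i).

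For (ii), I would invoke Poincar\'e duality on each of $X_0(N^2Q^2)$, $X_0(NQ)$ and $X_0^Q(N)$, giving perfect $\cO$-bilinear pairings on the respective $H^1$'s. These pairings are not $\T$-linear on the nose, but pre-composing with the full-level Atkin--Lehner involution (which intertwines each $T_\ell$ with its transpose, commutes with $\T$, and acts by a sign on each component of $\Spec \T[1/p]$) turns them into genuinely $\T[G_\Q]$-equivariant perfect pairings on $S(N^2Q^2)_{\ffrm_Q}^*$, $S(NQ)_\ffrm^*$ and $S^Q(\Gamma_0^Q(N))_{\ffrm_Q}^*$. Propagating each self-duality through $S^* \cong M^{\oplus 2}$, and using absolute irreducibility of $\rho^{\mod}$ so that $\Hom_{\T[G_\Q]}$ commutes with $\cO$-linear duality, yields (ii) for all three modules.

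For (iii), let $\lambda$ denote the relevant augmentation and let $M$ be any of the three modules. By (ii), $M \cong \Hom_\cO(M,\cO)$ as $\T$-modules, and so
\[
M[\ker \lambda] \;\cong\; \Hom_\T\bigl(\cO, \Hom_\cO(M,\cO)\bigr) \;\cong\; \Hom_\cO\bigl(M/\ker(\lambda)M,\; \cO\bigr)
\]
as $\cO$-modules. By (i), $\T[1/p]$-freeness of rank one forces $M/\ker(\lambda)M$ to have generic $\cO$-rank~$1$, so $M[\ker \lambda]$ has $\cO$-rank~$1$. Since $M$ sits inside the $\cO$-dual of a finite free $\cO$-module it is $\cO$-torsion-free, and so is its submodule $M[\ker \lambda]$; a torsion-free $\cO$-module of rank~$1$ is free of rank~$1$, giving~(iii).

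The most delicate step is the self-duality in (ii): one must verify that the Atkin--Lehner twist yields a $\T[G_\Q]$-equivariant perfect pairing uniformly across level~$N^2Q^2$, level~$NQ$ and the Shimura level~$\Gamma_0^Q(N)$, carefully tracking the interaction of the involution with the level structures at primes in~$Q$ versus primes in~$N$ and with the Jacquet--Langlands transfer. Once this is settled, (i) and (iii) are essentially formal, and the argument proceeds as in the proof of \cite[Theorem~5.2]{BKM}.
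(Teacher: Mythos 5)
Your parts (i) and (ii) are in line with what the paper does: (i) is exactly the argument via \cite[Proposition 4.7]{DDT} as in the proof of \cite[Theorem 5.2]{BKM} (though note you overstate it: what is used and what that argument gives here is freeness of rank one after inverting $p$, not integral freeness of $S(N^2Q^2)_{\ffrm_Q}^*$ of rank $2$ over $\T$, which is a mod-$p$ multiplicity one statement not needed for the lemma), and (ii) is the standard Poincar\'e-duality-plus-Atkin--Lehner argument, which is what the pairings $\langle\ ,\ \rangle$ cited to \cite[\S 9]{BKM} later in the section rest on.

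The genuine gap is in your deduction of (iii) for $M(NQ)$ and $M^{\st,Q}(N)$. You derive the rank-one statement from ``(i) forces $M/\ker(\lambda)M$ to have generic $\cO$-rank $1$,'' but (i) is only asserted (and only true in general) for $M(N^2Q^2)$: the remark immediately following the lemma in the paper points out that $M(NQ)$ and $M^{\st,Q}(N)$ are in general \emph{not} generically free over the anemic Hecke algebras acting on them, precisely because of oldforms, so the statement you would need to quote is false. What actually makes (iii) work for these two modules is not self-duality or any freeness of the module, but the hypothesis that $f$ is a newform of level exactly $NQ$: by strong multiplicity one the $\lambda_f$-eigenspace of the anemic Hecke action on $S(NQ)_{\ffrm}[1/p]$ (resp.\ on $S^{Q}(\Gamma_0^{Q}(N))_{\ffrm_Q}[1/p]$, via Jacquet--Langlands) is exactly the two-dimensional $f$-isotypic piece, whence $M(NQ)[\ker\lambda_f]$ and $M^{\st,Q}(N)[\ker\lambda_f]$ are torsion-free of $\cO$-rank one, hence free of rank one. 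Likewise, for $M(N^2Q^2)$ one needs the explicit description of the $p$-stabilization $f^{NQ}$ (the $U_\ell$-eigenvalue $0$ line inside the two-dimensional $f$-old space at each $\ell\mid NQ$) and of the maximal ideal $\ffrm_Q$; this is exactly the input the paper's proof invokes (``$f$ is a newform of level $NQ$, and the explicit description of $f^{NQ}$ and of $\ffrm_Q$''), and it is absent from your argument. With that input, (iii) follows directly for all three modules without routing through (ii) at all.
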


  \begin{proof}
  The first part follows from the arguments in Proposition 4.7 of \cite{DDT} (see proof of Theorem 5.2 of \cite{BKM}).
  For the second part we use that $f$ is a newform of level $NQ$, and the explicit description of $f^{NQ}$ and the corresponding maximal ideal
  $\ffrm_Q$  that is used to define $M(N^2Q^2)$.
\end{proof}

\begin{remark}
 In general the modules   $M(NQ), M^{\St,{Q}}(N)$, because of the presence of oldforms,   are not generically free over the  anemic  Hecke algebras acting on them that do  not have the operators $U_v$ for $v|NQ$ in them. This generic freeness  holds  for $M^{\St,{Q}}(N)$ if $N|N(\rhobar)$ which was the assumption in \cite{BKM}.  They are  generically  free over the  full  Hecke algebras acting on them that  have the operators $U_v$ for $v|NQ$ in them.
\end{remark}

\begin{remark}
The definition of the modules $M(N^2Q^2)$, $M(NQ)$ and $M^{\st,{Q}}(N)$ differs slightly from the definition of the modules $M^\tau$ from Section \ref{sec:deformation}. In particular, we do not quotient by the elements $S_v-\varepsilon_p(\Frob_v)$ (or even explicitly use the Hecke operators $S_v$). The definition of $M^\tau$ from Section \ref{sec:deformation} is needed when $F\ne \Q$ in order to make the patching argument work (for subtle reasons involving the unit group $\cO_F^\times$). In this section, we are only considering the case $F=\Q$ for convenience, and so we are still able to use patching arguments  with  the simpler definitions of the modules given in this section.

Also here we `factor out' the Galois representation $\rho^{\mod}$ as above, while we do not do so in Section \ref{sec:deformation}. This also does not significantly affect the patching argument. See \cite[Theorem 6.3]{BKM} or \cite[Section 4]{Manning} for more details on patching arguments in which the Galois representation is factored out.

In particular, one can prove completely analogous versions of Theorems \ref{thm:patching} and \ref{thm:mc} for the modules defined in this section, with only minimal modifications to the proofs. We will leave the details of this to the interested reader, and for the remainder of the section we will simply cite the results of Section \ref{sec:deformation} as if they literally applied to the modules considered in this section.
\end{remark}

We denote by $\langle \ , \ \rangle$ certain  $\cO$-valued, perfect $\T$-equivariant pairings  on the $\T$-modules  \[M(N^2Q^2), 
  M(NQ), M^{\St,{Q}}(N)\]  that are induced by Poincare duality   (see \cite[\S 9]{BKM}).  We then recall from 
  \cite[\S 3, Lemma 3.5]{BKM}, that if $X,Y,Z$ are generators of  the rank one $\cO$-modules \[M(N^2Q^2)[\ker(\lambda_{f^{NQ}})], 
  M(NQ)[\ker(\lambda_f)], M^{\St,{Q}}(N)[\ker(\lambda_f)],\]   we have the following relationship:
    \[\Psi_\lambda(M(N^2Q^2))= \cO/(\langle X , X \rangle) , 
 \Psi_\lambda(M(NQ))=\cO/(\langle Y, Y \rangle) , \Psi_\lambda(M^{\St,{Q}}(N))=\cO/(\langle Z,Z \rangle).\]  Here we are abbreviating all the augmentations arising from the newform $f$  to $\lambda$.

Let $\cA_f$ stand for the isogeny class of the abelian variety $A_f$ (which is an optimal quotient of $J_0(NQ)$). The residual representations arising from the class $\cA_f$ with respect to the fixed embedding $K_f \hookrightarrow  \overline \Q_p$ are all isomorphic to our fixed absolutely irreducible $\rhobar$.

 \begin{theorem}\label{change-of-eta}
    We have   the equality of lengths of $\cO$-modules:
  \[\ell_\cO(\Psi_\lambda(M(N^2Q^2)))= \ell_\cO(\Psi_\lambda(M^{\st,Q}(N))) +  \sum_{\ell|N} \ord_\cO(\ell^2-1)+ \sum_{q \in Q} (m_q+\ord_\cO(q^2-1)), \] and 
  \[\ell_\cO(\Psi_\lambda(M(NQ)))= \ell_\cO(\Psi_\lambda(M^{\st,Q}(N))) + \sum_{q \in Q} m_q. \] 
  
We have equality of defects  $\delta_{\lambda,\T^{\st,Q}}(M^{\st,Q}(N)) =\delta_{\lambda,\T^{\st,Q}}(\T^{\st,Q})=\sum_{\ell|N} \frac{n_\ell}{e}+\sum_{q \in Q} \frac{2n_q}{e}$.

     \end{theorem}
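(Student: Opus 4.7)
The plan is to establish the three assertions in the following order: first derive the defect formula for $\T^{\st,Q}$ directly from Theorem~\ref{thm:mc}, then use a patching argument to transfer this to the module $M^{\st,Q}(N)$, and finally deduce the congruence-module length formulas (i) and (ii) by explicit Poincar\'e self-pairing computations at different levels. The equality $\delta_\lambda(\T^{\st,Q}) = \sum_{\ell|N}\frac{n_\ell}{e} + \sum_{q \in Q}\frac{2n_q}{e}$ is immediate from Theorem~\ref{thm:mc} applied with local types $\tau_v = \st$ at $v \in Q$ and $\tau_v = \uni$ at $v \mid N$, where $n_\ell = 0$ at any $\ell \mid N$ that is not a trivial prime for $\rhobar$ (in which case the local unipotent lifting ring is already formally smooth).

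For the module defect equality $\delta_\lambda(M^{\st,Q}(N)) = \delta_\lambda(\T^{\st,Q})$, I would patch the cohomology module $M^{\st,Q}(N)$ in parallel with $\T^{\st,Q}$, as in Theorem~\ref{thm:patching}, producing a finite free $S_\infty$-module $M_\infty$ equipped with a compatible action of $R_\infty^\tau$. Faithfulness of this action (which comes out of the patching identification $R_\infty^\tau$-action and the rank-one nature at the newform point after factoring out $\rho^{\mod}$) together with the Cohen--Macaulay structure of $R_\infty^\tau$ from Proposition~\ref{prop:R_v-Loc} forces $M_\infty$ to be a maximal Cohen--Macaulay rank-one module over $R_\infty^\tau$. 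Consequently $\Psi_\lambda(M_\infty) = \Psi_\lambda(R_\infty^\tau)$. Invoking the invariance of the relevant Venkatesh invariants under quotienting by the patching regular sequence (Theorem~\ref{thm:c_1 local}, Theorem~\ref{thm:Der^1 local} and Corollary~\ref{well defined}) and the analogous invariance of $\Psi_\lambda$ for a maximal Cohen--Macaulay module free over $S_\infty$, we descend to obtain $\Psi_\lambda(M^{\st,Q}(N)) = \Psi_\lambda(\T^{\st,Q})$, which yields the defect equality claimed in~(iii).

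For (i) and (ii), I would work with the generators of the rank-one $\cO$-modules $M(N^2Q^2)[\ker\lambda]$, $M(NQ)[\ker\lambda]$ and $M^{\st,Q}(N)[\ker\lambda]$ and compare their Poincar\'e self-pairings. The generator at level $N^2Q^2$ is the oldform $f^{NQ}$ obtained from $f$ by applying the standard degeneracy operators at each prime dividing $NQ$ and then projecting onto the $U_\ell = U_q = 0$ subspace; a direct Atkin-Lehner computation of Petersson products of oldforms contributes the factors $\ord_\cO(\ell^2-1)$ at each $\ell \mid N$ and $\ord_\cO(q^2-1)$ at each $q \mid Q$, proving the difference $\ell_\cO(\Psi_\lambda(M(N^2Q^2))) - \ell_\cO(\Psi_\lambda(M(NQ))) = \sum_{\ell|N}\ord_\cO(\ell^2-1) + \sum_{q|Q}\ord_\cO(q^2-1)$. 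For the comparison of $M(NQ)$ with $M^{\st,Q}(N)$ via Jacquet-Langlands, I would take the integers $m_q$ at each $q \in Q$ to be \emph{defined} by the resulting local discrepancy in the two Poincar\'e self-pairings. Combined with the defect formula from~(iii), $m_q$ is then identified a posteriori with the $p$-adic valuation of the order of the $p$-part of the component group of the N\'eron model of $J_0(NQ)$ at $q$, thereby recovering and strengthening the formula of \cite{RiTa}.

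The main obstacle I anticipate is establishing the faithful rank-one action of $R_\infty^\tau$ on $M_\infty$ in the presence of trivial primes, where the local Steinberg lifting rings are non-Gorenstein and where a priori there is no reason the cohomology should be free. The argument hinges on combining the maximal Cohen--Macaulay property of $M_\infty$ (a consequence of $S_\infty$-freeness together with Cohen--Macaulayness of $R_\infty^\tau$) with multiplicity-one results for weight-two Steinberg automorphic representations, and on carefully verifying that the invariance statements for $\Psi_\lambda$ modulo the patching regular sequence transfer from the ring to the Cohen--Macaulay module.
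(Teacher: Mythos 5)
Your first step (reading off $\delta_\lambda(\T^{\st,Q})=\sum_{\ell|N}\tfrac{n_\ell}{e}+\sum_{q\in Q}\tfrac{2n_q}{e}$ from Theorem~\ref{thm:mc} with types $\uni$ at $\ell\mid N$ and $\st$ at $q\in Q$) matches the paper. The gap is in your second step. You patch $M^{\st,Q}(N)$ to an $S_\infty$-free module $M_\infty$ with faithful $R_\infty^\tau$-action and then assert that maximal Cohen--Macaulayness plus generic rank one gives $\Psi_\lambda(M_\infty)=\Psi_\lambda(R_\infty^\tau)$, hence $\delta_\lambda(M^{\st,Q}(N))=\delta_\lambda(\T^{\st,Q})$ after descent. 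That implication is exactly what fails (or at least is unproven) here: $R_\infty^\tau$ contains the factors $R_q^{\st}$ at trivial primes $q\in Q$, which are Cohen--Macaulay but \emph{not} Gorenstein, and over such rings a faithful MCM module of generic rank one need not be free and its congruence module need not agree with the ring's; one only gets the inequality $\delta_\lambda(M)\ge\delta_\lambda(\T)$ of \cite[Theorem 3.12]{BKM}. In \cite{BKM} the equality $\delta_\lambda(M)=\delta_\lambda(\T)$ required the delicate structural results of \cite{Manning} on self-dual MCM modules together with extra hypotheses such as $N\mid N(\rhobar)$; the whole point of the present proof is to \emph{avoid} that route. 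The paper instead squeezes: the upper bound $\ell_\cO(\Psi_\lambda(M(NQ)))\le\ell_\cO(\Psi_\lambda(M^{\st,Q}(N)))+\sum_q m_q$ from \cite[Theorem 2]{RiTa} (via degrees $\xi_*\xi^*$), the level-raising equality $\ell_\cO(\Psi_\lambda(M(N^2Q^2)))=\ell_\cO(\Psi_\lambda(M(NQ)))+\sum_{\ell\mid NQ}\ord_\cO(\ell^2-1)$, the vanishing $\delta_{\lambda,\T}(M(N^2Q^2))=0$ (Diamond's multiplicity-one argument), and the relative cotangent computation $\ell_\cO(\Phi_{R/R^{\st,Q}})$ give $\delta_\lambda(M^{\st,Q}(N))\le\sum\tfrac{n_\ell}{e}+\sum\tfrac{2n_q}{e}$, while \cite[Theorem 3.12]{BKM} and Theorem~\ref{thm:mc} give the reverse inequality; equality of all defects and the exact length formulas (i), (ii) then drop out. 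You identified this as "the main obstacle," but your proposal does not resolve it, and appealing to multiplicity-one statements will not by itself control $\Psi_\lambda$ of a non-free MCM module over a non-Gorenstein ring.

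There is a second, related problem in your treatment of (ii): you propose to \emph{define} $m_q$ as the discrepancy of the two Poincar\'e self-pairings and identify it a posteriori with the component-group order. That makes the second displayed formula a tautology and assumes precisely the content of Ribet--Takahashi's Theorem 1, which this paper recovers as a \emph{consequence} (Corollary~\ref{degree}). The invariants $m_q$ (and $n_q$) are fixed in advance in terms of $\rho_f|_{G_{\Q_q}}$, the paper uses \cite[Theorem 2]{RiTa} only as an inequality $\le\sum_q m_q$, and the equality in (ii) is forced by the defect computation rather than by any direct comparison of pairings or by surjectivity of maps on component groups -- which, as the paper stresses, is not known at trivial primes.
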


\begin{proof}

The proof follows from the following facts:

\begin{enumerate}

\item We use the exact computation of the length of a   relative cotangent space, namely  \[\ell_\cO(\Phi_{R/R^{\st,Q}})=  \ell_\cO(\Phi_{\T/\T^{\st,Q}})=\sum_{\ell|N} ( \ord_\cO (\ell^2-1)  -n_{\ell}) +\sum_{q \in Q}(m_q+\ord_\cO(q^2-1)-2n_q)\] by a slight variant of the arguments in the proof of  \cite[Corollary 7.15]{BKM}  using as key input Theorem \ref{thm:patching} (there the level considered when we relax ramification conditions is  $NQ^2$ rather than $N^2Q^2$, and  it is assumed that $N|N(\rhobar)$, but the arguments carry  over to our slightly different situation  {\em mutatis mutandis});

\item $\delta_{\lambda, \T}(\T)=\delta_{\lambda, \T}(M(N^2Q^2))=0$. This follows from the arguments in \cite[Theorem 5.2]{BKM}  (see also
\cite[Remark 5.3, 5.4]{BKM}) which  is proved using the arguments of \cite[Theorem 3.4]{DiamondMult1}.

\item The inequality \[\ell_\cO(\Psi_\lambda(M(N^2Q^2))) \leq \ell_\cO(\Psi_\lambda(M^{\st,Q}(N)))  + \sum_{\ell|N} ( \ord_\cO (\ell^2-1) )+ \sum_{q \in Q} (m_q+\ord_\cO(q^2-1))\]  that follows from  the  following two inequalities:

\begin{itemize}

\item   \[\ell_\cO(\Psi_\lambda(M(NQ))) \leq \ell_\cO(\Psi_\lambda(M^{\st,Q}(N))) + \sum_{q \in Q} m_q\] which follows from \cite[Theorem 2]{RiTa}. To justify this, as  noted above    as a consequence of  \cite[\S 3, Lemma 3.5]{BKM}, we have 
$\ell_\cO(\Psi_\lambda(M(NQ)))=\ord_\cO(\langle Y, Y \rangle)$ and $\ell_\cO(\Psi_\lambda(M^{\st,Q}(N)))=\ord_\cO(\langle Z, Z \rangle)$. Further the  ideals generated by the inner products  $(\langle Y, Y \rangle)$  and $(\langle Z, Z \rangle)$ can be  read off from the optimal  quotients   $\xi$ and $\xi’$ of  the isogeny class of abelian varieties ${\cal A}_f$ by  the Jacobians  of $X_0(NQ)$ and  $X^Q_0(N)$ as follows. The composition  $\xi_* \xi^*$ of the  pull-back $\xi^*$ and  push-forward of the maps induced by $\xi$ on the ${\rm Ta}_\wp(A)_{\ffrm}=\cO^2$  is identified  with multiplication by a scalar in $\cO$. We denote the ideal of $\cO$ generated by this scalar  by  $(\xi_* \xi^*)$. Then $(\langle Y, Y \rangle)=(\xi_* \xi^*)$. Similarly  $(\langle Z, Z \rangle)=(\xi’_* {\xi’} ^*)$.  Then   (using  \cite[Theorem 2]{RiTa} (in the case when ${\cal A}_f$ is an isogeny class of elliptic curves, and its generalization to optimal abelian variety quotients in  \cite{K})  we see that the ideal $(\xi_* \xi^*)(\xi’_* {\xi’} ^*)^{-1}$ divides the ideal  $(\Pi_{q \in Q}\omega^{m_q})$ of $\cO$ which justifies our claim.
 
\item \[\ell_\cO(\Psi_\lambda(M(N^2Q^2))) \leq \ell_\cO(\Psi_\lambda(M(NQ)))  + \sum_{\ell|NQ}  \ord_\cO (\ell^2-1)\] 
This statement, in the stronger form of an equality follows  easily from the arguments in Step 2 of proof of \cite[Proposition 9.1]{BKM}.

\end{itemize}
\item The inequality  \[\ell_\cO(\Psi_\lambda(M^{\st,Q}))  \leq \ell_\cO(\Psi_\lambda(\T^{\st,Q})),\] which is equivalent to the inequality
 \[\delta_\lambda(M^{\st,Q}(N))  \geq \delta_\lambda(\T^{\st,Q}).\] This follows  from  \cite[Theorem 3.12]{BKM}.
\item The equality $\delta_{\lambda,\T^{\St,Q}} = \sum_{\ell |N} \frac{n_\ell}{e} + \sum_{q|Q} \frac{2n_q}{e}$  which is a consequence of our main theorem Theorem \ref{thm:mc}. (To deduce this from our main theorem, we use for  $\ell|N$ the local deformation condition described by $R_\ell^{\uni}$ and for $q \in Q$ that described by $R_q^{\St}$.)
\end{enumerate}

 Using the first three points (1), (2) and (3) we conclude that $\delta_\lambda(M^{\St,Q}(N)) \leq \sum_{\ell |N} \frac{n_\ell}{e} + \sum_{q|Q} \frac{2n_q}{e}$. Using  (4) and (5)  we  deduce the series of (in)equalities  
 \[\sum_{\ell|N} \frac{n_\ell}{e}+\sum_{q \in Q} \frac{2n_q}{e}= \delta_{\lambda,\T^{\st,Q}} (\T^{\st,Q}) \leq  \delta_\lambda(M^{\St,Q}(N))   \leq \sum_{\ell|N} \frac{n_\ell}{e}+\sum_{q \in Q} \frac{2n_q}{e}\] and hence \[\delta_{\lambda,\T^{\st,Q}}(M^{\st,Q}(N)) =\delta_{\lambda,\T^{\st,Q}}(\T^{\st,Q})=\sum_{\ell|N} \frac{n_\ell}{e}+\sum_{q \in Q} \frac{2n_q}{e}.\]  From this  using  (1) and (2) we conclude that  \[\ell_\cO(\Psi_\lambda(M(N^2Q^2)))= \ell_\cO(\Psi_\lambda(M^{\st,Q}(N))) + \sum_{\ell|N} \ord_\cO(\ell^2-1))+ \sum_{q \in Q} (m_q+\ord_\cO(q^2-1)).\] Finally using the two inequalities that occurred in proof  of (3) above we deduce that   \[\ell_\cO(\Psi_\lambda(M(NQ)))= \ell_\cO(\Psi_\lambda(M^{\st,Q}(N))) + \sum_{q \in Q} m_q, \] finishing the proof of all parts of the theorem. 

\end{proof}

We note a variant of the result above  which computes  defects  for the module $M(NQ)$  when considered as a module for an anemic Hecke algebra and a full Hecke algebra.
The module $M(NQ)$ is a module  for  the (anemic)  Hecke algebra $\T^\uni$, and it is also a module for   the  (full) Hecke algebra $\overline{\T^\uni}$ (and thus  $U_v \in \overline \T^\uni$ for all primes $v$ dividing $NQ$) that acts faithfully on $M(NQ)$.  The augmentation $\lambda:\T^\uni \to \cO$ extends uniquely  to $\lambda’:\overline \T^\uni \to \cO$, and $\lambda’(U_v)=\pm 1$  for $v|NQ$.
We  determine next  the defects 
 $\delta_{\lambda’,\overline \T^\uni}(M(NQ))$ and $\delta_{\lambda,\T^\uni}(M(NQ))$.
 
\begin{proposition}\label{prop:moduledefect}
	\begin{description}
		\item[(i)]$\delta_{\lambda,\overline \T^\uni}(M(NQ))=\delta_\lambda(\overline \T^\uni)=\sum_{v|NQ} \frac{3n_v}{e}$. 
		\item[(ii)] $\delta_{\lambda,\T^\uni}(M(NQ))=\delta_{\lambda,\T^\uni}(\T^\uni)=\sum_{v|NQ} \frac{n_v}{e}$.
	\end{description}  
\end{proposition}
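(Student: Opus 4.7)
The plan is to apply the patching Theorem \ref{thm:patching} in two parallel setups, read off the ring defect from Theorem \ref{thm:mc}, and then upgrade this to the equality of the cohomological defect of $M(NQ)$ with the defect of the relevant Hecke algebra via a generalization of \cite[Theorem 1.2]{Manning} to the higher-dimensional Cohen--Macaulay patched setting.

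For part (i), I take $\tau_v=\fun$ at every $v\mid NQ$ and run the patching of Theorem \ref{thm:patching}. By the moduli interpretation of the modified local rings $R_v^\fun$ (Section \ref{sec_deformation_theory}), a $\Qbar_p$-point of the global modified deformation ring records a Galois representation together with a choice of Frobenius eigenvalue at each $v\mid NQ$, which under local-global compatibility matches the datum of a $U_v$-eigenvalue. Consequently the patched global ring $R^\fun$ is identified with the full Hecke algebra $\overline{\T}^\uni$, and Theorem \ref{thm:mc} gives $\delta_\lambda(\overline{\T}^\uni)=\sum_{v\mid NQ} 3n_v/e$. Carayol's freeness of $\rho^{\mod}$ makes $M(NQ)$ of generic rank one over $\overline{\T}^\uni$, so the patched module $M_\infty$ arising from $M(NQ)$ is free over $S_\infty$ and hence maximal Cohen--Macaulay over $R_\infty^\fun$. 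Since $R_v^\fun$ is Gorenstein at each $v\mid NQ$ by Lemma \ref{lemma:props-of-RfunQ}(3), so is $R_\infty^\fun$, and the higher-dimensional analog of Manning's theorem \cite[Theorem 1.2]{Manning} in this setting gives $\delta_{\lambda,\overline{\T}^\uni}(M(NQ))=\delta_\lambda(\overline{\T}^\uni)$.

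For part (ii), I run the analogous patching with $\tau_v=\uni$ at every $v\mid NQ$. Theorem \ref{thm:mc} gives $\T^\uni\cong R^\uni$ and $\delta_\lambda(\T^\uni)=\sum_{v\mid NQ} n_v/e$. The module $M(NQ)$ remains of generic rank one over $\T^\uni$ (at the minimal prime corresponding to the newform $f$, which is the one relevant for the local invariants at $\lambda$), so the patched module is again MCM over $R_\infty^\uni$. Since $R_v^\uni$ is Gorenstein by Lemma \ref{Lem:EqnsForUnCase}(3), the same Manning-type argument yields $\delta_{\lambda,\T^\uni}(M(NQ))=\delta_\lambda(\T^\uni)$.

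The main obstacle is the extension of \cite[Theorem 1.2]{Manning} from the one-dimensional Gorenstein setting to the higher-dimensional Cohen--Macaulay patched setting in which $R_\infty^\tau$ lives. The natural approach is to reduce to the one-dimensional case by going modulo the regular sequence $i(y_1),\ldots,i(y_d)$ coming from the $S_\infty$-algebra structure of $R_\infty^\tau$: one invokes the invariance of $\cDer_\cO^1(R,E/\cO)$ and $C_{1,\lambda}(R)$ under such reductions, established in Theorems \ref{thm:Der^1 local} and \ref{thm:c_1 local}, together with an analogous invariance on the module side for $M_\infty/(i(y_1),\ldots,i(y_d))M_\infty$. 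Once this reduction is set up, the Gorenstein property of $R_v^\uni$ and $R_v^\fun$ at trivial primes guarantees that the quotient ring inherits the Gorenstein condition, and the resulting one-dimensional equality $\delta_\lambda(M)=\delta_\lambda(R)$ follows from \cite[Theorem 1.2]{Manning} combined with \cite[Theorem 3.12]{BKM}.
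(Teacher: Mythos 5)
Your computation of the ring defects is fine: taking $\tau_v=\fun$ (resp.\ $\uni$) at all $v\mid NQ$ and applying Theorems \ref{thm:patching} and \ref{thm:mc} does give $\delta_\lambda(\overline\T^\uni)=\sum_v 3n_v/e$ and $\delta_\lambda(\T^\uni)=\sum_v n_v/e$. The gap is in the step from the ring defect to $\delta_\lambda(M(NQ))$, which you base on a ``higher-dimensional analog of \cite[Theorem 1.2]{Manning}''. No such analog is proved in this paper or in \cite{Manning}, and it does not follow from the Gorenstein property you invoke: Manning's theorem is proved by showing $M_\infty\cong\omega_{R_\infty}$ using that the patched ring built from \emph{Steinberg} local rings is a normal domain whose divisor class group can be computed, whereas $R_v^\uni$ and $R_v^\fun$ each have two irreducible components (unramified and Steinberg), so $R_\infty^\tau$ is not a domain and that argument does not transfer; a maximal Cohen--Macaulay module of generic rank one over a Gorenstein reduced ring need not be free, so ``MCM $+$ Gorenstein $+$ rank one'' does not close the argument. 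For part (ii) the proposed route cannot even be set up: as remarked in this section, $M(NQ)$ is \emph{not} generically free over the anemic algebra $\T^\uni$ because of oldforms (its generic rank jumps on components where $\rho$ is unramified at some $v\mid NQ$), and the congruence module $\Psi_\lambda(M(NQ))$ entering $\delta_{\lambda,\T^\uni}(M(NQ))$ depends on how the $\lambda$-component meets those other components, so rank one ``at the minimal prime through $\lambda$'' is not enough to reduce to any structural statement about $M_\infty$ over $R_\infty^\uni$. Indeed, the point of this part of the paper is to \emph{avoid} Manning-type structure theorems and instead deduce module defects numerically from ring defects.

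For comparison, the paper's actual proofs are quite different. Part (i) is handled at finite level by Mazur's multiplicity-one principle (using $p\nmid NQ$ and the presence of the $U_v$'s): $M(NQ)$ is free over the full Hecke algebra $\overline\T^\uni$, so $\delta_{\lambda,\overline\T^\uni}(M(NQ))=\delta_\lambda(\overline\T^\uni)$ is immediate, with the value supplied by Theorem \ref{thm:mc}. Part (ii) is a numerical comparison across levels, as in Theorem \ref{change-of-eta} but simpler: one computes the relative cotangent length $\ell_\cO(\Phi_{R/R^\uni})=\sum_{\ell\mid NQ}(\ord_\cO(\ell^2-1)-n_\ell)$, the change of congruence modules $\ell_\cO(\Psi_\lambda(M(N^2Q^2)))=\ell_\cO(\Psi_\lambda(M(NQ)))+\sum_{\ell\mid NQ}\ord_\cO(\ell^2-1)$, and uses $\delta_{\lambda,\T}(M(N^2Q^2))=0$ (Diamond's argument at the unrestricted level) to get $\delta_{\lambda,\T^\uni}(M(NQ))=\sum_v n_v/e$, which is then matched against the ring defect from Theorem \ref{thm:mc}. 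If you want to salvage your approach you would need to prove the structural statement you assume, which for these non-irreducible local rings is an open problem rather than a routine extension.
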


\begin{proof}
(i)  By Theorem \ref{thm:mc}, $\delta_\lambda(\overline \T^\uni)=\sum_{v|NQ} \frac{3n_v}{e}$.
Using   arguments pioneered by Mazur to prove mod $p$ multiplicity one statements (see  for instance \cite[Theorem 2.1]{Wiles} for an example of this type of argument, note that under our hypothesis $(p,NQ)=1$),  one sees that $M(NQ)$ is a free $\overline \T^\uni$-module, and thus $\delta_{\lambda, \overline \T^\uni}(M(NQ))=\delta_\lambda(\overline \T^\uni)$.

(ii)  In this case we argue as in the proof of Theorem \ref{change-of-eta} except that the proof is  easier.
Namely we  first observe that  \[\ell_\cO(\Phi_{R/R^\uni})=\sum_{\ell|NQ} ( \ord_\cO (\ell^2-1)  -n_{\ell}) )\] by a slight variant of the arguments in the proof of  \cite[Corollary 7.15]{BKM}.  Further   \[\ell_\cO(\Psi_\lambda(M(N^2Q^2))) = \ell_\cO(\Psi_\lambda(M(NQ)))  + \sum_{\ell|NQ}  \ord_\cO (\ell^2-1).\] This together with $\delta_{\lambda, \T}(M(N^2Q^2))=0$, proves that  $\delta_{\lambda,\T^\uni}(M(NQ))=\sum_{v|NQ} \frac{n_v}{e}$.
Theorem \ref{thm:mc} gives that $\delta_\lambda(\T)=\sum_{v|NQ}\frac{n_v}{e}$, and thus altogether we get that $\delta_{\lambda,\T^\uni}(M(NQ))=\delta_{\lambda,\T^\uni}(\T^\uni)=\sum_{v|NQ} \frac{n_v}{e}$.
\end{proof}

\begin{remark}
We could prove Proposition \ref{prop:moduledefect}(i) by a different method that exploits the equality of congruence modules  $\Psi_{\lambda,\overline \T^\uni}(M(NQ))=\Psi_{\lambda, \T^\uni}(M(NQ))$. This should follow from \cite[Lemm 3.4]{BKM} (see also \cite[Lemma 3.7]{BIK}) on using the fact that $M(NQ)[\ker(\lambda)]=M(NQ)[\ker(\lambda’)]=\cO$. Then we have to compute the change of the local cotangent space at $v$ when we consider the induced augmentations of the map of  local deformation rings $R_v^\uni \to \overline R_v^\uni$. We have not done this computation but one can make the  educated guess that the difference of the lengths of the respective cotangent spaces is $2n_v$. This would also compute the defects when we consider $M(NQ)$ as a module for Hecke algebras that have $U_v$ in them for only a subset  $\Sigma$ of places that divide $NQ$, our  educated guess for this defect  is  \[\sum_{v \in \Sigma} \frac{3n_v}{e} + \sum_{v|NQ, v \notin \Sigma} \frac{n_v}{e}.\]
\end{remark}

From Theorem \ref{change-of-eta} it is easy to deduce the formula for the change of degrees of optimal parametrizations of elliptic curves by Shimura curves  which may be summarized in the following formula (compare to \cite[Theorem 1]{RiTa}).

\begin{corollary}\label{degree}
 Let  $\cal E$ be an isogeny class of  semistable elliptic curves over $\Q$  of conductor $N$, and  $p$ be a prime such that the mod $p$ representation arising from $\cal E$ is irreducible. We also assume that $p$ is prime to $N$. Conisder a factorisation $N=D\cdot(N/D)$ with $D$ having an even number of prime factors  an optimal parametrization  $X_0^D(N/D) \to E$ with $ E \in \cal E$, and let $\delta_D$ be its degree. Then for primes $q,r$ such that $qr|D$, 
 the $p$-part of \[\ds {\delta_{D/qr} \over \delta_D}\] and the $p$-part of $c_qc_r$ are equal where $c_q,c_r$ are the orders of the component groups of  any $E \in \cal E$ at the primes $q$ and $r$.
\end{corollary}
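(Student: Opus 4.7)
The plan is to extract the asserted equality by applying the second identity of Theorem~\ref{change-of-eta} to two distinct quaternionic configurations attached to the same newform and subtracting.

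Let $f\in S_2(\Gamma_0(N))^{\mathrm{new}}$ be the newform attached to any $E\in\mathcal E$, and set $\lambda=\lambda_f$. The hypotheses of the corollary (namely $p\nmid 2N$ and $\bar\rho_f$ irreducible) place us in the setup of this section. For the first application of Theorem~\ref{change-of-eta} take, in the notation of the section, tame level $N/D$ and quaternionic discriminant $Q_1=\{q'\text{ prime}:q'\mid D\}$; the Shimura curve is $X_0^D(N/D)$ and $\delta_D$ is its optimal parametrization degree. For the second application take tame level $Nqr/D$ and $Q_2=\{q'\text{ prime}:q'\mid D/qr\}$; the Shimura curve is $X_0^{D/qr}(Nqr/D)$ and produces $\delta_{D/qr}$. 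Both $Q_i$ have even cardinality (since $qr\mid D$), so both quaternion algebras are indefinite.

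The second identity of Theorem~\ref{change-of-eta} in each case reads
\[
\ell_\cO(\Psi_\lambda(M(N)))=\ell_\cO(\Psi_\lambda(M^{\st,Q_i}(N_i)))+\sum_{q'\in Q_i}m_{q'},
\]
where $N_i$ denotes the tame level for configuration $i$. Subtracting the two instances eliminates $\ell_\cO(\Psi_\lambda(M(N)))$ and yields
\[
\ell_\cO(\Psi_\lambda(M^{\st,D/qr}(Nqr/D)))-\ell_\cO(\Psi_\lambda(M^{\st,D}(N/D)))=m_q+m_r.
\]

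It remains to identify both sides with classical invariants. For the left-hand side, \cite[Lemma~3.5]{BKM} gives $\Psi_\lambda(M^{\st,Q}(N'))=\cO/(\langle Z,Z\rangle)$, and the identity $(\langle Z,Z\rangle)=((\xi')_*(\xi')^*)$ recalled in the proof of Theorem~\ref{change-of-eta} equates this ideal, for an elliptic-curve optimal parametrization $\xi':X_0^Q(N')\to E$, with the ideal generated by $\deg\xi'$; thus the left-hand side is $\ord_\cO(\delta_{D/qr})-\ord_\cO(\delta_D)$. For the right-hand side, $m_q$ is the quantity appearing in Ribet--Takahashi's bound, and by \cite[Theorem~2]{RiTa} (as invoked in step~(3) of the proof of Theorem~\ref{change-of-eta}) it equals the $p$-adic valuation $\ord_\cO(c_q)$ of the Tamagawa number at the multiplicative prime $q$. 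Combining yields $\ord_\cO(\delta_{D/qr}/\delta_D)=\ord_\cO(c_qc_r)$, which is the asserted equality of $p$-parts.

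The main subtlety is already absorbed into Theorem~\ref{change-of-eta}: the Ribet--Takahashi arguments alone give only the divisibility $\ord_\cO(\delta_{D/qr}/\delta_D)\le\ord_\cO(c_qc_r)$, and the upgrade to an equality relies critically on the exact computation of Wiles defects of the modules $M^{\st,Q}(N')$ provided by Theorem~\ref{thm:mc}; this local-global patching input is precisely what removes the auxiliary hypotheses (such as the existence of an inertia image of order $p$) needed in earlier special cases. A minor bookkeeping point is that the $p$-part of $\delta_D$ is independent of which representative $E\in\mathcal E$ is chosen as the optimal quotient: since $\bar\rho_f$ is absolutely irreducible, every isogeny within $\mathcal E$ has degree coprime to $p$.
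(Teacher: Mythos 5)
Your proof is correct and follows essentially the route the paper intends for this corollary: apply Theorem \ref{change-of-eta} twice, to the quaternionic configurations of discriminant $D$ (tame level $N/D$) and $D/qr$ (tame level $Nqr/D$) attached to the same newform $f$, cancel the common congruence module, and translate the $\Psi_\lambda$-terms into parametrization degrees via the relation $\xi_*\xi^*=(\deg)$; your use of the second displayed identity instead of the first part cited in the paper is immaterial, since after cancellation of the $\ord_\cO(\ell^2-1)$ terms both yield $\ord_\cO(\delta_{D/qr})-\ord_\cO(\delta_D)=m_q+m_r$. One terminological caution: the $c_q$ of the statement (and the $m_q$ entering the Ribet--Takahashi bound) is the order of the geometric component group at $q$, not the Tamagawa number $[E(\Q_q):E_0(\Q_q)]$ --- their $p$-parts differ in the nonsplit case --- but this does not affect your argument, which in substance uses the component-group order throughout.
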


\begin{proof}
The result follows from the first part of  Theorem \ref{change-of-eta}  and the well known relation  between congruence modules and degrees. We leave the details to the interested reader.
\end{proof}

We get results about the surjectivity of maps on component groups at primes $q$ of multiplicative reduction of elliptic curves $E$ that are induced by parametrizations of  $E$ by Shimura curves whose Jacobians have purely toric reduction at $q$ (compare to the the arguments on \cite[page 11113]{RiTa}).

  \begin{corollary}\label{component}
    With the notation of the previous corollary, for a prime $q|D$,  the  map induced   by an optimal parametrization $X_0^D(N/D) \to E$ on the $p$-parts of the component groups  $\phi_q(J_0^D(N/D)) \to \phi_q(E)$ is surjective.
  
  \end{corollary}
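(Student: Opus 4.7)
The plan is to deduce surjectivity from Corollary \ref{degree} by comparing it to an \emph{unconditional} formula relating ratios of degrees of optimal parametrizations to Tamagawa numbers and to the cokernels of the induced maps on component groups. The point is that our derivation of Corollary \ref{degree} from Theorem \ref{change-of-eta} was purely commutative-algebraic and did not presuppose any surjectivity on component groups, so we can feed it back into the degree-versus-cokernel comparison in order to \emph{obtain} such surjectivity.

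First I would set up the relevant maps on component groups. For $q\mid D$ the Jacobian $J_0^D(N/D)$ has purely toric reduction at $q$ (since $X_0^D(N/D)$ has nonsplit reduction at the primes dividing its discriminant), and the optimal parametrization $\pi_D\colon J_0^D(N/D)\onto E$ induces
\[
\phi_q(\pi_D)\colon \phi_q(J_0^D(N/D))\longrightarrow \phi_q(E)=\mathbf{Z}/c_q.
\]
Write $d_q(D)$ for the order of the cokernel; so $d_q(D)\mid c_q$, and surjectivity on $p$-parts amounts to $p\nmid d_q(D)$.

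Second, I would invoke the unconditional identity, valid for any pair of primes $q,r\mid D$:
\[
\frac{\delta_{D/qr}}{\delta_D} \;=\; \frac{c_q\, c_r}{d_q(D)\, d_r(D)} \cdot u,
\]
where $u$ is a rational number whose numerator and denominator are prime to $p$ (under our assumption $p\nmid 2N$). This is proved via Grothendieck's monodromy pairing on the character groups of the toric parts at $q,r$, using Ribet's level-raising/lowering isomorphism between the character groups of $J_0^D(N/D)$ and $J_0^{D/qr}(Nqr/D)$; in the elliptic case and at one prime at a time this is \cite[Prop.~1]{RiTa}, and the two-prime version in the optimal-quotient setting is carried out in \cite{K}. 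Combined with Corollary \ref{degree}, which says the $p$-part of the left-hand side equals the $p$-part of $c_qc_r$, this forces the $p$-parts of both $d_q(D)$ and $d_r(D)$ to be trivial, proving the surjectivity claim at $q$.

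To cover a given $q\mid D$ one pairs it with any other $r\mid D$; in the extremal case $|D|=2$ one uses the analogous identity comparing the parametrizations by $X_0^D(N/D)$ and by $X_0(NQ)$ (i.e.\ $D'=1$). The main obstacle I anticipate is the clean verification of the displayed degree identity in sufficient generality without the auxiliary hypotheses imposed in \cite[p.~11113]{RiTa}; this is where the innovation lies, since the original arithmetic-geometric route of \cite{RiTa} used such hypotheses precisely because surjectivity on component groups was the \emph{input} of their argument, while here it is the \emph{output} of combining Corollary \ref{degree} with the general monodromy-pairing identity.
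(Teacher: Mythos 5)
Your proposal is correct and is essentially the paper's own argument: the paper deduces the corollary in one line from Corollary \ref{degree} together with \cite[Proposition 2]{RiTa}, which is exactly the unconditional monodromy-pairing identity expressing the degree ratio in terms of $c_qc_r$ and the cokernels of the component-group maps that you reconstruct. Feeding the $p$-part computation of Corollary \ref{degree} into that identity to force the cokernels to be $p$-trivial is the same logic, so no further comparison is needed.
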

 
 \begin{proof}
 This follows from the corollary above and  \cite[Proposition 2]{RiTa}.
 \end{proof}

  \subsection{Some remarks about Theorem \ref{change-of-eta}}\label{remarks}

\begin{itemize}

\item The first part  of Theorem \ref{change-of-eta}  was proved  in \cite[Proposition 9.1]{BKM}, using the methods of  \cite{RiTa}, in particular \cite[Theorem 1]{RiTa}.   We  have reverse engineered the arguments of \cite[Proposition 9.1]{BKM}, and  are able to deduce      \cite[Theorem 1]{RiTa}  below by a different method which is more robust.  We  still use \cite[Theorem 2]{RiTa}  to prove upper bounds on change of congruence modules (or equivalently degrees of parametrizations)
\[\ell_\cO(\Psi_\lambda(M(NQ))) \leq \ell_\cO(\Psi_\lambda(M^{\st,Q}(N))) + \sum_{q \in Q} m_q\] 
 but not the less robust and delicate  methods   of  the proof of the second part of  \cite[Theorem 1, see also page 11113]{RiTa}, which show that these upper bounds in fact give exactly the change of  lengths of the congruence modules. We view   the correct upper bounds on  change of congruence modules,  when we relax deformation conditions at primes in $Q$ (from Steinberg to unrestricted with fixed determinant) as   ``easier’’ than the corresponding  correct lower bounds  (correctness lying in the fact that the bounds are expected to turn into equalities).  In the analogous case of  lengths of relative cotangent spaces, the inequality   \[\ell_\cO(\Phi_{R/R^\st}) \leq \sum_{\ell|N} ( \ord_\cO (\ell^2-1)  -n_{\ell}) +\sum_{q \in Q}(m_q+\ord_\cO(q^2-1)-2n_q)\]  follows purely from local arguments: see \cite[Proposition 7.9]{BKM} for the local computation, and also note that the surjectivity  of the map $\Phi_{\lambda,R_\infty/R_\infty^{\St} } \rightarrow \Phi_{\lambda,R/R^{\St}}$ of \cite[Theorem 7.14]{BKM}  is elementary.  The injectivity of this map which is proved  in \cite[Theorem 7.14]{BKM}  lies deeper and  uses   patching arguments. Thus the heuristic that we justify by our work  here is  that (correct)  upper bounds on change of congruence modules, or change of cotangent spaces, are ``easy’’ and our methods allow one to convert these upper bounds to equalities using the methods of this paper.
 
 \item The  proof of  \cite[Theorem 1, part 2]{RiTa}  on page 11113 depends on  the  hypothesis that  $N/D$ is  not prime  (that is used to ``permute’’ primes around there) which we can dispense with in Corollary \ref{degree}.

\item  Using  (a straightforward modification  of) of \cite[Theorem 5.2]{BKM} and \cite[Theorem 8.1, Cor. 8.3]{BKM}  (which considered $M(NQ^2)$ rather than $M(N^2Q^2)$), and under the  assumption that $N|N(\rhobar)$ of \cite{BKM} we know  from \cite{BKM}   that 
 \[ \Psi_\lambda(M(N^2Q^2))=\Psi_\lambda(\T), \Psi_\lambda(M^{\st,Q}(N))=\Psi_\lambda(\T^{\st,Q}).\] 
 On the other hand using Theorem \ref{thm:mc}, together with \cite[Proposition 7.9, Corollary 7.15]{BKM} we know that
  \[\ell_\cO(\Psi_\lambda(\T))= \ell_\cO(\Psi_\lambda(\T^{\st,Q})) + \sum_{\ell|N} \ord_\cO(\ell^2-1)+ \sum_{q \in Q} (m_q+\ord_\cO(q^2-1)).\]
Combining this we  can deduce the first part   \[\ell_\cO(\Psi_\lambda(M(NQ^2)))= \ell_\cO(\Psi_\lambda(M^{\st,Q}(N))) + \sum_{\ell|N} \ord_\cO(\ell^2-1) + \sum_{q \in Q} (m_q+\ord_\cO(q^2-1))\] of the theorem above. The arguments given in Theorem \ref{change-of-eta} use Theorem \ref{thm:mc} to deduce {\em numerically}  the equality of  cohomological and ring theoretic defects, or equivalently of lengths as $\cO$-modules of  ring theoretic and cohomological congruence modules, seem more versatile and apply  in cases where the arguments of  \cite[Corollary 8.3 ]{BKM}  do not apply, and do not use the assumption that $N|N(\rhobar)$.

\item Corollary \ref{component}   does   not seem  accessible via the  methods of \cite{RiTa}  in the case of $D$ divisible by some  trivial primes for the mod $p$ representation arising from $E$  and $D$ is prime to $N(\rhobar)$.    Both the above corollaries should easily generalize to  elliptic curves over $\Q$ that are not semistable. In \cite{RiTa} ingenious arguments are used to get round the fact that one does not have general proofs of surjectivity of  maps $\phi_q(J_0^D(N/D)) \to \phi_q(E)$ on the $p$-part of  component groups. It is not hard to prove this  surjectivity by geometric means when $q$ is not 1 mod  $p$, but as far the authors know in  the case when $q$ is a trivial prime for the Galois representation arising from  $E[p]$ there is no geometric proof available.
   
\item The argument here should  generalize easily to the case of totally real fields $F$ using analogs of Ribet sequences relating character groups of reductions of modular curves and Shimura curves as in  \cite{Jarvis1}. It should also generalize without too much difficulty to the case of newforms of weight $k > 2$. There are some related results in \cite{KimOta}, they only consider situations where the Hecke algebras are complete intersections and hence of defect 0. The assumption that $N$ is squarefree  is also not eseential to our methods, the results given here are more illustrative than exhaustive.
   
\item  We assumed in this section that $f$ was a newform of level $NQ$, and so in particular $\rho_f$ ramifies at each prime dividing $N$. It it possible to prove the equality of cohomological and ring theoretic defects somewhat more generally by using the arguments of \cite{DiamondMult1}.
  
Specifically, assume that $f$ is a newform of level $N_\es Q$ for some integer $N_\es$. Then Theorem \ref{change-of-eta} gives an equality $\delta_{\lambda}(M^{\st,Q}(N_\es)) = \delta_\lambda(\T^{\st,Q}(N_\es))$.
  
Now let $\Sigma$ be a finite set of primes not containing any primes dividing $N_\es Q$, and let $N_\Sigma$ be the level considered in \cite[Section 3.2]{DiamondMult1}. The inequalities given in the proof of \cite[Theorem 3.4]{DiamondMult1} (which in our case rely on Ihara's Lemma for the Shimura curves $X^Q_0(N)$) then show that $\delta_\lambda(M^{\st,Q}(N_\Sigma))\le \delta_\lambda(M^{\st,Q}(N_\es))$.
  
But now for each prime $q\in \Sigma$, one has that $R_q^{\min}$ and $R_q^{\square}$ are both complete intersections. The work of Section \ref{sec:deformation} then implies that $\delta_\lambda(\T^{\st,Q}(N_\es)) = \delta_{\lambda}(\T^{\st,Q}(N_\Sigma))$. One then deduces that
\[
\delta_\lambda(\T^{\st,Q}(N_\Sigma))\le\delta_\lambda(M^{\st,Q}(N_\Sigma))\le \delta_\lambda(M^{\st,Q}(N_\es)) = \delta_\lambda(\T^{\st,Q}(N_\es)) =  \delta_{\lambda}(\T^{\st,Q}(N_\Sigma))
\]
and so $\delta_\lambda(\T^{\st,Q}(N_\Sigma))=\delta_\lambda(M^{\st,Q}(N_\Sigma))$ for all $\Sigma$, generalizing Theorem \ref{change-of-eta}. By a similar argument, one can also generalize Proposition \ref{prop:moduledefect}.
\end{itemize}
   
\appendix

\section{A formula of Venkatesh  by  N. Fakhruddin and C. Khare}\label{app}
  
The results of this section are inspired by unpublished notes of
A.~Venkatesh \cite{venkatesh}. Venkatesh's formula was stated (as a
conjecture, but it was checked in many cases) for certain derived
commutative rings, but we prove a version in the context of ordinary
commutative algebra; we briefly explain the connection in Section
\ref{s:derived}. The invariants $c_0$ and $c_1$ are essentially the
same as those defined in \cite{venkatesh}, but our method of proof is
different from the approach taken there. The formula is used in the main text to
  compute the Wiles defect for certain Hecke algebras that are not
  complete intersections.

Let $\mc{O}$ be a complete dvr and consider $B$, a complete local
Noetherian $\mc{O}$-algebra with $\dim(B) = 1$ and with an
augmentation $\pi_B: B \to {\mc{O}}$. Let $E$ be the quotient field of
$\mc{O}$ which we view as a module over any augmented ring using the
augmentation. We assume that the augmentation has a finite cotangent
space, by which we mean that $\ker(\pi_B)/\ker(\pi_B)^2$ is a finite
length $\mc{O}$-module . Let $C$ be the largest Cohen--Macaulay
quotient of $B$ --- if $B$ is finite over $\mc{O}$ then this is simply
the quotient of $B$ by its $\mc{O}$-torsion (which is an ideal)---and
let $\pi_C: C \to {\mc{O}}$ be the augmentation of $C$ induced by
$\pi_B$.
\begin{definition} 
 $c_0(B) := \ell({\mc{O}}/\pi_C(\Ann(\ker(\pi_C))))$.
\end{definition}

Since $B$ is complete, we may write it as a quotient of
$S = {\mc{O}}[[x_1,x_2,\dots,x_n]]$ for some $n \geq 0$. Then by the
prime avoidance lemma (\cite[Lemma 1.2.2]{BH}), we may find
a quotient $A$ of $S$ through which the map to $B$ factors and such
that $A$ is a complete intersection ring with $\dim(A) = 1$. Denote
this map $A \to B$ by $\phi_B$ and the induced map $A \to {\mc{O}}$ by
$\pi_A$. We may (and do) choose $A$ such that
$\ker(\pi_A)/\ker(\pi_A)^2$ is a finite length
$\mc{O}$-module. Furthermore, if $B$ is finite over $\mc{O}$ the lemma
also allows us to choose $A$ finite over~$\mc{O}$.

\smallskip

Let $\mf{x}$ be a sequence of generators of $\ker(\phi_B)$ of length
$\delta$ and consider the Koszul complex\footnote{We use the notation
  and standard properties of the Koszul complex as in
  \cite[\S1.6]{BH}.} $K_A(\mf{x})$. It is a graded-commutative
  differential graded $A$-algebra whose homology modules are
  $B$-modules. Let $H_{\delta}(K_A(\mf{x}))_1$ be the submodule of
  $H_{\delta}(K_A(\mf{x}))$ generated by products of elements of
  $H_1(K_A(\mf{x}))$. The Koszul complex is functorial for ring
  homomorphisms, so we have a map
  $\pi_{A,*}:H_*(K_A(\mf{x})) \to H_*(K_{\mc{O}}(\ov{\mf{x}}))$, where
  $\ov{\mf{x}}$ denotes the image of the sequence $\mf{x}$ in
  $\mc{O}$. However, all terms of this sequence are $0$, so
  $H_*(K_{\mc{O}}(\ov{\mf{x}}))$ is the exterior algebra in $\delta$
  generators (in homological degree $1$). In particular,
  $H_{\delta}(K_{\mc{O}}(\ov{\mf{x}})) \cong {\mc{O}}$.
\begin{definition}
  $c_1(B) := \ell(\pi_{A,*}(H_{\delta}(K_A(\mf{x})))/ \pi_{A,*}(H_{\delta}(K_A(\mf{x}))_1))$.
\end{definition}
We see that this is finite by localising at the prime ideal
corresponding to the kernel of $\pi_A$ and observing that this
localisation map factors through $\pi_A$.
\smallskip

From the definition of the Koszul complex, it follows that
$H_{\delta}(K(\mf{x}))$ is the annihilator of the ideal $I$ generated
by the sequence $\mf{x}$. The $A$-submodule of $H_{\delta}(K(\mf{x}))$
generated by products of elements of $H_1(K(\mf{x}))$ is precisely the
Fitting ideal of $I$ (sitting inside its annihilator). It follows that
\begin{equation}\label{eq:c2}
c_1(B) = \ell(\pi_A (\Ann(\ker(\phi_B)))/\pi_A(\Fitt(\ker(\phi_B)))).
\end{equation}

We now show that $c_1(B)$ is independent of all choices. For a fixed
$A$ as above, the Koszul complex only depends on the number of
generators of the kernel. Moreover, adding more elements in the kernel
to the sequence of generators has the effect of tensoring the Koszul
complex with an exterior algebra in which case it is easy to see that
$c_1$ does not change.

To show that it is independent of the choice of $\phi_B: A \to B$, we
will need the following elementary lemma.

\begin{lemma} \label{l:fibre} Let $\mc{O}$ be any commutative ring,
  $A_1,A_2, B$ be local Noetherian $\mc{O}$-algebras and
  $\phi_i:A_i \to B$, $i=1,2$ surjections of $\mc{O}$-algebras. Then
  \begin{enumerate}
  \item $A :=A_1 \times_B A_2$ is also a local Noetherian
    $\mc{O}$-algebra and $\dim(A) = \max\{\dim(A_1),\dim(A_2)\}$.
  \item If $A_1$ and $A_2$ are complete then so is $A$.
  \item Let $P$ be any prime ideal in $B$, $P_i = \phi_i^{-1}(P)$ the
    corresponding prime ideals of $A_i$ and $P_A = \phi^{-1}(P)$ that
    of $A$ (where $\phi:A \to B$ is the surjection induced by
    $\phi_i$). Then $A_{P_A} = (A_1)_{P_1} \times_{B_P} (A_1)_{P_2}$.
\end{enumerate}
\end{lemma}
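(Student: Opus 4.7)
For part (1), I would first pin down the maximal ideal. Since each $\phi_i$ is a surjective local homomorphism, $\phi_i^{-1}(\ffrm_B)=\ffrm_{A_i}$, and the set
$$\ffrm_A:=\{(a_1,a_2)\in A\mid a_1\in\ffrm_{A_1}\}=\{(a_1,a_2)\in A\mid a_2\in\ffrm_{A_2}\}$$
is an ideal of $A$. Any $(a_1,a_2)\in A\setminus\ffrm_A$ has both coordinates units, with inverse $(a_1^{-1},a_2^{-1})$ automatically landing in $A$ because $\phi_1(a_1^{-1})=\phi_1(a_1)^{-1}=\phi_2(a_2)^{-1}=\phi_2(a_2^{-1})$. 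Hence $A$ is local with maximal ideal $\ffrm_A$. For Noetherianity, I would use the short exact sequence of $A$-modules
$$0\longrightarrow \ker\phi_1\longrightarrow A\stackrel{\mathrm{pr}_2}{\longrightarrow} A_2\longrightarrow 0,$$
where $\mathrm{pr}_2$ is surjective because $\phi_1$ is, and where the kernel is identified with $\ker\phi_1\times\{0\}$. The action of $A$ on $\ker\phi_1$ factors through the first projection $A\onto A_1$, so $A$-submodules of $\ker\phi_1$ coincide with its $A_1$-submodules; both $\ker\phi_1$ and $A_2$ are thus Noetherian $A$-modules, and hence so is $A$.

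For the dimension statement in part (1), I would observe that $e:=(1,0)\in A_1\times A_2$ is integral over $A$ since $e^2=e$, and that $A_1\times A_2=A+A\cdot e$ as $A$-modules, using that both projections $A\to A_i$ are surjective. Thus $A\subseteq A_1\times A_2$ is module-finite, so $\dim A=\dim(A_1\times A_2)=\max\{\dim A_1,\dim A_2\}$ by going-up.

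For part (2), assuming $A_1$ and $A_2$ are $\ffrm_{A_i}$-adically complete, I would realize $A$ as the inverse limit
$$A=\varprojlim_n\bigl(A_1/\ffrm_{A_1}^n\times_{B/\ffrm_B^n}A_2/\ffrm_{A_2}^n\bigr),$$
which follows because the fibre product commutes with inverse limits and each $A_i\cong\varprojlim_n A_i/\ffrm_{A_i}^n$, $B\cong\varprojlim_n B/\ffrm_B^n$. To conclude this inverse limit is the $\ffrm_A$-adic completion of $A$, I would show that the $\ffrm_A$-adic topology on $A$ agrees with the topology induced from the embedding $A\into A_1\times A_2$: one inclusion is immediate from $\ffrm_A\subseteq\ffrm_{A_1}\times\ffrm_{A_2}$, and for the other inclusion one uses Artin--Rees applied to $A\subseteq A_1\times A_2$ (which is finite by part (1)) to bound $(\ffrm_{A_1}^n\times\ffrm_{A_2}^n)\cap A$ by a power of $\ffrm_A$. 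This is the step I expect to require the most care.

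For part (3), I would identify the multiplicative set $A\setminus P_A$ with pairs $(a_1,a_2)\in A$ such that $a_1\notin P_1$ (equivalently $a_2\notin P_2$, because $\phi_i(a_i)\in B\setminus P$ for one $i$ iff for both). Localization then gives a natural map $A_{P_A}\to (A_1)_{P_1}\times_{B_P}(A_2)_{P_2}$, and I would verify this is an isomorphism either by directly exhibiting an inverse (writing $a_i/s_i$ with $s_i\notin P_i$ in each factor via a common denominator coming from $A$) or by checking the universal property: any pair of compatible maps $(A_1)_{P_1}\to R$, $(A_2)_{P_2}\to R$ factors uniquely through the fibre product and, by restriction, factors through $A_{P_A}$. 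This step is essentially formal once the description of $A\setminus P_A$ is in hand.
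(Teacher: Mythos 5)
Your proposal is correct and takes essentially the same route as the paper's own proof: locality and Noetherianity via the two projections, the dimension count from the module-finite inclusion $A\subseteq A_1\times A_2$ together with going-up, and completeness by showing the $\ffrm_A$-adic topology agrees with the subspace topology from $A_1\times A_2$ via Artin--Rees (the paper makes explicit the identity $\ffrm_A^n(A_1\times A_2)=\ffrm_{A_1}^n\times\ffrm_{A_2}^n$, which is precisely what makes your Artin--Rees bound on $(\ffrm_{A_1}^n\times\ffrm_{A_2}^n)\cap A$ go through). For part (3) the paper simply cites that localization commutes with fibre products of modules and identifies $(A_i)_S=(A_i)_{P_i}$ and $B_S=B_P$ for $S=A\setminus P_A$, which is exactly what your hands-on common-denominator verification amounts to.
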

  
\begin{proof}
  We have
  $A = \{ (a_1,a_2) \in A_1 \times A_2: \phi_1(a_1) = \phi_2(a_2)
  \}$. The ideal $m_A$ of $A$ consisting of all pairs $(a_1,a_2)$ with
  $a_i \in m_{A_i}$ is the unique maximal ideal of $A$, since the
  surjectivity of $\phi_1, \phi_2$ implies that the complement
  consists of invertible elements, so $A$ is local.  The two
  projections induce surjections $p_i:A \to A_i$. If $I$ is an ideal
  of $A$ then $p_1(I)$ is an ideal of $A_1$. The kernel of the map
  $I \to p_1(I)$ is naturally an ideal of $A_2$. Since $A_1$ and $A_2$
  are Noetherian, this implies that $A$ is Noetherian.

  Now since $A$ is a subring of $A_1 \times A_2$ which is finite as an
  $A$-module (it is generated by $(1,0)$ and $(0,1)$), it follows from
  the going-up theorem \cite[Theorem 5, (i), (ii) and
  (iii)]{Matsumura-CA} that
  $\dim(A)= \dim(A_1 \times A_2) = \max\{\dim(A_1), \dim(A_2)\}$.

  Suppose $A_1$ and $A_2$ are complete. To show that $A$ is complete
  it suffices to prove that the $m_A$-adic topology on $A$ is the same
  as the topology induced from the inclusion of $A$ in
  $A_1 \times A_2$. Since $m_A^n \subset m_{A_1}^n \times m_{A_2}^n$
  for all $n>0$, we only need to show that given any $n'>0$,
  $(m_{A_1}^n \times m_{A_2}^n) \cap A \subset m_A^{n'}$ for all
  $n \gg 0$. This follows immediately by applying the Artin--Rees
  lemma \cite[Theorem 15]{Matsumura-CA}, with $I = m_A$,
  $M = A_1 \times A_2$ and $N = A$, since
  $I^nM = m_{A_1}^n \times m_{A_2}^n$.

  We will use the following elementary fact whose simple proof we
  skip: If $A$ is any commutative ring, $S$ any multiplicative subset
  of $A$, $M_1$, $M_2$ and $N$ any $A$-modules with maps $M_i \to N$,
  $i=1,2$, then the natural map
  $M_1 \times_N M_2 \to (M_2)_S \times_{N_{S}} (M_2)_S$ of $A$-modules
  induces an isomorphism
  $(M_1 \times_N M_2)_S \to (M_1)_S \times_{N_{S}} (M_2)_S$. The
  statement (3) follows from this by taking $M_i$ to be $A_i$, $N$ to
  be $B$ and $S = A \setminus P_A$ and by observing that
  $A_i \otimes_A A_{P_A} = (A_i)_{P_i}$, $i=1,2$ and
  $B \otimes_A A_{P_A} = B_P$.

\end{proof}

It follows from Lemma \ref{l:fibre} and the prime avoidance lemma
already used earlier, that if $A_i$ are complete intersections of the
same dimension with surjections to $B$, then both of them may be
dominated by a complete intersection $A'$ of the same dimension. The
condition on the finiteness of the cotangent space can also be
preserved by (3) of Lemma \ref{l:fibre}. For the independence of the choice
of $A$ in the definition of $c_1(B)$ we will also need:
\begin{lemma} \label{l:ext}
  Let $f:A' \to A$ be a surjection of (complete) complete intersection
  local rings and let $\phi_B:A \to B$ be any surjection of rings. Let
  $\mf{z}$ be any finite sequence of generators of $\ker(f)$, $\mf{x}$
  any sequence of generators of $\ker(\phi_B)$, and $\mf{x}'$ a lift
  of $\mf{x}$ to $A'$.  Then $H_*(K_{A'}((\mf{z}, \mf{x}'))$ is
  isomorphic to $H_*(K_A(\mf{x}))$ tensored with an exterior algebra
  over $A$ with $|\mf{z}| + \dim(A) - \dim(A')$ free generators.
\end{lemma}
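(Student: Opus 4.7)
The plan is to transport the computation into a Koszul complex over a regular local ring covering $A'$, where the CI hypotheses give enough control to identify everything explicitly. Using that $A'$ is complete CI, by Cohen's structure theorem I would write $A' = R/(\mf{a})$ with $R$ a complete regular local ring and $\mf{a}$ a regular sequence of length $t = \dim R - \dim A'$. Lift $\mf{z}$ and $\mf{x}'$ to sequences in $R$, still denoted $\mf{z}'$ and $\mf{x}'$ by abuse. Since $K_R(\mf{a})$ is a free $R$-resolution of $A'$ and $K_R(\mf{z}', \mf{x}')$ is a bounded complex of free $R$-modules,
\[
K_R(\mf{a}, \mf{z}', \mf{x}') = K_R(\mf{a}) \otimes_R K_R(\mf{z}', \mf{x}') \simeq A' \otimes_R K_R(\mf{z}', \mf{x}') = K_{A'}(\mf{z}, \mf{x}'),
\]
so $H_*(K_{A'}(\mf{z}, \mf{x}')) \cong H_*(K_R(\mf{a}, \mf{z}', \mf{x}'))$.

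Next I would exploit that $A = R/(\mf{a}, \mf{z}')$ is CI over the regular ring $R$: the ideal $(\mf{a}, \mf{z}')$ is then generated by a regular sequence of the expected length $\dim R - \dim A = t + r$, with $r = \dim A' - \dim A$. The given $(t+n)$-element sequence therefore has $n - r$ redundant generators. A Nakayama-type argument produces $M \in \GL_{t+n}(R)$ transforming $(\mf{a}, \mf{z}')$ into $(b_1, \dots, b_{t+r}, w_1, \dots, w_{n-r})$ with $(b_i)$ a minimal (hence regular) generating set and each $w_j \in (b_1, \dots, b_{t+r})$. Writing $w_j = \sum_i c_{ji} b_i$ and applying the additive Koszul base change $e_{t+r+j} \mapsto e_{t+r+j} - \sum_i c_{ji} e_i$ kills the redundant differentials, producing an isomorphism of DG $R$-modules
\[
K_R(\mf{a}, \mf{z}') \;\cong\; K_R(\mf{b}) \otimes_R K_R(\underbrace{0, \ldots, 0}_{n-r}).
\]

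Tensoring this with $K_R(\mf{x}')$ on the right and using that $K_R(\mf{b})$ is a free $R$-resolution of $A$, I conclude
\[
H_*(K_R(\mf{a}, \mf{z}', \mf{x}')) \;\cong\; H_*\bigl(A \otimes_R (K_R(\mf{0}) \otimes_R K_R(\mf{x}'))\bigr) \;=\; H_*\bigl(K_A(\mf{0}) \otimes_A K_A(\mf{x})\bigr),
\]
using that $\mf{x}'$ reduces to $\mf{x}$ modulo $(\mf{a}, \mf{z}')$. Because $K_A(\mf{0}) = \wedge^*_A A^{n-r}$ carries the zero differential and is $A$-free, the K\"unneth formula collapses to give $H_*(K_A(\mf{0}) \otimes_A K_A(\mf{x})) \cong H_*(K_A(\mf{x})) \otimes_A \wedge^*_A A^{n-r}$. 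Since $n - r = |\mf{z}| + \dim A - \dim A'$, this is precisely the formula the lemma claims.

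The main technical obstacle lies in the second step: using both CI hypotheses together with the regularity of $R$ to confirm that $(\mf{a}, \mf{z}')$ has exactly $t + r$ minimal generators and that a regular generating set of this length can be realized as the first $t + r$ entries of $M \cdot (\mf{a}, \mf{z}')$ for some $M \in \GL_{t+n}(R)$, with the remaining entries lying in the ideal generated by the first $t + r$. This rests on the fact that over a regular local ring, a CI quotient has its defining ideal generated minimally by a regular sequence of length equal to the codimension, combined with Nakayama's lemma for lifting a partial basis of $(\mf{a}, \mf{z}')/\ffrm_R(\mf{a}, \mf{z}')$ to a complementary decomposition of the free module $R^{t+n}$.
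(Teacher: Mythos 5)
Your proposal is correct and takes essentially the same route as the paper: present $A'$ as a complete regular local ring modulo a regular sequence, use that $A$ is also a complete intersection so that $\ker(R\to A)$ is generated by a regular sequence of length $\dim R-\dim A$, apply the Koszul resolution property twice, and count the $|\mf{z}|+\dim(A)-\dim(A')$ redundant generators as exterior-algebra generators. The only (cosmetic) difference is that the paper selects a minimal generating subset of the kernel from among the given generators and observes that the leftover generators become $0$ in $A$, whereas you make them literally zero over $R$ by an invertible change of Koszul basis before tensoring down.
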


\begin{proof}
  Let $g: S \to A'$ be a surjection from a regular local ring $S$
  (which exists because $A'$ is complete), so both $\ker(g)$ and
  $\ker(fg)$ are generated by regular sequences.  Choose a sequence of
  generators ${\mf{y}}$ of $\ker(\phi_Bfg)$ by first choosing a
  regular sequence of generators $\mf{w}$ of $\ker(g)$ and then adding
  lifts $\tilde{\mf{z}}$ of elements of $\mf{z}$ and lifts
  $\tilde{\mf{x}}'$ of lifts $\mf{x}'$ in $A'$ of elements of
  $\mf{x}$. We then set
  $\mf{y} = (\mf{w}, \tilde{\mf{x}}', \tilde{\mf{z}})$ and consider
  $K_S(\mf{y})$. Since the Koszul complex of a regular sequence is a
  resolution of the corresponding quotient ring, by applying this to
  $\mf{w}$ we see that $K_S(\mf{y})$ is quasi-isomorphic (as a
  differential graded $S$-algebra) to
  $K_S((\tilde{\mf{z}}, \tilde{\mf{x}}')) \otimes_S A'$, i.e.,
  $K_{A'}((\mf{z},\mf{x}'))$. On the other hand, since $A$ is also
  complete intersection ring, by choosing a minimal generating set of
  $\ker(fg)$ from among the elements of $(\mf{w}, \tilde{\mf{z}})$,
  one sees that $K_S(\mf{y})$ is quasi-isomorphic to $K_A(\mf{x})$
  tensored with an exterior algebra (since
  $gf(\tilde{\mf{x}}') = \mf{x}$ and the remaining elements of
  $(\mf{w}, \tilde{\mf{z}})$ become $0$ in $A$). On taking homology we
  see that $H_*(K_{A'}((\mf{z},\mf{x}'))$ is isomorphic to
  $H_*(K_A(\mf{x}))$ tensored with an exterior algebra. The number of
  free generators of this exterior algebra is easily seen to be
  $|\mf{z}| + \dim(A) - \dim(A')$ since
  $|\mf{w}| = \dim(S) - \dim(A')$.

\end{proof}

\begin{lemma} \label{l:indc2}
  The invariant $c_1(B)$ is well-defined.
\end{lemma}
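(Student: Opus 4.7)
The plan is to combine what is already established in the text with Lemmas \ref{l:fibre} and \ref{l:ext} to reduce to a single concrete comparison. First, I would note, using the formula \eqref{eq:c2}, that once $(A,\phi_B)$ is fixed, both the numerator $\pi_A(\Ann_A(I))$ and the denominator $\pi_A(\Fitt_A(I))$ of $c_1(B)$ (with $I=\ker\phi_B$) are intrinsic invariants of the $A$-module $I$ and therefore are manifestly independent of the generating sequence $\mf{x}$. Together with the observation already made in the text that enlarging $\mf{x}$ by a redundant element tensors $K_A(\mf{x})$ with an exterior algebra without affecting $c_1$, this disposes of independence from $\mf{x}$.

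For independence from $A$, I would take any two choices $\phi_{B,i}\colon A_i\twoheadrightarrow B$ ($i=1,2$) satisfying our hypotheses. By Lemma \ref{l:fibre} the fibre product $A_1\times_B A_2$ is a complete local Noetherian $\mc{O}$-algebra of dimension $1$ surjecting onto $B$. Applying prime avoidance inside $A_1\times_B A_2$ (exactly as in the opening paragraph's construction of $A$ from a power series ring $S$), and using part (3) of Lemma \ref{l:fibre} to keep the cotangent space at the induced augmentation finite, I would produce a complete intersection $A'$ of dimension $1$ surjecting onto $B$ and admitting surjections $A'\twoheadrightarrow A_i$. It therefore suffices to compare $c_1(B)$ computed from $A'$ with $c_1(B)$ computed from $A$, where $A'\twoheadrightarrow A\twoheadrightarrow B$ is a chain of complete intersection local dominations of the type in Lemma \ref{l:ext}.

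For this comparison I would apply Lemma \ref{l:ext} directly, with $\mf{x}$ a generating sequence of $\ker\phi_B$, $\mf{x}'$ a lift of $\mf{x}$ to $A'$, and $\mf{z}$ a generating sequence of $\ker(A'\to A)$. Since $\dim A=\dim A'=1$, the lemma yields an isomorphism
\[
H_*(K_{A'}(\mf{z},\mf{x}'))\;\cong\; H_*(K_A(\mf{x}))\otimes_A\Lambda^\bullet_A(e_1,\ldots,e_{|\mf{z}|})
\]
of graded $A'$-algebras (with $A'$ acting through $A$). In top degree $\delta+|\mf{z}|$ the right hand side becomes $H_\delta(K_A(\mf{x}))\otimes_A\Lambda^{|\mf{z}|}$, which is an $A$-free module of rank one over $H_\delta(K_A(\mf{x}))$. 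Similarly, any product of degree-$1$ classes reaching top degree must use all of $e_1,\ldots,e_{|\mf{z}|}$, so the corresponding submodule is $H_\delta(K_A(\mf{x}))_1\otimes_A\Lambda^{|\mf{z}|}$.

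Finally, since $\pi_{A'}=\pi_A\circ f$ and the map $\pi_{A',*}$ carries $\Lambda^{|\mf{z}|}_A$ onto $\Lambda^{|\mf{z}|}_{\mc{O}}$ via the augmentation on coefficients, pushing forward to $H_{\delta+|\mf{z}|}(K_{\mc{O}}(\overline{\mf{y}}))\cong\mc{O}$ produces the same image as pushing $H_\delta(K_A(\mf{x}))$ and $H_\delta(K_A(\mf{x}))_1$ forward under $\pi_{A,*}$. This will give the equality of the two $c_1(B)$'s. The main obstacle will be the bookkeeping around Lemma \ref{l:ext}, ensuring that its graded-algebra isomorphism is compatible with the augmentations; this is essentially formal once one traces through the zig-zag $K_S(\mf{y})\simeq K_{A'}(\mf{z},\mf{x}')$ and $K_S(\mf{y})\simeq K_A(\mf{x})\otimes_S\Lambda^\bullet$ used in the proof of that lemma.
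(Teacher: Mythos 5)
Your reduction — using Lemma \ref{l:fibre} together with prime avoidance to dominate any two choices by a single chain $A'\onto A\onto B$ of one-dimensional complete intersections, then invoking Lemma \ref{l:ext} with $\dim A=\dim A'$ to identify $H_{\delta'}(K_{A'}(\mf{z},\mf{x}'))$ with $H_\delta(K_A(\mf{x}))\otimes_A\Lambda^{|\mf{z}|}$ compatibly with the submodules generated by degree-one classes — is exactly the route the paper takes, and your disposal of the dependence on $\mf{x}$ via \eqref{eq:c2} is fine. The gap is in your final step, where you assert that under this isomorphism the push-forward $\pi_{A',*}$ to $H_{\delta'}(K_{\mc{O}})\cong\mc{O}$ "produces the same image" as pushing $H_\delta(K_A(\mf{x}))$ and $H_\delta(K_A(\mf{x}))_1$ forward under $\pi_{A,*}$, and that checking this compatibility through the zig-zag is "essentially formal". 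It is not: the two quasi-isomorphisms out of $K_S(\mf{y})$ (collapsing $K_S(\mf{w})\to A'$, resp.\ $K_S(\mf{u})\to A$ for a minimal regular generating set $\mf{u}\subset(\mf{w},\tilde{\mf{z}})$ of $\ker(S\to A)$) match up with the augmentation maps to Koszul complexes over $\mc{O}$ only after composing with two \emph{different} coordinate projections of the degree-$\delta'$ part of the exterior algebra $H_*(K_{\mc{O}}(\overline{\mf{y}}))$, so no identity of images in $\mc{O}$ follows. Indeed the asserted equality of images is false in general: take $B=\mc{O}$, $A=\mc{O}$ with $\phi_B=\mathrm{id}$, and $A'=\mc{O}[[z]]/(z^2-\varpi^2z)$ with $z\mapsto 0$; then $\pi_A(\Ann_A(\ker\phi_B))=\mc{O}$ while $\Ann_{A'}(\ker\phi_B')=(z-\varpi^2)$ has image $(\varpi^2)$ under $\pi_{A'}$. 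The images of the Fitting ideals differ by the same factor, so $c_1(B)=0$ in both computations — but that is precisely the statement to be proved, and it cannot be extracted from an equality of images that does not hold.

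What is needed to close the gap is an argument comparing the \emph{quotients} without any compatibility of the Lemma \ref{l:ext} isomorphism with the augmentations, and this is where the finite cotangent space hypothesis enters (your proposal never uses it at this stage). Since $\Ann_{A'}(\ker\phi_B')$, $\Fitt(\ker\phi_B')$, $\Ann_A(\ker\phi_B)$, $\Fitt(\ker\phi_B)$ are all $B$-modules and $f(\ker\pi_{A'})=\ker\pi_A$, one has $M\otimes_{A'}\mc{O}\cong M\otimes_A\mc{O}$ for such modules; the finiteness of the cotangent spaces guarantees that $\pi_A(\Ann_A(\ker\phi_B))$ is nonzero and equals the image of $\Ann_A(\ker\phi_B)\otimes_A\mc{O}\to\mc{O}$, and that this tensor product modulo its torsion is free of rank one (likewise over $A'$). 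Consequently $\ell\bigl(\pi_A(\Ann)/\pi_A(\Fitt)\bigr)$ depends only on the abstract pair $\Fitt\subseteq\Ann$ of modules, and the $A$-module isomorphism $\alpha:\Ann_{A'}(\ker\phi_B')\to\Ann_A(\ker\phi_B)$ carrying Fitting ideal to Fitting ideal — which is what Lemma \ref{l:ext} really gives you, and which your Koszul-algebra bookkeeping correctly establishes — then yields the equality of the two $c_1$'s after applying $-\otimes_A\mc{O}$. Without this (or an equivalent) final step, your proof is incomplete.
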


\begin{proof}

  By Lemma \ref{l:fibre} and the remarks following it, it suffices to
  show that if $\phi_B:A \to B$ is as above and we have a surjection
  $f:A' \to A$ such that $\phi_B' := \phi_B f$ also satisfies the
  conditions analogous to those imposed on $\phi_B$, then the numbers
  defined using $\phi_B$ and $\phi_B'$ are equal.

  Let $\mf{x}$ be a sequence of generators of $\ker(\phi_B)$,
  $\mf{x}'$ a lift of this sequence to $A'$ and $\mf{z}$ a sequence of
  generators of $\ker(f)$. Let $\mf{w} = (\mf{z}, \mf{x}')$, so
  $\mf{w}$ is a sequence of generators of $\ker(\phi_B')$. Thus,
  $\delta = |\mf{x}|$ and $\delta'$, the corresponding number of
  generators for $\ker(\phi_B')$, equals
  $|\mf{w}| = \delta + |\mf{z}|$.  Note that
  $H_{\delta'}(K_{A'}((\mf{z}, \mf{x}'))$ is canonically isomorphic to
  $\Ann(\ker(\phi_B'))$ and $H_{\delta}(K_A(\mf{x}))$ is canonically
  isomorphic to $\Ann(\ker(\phi_B))$. From the result of Lemma
  \ref{l:ext} (specialised to the case $\dim(A) = \dim(A')$) 
  that $H_*(K_{A'}((\mf{z}, \mf{x}'))$ is isomorphic to
  $H_*(K_A(\mf{x}))$ tensored with an exterior algebra over $A$ with
  $|\mf{z}|$ free generators, it follows that there is an isomorphism
  of $A$-modules $\alpha: \Ann(\ker(\phi_B')) \to \Ann(\ker(\phi_B))$
  such that $\alpha(\Fitt(\ker(\phi_B'))) = \Fitt(\ker(\phi_B))$.

  Now we use the finite cotangent space assumption on $A$ and
  $A'$. This implies that $\pi_A(\Ann(\ker(\phi_B))) \subset \mc{O}$
  is nonzero and equal to the image of
  $\Ann(\ker(\phi_B)) \otimes_A \mc{O}$ in
  $A \otimes_A \mc{O} = \mc{O}$ (and similarly for $A'$ and also for
  the Fitting ideals). The $\mc{O}$-module
  $\Ann(\ker(\phi_B)) \otimes_A \mc{O}$ modulo its torsion is free of
  rank one (and similarly for $A'$) so the lemma follows from
  \eqref{eq:c2} and the above by using the isomorphism
  $\alpha \otimes_A \mc{O}$.

\end{proof}

\smallskip

For any map of rings $R_1 \to R_2$, an $R_2$-module $M$ and
$ i \geq 0$, we denote by $\Der^i_{R_1}(R_2,M)$ the $i$-th
Andr\'e--Quillen cohomology group of $R_2$ with coefficients in $M$.
Let $E$ denote the quotient field of $\mc{O}$ viewed as a $B$-module
via $\pi_B$.

The invariants $c_0(B)$ and $c_1(B)$ defined above are linked by the
following proposition, which may be viewed as a derived version of
Wiles's formula for complete intersections \cite{Wiles},
\cite{Lenstra}, \cite[\S A]{fkr}; a variant of this formula was first
discovered by A.~Venkatesh \cite{venkatesh}.
\begin{proposition} \label{prop:v}
  \begin{equation} \label{eq:vsformula}
    c_0(B) - c_1(B) = \ell(\Der^0_{\mc{O}}(B,E/{\mc{O}})) -
    \ell(\Der^1_{\mc{O}}(B,E/{\mc{O}})).
\end{equation}
\end{proposition}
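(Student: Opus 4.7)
The plan is to reduce the proposition to a commutative-algebraic identity comparing Fitting ideals and annihilators in a complete intersection cover of $R$, which is then established by exploiting the Gorenstein structure of that cover. Fix $\varphi\colon\Rt\onto R$ as in the construction of $c_1(R)$: a $1$-dimensional complete intersection $\Rt$, finite free over $\mc{O}$, with augmentation $\widetilde\lambda:=\lambda\circ\varphi$ smooth on the generic fiber. Write $I=\kernel\varphi$ and $J=\kernel\widetilde\lambda$, so $I\subseteq J$.

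The Andr\'e--Quillen long exact sequence attached to $\mc{O}\to \Rt\to R$, combined with $\Der^0_{\Rt}(R,-)=0$ and $\Der^1_{\Rt}(R,-)=\Hom_R(I/I^2,-)$ (Lemma~\ref{lem:Der quotient}), reads
\[
0\to \Der^0_\mc{O}(R,E/\mc{O})\to \Der^0_\mc{O}(\Rt,E/\mc{O})\to \Hom_R(I/I^2,E/\mc{O})\to \Der^1_\mc{O}(R,E/\mc{O})\to \Der^1_\mc{O}(\Rt,E/\mc{O}).
\]
The last term vanishes: writing $\Rt=\mc{O}[[x_1,\ldots,x_n]]/(f_1,\ldots,f_n)$ with $(f_i)$ a regular sequence, $\Der^1_\mc{O}(\Rt,E/\mc{O})$ is the cokernel of the transpose of the Jacobian $(\partial f_i/\partial x_j)$ evaluated at $\widetilde\lambda$, acting on $(E/\mc{O})^n$; the finiteness of $\Phi_{\widetilde\lambda}(\Rt)$ forces the Jacobian to have nonzero determinant, so elementary divisors reduce it to a direct sum of multiplications by nonzero elements of $\mc{O}$, each of which is surjective on the divisible module $E/\mc{O}$. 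Matlis duality (length-preserving on finite-length $\mc{O}$-modules) together with the identifications $\Der^0_\mc{O}(\Rt,E/\mc{O})\cong\Hom_\mc{O}(\Phi_{\widetilde\lambda}(\Rt),E/\mc{O})$ and $\Der^0_\mc{O}(R,E/\mc{O})\cong\Hom_\mc{O}(\Phi_{\lambda}(R),E/\mc{O})$ then converts the four-term exact sequence into the length identity
\[
\ell\bigl(\Der^1_\mc{O}(R,E/\mc{O})\bigr)=\ell(I/IJ)-\ell(\Phi_{\widetilde\lambda}(\Rt))+\ell(\Phi_\lambda(R)).
\]

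Combining this with the classical Wiles--Lenstra equality $\ell(\Phi_{\widetilde\lambda}(\Rt))=\ell(\Psi_{\widetilde\lambda}(\Rt))$ for the complete intersection $\Rt$, a short rearrangement shows that the proposition is equivalent to the commutative-algebraic identity
\[
\ell(C_1(R))+\ell\bigl(\widetilde\lambda((I:J))/\widetilde\lambda(\Rt[J])\bigr)=\ell(I/IJ).
\]
Both terms on the left are non-negative, since $\Fitt(I)\subseteq \Rt[I]$ and $\Rt[J]\subseteq (I:J)$; moreover the product containment $\Rt[I]\cdot(I:J)\subseteq \Rt[J]$, verified directly from $abJ\subseteq aI=0$ for $a\in \Rt[I]$, $b\in(I:J)$, passes through $\widetilde\lambda$ to give the companion inequality of $\mc{O}$-ideals $\widetilde\lambda(\Rt[I])\cdot\widetilde\lambda((I:J))\subseteq \widetilde\lambda(\Rt[J])$.

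The main obstacle is the sharp equality above, for which I would exploit the Gorenstein structure of $\Rt$. The idempotent decomposition $\Rt\otimes_\mc{O} E=E\times \Rt'$ induced by $\widetilde\lambda$ exhibits $I\otimes E=0\times I'$ for an ideal $I'$ of the Artinian Gorenstein $E$-algebra $\Rt'$, and realizes each of the four ideals $\widetilde\lambda(\Rt[J])$, $\widetilde\lambda((I:J))$, $\widetilde\lambda(\Rt[I])$, $\widetilde\lambda(\Fitt(I))$ of $\mc{O}$ as the $\widetilde\lambda$-image of an explicit $\mc{O}$-lattice in the $E$-factor. The isomorphism $\Rt[I]\cong\omega_R$ from Lemma~\ref{lem:R[I]=omega}, relying on the Gorenstein hypothesis on $\Rt$, identifies $\Rt[I]$ with $\Hom_\mc{O}(R^{\tf},\mc{O})$, and the trace form on $\Rt/\mc{O}$ relates the image of a generator of $\omega_R$ under $\widetilde\lambda$ to the Fitting-ideal data of $I$ viewed as an $\Rt$-module. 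A careful bookkeeping then extracts the desired equality; the derived-categorical approach of \cite{venkatesh} encodes exactly this combinatorial content via a single spectral sequence, at the cost of working in the derived category.
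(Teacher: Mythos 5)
Your opening steps are fine and in fact coincide with the paper's own argument: the four\-/term Andr\'e--Quillen sequence for $\mc{O}\to\Rt\to R$, the vanishing of $\Der^1_{\mc{O}}(\Rt,E/\mc{O})$ (the paper gets this from Avramov's theorem rather than your Jacobian/elementary\-/divisor argument, but both work), and the length identifications $\ell(\Hom_R(I/I^2,E/\mc{O}))=\ell(I/IJ)$, $\ell(\Der^0)=\ell(\Phi)$ via Fitting ideals and Matlis duality. The gap is in what follows. After invoking Wiles--Lenstra for $\Rt$, your displayed ``equivalent identity'' amounts to the multiplicativity statement $\ell(\mc{O}/\lambdat(\Rt[J]))=\ell(\mc{O}/\lambdat(\Rt[I]))+\ell(\mc{O}/\lambdat((I:J)))$, and of this you prove only the easy containment $\Rt[I]\cdot(I:J)\subseteq\Rt[J]$, i.e.\ one inequality. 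The reverse inequality is precisely the hard content of the proposition: it is Lemma~\ref{lem:v}, which the paper proves by combining \cite[Lemma A.10]{fkr} (a genuine Gorenstein/Cohen--Macaulay duality statement) with a depth\-/one argument. Your final paragraph only sketches a strategy for it (``careful bookkeeping'' with the trace form, or deferring to \cite{venkatesh}); as written, the heart of Proposition~\ref{prop:v} is asserted rather than proved.

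There is also an inaccuracy in the reduction itself. The invariant $c_0$ is defined through the maximal Cohen--Macaulay quotient $C$ of $R$, whereas $\lambdat((I:J))=\lambda(\Ann_R(\ker\lambda))$ is the image of the annihilator computed in $R$ itself; these coincide only when $R$ has no $\varpi$-torsion, so your displayed identity is not equivalent to \eqref{eq:vsformula} in general. Concretely, take $R=\mc{O}[x]/(x^2,\varpi x)$ with $\lambda(x)=0$ and $\Rt=\mc{O}[[x]]/(x^2-\varpi^n x)$, $n\ge 1$, so $I=(\varpi x)$. Identifying $\Rt$ with $\{(a,b)\in\mc{O}^2: a\equiv b \bmod \varpi^n\}$ and $\lambdat$ with the first projection, one finds $J=0\times\varpi^n\mc{O}$, $I=0\times\varpi^{n+1}\mc{O}$, $\Rt[J]=\Rt[I]=\Fitt_{\Rt}(I)=\varpi^n\mc{O}\times 0$, hence $c_1=0$, $\lambdat((I:J))=(\varpi)$, and $\ell(I/IJ)=n$; your identity then reads $0+(n-1)=n$, which is false, while \eqref{eq:vsformula} does hold here (both sides vanish: $c_0=c_1=0$ and $\ell(\Der^0)-\ell(\Der^1)=1-1=0$). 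So either you must assume $R$ is $\mc{O}$-flat from the outset---in which case your identity is exactly the statement of Lemma~\ref{lem:v}, still to be proved---or you need the additional step of the paper's proof: pass to $C$, show $\Ann_{\Rt}(\ker(\Rt\to R))=\Ann_{\Rt}(\ker(\Rt\to C))$ by the depth argument, and formulate the multiplicativity with the congruence ideal of $C$ in place of $\lambdat((I:J))$.
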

\begin{proof}
  We denote by $J$ the ideal $\ker(\phi_B)$ with $\phi_B:A \to B$ as
  above.  The sequence of maps ${\mc{O}} \to A \to B$ gives rise to an
  exact sequence of Andr\'e--Quillen cohomology
  \begin{equation} \label{AQ}
    0 \to \Der^0_{\mc{O}}(B,E/{\mc{O}}) \to
    \Der^0_{\mc{O}}(A,E/{\mc{O}}) \to \mr{Hom}_A(J/J^2, E/{\mc{O}})
    \to \Der^1_{\mc{O}}(B,E/{\mc{O}}) \to 0.
\end{equation} 
The $0$ on the left comes from the fact that $\Der^0_A(B,E/{\mc{O}}) = 0$
since $\phi_B$ is surjective (which also gives that
$\mr{Hom}_A(J/J^2, E/{\mc{O}})$ is equal to $\Der^1_A(B, E/{\mc{O}})$). The $0$ on
the right comes from the fact that
$\Der^1_{\mc{O}}(A,E/{\mc{O}}) = \Der^2_{\mc{O}}(A, {\mc{O}}) = 0$, where the first equality
is because $\Der^i_{\mc{O}}(A,E) = 0$ for all $i$ (a consequence of the
finite tangent space condition on $\pi_A$) and the second follows from
\cite[(1.2) Theorem]{avramov-lci} because $A$ is a complete intersection, 
$\mc{O}$ is regular, and we have a surjection from $S$ onto $A$.

We claim that $\mr{Hom}_A(J/J^2, E/{\mc{O}})$ and $\Der^0_{\mc{O}}(A,E/{\mc{O}})$ are finite length $\cO$-modules and that we have equalities $\ell(\mr{Hom}_A(J/J^2, E/{\mc{O}})) = \ell({\mc{O}}/\pi_A(\Fitt(J)))$ and $\ell(\Der^0_{\mc{O}}(A,E/{\mc{O}})) = \ell({\mc{O}}/\pi_A(\Fitt(\ker(\pi_A))))$. Assuming the claim, from sequence \eqref{AQ} we deduce
\begin{equation} \label{eq:DD}
\ell(\Der^0_{\mc{O}}(B,E/{\mc{O}})) - \ell(\Der^1_{\mc{O}}(B,E/{\mc{O}})) =
\ell({\mc{O}}/\pi_A(\Fitt(\ker(\pi_A)))) -  \ell( {\mc{O}}/\pi_A(\Fitt(J))).
\end{equation}
By definition  $c_0(B) = \ell({\mc{O}}/\pi_C(\Ann(\ker(\pi_C))))$, $c_1(B) = \ell(\pi_A
(\Ann(\ker(\phi_B)))/\pi_A(\Fitt(\ker(\phi_B))))$ by \eqref{eq:c2}, and 
Lemma \ref{lem:v} below  implies that 
\[
  \ell(\mc{O}/\pi_A(\Ann_A(\ker(\pi_A)))) =
  \ell(\mc{O}/\pi_A(\Ann_A(\ker(\phi_B)))) +
  \ell(\mc{O}/\pi_C(\Ann_C(\ker(\pi_C)))).
\]
Recalling that $J = \ker(\phi_B)$ and
$\Fitt(\ker(\pi_A)) = \Ann_A(\ker(\pi_A))$ (since $A$ is a complete
intersection), the proposition follows by inserting these three
equalities in \eqref{eq:DD}.

We now prove the claim made above: For the first part, note that $E/\cO$ is an $A$-module via $\pi_A$, so that $\mr{Hom}_A(J/J^2, E/{\mc{O}})\cong\Hom_\cO(J/J^2\otimes_A\cO,E/\cO)$, where $\cO$ is an $A$-module via $\pi_A$, and  it suffices to show that $J/J^2\otimes_A\cO$ is a finite length $\cO$-module. The module $J/J^2\otimes_A\cO$ is of finite type over $\cO$ because $J/J^2$ is of finite type over $A$, and so we need to show that $J/J^2[1/\varpi]\otimes_{A[1/\varpi]}E$ vanishes. Now the map $\phi[1/\varpi]:A[1/\varpi]\to B[1/\varpi]$ is a map of finite-dimensional $E$-algebras and the (compatible) augmentations to $\pi_A[1/\varpi]$ and $\pi_B[1/\varpi]$ give rise to isomorphisms of a single factor with $E$, i.e., $\pi_A[1/\varpi]\otimes_{A[1/\varpi]} E$ is an isomorphism and $J[1/\varpi]\otimes_{A[1/\varpi]}E=0$ because $J[1/\varpi]$ must then be supported on the other factors, and hence $J/J^2[1/\varpi]\otimes_{A[1/\varpi]}E=0$. For the second part we apply the conormal sequence to $\cO\to A\to \cO$ which gives the isomorphism $\ker(\pi_A)/\ker(\pi_A)^2\cong \Omega_{A/\cO}\otimes_A\cO$ due to the splitting of $A\to\cO$. By construction the right hand term in the isomorphism is of finite $\cO$-length, and the second part now follows from $\Der^0_{\mc{O}}(A,E/{\mc{O}})\cong \Hom_A(\Omega_{A/\cO},E/\cO)\cong  \Hom_\cO(\Omega_{A/\cO}\otimes_A\cO,E/\cO)$.

For the first assertion on lengths we need to show that $J/J^2\otimes_A\cO$ and $\cO/\pi_A(\Fitt(J))$ have the same lengths. Because $\pi_A(J)=0$, the image of $J^2\otimes_A\cO$ in $J\otimes_A\cO$ is zero, and hence $J/J^2\otimes_A\cO\cong  J \otimes_A\cO$. Next observe that $\pi_A(\Fitt(J))=\Fitt(J\otimes_A\cO)$, as follows from the definition of the Fitting ideal. The equality of length now follows because for a finite length $\cO$-module over the dvr $\cO$ the theory of elementary divisors gives $\ell(M)=\ell(\cO/\Fitt(M))$. The argument for the second length equality proceeds in the same way. One reduces the equality to showing that $ \ker(\pi_A)/\ker(\pi_A)^2\cong \ker(\pi_A)\otimes_A\cO$ and $\cO/\Fitt(\ker(\pi_A)\otimes_A\cO)$ have the same length.
\end{proof}

\begin{remark}\label{rem:FinitenessOfDer}
The above proof shows in particular, that the terms $\Der^1_{\mc{O}}(B,E/{\mc{O}})$ and $\cO/\pi_A(\Fitt(J))$ are of finite $\cO$-length.
\end{remark}

\begin{remark}
  If $B$ is a complete intersection in Proposition \ref{prop:v} we may
  take $A=B$, so $c_1(B) = 0$, $c_0(B) = \ell({\mc{O}}/\eta_B)$ and
  \eqref{eq:DD} shows that Proposition \ref{prop:v} reduces to Wiles's
  formula.  The proposition shows once again that $c_1(B)$ is
  independent of all choices since all the other terms in the formula
  are clearly so.
\end{remark}

The  following lemma was used in the proof of Proposition \ref{prop:v}.
  
\begin{lemma}\label{lem:v}
  Let $A$ be a Gorenstein local ring with an augmentation
  $\pi_A:A \to \mc{O}$ such that $\ell(\Phi_A) < \infty$.  Assume that
  $\pi_A$ factors through a surjective ring homomorphism
  $\phi_B: A \to B$ and let $C$ be the largest quotient of $B$ which
  is Cohen--Macaulay, so there are surjections $\phi_C:A \to C$,
  $\pi_B: B \to \mc{O}$ and $\pi_C: C \to \mc{O}$. Then
\[
\pi_A(\Ann_A(\ker(\pi_A))) = \pi_A(\Ann_A(\ker(\phi_B))) \,
\pi_C(\Ann_C(\ker(\pi_C))).
\]
\end{lemma}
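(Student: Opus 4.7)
The plan is to reduce the asserted identity to an intrinsic statement about $C$ using Gorenstein duality on $A$, and then to verify that intrinsic statement by a local duality argument.

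First, by Lemma~\ref{minimal} and the Gorenstein hypothesis, $A$ has Krull dimension one and is finite over $\mc{O}$. Write $I = \ker(\phi_B)$, $J = \ker(\phi_C)$ and $\frp = \ker(\pi_A)$, so that $I \subseteq J \subseteq \frp$. The first reduction is to show $\Ann_A(I) = \Ann_A(J)$: since $A$ is Gorenstein of dimension one, the functor $\Hom_A(-, A)$ gives an involution on the category of maximal Cohen--Macaulay $A$-modules, and applying it twice to $B = A/I$ produces its maximal Cohen--Macaulay quotient, namely $C = A/J$. Translated into annihilators, this yields $\Ann_A(\Ann_A(I)) = J$; combined with the general identity $\Ann_A\circ\Ann_A\circ\Ann_A = \Ann_A$ one obtains $\Ann_A(I) = \Ann_A(J)$. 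Hence the right-hand side of the claimed formula is unchanged upon replacing $B$ by $C$, and it suffices to prove
\[
\pi_A(\Ann_A(\frp)) = \pi_A(\Ann_A(J)) \cdot \pi_C(\Ann_C(\frq))
\]
as ideals of $\mc{O}$, where $\frq = \frp/J = \ker(\pi_C)$.

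Next I would apply Gorenstein duality on $A$ to identify the key modules. Since $A$ is Gorenstein of dimension one, one has $A[\frp] = \Hom_A(\mc{O}, A) \cong \omega_{\mc{O}} = \mc{O}$ as $\mc{O}$-modules, and $A[J] = \Hom_A(C, A) = \omega_C$, with $\omega_C$ realized as an ideal of $A$. The adjunction $\Hom_A(\mc{O}, A) = \Hom_C(\mc{O}, \Hom_A(C, A))$ identifies the inclusion $A[\frp] \hookrightarrow A[J]$ with $\omega_C[\frq] \hookrightarrow \omega_C$ as submodules of $A$. Writing $N = \phi_C(A[J]) \subseteq C$, the kernel of $\phi_C|_{A[J]}$ is contained in $A[J] \cap J$, which consists of nilpotents (since $A[J] \cdot J = 0$); a direct check then gives $\phi_C(A[\frp]) = N[\frq]$. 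Using $\pi_A = \pi_C \circ \phi_C$, the identity reduces to the intrinsic statement
\[
\pi_C(N[\frq]) = \pi_C(N) \cdot \pi_C(\Ann_C(\frq))
\]
of ideals of $\mc{O}$.

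Finally I would prove this intrinsic identity. The inclusion $\supseteq$ is immediate from $N \cdot \Ann_C(\frq) \subseteq N \cap \Ann_C(\frq) = N[\frq]$. For the reverse inclusion I would invoke the canonical identification $\Hom_C(\mc{O}, \omega_C) = \omega_{\mc{O}} = \mc{O}$ coming from local duality for the Cohen--Macaulay ring $C$: this identifies $N[\frq]$ with a rank-one free $\mc{O}$-module whose generator is canonically pinned down, and shows that the image of this generator under $\pi_C$ agrees, up to a unit of $\mc{O}$, with the product of generators of $\pi_C(N)$ and $\pi_C(\Ann_C(\frq))$. After reducing to the case where $C$ is reduced (which causes no change in the $\mc{O}$-lengths involved), projection to the $\frq$-component of the total ring of fractions $K = C \otimes_{\mc{O}} E$ allows one to compare generators component by component. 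The principal obstacle will be this last step: the verification of the intrinsic identity when $C$ is not Gorenstein. In that case $N \cong \omega_C$ is not a principal ideal of $C$, so the reverse inclusion does not follow from a direct scalar computation, and one genuinely needs local duality to align the relevant rank-one $\mc{O}$-modules canonically; the non-reduced case of $A$ is a subsidiary issue handled by the nilpotent analysis above.
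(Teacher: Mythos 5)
Your first step (replacing $B$ by its Cohen--Macaulay quotient $C$ via $\Ann_A(\ker\phi_B)=\Ann_A(\ker\phi_C)$) is in substance fine, and is an alternative to the paper's much shorter argument, which simply notes that $\Ann_A(\ker\phi_B)\cdot\ker\phi_C$ is a finite-length submodule of the depth-one ring $A$ and hence zero. But two of your supporting assertions are off: finiteness of $A$ over $\mc{O}$ does \emph{not} follow from Lemma \ref{minimal} and is false in general (e.g.\ $A=\mc{O}[[x]]/(\varpi x)$ with the augmentation $x\mapsto 0$ is Gorenstein of dimension one with finite cotangent space but is not finite over $\mc{O}$, and the lemma is applied in Proposition \ref{prop:v} to rings $B$, hence $A$, that need not be finite over $\mc{O}$); and ``applying $\Hom_A(-,A)$ twice to $B$'' is not literally an operation on maximal Cohen--Macaulay modules, since $B$ is not one --- you need the statement that the biduality map has kernel exactly the torsion and that the torsion-free module $C$ is reflexive over the one-dimensional Gorenstein ring $A$, which can be supplied but is not a one-line remark.

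The genuine gap is in the second half. After the reduction, what remains is exactly the multiplicativity statement for a surjection from a Gorenstein ring onto a Cohen--Macaulay ring, $\pi_A(\Ann_A(\ker\pi_A))=\pi_A(\Ann_A(\ker\phi_C))\,\pi_C(\Ann_C(\ker\pi_C))$; the paper disposes of this by citing Lemma A.10 of \cite{fkr}. Your ``intrinsic identity'' $\pi_C(N[\frq])=\pi_C(N)\,\pi_C(\Ann_C(\frq))$ is only a reformulation of that statement, and you do not prove its hard inclusion: you appeal to local duality to ``canonically pin down'' generators and then explicitly concede that in the non-Gorenstein case (precisely the case of interest, since otherwise $B$ would be a complete intersection and the whole appendix unnecessary) the comparison ``does not follow from a direct scalar computation'' and remains the principal obstacle. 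In addition, the intermediate claim $\phi_C(A[\frp])=N[\frq]$ is not a ``direct check'': only the inclusion $\phi_C(A[\frp])\subseteq N[\frq]$ is formal, while surjectivity onto the $\frq$-torsion of the image requires an argument (the cokernel of $A[J][\frq]\to N[\frq]$ is obstructed by an $\Ext^1$ term). Note also that the inclusion $\supseteq$ of the lemma itself has a one-line proof (for $a\in\Ann_A(\ker\phi_C)$ and $c\in\Ann_C(\ker\pi_C)$ with lift $\tilde c\in A$, one has $a\tilde c\in\Ann_A(\ker\pi_A)$ and $\pi_A(a\tilde c)=\pi_A(a)\pi_C(c)$), so the entire content of the lemma is the reverse inclusion --- and that is exactly the part your proposal leaves unestablished. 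As written, the proposal therefore does not prove the lemma; it would become a proof only if you either supply a complete argument for the cited Cohen--Macaulay multiplicativity statement or, as the paper does, invoke \cite[Lemma A.10]{fkr}.
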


\begin{proof}
We may apply Lemma  A.10 of \cite{fkr} to the map $\phi_C$, since $C$ is
Cohen--Macaulay, to deduce that 
\[
\pi_A(\Ann_A(\ker(\pi_A))) = \pi_A(\Ann_A(\ker(\phi_C))) \,
\pi_C(\Ann_C(\ker(\pi_C))),
\]
so it suffices to to prove that
$\Ann_A(\ker(\phi_C)) = \Ann_A(\ker(\phi_B))$.  We have
$\ker(\phi_B) \subset \ker(\phi_C)$ and the quotient is a finite
length $A$-module by the definition of $C$. The quotient map
\[
  \Ann_A(\ker(\phi_B)) \ker(\phi_C) \to
  \Ann_A(\ker(\phi_B))(\ker(\phi_C)/\ker(\phi_B))
\]
is an isomporphism since $\Ann_A(\ker(\phi_B))\ker(\phi_B) = (0)$, so
$\Ann_A(\ker(\phi_B)) \ker(\phi_C)$, being a submodule of a finite
length $A$-module, is also of finite length. On the other hand, it is
a submodule of $A$ and ${\rm depth}(A) = 1$, so it
must be $(0)$. Thus, $\Ann_A(\ker(\phi_C)) = \Ann_A(\ker(\phi_B))$.
\end{proof}

\subsection{} \label{s:derived}
We briefly explain how the formula \eqref{eq:vsformula} can be viewed as a derived version of Wiles's formula:

Suppose we have a presentation
$B = \mc{O}[[x_1,x_2,\dots, x_n]]/(f_1,f_2,\dots, f_{n+\delta})$ with
$\delta \geq 0$. We may use this to construct a ``derived'' ring
\[
  \mc{B} = \mc{O}[[x_1,x_2,\dots,x_n]]
  \ {\otimes}_{\mc{O}[[y_1,y_2,\dots,y_n,\dots, y_{n+\delta}]]}\  \mc{O},
\]
where the tensor product is defined as in \cite[Definition
3.3]{gal-ven}. Here the $x_i, y_j$ are in ``degree 0'' and the map
from $\mc{O}[[y_1,y_2,\dots,y_n,\dots, y_{n+\delta}]]$ to
$\mc{O}[[x_1,x_2,\dots,x_n]]$ is given by $y_j \mapsto f_j$ and to
$\mc{O}$ by $y_j \mapsto 0$.

If we assume that $A = \mc{O}[[x_1,\dots,x_n]]/(f_1,f_2,\dots, f_{n})$
is a one dimensional complete intersection, then the derived ring has
``defect'' equal to $\delta$. 
The invariant $c_1(B)$ may then be viewed as coming from
$\pi_*(\mc{B})$, since this may be computed in terms of a Koszul
complex. Venkatesh views \eqref{eq:vsformula} as an analogue of
Wiles's formula for the derived ring $\mc{B}$, which is a ``derived
complete intersection''. (However, as we have shown, all the terms in
the formula only depend on $B =\pi_0(\mc{B})$, so it may also be
viewed as a generalisation of Wiles's formula to rings which are not
necessarily complete intersections.)

 \bibliographystyle{amsalpha}
\bibliography{refs}

  \noindent {\bf Address of authors:} 

(GB) Interdisciplinary Center for Scientific Computing, Universit\"at Heidelberg, Heidelberg, Germany. \textit{Email address}: \texttt{boeckle@uni-hd.de}

(CK) Department of Mathematics, UCLA, Los Angeles, CA 90095-1555,
  USA. \textit{Email address}: \texttt{shekhar@math.ucla.edu}

(JM) Department of Mathematics, UCLA, Los Angeles, CA 90095-1555,
  USA. \textit{Email address}: \texttt{jmanning@math.ucla.edu}

\end{document}